\newcommand{\nc}{\newcommand}
\nc{\les}{\lesssim}
\nc{\nit}{\noindent}
\nc{\nn}{\nonumber}
\nc{\D}{\partial}
\nc{\diff}[2]{\frac{d #1}{d #2}}
\nc{\diffn}[3]{\frac{d^{#3} #1}{d {#2}^{#3}}}
\nc{\pdiff}[2]{\frac{\partial #1}{\partial #2}}
\nc{\pdiffn}[3]{\frac{\partial^{#3} #1}{\partial{#2}^{#3}}}
\nc{\abs}[1] {\lvert #1 \rvert}
\nc{\cAc}{{\cal A}_c}
\nc{\cE}{{\cal E}}
\nc{\cF}{{\cal F}}
\nc{\cP}{{\cal P}}
\nc{\cV}{{\cal V}}
\nc{\cQ}{{\cal Q}}
\nc{\cGin}{{\cal G}_{\rm in}}
\nc{\cGout}{{\cal G}_{\rm out}}
\nc{\cO}{{\cal O}}
\nc{\Lav}{{\cal L}_{\rm av}}
\nc{\cL}{{\cal L}}
\nc{\cB}{{\cal B}}
\nc{\cZ}{{\cal Z}}
\nc{\cR}{{\cal R}}
\nc{\cT}{{\cal T}}
\nc{\cY}{{\cal Y}}
\nc{\cX}{{\cal X}}
\nc{\cXT}{{{\cal X}(T)}}
\nc{\cBT}{{{\cal B}(T)}}
\nc{\vD}{{\vec \mathcal{D}}}
\nc{\efield}{\mathcal{E}}
\nc{\vE}{{\vec \efield}}
\nc{\vB}{{\vec \mathcal{B}}}
\nc{\vH}{{\vec \mathcal{H}}}
\nc{\ty}{{\tilde y}}
\nc{\tu}{{\tilde u}}
\nc{\tV}{{\tilde V}}
\nc{\Pc}{{\bf P_c}}
\nc{\bx}{{\bf x}}
\nc{\bX}{{\bf X}}
\nc{\bXYZ}{{\bf XYZ}}
\nc{\bY}{{\bf Y}}
\nc{\bF}{{\bf F}}
\nc{\bS}{{\bf S}}
\nc{\dV}{{\delta V}}
\nc{\dE}{{\delta E}}
\nc{\TT}{{\Theta}}
\nc{\dPsi}{{\delta\Psi}}
\nc{\order}{{\cal O}}
\nc{\Rout}{R_{\rm out}}
\nc{\eplus}{e_+}
\nc{\eminus}{e_-}
\nc{\epm}{e_\pm}
\nc{\eps}{\varepsilon}
\nc{\vnabla}{{\vec\nabla}}
\nc{\G}{\Gamma}
\nc{\w}{\omega}
\nc{\mh}{h}
\nc{\mg}{g}
\nc{\vphi}{\varphi}
\nc{\tlambda}{\tilde\lambda}
\nc{\be}{\begin{equation}}
\nc{\ee}{\end{equation}}
\nc{\ba}{\begin{eqnarray}}
\nc{\ea}{\end{eqnarray}}
\nc{\g}{\gamma}
\nc{\ol}{\overline}
\newtheorem{theorem}{Theorem}[section]
\newtheorem{lemma}[theorem]{Lemma}
\newtheorem{prop}[theorem]{Proposition}
\newtheorem{corollary}[theorem]{Corollary}
\newtheorem{defin}[theorem]{Definition}
\newtheorem{rmk}[theorem]{Remark}
\nc{\pT}{\partial_T}
\nc{\pz}{\partial_z}
\nc{\pt}{\partial_t}
\nc{\la}{\langle}
\nc{\ra}{\rangle}
\nc{\infint}{\int_{-\infty}^{\infty}}
\nc{\halfwidth}{6.5cm}
\nc{\figwidth}{10cm}
\newcommand{\f}{\frac}
\nc{\nlayers}{L} \nc{\nsectors}{M}
\nc{\indicator}{\mathbf{1}}
\nc{\Rhole}{R_{\rm hole}}
\nc{\Rring}{R_{\rm ring}}
\nc{\neff}{n_{\rm eff}}
\nc{\Frem}{F_{\rm rem}}
\nc{\R}{\mathbb R}
\nc{\Z}{\mathbb Z}
\nc{\DD}{\Delta}
\nc{\cD}{\mathcal D}
\nc{\lnorm}{\left\|}
\nc{\rnorm}{\right\|}
\nc{\rnormp}{\right\|_{\ell^{p,\eps}}}
\nc{\rar}{\rightarrow}
\date{\today}
\begin{document}

\begin{abstract}

We study the wave equation with potential
$u_{tt}-\Delta u+Vu=0$ in two spatial dimensions,
with $V$  a real-valued, decaying potential.
With $H=-\Delta+V$, 
we study a variety of mapping estimates of the
solution operators, $\cos(t\sqrt{H})$ and
$\frac{\sin(t\sqrt{H})}{\sqrt{H}}$ under the assumption that
zero is a regular point of the spectrum of $H$.  
We prove a dispersive
estimate with a time decay rate of $|t|^{-\frac{1}{2}}$,
a polynomially weighted dispersive estimate which
attains a faster decay rate of $|t|^{-1}(\log |t|)^{-2}$
for $|t|>2$.  Finally, we prove dispersive estimates
if zero is not a regular point of the spectrum of
$H$.

\end{abstract}

\title{Time decay estimates for the wave equation with 
potential in dimension two}

\author{William~R. Green}

\address{Department of Mathematics\\
Rose-Hulman Institute of Technology \\
Terre Haute, IN 47803, U.S.A.}
\email{green@rose-hulman.edu}
\thanks{The author gratefully 
acknowledges the support of an 
AMS-Simons Travel Grant.}

\maketitle
\section{Introduction}
In this paper we study the linear wave equation 
with a real-valued decaying 
potential in two spatial dimensions,
\begin{align}
	u_{tt}-\Delta u+Vu=0, \qquad 
	u(0)=f, \quad \partial_t u(0)=g,
	\quad(x,t)\in\R^2 \times \R.\label{wave eqn}
\end{align}
Formally, with $H=-\Delta +V$, 
we represent the solution of this equation
by
\begin{align}\label{wave soln}
	u(x,t)=\cos(t\sqrt{H})f(x)+\frac{\sin(t\sqrt{H})}
	{\sqrt{H}}g(x).
\end{align}
This formulation is valid if, for example, $f\in L^2$ and 
$g\in \dot{H}^{-1}$.  In the free case, when $V=0$,
it is known that if the inital data $f$ and $g$ lie 
in sufficiently regular Sobolev spaces we also have the
dispersive estimates
\begin{align}
	&\|\cos(t\sqrt{-\Delta})f\|_{\infty}\les |t|^{-1/2}
	\|f\|_{W^{k+1,1}},\\
	&\bigg\|\frac{\sin(t\sqrt{-\Delta})}{\sqrt{-\Delta}}g
	\bigg\|_{\infty}\les |t|^{-1/2}
	\|g\|_{W^{k,1}},
\end{align}
for $k>\frac{1}{2}$ in dimension two, see 
\cite{MSW,Beals, BS}.  In general, one must use Hardy or
Besov spaces or BMO to obtain the sharp $k=\frac{n-1}{2}$ 
smoothness bound in even dimensions.  One can
attain the bound for Sobolev spaces in dimension three
where one can use the divergence 
theorem, see \cite{Str}.  More generally, 
in odd spatial dimensions one
can take advantage of the strong Huygens principle.

In this paper, we extend these dispersive bounds to the 
perturbed equation, \eqref{wave eqn}, when the initial data
$(f,g)$ is sufficiently differentiable with relatively
weak assumptions on the potential $V$.  These, along with
the $H^s$ bounds yield a class of $L^p$ dispersive
bounds, see Theorem~1.1 of \cite{Beals}.

Such dispersive estimates were first studied by 
Beals and Strauss in dimensions $n\geq 3$ in \cite{BS}.
In two dimensions there is not much work on the
$W^{k,1}\to L^\infty$ dispersive estimates or 
`regularized'
$L^1\to L ^\infty$ type estimates, where negative
powers of $H$ are employed. 
High frequency estimates of this type were studied 
by Moulin
in \cite{Mou}.
In 
\cite{Kop}, Kopylova studied two dimensional local
estimates, based on polynomially weighted $H^s$ spaces
when zero energy is regular, in which a decay rate
of $t^{-1}(\log t)^{-2}$ is attained for large $t$.  
In the $H^s$ setting, one can take advantage of conservation
of energy, which is unavailable for the global
dispersive bounds.  This
matches the decay rate obtained by Murata for the 
two dimensional Schr\"odinger equation in \cite{Mur}.
Dispersive bounds for the wave equation in three 
dimensions have been studied, see \cite{KS,BeGol} for
example.
More is known about Strichartz estimates for the wave
equation, particularly in dimensions $n\geq 3$ see
for example \cite{GinV,Burq1,Burq2,GV,DP,BeGol}.

When we perturb the free wave equation with a potential,
one needs to project away from the eigenspaces.  If
we assume that $|V(x)|\les \la x\ra^{-\beta}$
for some $\beta>2$, it is
well known, see \cite{RS1}, that the spectrum of
$H$ is made of finitely many non-negative eigenvalues
and the absolutely continuous spectrum $[0,\infty)$. 
The negative eigenvalues cause exponential growth
of the solutions due to the
action of the operators $\cos(t\sqrt{E})$ and 
$E^{-\f12}\sin(t\sqrt{E})$ with $\sqrt{E}\in i\R$.
Accordingly, we first project away from the eigenvalues
with $P_{ac}$, the projection onto the absolutely
continuous spectrum.  

Throughout the paper we use the
notation $a-:=a-\epsilon$ and $a+:=a+\epsilon$ for
some small, but fixed, $\epsilon>0$.
Our first main result is the
dispersive bound

\begin{theorem}\label{thm:main}

	If $|V(x)|\les \la x\ra^{-\beta}$ for some
	$\beta>3$ and if zero is a regular point of the 
	spectrum of $H=-\Delta+V$, then
	\begin{align*}
		&\|\cos(t\sqrt{H}) \la H\ra^{-\f34-}P_{ac}(H)f
		\|_{L^1\to L^\infty}\les |t|^{-\f12}, \\
		&\bigg\|\frac{\sin(t\sqrt{H})}{\sqrt{H}}
		\la H\ra^{-\f14-} P_{ac}(H)g
		\bigg\|_{L^1\to L^\infty}\les |t|^{-\f12}.
	\end{align*}

\end{theorem}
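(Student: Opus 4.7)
\emph{Proof proposal.} The plan is to apply Stone's formula to express the two wave propagators as oscillatory spectral integrals and then to analyze these integrands via the perturbed resolvent $R_V^\pm(\mu^2) = (H - \mu^2 \mp i 0)^{-1}$. After the change of variables $\lambda = \mu^2$ one has
\[
\cos(t\sqrt{H})\, P_{ac}(H) = \f{1}{\pi i}\int_0^\infty \cos(t\mu)\, \mu\, [R_V^+(\mu^2) - R_V^-(\mu^2)]\, d\mu,
\]
and the analogous formula with $\sin(t\mu)\, d\mu$ (no extra $\mu$) for the sine propagator. The smoothing multipliers become weights $\la \mu^2\ra^{-\f34-}$ and $\la \mu^2\ra^{-\f14-}$ in the respective integrands. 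I would then introduce a smooth cutoff at $\mu \approx 1$ and handle low and high frequencies by separate arguments.

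\emph{Low frequencies.} Factoring $V = Uv^2$ with $v = |V|^{\f12}$ and $U = \mathrm{sgn}(V)$, the symmetric resolvent identity
\[
R_V^\pm(\mu^2) = R_0^\pm(\mu^2) - R_0^\pm(\mu^2)\, v\, M^\pm(\mu)^{-1}\, v\, R_0^\pm(\mu^2), \qquad M^\pm(\mu) = U + v R_0^\pm(\mu^2) v,
\]
reduces the analysis to controlling $M^\pm(\mu)^{-1}$. The hypothesis that zero is regular is precisely what allows $M^\pm(\mu)^{-1}$ to be inverted near $\mu = 0$ via a Jensen--Nenciu--type expansion, with the dangerous $\log\mu$ contributions from the 2D free kernel $R_0^\pm(\mu^2)(x,y) = \pm \f{i}{4} H_0^{(1)}(\mu|x-y|)$ cancelling into an acceptable remainder. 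Together with the small-$\mu$ series for $H_0^{(1)}$, this yields pointwise bounds on $[R_V^+ - R_V^-](\mu^2)(x,y)$ and its first two $\mu$-derivatives that are uniform in $x,y$. A Van der Corput / stationary-phase argument applied to the resulting oscillatory $\mu$-integral then delivers the $|t|^{-\f12}$ bound with considerable room to spare (the $\log$-improvement will in fact appear in the later weighted theorem).

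\emph{High frequencies.} I would use a finite Born expansion $R_V^\pm = \sum_{k=0}^{N-1} R_0^\pm(-V R_0^\pm)^k + (R_0^\pm V)^N R_V^\pm$, choosing $N$ large enough that the tail is controlled by the limiting-absorption estimates afforded by $|V(x)| \les \la x \ra^{-\beta}$ with $\beta > 3$. Substituting the high-frequency asymptotic $H_0^{(1)}(z) = e^{iz} z^{-\f12}\omega(z)$ with $|\omega^{(k)}(z)| \les z^{-k}$ into each Born term and combining the phase $e^{i\mu|x-y|}$ with $\cos(t\mu)$ or $\sin(t\mu)$, a standard stationary-phase / integration-by-parts argument extracts $|t|^{-\f12}$. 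The weights $\la\mu^2\ra^{-\f34-}$ and $\la\mu^2\ra^{-\f14-}$ are chosen just strong enough to make the remaining $\mu$-integrals absolutely convergent, matching the $W^{k+1,1}$ and $W^{k,1}$ smoothness thresholds $k > \f12$ of the free problem.

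\emph{Main obstacle.} The hard step is the low-frequency analysis, where the logarithmic singularity of $R_0^\pm(\mu^2)$ at $\mu = 0$ must be tracked through the inversion of $M^\pm(\mu)$; cancellations must be verified separately in the two spectral integrals, because the cosine propagator carries an extra factor of $\mu\, d\mu$ whereas the sine propagator carries only $d\mu$, and a term harmless for one can be dangerous for the other. It is exactly this bookkeeping that forces the regularity-of-zero hypothesis and distinguishes the 2D problem from its higher-dimensional analogues, where the free resolvent is norm-bounded at the bottom of the spectrum and no such delicate cancellation is required.
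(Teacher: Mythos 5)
Your overall architecture matches the paper's: the spectral representation via Stone's formula, a low/high frequency split, the symmetric resolvent identity with a Jensen--Nenciu inversion of $M^\pm(\mu)=U+vR_0^\pm(\mu^2)v$ at low energy (this is where regularity of zero enters, exactly as you say), and a finite Born expansion whose tail is controlled by the limiting absorption principle at high energy. The high-frequency sketch is essentially the paper's Section 3, except that the paper gets away with the three-term expansion $R_0 - R_0VR_0 + R_0VR_VVR_0$ rather than a long Born series; also, the weights do \emph{not} make the $\mu$-integrals absolutely convergent (e.g.\ $\int_1^\infty \mu^{-\f12-}\,d\mu$ diverges for the first Born term), so oscillation is still essential there, as your integration-by-parts remark implicitly acknowledges.

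The genuine gap is in your low-frequency step. You claim pointwise bounds on $[R_V^+-R_V^-](\mu^2)(x,y)$ and its first two $\mu$-derivatives that are \emph{uniform in $x,y$}, followed by van der Corput. This cannot work: already for the free term the difference is $\f{i}{2}J_0(\mu|x-y|)$, and each $\mu$-derivative brings down a factor of $|x-y|$, which is unbounded. Moreover, if such uniform bounds held, a single integration by parts on the compactly supported low-energy integral would give $|t|^{-1}$ decay, contradicting the sharp $|t|^{-\f12}$ rate of the free 2D wave evolution. The slow rate comes precisely from the regime $\mu|x-y|\gtrsim 1$ with $|x-y|\gtrsim t$, where the phase $e^{it\mu}e^{-i\mu|x-y|}$ is nearly stationary and one must forgo integration by parts, instead harvesting $t^{-\f12}$ from the $(\mu|x-y|)^{-\f12}$ amplitude decay. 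The paper's low-energy proof therefore keeps the inner spatial variables explicit, splits each term (free, $QD_0Q$, $S/h_\pm$, error) into low/high interactions according to the sizes of $\mu|x-x_1|$ and $\mu|y-y_1|$, and cases on $t-d$ versus $t/2$; it also needs the orthogonality $Qv=vQ=0$ to replace $Y_0\chi$ by a function with no $\log\mu$ singularity before any of this can be run. Your proposal correctly identifies the $\log\mu$ bookkeeping as an obstacle but then asserts an endpoint estimate that is both false and too strong; the spatially non-uniform case analysis is the missing core of the argument.
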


In addition, we prove a weighted version of this
Theorem that attains a faster time decay rate. 

\begin{theorem}\label{thm:wtd}

	If $|V(x)|\les \la x\ra^{-\beta}$ for some
	$\beta>3$ and if zero is a regular point of the 
	spectrum of $H=-\Delta+V$, then for $t>2$
	\begin{align*}
		&\|\cos(t\sqrt{H}) \la H\ra^{-\f34-}P_{ac}(H)f
		\|_{L^{1,\f12+}\to L^{\infty, -\f12-}}\les t^{-1}
		(\log t)^{-2}, \\
		&\bigg\|\frac{\sin(t\sqrt{H})}{\sqrt{H}}
		\la H\ra^{-\f14-} P_{ac}(H)g
		\bigg\|_{L^{1,\f12+}\to L^{\infty, -\f12-}}
		\les t^{-1} (\log t)^{-2}.
	\end{align*}

\end{theorem}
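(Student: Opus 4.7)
The plan is to use the spectral representation via Stone's formula,
\begin{align*}
\cos(t\sqrt H)\la H\ra^{-\f34-}P_{ac}(H)f &= \f{2}{\pi}\int_0^\infty \cos(t\mu)\,\mu\,\la\mu^2\ra^{-\f34-}\, \Im[R_V^+(\mu^2)]f\,d\mu,\\
\f{\sin(t\sqrt H)}{\sqrt H}\la H\ra^{-\f14-}P_{ac}(H)g &= \f{2}{\pi}\int_0^\infty \sin(t\mu)\,\la\mu^2\ra^{-\f14-}\, \Im[R_V^+(\mu^2)]g\,d\mu,
\end{align*}
where $R_V^\pm(\mu^2)=(H-\mu^2\mp i0)^{-1}$, and then to split the $\mu$-integral using a smooth cutoff $\chi$ supported in $[0,1]$ with $\chi\equiv 1$ on $[0,\f12]$ into a low-frequency piece $\chi(\mu)$ and a high-frequency piece $1-\chi(\mu)$.

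On high frequencies, I would integrate by parts twice in $\mu$ to gain a factor of $t^{-2}$. The derivatives falling on the resolvent are controlled between the weighted spaces $L^{2,\f12+}\to L^{2,-\f12-}$ by standard limiting absorption estimates, and the smoothing weights $\la\mu^2\ra^{-\f34-}$, $\la\mu^2\ra^{-\f14-}$ provide integrability at infinity, yielding a bound of order $t^{-2}$ for this piece, well better than required. On low frequencies, I would use the symmetric resolvent identity
$$R_V^\pm(\mu^2)=R_0^\pm(\mu^2) - R_0^\pm(\mu^2)\,v\,M^\pm(\mu)^{-1}\,v\,R_0^\pm(\mu^2),$$
with $v=|V|^{\f12}$, $V=Uv^2$, and $M^\pm(\mu)=U+v R_0^\pm(\mu^2)v$, together with the small-energy expansion of the two-dimensional free resolvent, $R_0^\pm(\mu^2)(x,y)=-\f{1}{2\pi}\log(\mu|x-y|)+c^\pm+O(\mu^{2-}\la x-y\ra^{2-})$, whose singularity at $\mu=0$ is logarithmic. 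Under the assumption that zero is regular, a Jensen--Nenciu style analysis yields $M^\pm(\mu)^{-1}=(\log\mu+z^\pm)^{-1}P_0+\Gamma^\pm(\mu)$, where $P_0$ is a rank-one operator tied to constants in the range of $v$, $z^\pm$ is a complex constant, and $\Gamma^\pm(\mu)$ is a uniformly bounded remainder with a mild modulus of continuity in $\mu$. Substituting back and taking imaginary parts, the leading contribution to $\mu\,\Im[R_V^+(\mu^2)](x,y)$ between the weighted spaces has the form $\mu\,K(x,y)/(\log\mu)^2$ modulo faster-decaying terms, with $K(x,y)$ absorbed by the weights $\la x\ra^{-\f12-}\la y\ra^{-\f12-}$ built into the output and (dualized) input norms.

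The final ingredient is the oscillatory integral bound
$$\Big|\int_0^\infty e^{\pm it\mu}\f{\mu\,\chi(\mu)}{(\log\mu)^2}\,d\mu\Big|\les t^{-1}(\log t)^{-2}\qquad (t>2),$$
proved by a single integration by parts (the boundary term vanishes since $\mu/(\log\mu)^2\to 0$ as $\mu\to 0^+$) followed by splitting the remaining integral at $\mu\sim 1/t$. The main obstacle is the low-frequency analysis: verifying the precise $(\log\mu)^{-1}$ leading behaviour of $M^\pm(\mu)^{-1}$ under the regularity hypothesis, and the delicate bookkeeping of the various $\mu^a(\log\mu)^b$ terms arising from the expansion so that every error term, after oscillatory integration and the weighted pairing, contributes strictly less than $t^{-1}(\log t)^{-2}$. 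This logarithmic sharpness is a genuinely two-dimensional phenomenon tied to the $\log\mu$ in the 2D free resolvent, with no direct higher-dimensional analogue.
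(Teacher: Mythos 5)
Your outline reproduces the paper's overall architecture (Stone formula, low/high frequency split, symmetric resolvent identity with the Jensen--Nenciu inversion of $M^\pm(\mu)$, and the $(\log\mu)^{-2}$ mechanism behind the $t^{-1}(\log t)^{-2}$ rate), but it has two genuine gaps, both at the points where the actual work lies. First, the high-frequency piece: you propose to integrate by parts twice in $\mu$ and control the derivatives of the resolvent in $L^{2,\f12+}\to L^{2,-\f12-}$. The limiting absorption principle gives $\partial_\mu^k R_V^\pm: L^{2,\sigma}\to L^{2,-\sigma}$ only for $\sigma>\f12+k$, so two derivatives require $\sigma>\f52$; at the kernel level each $\mu$-derivative landing on the phase $e^{\pm i\mu|x-y|}$ of the free resolvent costs a factor $|x-y|$, so two integrations by parts produce weights of order $|x-y|^{3/2}$, which cannot be absorbed into $\la x\ra^{\f12+}\la y\ra^{\f12+}$. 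The paper avoids this by integrating by parts against the combined phase $e^{i\mu(t-|x-y|)}$ when $t-|x-y|\geq t/2$, and only once against $e^{it\mu}$ in the regime $|x-y|\gtrsim t$, settling for $t^{-1-\alpha}$ with weights $\la x\ra^{\f12+\alpha}$ rather than your claimed $t^{-2}$; some version of this case analysis is unavoidable with the stated weights.

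Second, and more seriously, the low-frequency piece. Since $R_0^+(\mu^2)-R_0^-(\mu^2)=\f{i}{2}J_0(\mu|x-y|)\to\f{i}{2}$ as $\mu\to0$, the free resolvent alone contributes a term of size exactly $\f{i}{2t}$ to the sine evolution --- already far larger than $t^{-1}(\log t)^{-2}$. Your assertion that the leading contribution to $\Im R_V^+$ has the form $K(x,y)/(\log\mu)^2$ modulo faster-decaying terms is therefore true only because of an exact cancellation: the leading rank-one piece $R_0^\pm v\, h_\pm(\mu)^{-1}S\, vR_0^\pm$ of the perturbation series produces a constant whose evolution is precisely $-\f{i}{2t}$, cancelling the free term; verifying this uses $Qv=vQ=0$ (so $S$ may be replaced by $P$) and $\int v P v=\|V\|_1$. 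This cancellation is the central mechanism of the theorem, not ``delicate bookkeeping''; without isolating it you cannot get below $t^{-1}$. A smaller issue of the same flavor: in your model oscillatory integral a single integration by parts yields only $O(t^{-1})$ (the integral $\int_0^{1/2}\mu^{-1}|\log\mu|^{-3}d\mu$ converges), so after splitting at $\mu\sim 1/t$ you still need a second integration by parts (or the $\pi/t$-translation trick of the paper's Lipschitz lemma) on the outer region to extract the extra $(\log t)^{-2}$; you mention the split but not this second step, and the same Lipschitz-in-$\mu$ control is what forces the H\"older-continuity hypotheses on all the error terms in the expansion.
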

Here $L^{1,\f12+}:=\{f:\R^2\to \mathbb C \, :\, \|f\la \cdot\ra^{\f12+}\|_1<\infty\}$ and $L^{\infty, -\f12-}$ is
defined similarly.  We note that the bounds in 
Theorems~\ref{thm:main} and \ref{thm:wtd} can be replaced
by bounds withought negative powers of $\la H \ra$ which
map $W^{2,1}\to L^\infty$ for the cosine 
operator and $W^{1,1}\to L^\infty$ for the sine 
operator (and respective weighted Sobolev spaces for
Theorem~\ref{thm:wtd}), see Remark~\ref{rmk:high}
below.  We note that the regularizing powers of
$\la H\ra^{-\alpha}$ for $\alpha>0$ reflect the loss of
derivatives of the initial data, and
are only needed
for high energy.  In the low energy estimates, we 
prove $L^1\to L^\infty$ bounds. 

A series of papers have considered such estimates involving `regularizing' powers of $H$, mostly 
with a high-energy cut-off and using
semi-classical techniques.  See \cite{CCV,Mou} for two dimensional results.  In higher
dimensions see, for example, \cite{Vod1,CV}.

In the case that zero is not regular, that is there are
non-trivial bounded solutions to $H\psi=0$, one can study the
boundedness of sine and cosine operators.  The 
distributional solutions
of this equation have been  
characterized, see
\cite{JN,EG}, in terms of the $L^p$ spaces.  If 
$\psi\in L^{\infty}$ and $\psi\notin L^p$ for any $p<\infty$
we call $\psi$ an `s-wave' resonance.  If $\psi\in L^p$
for all $p>2$ but not in $L^2$, we say $\psi$ is a
`p-wave' resonance.  Whereas, if $\psi \in L^2$ there is an
eigenvalue at zero.  It is known that when zero is not
regular, in general, time decay is lost in the dispersive
estimates.  See, for example, \cite{JenKat,ES} for three-dimensional and  \cite{EG} for two-dimensional
Schr\"odinger operators.  In the two-dimensional Schr\"odinger equation, it is shown that the `s-wave' resonance is sufficiently mild as to not destroy the dispersive time decay rate.  We prove a similar characterization for the wave equation.

\begin{theorem}\label{thm:nonreg}

	Assume that $|V(x)|\lesssim \langle x\rangle^{-\beta}$.  If there is only an s-wave resonance
	at zero energy, then for $\beta>4$, for $|t|>1$ we have
	\begin{align*}
		&\|\cos(t\sqrt{H}) \la H\ra^{-\f34-}P_{ac}(H)f
		\|_{L^1\to L^\infty}\les |t|^{-\f12}, \\
		&\bigg\|\frac{\sin(t\sqrt{H})}{\sqrt{H}}
		\la H\ra^{-\f14-} P_{ac}(H)g
		\bigg\|_{L^1\to L^\infty}\les |t|^{-\f12}.
	\end{align*}
	If there is a p-wave resonance or eigenvalue at zero, then for $\beta>6$, there are time-dependent operators
	$F_t$ and $G_t$ such that for $|t|>1$,
	\begin{align*}
		&\|\cos(t\sqrt{H})[\la H\ra^{-\f34-}
		P_{ac}(H)-F_t]f
		\|_{L^1\to L^\infty}\les |t|^{-\f12}, \\
		&\bigg\|\bigg[
		\frac{\sin(t\sqrt{H})}{\sqrt{H}}\la H\ra^{-\f14-} 
		P_{ac}(H)-G_t\bigg]g
		\bigg\|_{L^1\to L^\infty}\les |t|^{-\f12}.
	\end{align*}
	with
	\begin{align*}
	\sup_t \| F_t \|_{L^{1}\to L^\infty}\lesssim 1,
	\qquad \textrm{ and } \qquad  \| G_t \|_{L^{1}\to L^\infty}\les |t|.
	\end{align*}

\end{theorem}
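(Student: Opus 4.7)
The plan is to use the spectral representation of the wave propagators: by the Stone formula and functional calculus,
\begin{align*}
\cos(t\sqrt{H})P_{ac}(H) &= \f{2}{\pi i}\int_0^\infty \lambda\cos(t\lambda)[R_V(\lambda^2+i0)-R_V(\lambda^2-i0)]\,d\lambda,\\
\f{\sin(t\sqrt{H})}{\sqrt{H}}P_{ac}(H) &= \f{2}{\pi i}\int_0^\infty \sin(t\lambda)[R_V(\lambda^2+i0)-R_V(\lambda^2-i0)]\,d\lambda.
\end{align*}
I would split each integral by a smooth cutoff $\chi(\lambda)$ into a low and a high energy piece. The high-energy piece is insensitive to the zero-energy structure of $H$, so the argument of Theorem~\ref{thm:main} applies verbatim: the regularizing factors $\la H\ra^{-3/4-}$ and $\la H\ra^{-1/4-}$ recover $|t|^{-1/2}$ there. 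All new work is concentrated in the low-energy regime.

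At low energy, I would use the symmetric resolvent identity
\[
R_V(\lambda^2) = R_0(\lambda^2) - R_0(\lambda^2)\, v\, M(\lambda)^{-1}\, v\, R_0(\lambda^2),
\]
with $v=|V|^{1/2}$, $M(\lambda)=U+vR_0(\lambda^2)v$, and $U=\mathrm{sgn}\,V$. In two dimensions the Hankel kernel of $R_0(\lambda^2)$ produces a $\log\lambda$ singularity at zero, and when zero is not regular $M(\lambda)$ fails to be uniformly invertible as $\lambda\to 0$. Inverting $M(\lambda)$ stage by stage via the Jensen--Nenciu inversion procedure, peeling off orthogonal projections onto the s-wave, p-wave, and $L^2$ eigenspaces classified in \cite{JN,EG}, I would obtain an explicit asymptotic expansion of the integrand near $\lambda=0$ whose singular terms correspond exactly to the obstruction type present.

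When only an s-wave resonance is present, the leading obstruction carries an extra factor of order $1/\log\lambda$ which cancels the logarithmic singularity of $R_0$, so the integrand is no worse than in the regular case and oscillation against $\cos(t\lambda)$ or $\sin(t\lambda)$ produces $|t|^{-1/2}$ exactly as in Theorem~\ref{thm:main}; this parallels the Schr\"odinger result of \cite{EG}. When a p-wave resonance or $L^2$ eigenvalue is present the obstruction terms are strictly more singular as $\lambda\to 0$ and cannot be absorbed by oscillation alone. I would define $F_t$ and $G_t$ to be the contribution of precisely these terms to the low-energy integrals: since $\cos(t\lambda)\to 1$ as $\lambda\to 0$, $F_t$ is time-uniform as an $L^1\to L^\infty$ operator, while $\sin(t\lambda)/\lambda\to t$ forces $\|G_t\|_{L^1\to L^\infty}\les|t|$. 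The remainder after removing $F_t$ and $G_t$ is handled by the same oscillatory integral estimates as in the s-wave case.

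The main obstacle is the bookkeeping of this low-energy expansion in the p-wave/eigenvalue case: each stage of the Jensen--Nenciu procedure introduces a new layer of projections that must be unwound against the logarithmic 2D free resolvent to sufficiently high order that the remainder, after $F_t$ and $G_t$ are extracted, admits the integrations by parts needed for $|t|^{-1/2}$ decay. This is also what forces the stronger hypothesis $\beta>6$, needed to generate enough polynomial moments of $V$ for the p-wave projections to act boundedly from $L^1$ to $L^\infty$. A secondary difficulty is the $1/\lambda$ factor in the sine representation: it requires pushing the low-energy expansion one order further than for the cosine and is precisely why $G_t$ is only of linear growth rather than uniformly bounded like $F_t$.
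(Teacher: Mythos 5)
Your overall architecture matches the paper's: Stone formula, high/low energy split with the high-energy bounds of Section~\ref{sec:high energy} carrying over unchanged, the symmetric resolvent identity with the Jensen--Nenciu expansions of $M^{\pm}(\lambda)^{-1}$ from \cite{EG}, the definition of $F_t$ and $G_t$ as the contributions of the singular terms in \eqref{M pwave} and \eqref{M eval}, the uniform bound on $F_t$ coming from absolute integrability with $|\cos(t\lambda)|\leq 1$, and the linear growth of $G_t$ from the crude bound $|\sin(t\lambda)|\leq |t\lambda|$ reducing the sine integrals to the cosine ones.

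There is, however, one genuine gap in the s-wave part of your argument. You claim that ``the leading obstruction carries an extra factor of order $1/\log\lambda$ which cancels the logarithmic singularity of $R_0$, so the integrand is no worse than in the regular case.'' This is backwards. By Lemma~\ref{Mswave}, the new leading term in $M^{\pm}(\lambda)^{-1}$ for a resonance of the first kind is $-h_{\pm}(\lambda)S_1D_1S_1$ with $h_{\pm}(\lambda)=a\log\lambda+z$, i.e.\ it carries a \emph{growing} factor of $\log\lambda$; sandwiched between two free resolvents, each of which is itself logarithmically singular, the resulting integrand is strictly \emph{more} singular near $\lambda=0$ than in the regular case (the terms with $h_{\pm}(\lambda)^{-1}$ are present too, but they are the harmless ones, handled exactly as the $S$-term in Lemma~\ref{lem:S}). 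The mechanism that actually saves the $|t|^{-\f12}$ rate is the orthogonality $S_1\leq Q$, hence $S_1v=vS_1=0$, which permits replacing $Y_0$, $J_0$, and the high-energy Hankel pieces by the subtracted kernels $F$, $G$, and $\widetilde G$ of \eqref{F defn}, \eqref{Gtrick}, and \eqref{G2defn}; these vanish to order $\lambda^{0+}$ at the cost of a weight $\la x_1\ra^{0+}$ (or $\la y_1\ra$), restoring integrability after the integration by parts (Proposition~\ref{prop:summand2} and Lemma~\ref{SS_1D_1S_1 prop}), and it is this step---together with the $\pm$ cancellation in \eqref{K defn}---that also accounts for the stronger decay $\beta>4$ in the s-wave case. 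Without invoking this orthogonality the integration by parts against $\sin(t\lambda)$ produces a non-integrable $\lambda^{-1}(\log\lambda)$-type singularity at $\lambda=0$ and the reduction to Theorem~\ref{thm:main} fails. The same subtraction device is what makes your $F_t$ bounded in the p-wave/eigenvalue case, where the singular prefactors are $\lambda^{-2}(\log\lambda)^{-k}$; your sketch asserts the uniform bound but does not identify the cancellation that makes the $\lambda$-integrals converge absolutely.
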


Estimates for the three-dimensional wave equation when
there is a resonance at zero energy were obtained by
Krieger and Schlag in \cite{KS}, and (with 
Nakanishi) extended to
mixed space-time estimates in \cite{KNS}.  In that work 
the authors
were concerned with the focusing, energy critical
non-linear wave equation in three spatial dimensions,
in which the dispersive evolution (after projecting
away from the resonance function) was needed.  During
the review period for this article, 
Erdo\smash{\u{g}}an, Goldberg the author  
proved dispersive 
estimates for the four-dimensional Schr\"odinger and
wave equations with resonances at zero energy, 
\cite{EGG}.

The resolvent operator 
$R_V(z):=(-\Delta+V-z)^{-1}$, 
is defined for $z\in \mathbb C$ which are not in the
spectrum of $H$.  We define the limiting operators
$$R_V^{\pm}(\lambda^2)
:=\lim_{\epsilon \to 0^+}(-\Delta+V-(\lambda^2\pm i\epsilon))^{-1}.$$
Such operators
are well defined on certain weighted $L^2$ spaces due to
Agmon's limiting absorption principle, see
\cite{agmon}.

Using the Spectral theorem we can write
$$
	\langle f(H)P_{ac} g, h\rangle=\int_0^\infty f(\lambda) 
	\la E_{ac}'(\lambda)g,h \ra \, d\lambda,
$$
where the Stone formula yields that the
absolutely continuous spectral measure is
given by 
$$
	\la E_{ac}'(\lambda)g,h \ra =\frac{1}{2\pi i}
	\la [R_V^+(\lambda)-R_V^-(\lambda)]g,h\ra.
$$
The key insight here is that the
absolutely continuous spectral measure
is the same as in the analysis of the Schr\"odinger
equation with potential.  The fact that the spectral measure is the
difference of the resolvent operators is crucial to
establishing dispersive bounds in the free case, and
for low energy (small $\lambda$) bounds we establish
in Section~\ref{sec:low energy}.
For low energy, the difference of the resolvents helps
to control the singularities of the resolvents as the
spectral parameter $\lambda\to 0$.
We write the evolution of the
solution operator projection onto the absolutely
continuous spectrum as
\begin{align}\label{spectral rep}
	\cos(t\sqrt{H})P_{ac}f(x)
	+\frac{\sin(t\sqrt{H})}{\sqrt{H}}P_{ac}g(x)
	&=\frac{1}{\pi i}
	\int_0^\infty \lambda \cos(t\lambda)
	\big[R_V^+(\lambda^2)-R_V^-(\lambda^2)
	\big]f(x)\, d\lambda\\
	&+\frac{1}{\pi i}
	\int_0^\infty \sin(t\lambda))
	\big[R_V^+(\lambda^2)-R_V^-(\lambda^2)
	\big]g(x)\, d\lambda.\nn
\end{align}
Where we made the change of variables 
$\lambda \mapsto \lambda^2$ for computational convience.
The solution operators in the wave
equation lead to multiplication in the spectral
parameter by the functions
sine and cosine instead of multiplication by
$e^{it\lambda^2}\lambda$ that arises in the Schr\"odinger
evolution.  Throughout this paper, we will consider the
case of $t>0$, the case of $t<0$ follows from minor
modifications of our proofs.

The dispersive decay, in the sense of mappings
between weighted $L^2$ spaces
or from $L^1$ to $L^\infty$ is well
studied for the perturbed Schr\"odinger equation
$$
	i\partial_t u-\Delta u+Vu=0, \qquad u(x,0)=f(x).
$$
In two spatial dimensions, 
Murata attained time-integrable estimates on weighted
$L^2$ spaces.  This time-decay rate was attained by
Erdo\smash{\u{g}}an and the author in a logarithmically
weighted $L^1$ to logarithmically weighted $L^\infty$ 
space in \cite{EG2}.  This paper proves theorems 
for the wave equation which are analogous
to those found in the work of
Schlag in \cite{Sc2} and previous  work of 
Erdo\smash{\u{g}}an and the author, \cite{EG,EG2} 
studying the Schr\"odinger solution operator
$e^{itH}P_{ac}$ in two spatial 
dimensions.


The paper is organized as
follows:  In Section~\ref{sec:exp} we develop expansions
for the two-dimensional free resolvent 
needed for
our analysis.  In Section~\ref{sec:high energy} we
establish the high-energy dispersive bound for 
Theorem~\ref{thm:main}.  
In Section~\ref{sec:low energy} we recall necessary
resolvent expansions for small $\lambda$ and establish
the dispersive bound of Theorem~\ref{thm:main}
for low energy.
In Section~\ref{sec:weighted} we establish refinements
of the bounds and expansions of the previous sections
needed to prove Theorem~\ref{thm:wtd}.
Finally
in Section~\ref{sec:nonreg} we show how the low-energy
evolution is affected by the presence of an eigenvalue
and/or resonance at zero energy, and prove Theorem~\ref{thm:nonreg}.

\section{The Free Resolvent} \label{sec:exp}

In this section we discuss the properties of the free resolvent,  $R_0^\pm(\lambda^2)=[-\Delta-(\lambda^2\pm i 0)]^{-1}$, in $\R^2$.  We wish to understand the
free resolvent in order to better understand the spectral
measure in \eqref{spectral rep}. A similar discussion
appears in \cite{Sc2,EG,EG2}.

To simplify the formulas, we   use the notation
$$
f=\widetilde O(g)
$$
to denote
$$
\frac{d^j}{d\lambda^j} f = O\big(\frac{d^j}{d\lambda^j} g\big),\,\,\,\,\,j=0,1,2,3,...
$$
If the derivative bounds hold only for the first $k$ derivatives we  write $f=\widetilde O_k (g)$.

Recall that
\begin{align}\label{R0 def}
	R_0^\pm(\lambda^2)(x,y)=\pm\frac{i}{4} 
	H_0^\pm(\lambda|x-y|)=\pm\frac{i}{4} 
	J_0(\lambda|x-y|)- \frac{1}{4} Y_0(\lambda|x-y|).
\end{align}
Thus, we have
\be\label{r0low2}
  R_0^+(\lambda^2)(x,y)-R_0^-(\lambda^2)(x,y) =  \frac{i}2 J_0(\lambda |x-y|).
\ee
From the series expansions for the Bessel functions, see \cite{AS},  we have, as $z\to 0$,
\begin{align}
	J_0(z)&=1-\frac{1}{4}z^2+\widetilde O_4(z^4),\label{J0 def}\\
	Y_0(z)&=\frac{2}{\pi}(\log(z/2)+\gamma)J_0(z)+\frac{2}{\pi}\bigg(\frac{1}{4}z^2 +\widetilde O_4(z^4)\bigg)\nn \\
&=\frac2\pi \log(z/2)+\f{2\gamma}{\pi}+ \widetilde O(z^2\log(z)).\label{Y0 def}
\end{align}
For $|z|>1 $,
\begin{align}\label{JYasymp2}
	 H_0^\pm(z)= e^{\pm i z} \omega_\pm(z),\,\,\,\,\quad \omega_{\pm}(z)=\widetilde O\big((1+|z|)^{-\frac{1}{2}}\big).
\end{align}
So that for $|z|>1$
\begin{align}\label{largeJYH}
	\mathcal C(z)=e^{iz} \omega_+(z)+e^{-iz}\omega_-(z), \qquad
	\omega_{\pm}(z)=\widetilde O\big((1+|z|)^{-\frac{1}{2}}\big),
\end{align}
 for any $\mathcal C\in\{J_0, Y_0\}$ respectively with different $\omega_{\pm}$ that satisfy the same bounds.

In particular, 
\begin{align}\label{r0low}
 R_0^\pm(\lambda^2)(x,y)&= \pm\frac{i}4-\frac{\gamma}{2\pi}-\frac{1}{2\pi}\log(\lambda|x-y|/2)+\widetilde O\big(\lambda^2|x-y|^2\log(\lambda|x-y|)\big),
 \quad \lambda|x-y|\ll 1.\\
	\label{r0high}
 R_0^\pm(\lambda^2)(x,y)& = e^{\pm i\lambda|x-y|}\omega_{\pm}(\lambda|x-y|), 
 \qquad \lambda|x-y|\gtrsim 1.
\end{align}

\section{High Energy}\label{sec:high energy}
Here we show high energy, $\lambda\geq \lambda_1>0$
for some fixed $\lambda_1>0$, estimates
for the evolution of the solution of \eqref{wave eqn} 
in \eqref{spectral rep}.  
The exact value of $\lambda_1>0$ is unimportant for the
following analysis, it will be determined by certain
resolvent expansions used in the low energy regime,
see Lemma~\ref{Minverse} below.

Take a smooth cut-off function $\chi\in C^\infty((0,\infty))$
with $\chi(x)=1$ if $x<\f12$ and $\chi(x)=0$ if
$x>1$.  Define $\widetilde \chi=1-\chi$, and
$\chi_1(x)=\chi(x/\lambda_1)$ with $\widetilde \chi_1
=1-\chi_1$.
For the high energy evolution, we prove the following
high energy version of Theorem~\ref{thm:main}.

\begin{prop}\label{high prop}

	For fixed $\lambda_1>0$, 
	if $|V(x)|\les \la x\ra^{-2-}$,
	we have the bounds
	\begin{align*}
		\|\cos(t\sqrt{H})H^{-\f34-}
		\widetilde\chi_{1}
		(\sqrt{H})\|_{L^1\to L^\infty}&\les |t|^{-\f12},\\
		\|\sin(t\sqrt{H})H^{-\f34-}
		\widetilde\chi_{1}
		(\sqrt{H})\|_{L^1\to L^\infty}&\les |t|^{-\f12}.
	\end{align*}

\end{prop}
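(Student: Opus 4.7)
The approach is to apply Stone's formula to obtain an oscillatory integral representation, split $R_V^\pm$ via a finite symmetric Born expansion, and analyze each summand accordingly. By functional calculus and \eqref{spectral rep}, Proposition~\ref{high prop} reduces to establishing the uniform pointwise bound
$$\biggl|\int_0^\infty \lambda^{-\f12-}\,\widetilde\chi_1(\lambda)\,\cos(t\lambda)\,\bigl[R_V^+(\lambda^2)-R_V^-(\lambda^2)\bigr](x,y)\,d\lambda\biggr|\les|t|^{-\f12}$$
for $x,y\in\R^2$ and its $\sin$-counterpart. Iterating the resolvent identity symmetrically yields, for any integer $N\ge 1$,
$$R_V^\pm(\lambda^2) = \sum_{k=0}^{2N-1}(-1)^k R_0^\pm(VR_0^\pm)^k + (R_0^\pm V)^N R_V^\pm(\lambda^2)(VR_0^\pm)^N,$$
with $N$ chosen large enough (depending on $\beta>2$) so that the tail yields to weighted-$L^2$ estimates.

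For each explicit Born summand, Fubini reduces the contribution to an iterated spatial integral whose kernel is the $\lambda$-integral of the product of resolvents. In the regime where all $\lambda r_i \gtrsim 1$ (with $r_i=|y_i-y_{i+1}|$, $y_0=x$, $y_{k+1}=y$), the representation \eqref{r0high} converts the $\lambda$-integral into
$$\prod_{i=0}^k r_i^{-\f12}\int_0^\infty \lambda^{-\frac{k+2}{2}-}\,\widetilde\chi_1(\lambda)\,\cos(t\lambda)\,e^{i\lambda\sigma}\,a(\lambda,\{r_i\})\,d\lambda,$$
where $\sigma$ is a signed sum of the $r_i$'s and $a$ is smooth in $\lambda$ with derivatives uniformly bounded in $\{r_i\}$. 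Repeated integration by parts bounds this $\lambda$-integral by $(1+|t\pm\sigma|)^{-M}$ for any $M$, and a case split (either $|\sigma|\le|t|/2$, making the phase $\gtrsim |t|$; or $|\sigma|>|t|/2$, forcing some $r_i\gtrsim |t|/(k+1)$ so that $r_i^{-1/2}\les |t|^{-1/2}$) produces the required $|t|^{-1/2}$ after integration against $\prod|V(y_j)|\les\prod\langle y_j\rangle^{-2-}$. Subregions where some $\lambda r_i$ is small are covered by \eqref{r0low} and give even faster $\lambda$-decay by integration by parts.

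For the tail, view its contribution to the kernel as a spectral pairing
$$\int_0^\infty \lambda^{-\f12-}\widetilde\chi_1(\lambda)\cos(t\lambda)\Bigl[\bigl\langle R_V^+(\lambda^2)g^+_y,h^+_x\bigr\rangle-\bigl\langle R_V^-(\lambda^2)g^-_y,h^-_x\bigr\rangle\Bigr]\,d\lambda,$$
where $g^\pm_y,h^\pm_x\in L^{2,s}(\R^2)$ lie in a weighted $L^2$ space with $s>1/2$, uniformly in $x,y$, provided $N$ is large enough that the outer chains of $R_0^\pm V$ provide the requisite spatial decay. The limiting absorption principle of \cite{agmon}, together with its higher $\lambda$-derivative analogues $\|\partial_\lambda^j R_V^\pm(\lambda^2)\|_{L^{2,s+j}\to L^{2,-(s+j)}}\les\lambda^{-1-j}$ on $\lambda\ge\lambda_1$, then allows one to integrate by parts in $\lambda$ against $\cos(t\lambda)$ and recover the $|t|^{-1/2}$ bound.

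The principal obstacle is the balancing act required to close the argument with the modest hypothesis $\beta>2$: $N$ must be chosen large enough that the $N$ copies of $VR_0^\pm$ flanking $R_V^\pm$ supply simultaneously the spatial decay to embed $g^\pm_y,h^\pm_x$ in $L^{2,s+j}$ and the weighted-$L^2$ smoothness needed to absorb $\|\partial_\lambda^j R_V^\pm\|$ after the requisite number of integrations by parts. A secondary difficulty is the careful treatment of the transition region $\lambda r_i\sim 1$ in the explicit Born terms, which I would resolve by a smooth partition of unity on the $\lambda r_i$ scale so as to patch together the contributions from \eqref{r0low} and \eqref{r0high} without loss.
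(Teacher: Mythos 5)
Your overall architecture (reduce to the oscillatory integral via \eqref{spectral rep}, expand $R_V^\pm$ in a Born series, treat explicit terms by stationary/non-stationary case analysis and the tail by the limiting absorption principle) matches the paper's, and your case split for the explicit terms --- either the combined phase is $\gtrsim |t|$ or some $r_i\gtrsim |t|$ and its $r_i^{-1/2}$ supplies the decay --- is exactly the paper's argument for Lemmas~\ref{free dispersive} and \ref{lem:bs2hi}. However, your treatment of the Born remainder has a genuine gap. The derivative bounds you invoke, $\|\partial_\lambda^j R_V^\pm(\lambda^2)\|_{L^{2,s+j}\to L^{2,-(s+j)}}\les\lambda^{-1-j}$, are false: since $\partial_\lambda=2\lambda\partial_z$ with $z=\lambda^2$, each $\lambda$-derivative gains no decay, and the correct statement (the paper's \eqref{lap}) is only $\sup_{\lambda>\lambda_1}\|\partial_\lambda^k R_V^\pm(\lambda^2)\|_{L^{2,\sigma}\to L^{2,-\sigma}}\les 1$ for $\sigma>\f12+k$, with the weight index growing in $k$. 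More seriously, integrating by parts ``against $\cos(t\lambda)$'' in the tail, as you propose, lets the derivative fall on the outermost free resolvents, whose high-frequency pieces carry the phases $e^{\pm i\lambda|x-y_1|}$; differentiating these produces a factor $|x-y_1|\gtrsim\la x\ra$ in the \emph{external} variable $x$, which no number $N$ of interior factors of $V$ can absorb, since the $L^1\to L^\infty$ bound must be uniform in $x$. This is precisely why the paper decomposes $R_0=e^{-i\lambda d}\rho_++\rho_-$ in the tail as well (Lemma~\ref{lem:tailhi}), integrates by parts only against the \emph{combined} phase $e^{i\lambda(t\pm d_1\pm d_2)}$ in the non-resonant regime, and in the resonant regime does not integrate by parts at all, extracting $t^{-1/2}$ from $\min(d_1,d_2)^{-1/2}$. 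You carry out this case analysis for the explicit terms but omit it where it is equally indispensable.

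Two smaller remarks. First, the long symmetric Born expansion with $N$ ``large depending on $\beta>2$'' is unnecessary: the regularizing factor $\lambda^{-\f12-}$ coming from $\la H\ra^{-\f34-}$ already makes the $\lambda$-integral of the three-term expansion \eqref{BS:high} absolutely convergent, and lengthening the series does not help with the spatial-weight problem above (it only adds convolution integrals to check). Second, ``repeated integration by parts \ldots for any $M$'' is both unavailable for the phase-carrying pieces (each pass costs another spatial weight or another order of weight in the limiting absorption bounds) and unneeded: a single integration by parts, or none in the resonant cases, suffices for $|t|^{-\f12}$.
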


We define the cut-off $\chi_L(\lambda)=\chi(\lambda/L)$.
Due to the spectral respresentation, \eqref{spectral rep},
the Proposition follows if we prove the 
bound
\begin{align}\label{weighteddecayhi}
	\sup_{x,y\in \R^2} \sup_{L\geq 1}
	\bigg| \int_{\R^2}
	\int_0^\infty e^{it\lambda}\lambda^{-\f12-} 
	\widetilde{\chi}_1(\lambda) \chi_L(\lambda)
	[R_V^+(\lambda^2)-R_V^-(\lambda^2)](x,y)\, d\lambda
	 \bigg|
	\les |t|^{-\f12}.
\end{align}
We prove the estimates
with $e^{it\lambda}$, estimates for $e^{-it\lambda}$ and
hence $\sin(t\lambda)$  and $\cos(t\lambda)$ follow
similarly.

We note that such a bound is attained in \cite{Mou} 
for $\lambda_1$ large enough with the
assumption that 
$$
	\sup_{y\in \R^2}\int_{\R^2}\frac{|V(z)|}
	{|z-y|^{\f12}}\,
	dz\leq C<\infty.
$$
As one needs more decay on the potential to understand the
low energy evolution and we need the bound to hold for
any $\lambda_1>0$,
we provide the following proof which requires a slightly
stronger assumption on the potential.

We start with the resolvent expansion
\begin{align}\label{BS:high}
    R_V^{\pm}(\lambda^2)&=
    R_0^{\pm}(\lambda^2)
    -R_0^{\pm}(\lambda^2)VR_0^{\pm}(\lambda^2) 
    +R_0^{\pm}(\lambda^2)VR_V^{\pm}(\lambda^2) V R_0^{\pm}(\lambda^2).
\end{align}
The first term is the
free resolvent.  In the high energy regime 
for the first term only, 
we need to use the cancellation between
`+' and `-' free resolvents, for the remaining terms the
cancellation between `+' and `-' resolvents does not
simplify the proofs and is not used.

\begin{lemma}\label{free dispersive}

	We have the bound
	\begin{align*}
		\sup_{x,y\in \R^2} \sup_{L\geq 1}
		\bigg|
		\int_0^\infty e^{it\lambda}\lambda^{-\f12-} 
		\widetilde\chi_1(\lambda) \chi_L(\lambda)
		[R_0^{+}(\lambda^2)-R_0^{-}(\lambda^2)]
		(x,y)\,d\lambda
		\bigg| \les t^{-\f12}.
	\end{align*}

\end{lemma}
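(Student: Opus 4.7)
The plan is to reduce the bound \eqref{weighteddecayhi} to a uniform estimate on an oscillatory integral in $\lambda$ alone, with $r:=|x-y|$ playing the role of a parameter. By \eqref{r0low2} the kernel factor $R_0^+-R_0^-$ equals $\frac{i}{2}J_0(\lambda r)$, so the problem reduces to bounding
\[
    I(t,r,L) := \int_0^\infty e^{it\lambda}\lambda^{-\f12-}\widetilde\chi_1(\lambda)\chi_L(\lambda)\,J_0(\lambda r)\,d\lambda
\]
by $Ct^{-1/2}$ uniformly in $r\ge 0$ and $L\ge 1$. I would introduce a smooth cutoff $\eta\in C^\infty([0,\infty))$ with $\eta(s)=1$ for $s\le 1$ and $\eta(s)=0$ for $s\ge 2$, and split $J_0(\lambda r)=\eta(\lambda r)J_0(\lambda r)+(1-\eta(\lambda r))J_0(\lambda r)$, treating the two pieces separately.

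On the $\eta(\lambda r)$ piece the $\lambda$-support lies in $[\lambda_1/2,\,2/r]$, which is nonempty only when $r\lesssim \lambda_1^{-1}$; in particular $r$ is bounded. On this range $J_0(\lambda r)$ is smooth with uniformly bounded $\lambda$-derivatives thanks to \eqref{J0 def}. A single integration by parts in $\lambda$, using $e^{it\lambda}=\partial_\lambda(e^{it\lambda}/(it))$, produces no boundary terms (the factors $\widetilde\chi_1$ and $\chi_L$ vanish outside a compact $\lambda$-interval), and the derivative of the amplitude is integrable uniformly in $r$ and $L$; this yields a bound $\lesssim 1/t$, which is acceptable for $t\ge 1$.

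On the $(1-\eta(\lambda r))$ piece I would invoke the large-argument asymptotic \eqref{largeJYH} to write $J_0(\lambda r)=e^{i\lambda r}\omega_+(\lambda r)+e^{-i\lambda r}\omega_-(\lambda r)$, obtaining two oscillatory integrals with linear phases $(t\pm r)\lambda$. For the $(t+r)\lambda$-piece $|t+r|\ge t$, so one integration by parts gives $\lesssim 1/t$; the $\widetilde O$ bound on $\omega_+^{(j)}$ guarantees that the derivative of the amplitude $\lambda^{-\f12-}\widetilde\chi_1(\lambda)\chi_L(\lambda)(1-\eta(\lambda r))\omega_+(\lambda r)$ has $L^1$-norm in $\lambda$ bounded uniformly in $r$ and $L$. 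The $(t-r)\lambda$-piece is handled in the same way whenever $|t-r|\ge t/2$, again giving $\lesssim 1/t$.

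The one remaining, and most delicate, case is the near-stationary regime $r\in(t/2,3t/2)$ of the $(t-r)\lambda$-piece; this is where I expect the main obstacle to lie, since a purely linear phase of small slope gives no oscillatory gain under integration by parts. Here I would estimate the integral directly in absolute value: since $|\omega_-(\lambda r)|\lesssim(\lambda r)^{-1/2}$ by \eqref{largeJYH}, the integrand is dominated by $r^{-1/2}\lambda^{-1-\epsilon}\widetilde\chi_1(\lambda)\chi_L(\lambda)$. The decisive observation is that $\widetilde\chi_1$ enforces $\lambda\ge \lambda_1/2>0$, so
\[
    \int_0^\infty \lambda^{-1-\epsilon}\widetilde\chi_1(\lambda)\chi_L(\lambda)\,d\lambda \;\le\; \int_{\lambda_1/2}^\infty \lambda^{-1-\epsilon}\,d\lambda \;=\; C(\lambda_1,\epsilon)
\]
uniformly in $L$. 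Because $r\sim t$ in this regime, this yields the bound $\lesssim r^{-1/2}\lesssim t^{-1/2}$, and combining the three cases completes the proof.
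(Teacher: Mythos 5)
Your decomposition and case analysis mirror the paper's own proof almost exactly: the split of $J_0(\lambda|x-y|)$ into a near-field piece supported on $\lambda|x-y|\lesssim 1$ and far-field pieces $e^{\pm i\lambda|x-y|}\omega_\pm(\lambda|x-y|)$, integration by parts against the combined phase when $|t\pm|x-y||\gtrsim t$, and the direct absolute-value bound $|x-y|^{-1/2}\int \lambda^{-1-\epsilon}\,d\lambda \lesssim t^{-1/2}$ in the near-stationary regime $|x-y|\sim t$ is precisely the paper's treatment of the delicate case. For $t\ge 1$ your argument is complete and uniform in $r$ and $L$.

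There is, however, a genuine gap for $0<t<1$, which the lemma does cover (the stated bound is for all $t$, and the paper devotes a separate case to $|x-y|\ll t\ll 1$). Two of your estimates terminate in the crude bound $C/t$, which exceeds $t^{-1/2}$ when $t<1$. For the far-field pieces this is repairable by retaining the $|x-y|$-dependence rather than discarding it: the $L^1_\lambda$ norm of the differentiated amplitude is $\lesssim |x-y|^{1/2}$ (coming from $\int_{\lambda\gtrsim |x-y|^{-1}}\lambda^{-2}\,d\lambda$ after extracting $(\lambda|x-y|)^{-1/2}$), and since $|x-y|\lesssim t$ whenever you integrate by parts against $e^{i\lambda(t-|x-y|)}$ (similarly for the $+$ phase), one recovers $|x-y|^{1/2}/t\lesssim t^{-1/2}$ for all $t>0$; this is exactly what the paper does. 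The near-field piece with $|x-y|\ll t\ll 1$ is the real obstruction: there is no $|x-y|$-decay to exploit there, the amplitude $\lambda^{-1/2-\epsilon}$ is not absolutely integrable on $[\lambda_1,\infty)$ uniformly in $L$, and a single integration by parts yields only $1/t$. The missing idea is to split the $\lambda$-integral at $\lambda=t^{-1}$: on $[\lambda_1,t^{-1}]$ bound directly, using $\int_0^{t^{-1}}\lambda^{-1/2}\,d\lambda\lesssim t^{-1/2}$, and on $[t^{-1},\infty)$ integrate by parts once, noting that both the boundary term $t^{-1}\lambda^{-1/2-}\big|_{\lambda=t^{-1}}$ and the bulk term $t^{-1}\int_{t^{-1}}^\infty\lambda^{-3/2-}\,d\lambda$ are $O(t^{-1/2})$. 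With these two repairs your proof coincides with the paper's.
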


\begin{proof}

Using \eqref{r0low2}, we reduce the contribution of
the first term in \eqref{BS:high} to showing the bound
	$$
		\sup_{x,y\in \R^2}\sup_{L\geq 1}
		\bigg|\int_0^\infty e^{it\lambda} \widetilde \chi_1(\lambda) \chi_L(\lambda)
		\lambda^{-\f12-} J_0(\lambda |x-y|)\, d\lambda
		\bigg| \les t^{-\f12}.
	$$

Using \eqref{J0 def} and \eqref{JYasymp2} we write
$$
	J_0(\lambda |x-y|)=\rho_-(\lambda |x-y|)
	+e^{i\lambda |x-y|}\omega_+(\lambda |x-y|)
	+e^{-i\lambda |x-y|}\omega_-(\lambda |x-y|)
$$
with $\rho_-(z)=\widetilde O(1)$ supported on
$[0,\f12]$
and $\omega_{\pm}(z)=\widetilde O((1+z)^{-1/2})$
supported on $[\f14,\infty)$.

We first consider the  contribution of $J_0$ when 
$\lambda|x-y|\gtrsim 1$ we consider first the `-'
phase from the expansion for $J_0$ and bound
the following integral
\begin{align}\label{high:free high}
	\int_0^\infty e^{it\lambda} \widetilde \chi_1(\lambda)
	\chi_L(\lambda)
	\lambda^{-\f12-}	 e^{-i\lambda|x-y|}\omega_-(\lambda|x-y|)
	\, d\lambda.
\end{align}
First we note that if $t-|x-y|\leq t/2$ then $t\les |x-y|$
and we use the decay of $\omega_-$ to see
\begin{align*}
	|\eqref{high:free high}|
	&\les \int_1^\infty \lambda^{-1-} |x-y|^{-\f12-}\, d\lambda
	\les |x-y|^{-\f12-}\les t^{-\f12-}.
\end{align*}
On the other hand, if $t-|x-y|\geq t/2$, we can integrate
by parts against the combined phase $e^{i\lambda(t-|x-y|)}$
to see
\begin{align*}
	|\eqref{high:free high}|
	&\les \frac{1}{t-|x-y|}\int_0^\infty
	\frac{d}{d\lambda}\big(\widetilde \chi_1(\lambda)\chi_L(\lambda)\lambda^{-\f12-}
	\omega_-(\lambda|x-y|)	\big)\, d\lambda
	\les \frac{1}{t}\int_1^\infty 
	\frac{\lambda^{-\f32-}}{(1+\lambda |x-y|)^{\f12}}
	\, d\lambda\\
	&\les \frac{1}{t}|x-y|^{-\f12}
	\int_{\lambda \gtrsim |x-y|^{-1}} \lambda^{-2}\, d\lambda\les \frac{|x-y|^{\f12}}{t}
	\les t^{-\f12}.
\end{align*}
Where we used that $\lambda\gtrsim 1$ in the second
to last step and that $|x-y|\les t$ in the last 
inequality.  For the `+' phase, the analysis follows
from considering the cases of $t+|x-y|\leq 2t$
and $t+|x-y|\geq 2t$ in place of the cases
$t-|x-y|\geq t/2$ and $t-|x-y|\leq t/2$ considered for
the negative phase.

For the contribution of $\rho_-$ we need to consider
three cases.  First, consider the case when $t\les |x-y|$.
Then, using $1\les (\lambda |x-y|)^{-1}$ on the support of
$\rho_-$ we have
\begin{align*}
	\int_0^\infty e^{it\lambda} \widetilde \chi_1(\lambda)
	\chi_L(\lambda)
	\lambda^{-\f12-} \chi_{\{\lambda|x-y|\les 1\}} \, d\lambda
	&\les \frac{1}{|x-y|^{\f12}}
	\int_{0}^\infty  \lambda^{-1-}\widetilde \chi_1
	(\lambda)
	\, d\lambda \les \frac{1}{|x-y|^{-\f12}}\les t^{-\f12}.
\end{align*}

We now consider the second case in which $t\gtrsim |x-y|$.
The first subcase here is when $t\gtrsim 1$.
We use the fact that
$|\frac{d^k}{d\lambda^k} \widetilde \chi_1(\lambda)|,
|\frac{d^k}{d\lambda^k} \widetilde \chi_L(\lambda)|
\les \lambda^{-k}$ to see
\begin{align*}
	\int_0^\infty e^{it\lambda} \widetilde \chi_1(\lambda)
	\chi_L(\lambda)
	\lambda^{-\f12-} \widetilde O(1)\, d\lambda
	&=\frac{1}{it}\int_0^\infty e^{it\lambda} \frac{d}
	{d\lambda}\big(\widetilde \chi_1(\lambda)
	\chi_L(\lambda)
	\lambda^{-\f12-} \widetilde O(1)\big )\, d\lambda\\
	&\les \frac{1}{t} \int_1^\infty  \lambda^{-\f32-}\, 
	d\lambda \les t^{-1}\les t^{-\f12}.
\end{align*}
 Finally, we need to consider the case in which
 $|x-y|\ll t\ll 1$.  Then we break the integral up into
 two pieces to bound with
 \begin{align*}
 	\int_0^{t^{-1}} e^{it\lambda} \widetilde \chi_1(\lambda)\chi_L(\lambda)
 	\lambda^{-\f12-} \widetilde O(1)\, d\lambda
 	+ 	\int_{t^{-1}}^\infty e^{it\lambda} \widetilde \chi_1(\lambda)\chi_L(\lambda)
 	 \lambda^{-\f12-} \widetilde O(1)\, d\lambda.
 \end{align*}
 For the first integral we note that on the support of
 $\widetilde \chi_1(\lambda)$, we have $\lambda^{-\f12 -}
 \les \lambda ^{-\f12}$ so,
  \begin{align*}
  	\bigg|\int_0^{t^{-1}} e^{it\lambda} \widetilde \chi_1(\lambda)\chi_L(\lambda)
  	\lambda^{-\f12-} \widetilde O(1)\, d\lambda\bigg|
  	&\les \int_0^{t^{-1}} \lambda^{-\f12}\, d\lambda
  	\les t^{-\f12}
  \end{align*}
  For the second integral, we integrate by parts once
  against the phase $e^{it\lambda }$ to bound with 
\begin{align*}
	\int_{t^{-1}}^\infty e^{it\lambda} \widetilde 
	\chi_1(\lambda)\chi_L(\lambda)
	\lambda^{-\f12-} \widetilde O(1)\, d\lambda&\les
	\frac{\widetilde \chi_1(\lambda)\chi_L(\lambda)
	\lambda^{-\f12-}}{t} \bigg|_{t^{-1}}^\infty +\frac{1}{t} \int_{t^{-1}}^\infty\lambda^{-\f32 -}\, d\lambda
	\les t^{-\f12}.
\end{align*}
Again we use that
that on the support of
 $\widetilde \chi_1(\lambda)$, we have $\lambda^{-\f12 -}
 \les \lambda ^{-\f12}$ to get the $t^{-\f12}$ bound.

\end{proof}

It is worth noting one can obtain this high energy
estimate by considering an alternative definition of
the Bessel function to bound
$$
	\int_0^\infty e^{it\lambda}
	\lambda^{-\f12-}\widetilde{\chi}_1(\lambda)
	\chi_L(\lambda)\bigg(
	\int_{|\omega|=1} e^{i\lambda \omega \cdot (x-y)}\, d\omega
	\bigg)\, d\lambda.
$$
The desired bounds follow by using standard facts on
the Fourier transforms of densities on curved manifolds,
see \cite{soggebook} for example.
We do not pursue this issue here, as
much of our analysis requires considering the free
resolvent and the Bessel functions on different
regimes according to the size of their argument, 
recall \eqref{J0 def}, \eqref{Y0 def},
\eqref{JYasymp2}, \eqref{r0low} and \eqref{r0high}.
This is due to the vastly different behavior of the
`low', $|z|\ll 1$ and `high', $|z|\gtrsim 1$, portions 
of these functions.

For the second term in \eqref{BS:high}, we have

\begin{lemma}\label{lem:bs2hi}

	If $|V(x)|\les \la x\ra^{-2-}$, then we 
	have the bound
	\begin{align*}
		\sup_{x,y\in \R^2} \sup_{L\geq 1}
		\bigg| \int_{\R^2}
		\int_0^\infty e^{it\lambda}\lambda^{-\f12-} 
		\widetilde\chi_1(\lambda) \chi_L(\lambda)
		R_0^{\pm}(\lambda^2)(x,z)V(z)
		R_0^{\pm}(\lambda^2)(z,y)\,d\lambda\, dz
		\bigg| \les t^{-\f12}.
	\end{align*}

\end{lemma}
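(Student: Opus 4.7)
The plan is to decompose each of the two free resolvents in the integrand according to whether its argument is small or large, using the expansions \eqref{r0low} and \eqref{r0high} with a smooth cut-off. This produces four pieces depending on which regime each factor $R_0^\pm(\lambda^2)(x,z)$, $R_0^\pm(\lambda^2)(z,y)$ sits in. In the ``low'' regime ($\lambda|a-b|\les 1$) the resolvent is bounded by $1+|\log(\lambda|a-b|)|$, and in the ``high'' regime ($\lambda|a-b|\gtrsim 1$) it factors as $e^{\pm i\lambda|a-b|}\omega_\pm(\lambda|a-b|)$ with $\omega_\pm=\widetilde O((\lambda|a-b|)^{-\f12})$. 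As the author has already remarked, no cancellation between the $\pm$ resolvents will be exploited, so each sign choice is treated independently.

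The principal case is ``high-high''. Here the integrand has total phase $e^{i\lambda\Phi}$ with $\Phi=t\pm|x-z|\pm|z-y|$ and amplitude bounded by $\lambda^{-\f32-}|x-z|^{-\f12}|z-y|^{-\f12}|V(z)|$. I would split into two subregions: if $|\Phi|\geq t/2$, integrate by parts once in $\lambda$ to pull out a factor $\Phi^{-1}\les t^{-1}$, noting that the differentiated amplitude retains the same $\lambda$-decay after accounting for derivatives of $\widetilde\chi_1,\chi_L$, and $\omega_\pm$, so the $\lambda$-integral converges; the $z$-integral $\int_{\R^2}\la z\ra^{-2-}|x-z|^{-\f12}|z-y|^{-\f12}\,dz$ is bounded uniformly in $x,y$ by a Cauchy-Schwarz argument, giving a total bound of $t^{-1}\les t^{-\f12}$. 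If instead $|\Phi|<t/2$, a short case analysis over the four sign choices shows that at least one of $|x-z|,|z-y|$ must exceed $t/4$, so the corresponding $\omega_\pm$ contributes an extra $(\lambda t)^{-\f12}$; the remaining $\lambda$-integral is then controlled by $\int_1^\infty\lambda^{-\f32-}\,d\lambda<\infty$ and the $z$-integral is again uniformly bounded, delivering $t^{-\f12}$.

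The ``low-low'' and mixed pieces are softer. For ``low-low'', the cut-offs restrict $z$ to a set of area $\les\lambda^{-2}$, so after absorbing the two logarithmic singularities of $R_<$, the $z$-integral produces an amplitude of size $O(\lambda^{-2+})$; a single integration by parts against $e^{it\lambda}$ then yields $t^{-1}$. The ``high-low'' and ``low-high'' pieces carry a single oscillating factor $e^{\pm i\lambda|x-z|}$ (or its analog), and are treated in direct parallel to Lemma~\ref{free dispersive}: split on whether $|t\pm|x-z||$ exceeds $t/2$ and in each subcase either integrate by parts in $\lambda$ or exploit the $\omega_\pm$ decay. The main obstacle I expect is the bookkeeping in the ``high-high'' case, particularly verifying that the $z$-integral $\int\la z\ra^{-2-}|x-z|^{-\f12}|z-y|^{-\f12}\, dz$ stays uniformly bounded in $x,y$; this is precisely where the hypothesis $|V(x)|\les\la x\ra^{-2-}$ is used, and why this argument requires slightly more decay on $V$ than Moulin's estimate in \cite{Mou}.
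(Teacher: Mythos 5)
Your proposal is correct and follows essentially the same route as the paper's proof: the same low/high splitting of each free resolvent into $\rho_-$ and $e^{\pm i\lambda d}\rho_+$ pieces, the same case analysis comparing the combined phase with $t/2$ (integrating by parts in $\lambda$ when the phase is $\gtrsim t$, otherwise exploiting the $(\lambda d)^{-\f12}$ decay of $\omega_\pm$ together with the forced largeness of one of the distances), and the same closing step of integrating the resulting spatial weights against $|V(z)|\les \la z\ra^{-2-}$. The only cosmetic difference is in the low-low piece, where you integrate in $z$ first and use the area bound on the support, while the paper splits pointwise on $t$ versus $\sqrt{|x-z|\,|z-y|}$; both yield $\min(1,t^{-1})\les t^{-\f12}$ (do note that your stated $t^{-1}$ bounds cover small $t$ only after interpolating with the uniform boundedness of the integral, as the paper does implicitly).
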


\begin{proof}

We won't make use of any  cancellation between `$\pm$' terms. Thus, we will only consider $R_0^-$, and drop the `$\pm $' signs.
Using \eqref{R0 def}, \eqref{J0 def}, \eqref{Y0 def}, and \eqref{JYasymp2} we write
\begin{align}\label{Resolvent decomp}
    R_0(\lambda^2)(x,y)=e^{-i\lambda|x-y|}\rho_+(\lambda|x-y|)+\rho_-(\lambda|x-y|),
\end{align}
where $\rho_+$ and $\rho_-$ are supported on the sets $[1/4,\infty)$ and $[0, 1/2]$, respectively.
Moreover, we have the bounds
\be\label{omega bds}
    \rho_-(y) =\widetilde O(1+|\log y|),\,\,\,\,\,\,\, \rho_+(y) =\widetilde O\big( (1+|y|)^{-1/2}\big)\\
\ee

We now consider
$$
	\int_0^\infty e^{it\lambda}\lambda^{-\f12-} 
	\widetilde{\chi}_1(\lambda)\chi_L(\lambda) R_0^{-}(\lambda^2)(x,z)V(z)
	R_0^{-}(\lambda^2)(z,y)\,d\lambda.
$$
We are considering the case of $R_0^{-}$ as the sign of
$t$ and the high energy phases do not match, which requires
a more delicate analysis.
We note that
\begin{align*}
\big|\partial_\lambda^k \big[\rho_+(\lambda d_j)\big]\big|&\les \frac{d_j^k}{(1+\lambda d_j)^{k+1/2}},\,\,\,\,\,\,\,k=0,1,2,...,\\
\big|\partial_\lambda^k \big[\rho_-(\lambda d_j)\big]\big|&\les \f1{\lambda^k},\,\,\,\,k=1,2,...
\end{align*}
Define the function $\log^{-}(z)=- (\log z)\chi_{\{0<z<1\}}$.
Using the monotonicity of $\log^-$ function, we also obtain
$$
\widetilde\chi_1(\lambda) \big|\rho_-(\lambda d_j)\big|\les \widetilde \chi_1(\lambda) (1+|\log(\lambda d_j)|) \chi_{\{0<\lambda d_j\leq 1/2\}}
\les \widetilde \chi(\lambda) (1+\log^-(\lambda d_j))\les
1+\log^-(d_j).
$$
Finally, noting that
 $(\widetilde \chi_1)^{\prime}$ and $\chi_L^\prime$
 are supported on the set $\{\lambda\approx 1\}$,
 we have
 \begin{align}\label{log-eqn}
 	|\partial_\lambda^k \widetilde \chi_1(\lambda)|,
 	\quad
 	|\partial_\lambda^k \chi_L(\lambda)| \les \lambda^{-k}.
 \end{align}

For notational convience, let $d_1:=|x-z|$ and
$d_2:=|z-y|$.
First consider the `low-low' interaction, that is when each
resolvent contributes $\rho_-$.    
We consider two cases,
first if $t\geq \sqrt{d_1d_2}$, then
\begin{align*}
	\int_0^\infty e^{it\lambda}
	\widetilde \chi_1(\lambda)& \chi_L(\lambda)
	\lambda^{-\f12-} 
	\rho_{-}(\lambda d_1)
	\rho_-(\lambda d_2)\,d\lambda\\
	&\les \frac{1}{t} \int_0^\infty \bigg|
	\frac{d}{d\lambda}\big[
	\widetilde \chi_1(\lambda)\chi_L(\lambda)
	\lambda^{-\f12-} 
	\rho_{-}(\lambda d_1)
	\rho_-(\lambda d_2)\big] \bigg|\,d\lambda\\
	&\les \frac{(1+\log^-d_1)(1+\log^-d_2)}{t}
	\int_1^\infty \frac{1}
	{\lambda^{\f32+} }\, d\lambda
	\les \frac{(1+\log^-d_1)(1+\log^-d_2)}
	{t^{\f12}d_1^{\f14}d_2^{\f14}}.
\end{align*}
On the other hand, if $t\leq \sqrt{d_1 d_2}$ we don't
integrate by parts and instead use the support conditions
on $\rho_-$ to see
\begin{align}\label{lowlow:noIBP}
	\int_0^\infty e^{it\lambda}
	\widetilde \chi_1(\lambda)\chi_L(\lambda)
	\lambda^{-\f12-} 
	\rho_{-}(\lambda d_1)
	\rho_-(\lambda d_2)\,d\lambda
	&\les \int_1^\infty \frac{\lambda^{-\f12-}}{(\lambda \sqrt{d_1d_2})}\, d\lambda
	\les \frac{1}{t^{\f12}}\frac{1}{d_1^{\f14}d_2^{\f14}}.
\end{align}

Next we consider the `low-high' interaction.  That is, we
have a contribution from one $\rho_+$ and one $\rho_-$.
Accordingly, we need to bound integrals of the form
\begin{align}\label{int high-high-high}
	\int_0^\infty e^{it\lambda}
	\widetilde \chi_1(\lambda)\lambda^{-
	\f12-} 
	e^{-i\lambda d_1}\rho_{+}(\lambda d_1)
	\rho_-(\lambda d_2)\,d\lambda
\end{align}
Due to the a-priori $\lambda$ decay of $\rho_+$,
\eqref{JYasymp2}, we can take $L=\infty$ and remove the
cut-off from our analysis.
We need to consider two cases, depending on the relative size
of $t$ and the phase $d_1$.  In the first case, we consider
$t-d_1\geq \frac{t}{2}$.  In this case we 
can safely integrate by parts
against $e^{i\lambda(t-d_1)}$. 
\begin{align*}
	|\eqref{int high-high-high}| &\les
	\frac{1}{t-d_1}\int_0^\infty \bigg| \frac{d}{d\lambda}
	\big(\widetilde \chi_1(\lambda)
	\lambda^{-\f12-} \rho_+(\lambda d_1)
	\rho_-(\lambda d_2)\big)\bigg|	\, d\lambda\\
	&\les \frac{1}{t}\int_1^\infty \bigg[
	\frac{1+\log^- d_2}{\lambda^{\f32+}(1+\lambda d_1)^{\f12}}+
	\frac{ d_1(1+\log^- d_2)}
	{\lambda^{\f12+}(1+\lambda d_1)^{\f32}}
	\bigg]\, d\lambda\\
	&\les \frac{1+\log^- d_2}{t}\int_{1}^\infty 
	\lambda^{-\f32-}\, d\lambda
	\les \frac{1+\log^- d_2}{t} 
	\les \frac{1+\log^- d_2}{t^{\f12}d_1^{\f12}}
\end{align*}
Here we used that $t^{-1}\les d_1^{-1}$ in the last
inequality.

The second case is when $t-d_1\leq \f t2$.  In this case we
won't integrate by parts, but use that $d_1^{-1}\les t^{-1}$.
\begin{align*}
	|\eqref{int high-high-high}|
	&\les \int_1^\infty 
	\frac{\lambda^{-\f12-}(1+\log^- d_2)}{(1+\lambda d_1)^{\f12}}\, d\lambda
	\les \frac{1+\log^- d_2}{d_1^{\f12}} 
	\int_{1}^\infty \lambda^{-1-}\, d\lambda
	\les \frac{1+\log^- d_2}{t^{\f12}}.
\end{align*}
Note that it is precisely this case that constrains us
to a $t^{-\f12}$ time decay rate in the wave equation
for a non-weighted $L^\infty$ bound.

For the `high-high' interaction, we need to bound
\begin{align}\label{hhh int}
	\int_0^\infty e^{it\lambda}
	\widetilde \chi_1(\lambda)\lambda^{-\f12-} 
	e^{-i\lambda (d_1+d_2)}\rho_{+}(\lambda d_1)
	\rho_+(\lambda d_2)\,d\lambda.
\end{align}
We first consider the case when $t-(d_1+d_2)\leq \f t2$,
in which case $(d_1+d_2)\geq \f t2$.  Then
\begin{align*}
	|\eqref{hhh int}|&\les \frac{1}{\sqrt{d_1 d_2}}
	\int_1^\infty 
	\lambda^{-\f32-} \,d\lambda \les \frac{1}{\sqrt{d_1 d_2}}
	\les \frac{1}{t^{\f12}\sqrt{\min(d_1, d_2)}}
\end{align*}
Where we used that $(d_1+d_2)\geq \f t2$ gives us that
$\max(d_1,d_2)\gtrsim t$.  

The second case is when $t-(d_1+d_2)\geq \f t2$, in which
we case we don't integrate by parts once but use 
$d_1, d_2\les t$ and the asymptotics of $\rho_+$.  Accordingly we bound
\begin{align*}
	\frac{1}{\sqrt{d_1d_2}}
	\int_0^\infty 
	\widetilde \chi_1(\lambda) \lambda^{-\f32-}
	\rho_+(\lambda d_1)\rho_+(\lambda d_2)
	\, d\lambda
	&\les\frac{1}{\sqrt{d_1d_2}}
	\int_1^\infty \lambda^{-\f32-}\, d\lambda
	\les \frac{1}{t^{\f12}\sqrt{\min(d_1, d_2)}}
\end{align*}
The case when $R_0^+$ appears and the exponentials
have a positive phase follows as in the case analysis
above by considering the size of $t+d_1$ or
$t+d_1+d_2$ compared to $2t$.

Finally, we close the argument by noting that
$$
	\sup_{x,y} \bigg|\int_{\R^2} V(z)\bigg(
	1+\frac{(1+\log^-|z-y|)(1+\log^-|x-z|)}
	{|x-z|^{\f14}|z-y|^{\f14}}+
	\frac{1+\log^-|z-y|}{|x-z|^{\f12}}
	\bigg)\, dz \bigg|\les 1
$$
provided that $|V(z)|\les \la z\ra^{-2-}$.

\end{proof}

To control the remainder of the born series
we employ the limiting absorption principle of Agmon,
\cite{agmon}. For $\lambda>\lambda_1>0$
\begin{align}\label{free lap}
	\| R_0^{\pm}(\lambda^2)\|_{L^{2,\sigma}(\R^2)\to L^{2,-\sigma}(\R^2)}\les \lambda^{-1+}.
\end{align}
From the limiting absorption principle, we can
deduce  bounds for
the derivatives by considering the respresentation
for the free resolvent in \eqref{Resolvent decomp}.
Specifically, we have
\begin{align}\label{lap}
	\| R_V^{\pm}(\lambda^2)\|_{L^{2,\sigma}(\R^2)\to L^{2,-\sigma}(\R^2)}&\les \lambda^{-1+},\\
	\sup_{\lambda>\lambda_1}\| \partial_\lambda^k R_V^{\pm}(\lambda^2)\|_{L^{2,\sigma}(\R^2)\to L^{2,-\sigma}(\R^2)}&\les 1,
\end{align}
which is valid for $\sigma>\frac{1}{2}+k$.  The bounds for
the derivatives are valid for the free resolvents as well.
The proof of these estimates follows as in Proposition~9
in \cite{GS} along with the discussion following it.
The bounds for the perturbed
resolvent  relies on the resolvent identity
$$
	R_V^\pm (\lambda^2)=(I +R_0^\pm(\lambda^2)V)^{-1}
	R_0^\pm(\lambda^2)
$$
and the absence of embedded
eigenvalues in the continuous spectrum, which is guaranteed
by Kato's Theorem, see Section XIII.8 of \cite{RS1}.

 Using the representation \eqref{omega bds},
we note the following bounds on the free resolvent
which are valid on $\lambda>\lambda_1>0$,
\begin{align}\label{r0bds}
	|\partial_\lambda^k R_0^{\pm}(\lambda^2)(x,y)|\les |x-y|^k \left\{\begin{array}{ll}
	|\log(\lambda|x-y|)| & 0<\lambda|x-y|<\frac{1}{2}\\
	(\lambda|x-y|)^{-\frac{1}{2}} & \lambda|x-y|\gtrsim 1\end{array}\right.
	\les \lambda^{-\frac{1}{2}}|x-y|^{k-\frac{1}{2}}.
\end{align}
Thus, for $\sigma>\f12+k$,
\begin{align}\label{R0 wtdL2}
	\|\partial_\lambda^k &R_0^{\pm}(\lambda^2)(x,y)\la y\ra^{-\sigma}\|_{L^2_y} 
	\les\lambda^{-\f12} \Big[\int_{\R^2} \frac{|x-y|^{2k-1}}{\la y\ra^{2\sigma}}\, dy\Big]^{\f12}
	\les \lambda^{-\f12} \la x\ra^{\max(0,k-\f12)}.
\end{align}
To avoid polynomial weights in $x$ or $y$, 
we take a bit more care with the
leading and lagging free resolvents.  We use the following
estimates on the pieces of the free resolvent in
\eqref{Resolvent decomp}.  Noting that 
for $\lambda\gtrsim 1$,
\begin{align}
	|\partial_\lambda^k \rho_-(\lambda)(\lambda r)|
	&\les (1+\lambda r)^{-\f12}
	\left\{\begin{array}{ll} 
	(1+ \log^- r) & k=0\\
	\lambda^{-1} & k=1
	\end{array}\right.\\
	|\partial_\lambda^k \rho_+(\lambda)(\lambda r)|
	&\les \lambda^{-k} (1+\lambda r)^{-\f12}, \qquad
	k=0,1.
\end{align}
The $(1+\lambda r)^{-\f12}$ in the first bound follows
from the support conditions on $\rho_-$.
So that for $\sigma>k+\frac{1}{2}$ and $k=0,1$
we have the weighted $L^2$ bounds
\begin{align}\label{rho L2}
	\| \partial_\lambda^k \rho_{\pm}(\lambda|x-y|)
	\|_{L^{2,\sigma}_y}\les \lambda^{-k-\frac{1}{2}}.
\end{align}

Once again, we estimate the $R_V^+$ and $R_V^-$ terms 
separately and omit the `$\pm$' signs. 

\begin{lemma}\label{lem:tailhi}

If $|V(x)|\les \la x\ra^{-2-}$ we have the bound
\eqref{weighteddecayhi} as
	\begin{align}\label{I def}
		\sup_{x,y\in \R^2} \sup_{L\geq 1}
		 \bigg|\int_0^\infty e^{it\lambda} \lambda^{-\f12-} \, \mathcal E(\lambda)(x,y)\, d\lambda\bigg|
		 \les t^{-\f12}
	\end{align}
where
$$
	\mathcal E(\lambda)(x,y) = 
	\widetilde{\chi}_1(\lambda) \chi_L(\lambda)
	\big\la VR_V^{\pm}(\lambda^2) V 
	R_0^{\pm}(\lambda^2)(\cdot,x),
 	R_0^{\pm}(\lambda^2)(\cdot,y) \big\ra.
$$

\end{lemma}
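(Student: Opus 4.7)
The plan is to integrate by parts once in $\lambda$ against the oscillating phase $e^{it\lambda}$, which will gain a factor $t^{-1}$ (stronger than the required $t^{-\f12}$) provided we can control $\int|\partial_\lambda[\lambda^{-\f12-}\mathcal{E}(\lambda)]|\,d\lambda$ uniformly in $x,y,L$. For the complementary case $t\leq 1$, I would bound \eqref{I def} uniformly without integrating by parts, since $1\les t^{-\f12}$ there.

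To set up the uniform estimate I would factor $V=V_1 V_2$ with $|V_j|\les \la\cdot\ra^{-1-}$ and apply Cauchy--Schwarz in the inner product to write
\[
|\mathcal{E}(\lambda)| \leq \|V_2 R_V^{\pm}(\lambda^2) V_1\|_{L^2\to L^2}\,\|V_2 R_0^{\pm}(\lambda^2)(\cdot,x)\|_{L^2}\,\|V_1 R_0^{\pm}(\lambda^2)(\cdot,y)\|_{L^2}.
\]
The limiting absorption principle \eqref{lap} then supplies $\|V_2 R_V^\pm V_1\|_{L^2\to L^2}\les \lambda^{-1+}$, while the decomposition \eqref{Resolvent decomp} combined with \eqref{rho L2} furnishes $\|V_j R_0^{\pm}(\lambda^2)(\cdot,z)\|_{L^2}\les \lambda^{-\f12}$ uniformly in $z$. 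Together these give $|\mathcal{E}(\lambda)|\les \lambda^{-2+}$, and the original integral is bounded by $\int_1^\infty \lambda^{-\f52-}\,d\lambda\les 1$, which handles $t\leq 1$.

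For $t>1$, after one IBP the boundary terms at $\lambda=0$ and $\lambda=\infty$ vanish thanks to $\widetilde\chi_1$ and $\chi_L$. Expanding $\partial_\lambda[\lambda^{-\f12-}\mathcal{E}(\lambda)]$ by the product rule, the derivative falls on (i) $\lambda^{-\f12-}$ or the cutoffs $\widetilde\chi_1\chi_L$; (ii) the perturbed resolvent $R_V^{\pm}$; or (iii) one of the free resolvents $R_0^{\pm}(\cdot,x)$, $R_0^{\pm}(\cdot,y)$. Case (i) reuses the bound $|\mathcal{E}(\lambda)|\les \lambda^{-2+}$ directly, with the cutoff derivatives controlled by \eqref{log-eqn}. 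Case (ii) replaces the $\lambda^{-1+}$ LAP bound by $\|\partial_\lambda R_V^\pm\|_{L^{2,\sigma}\to L^{2,-\sigma}}\les 1$ from \eqref{lap}. Case (iii) replaces the $\lambda^{-\f12}$ bound on one free resolvent by $\|V_j \partial_\lambda R_0^{\pm}(\lambda^2)(\cdot,z)\|_{L^2}\les \lambda^{-\f32}$, obtained from \eqref{Resolvent decomp} and \eqref{rho L2}. Each resulting integrand is then of order $\lambda^{-\f32-\eps}$ or better, so the $\lambda$-integral is uniformly bounded and $|I(x,y)|\les t^{-1}\les t^{-\f12}$.

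The main obstacle is keeping the bounds uniform in $x$ and $y$ when the $\lambda$-derivative lands on a free resolvent, since the pointwise bound $|\partial_\lambda R_0^{\pm}(\lambda^2)(z,w)|\les \lambda^{-\f12}|z-w|^{\f12}$ from \eqref{r0bds} carries a growing spatial factor that must be absorbed by the decay of $V_j$ before the $L^2$ norm is taken. The $\rho_\pm$ decomposition \eqref{Resolvent decomp} and the weighted bound \eqref{rho L2} are exactly what convert this potentially $x$-dependent quantity into a uniform $\lambda^{-\f32}$ estimate; the $L\to\infty$ limit is then automatic given the polynomial $\lambda$-decay of every term.
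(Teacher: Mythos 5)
Your uniform boundedness estimate (the Cauchy--Schwarz chain with the limiting absorption principle, giving $|\mathcal E(\lambda)|\les\lambda^{-2+}$ and hence control of the region $t\les 1$ and the removal of $\chi_L$) is correct and is exactly the first display of the paper's proof. The gap is in case (iii) of your integration by parts. You claim $\|V_j\,\partial_\lambda R_0^{\pm}(\lambda^2)(\cdot,x)\|_{L^2}\les\lambda^{-\f32}$ uniformly in $x$, citing \eqref{Resolvent decomp} and \eqref{rho L2}. But \eqref{rho L2} controls $\partial_\lambda\rho_{\pm}$ only; the full derivative of $R_0^{\pm}(\lambda^2)(z,x)=e^{\mp i\lambda|z-x|}\rho_+(\lambda|z-x|)+\rho_-(\lambda|z-x|)$ also lands on the phase $e^{\mp i\lambda|z-x|}$ and produces a factor $|z-x|$, so the correct bound is the one in \eqref{r0bds}--\eqref{R0 wtdL2} with $k=1$, namely $\|V_j\,\partial_\lambda R_0^{\pm}(\lambda^2)(\cdot,x)\|_{L^2}\les\lambda^{-\f12}\la x\ra^{\f12}$. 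The weight $|z-x|^{\f12}$ cannot be absorbed by the decay of $V_j(z)$, since it grows with the external point $x$ (take $|x|=R$ large and $z$ near the origin), so the resulting estimate is not uniform over the supremum in $x,y$. This is precisely why the paper warns that one must ``take a bit more care with the leading and lagging free resolvents'': instead of integrating by parts against $e^{it\lambda}$ alone, it combines that phase with the resolvent phases $e^{-i\lambda d_j}$ and repeats the case analysis of Lemma~\ref{lem:bs2hi} (integrating by parts against $e^{i\lambda(t-d_1)}$ or $e^{i\lambda(t-d_1-d_2)}$ when $t-d\ge t/2$, and not integrating by parts at all when $t-d\le t/2$, using instead $d\gtrsim t$ and the decay of $\rho_+$).

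A useful sanity check that your route cannot work as written: it would yield an unweighted $L^1\to L^\infty$ rate of $t^{-1}$, whereas the paper notes in the `low-high' case of Lemma~\ref{lem:bs2hi} that the regime $t-d_1\le \f t2$ is exactly what caps the unweighted rate at $t^{-\f12}$, and Section~\ref{sec:weighted} shows that the polynomial weights $\la x\ra^{\f12+\alpha}\la y\ra^{\f12+\alpha}$ --- the very weights your phase derivative generates --- are the price of going faster than $t^{-\f12}$. To repair the argument, replace the single integration by parts against $e^{it\lambda}$ by the $\rho_\pm$ splitting of both outer resolvents and the combined-phase case analysis; the weighted $L^2$ machinery you set up then closes each case exactly as in the paper.
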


\begin{proof}

Using \eqref{R0 wtdL2}, one can see that
\begin{align*}
	|\eqref{I def}|&\les \int_0^{\infty} 
	\widetilde\chi_1(\lambda)\lambda^{-\f12-}
	\|R_0\|_{L^{2,-\f12-}} \|V\|_{L^{2,-\f12-}\to L^{2,\f12+}}
	\|R_V\|_{L^{2,\f12+}\to L^{2,-\f12-}}\\ 
	&\qquad \times \|V\|_{L^{2,-\f12-}\to L^{2,\f12+}}
	\|R_0\|_{L^{2,-\f12-}}\, d\lambda
	\les \int_0^{\infty}\widetilde \chi_1(\lambda)\lambda^{-\f32-}\, d\lambda \les 1.
\end{align*}
Again this justifies taking $L=\infty$ in the cut-off
$\chi_L$.
To establish bounds with time decay, 
we again need to distinguish cases based on whether
the leading and lagging resolvents contribute $\rho_-$
or $\rho_+$ and the size of $t$ relative to $|x-z|$, 
and/or $|y-z|$.  The analysis is essentially identical
to the second term of the Born Series, but here we 
use weighted $L^2$ bounds.  For example, we can conclude
the bound for the term analogous to \eqref{hhh int}
by noting that
\begin{align*}
	\|1+\log^- |x-\cdot|\|_{L^{2,-1-}} \|V\|_{L^{2,-1-}\to L^{2,\f12+}}
	\|R_V^{\pm}\|_{L^{2,\f12+}\to L^{2,-\f12-}} 
	\|V\|_{L^{2,-\f12-}\to L^{2,\f12+}}
	\||y-\cdot|^{-\f12}\|_{L^{2,-\f12-}}
\end{align*}
is bounded uniformly in $x$ and $y$ under the assumptions
on the potential.

We need only make a few small adjustments.  We note that, 
when integrating by parts, if the derivative acts on the
resolvent $R_V^{\pm}(\lambda^2)$ in the case when only
$\rho_-$'s are contributing 
we must account for the case when the derivative
acts on the perturbed resolvent,
\begin{multline*}
	\int_0^\infty \widetilde \chi_1(\lambda)
	\lambda^{-\f12-}\rho_-(\lambda d_1)\rho_-(\lambda d_2)
	\partial_{\lambda}R_V^{\pm}(\lambda^2)\, d\lambda\\
	\les \frac{1}{d_1^{\f14}d_2^{\f14}}\int_0^\infty \widetilde \chi_1(\lambda)
	\lambda^{-1-}\rho_-(\lambda d_1)\rho_-(\lambda d_2)
	\partial_{\lambda}R_V^{\pm}(\lambda^2)\, d\lambda
\end{multline*}
by using the support conditions that $\lambda d_j\les 1$.
Then, using \eqref{r0bds} on can 
see $\|r^{-\f14} \rho_-(\lambda r)\|_{L^{2,\sigma}}\les 1$ 
for $\sigma>\f12$.

	This argument requires that $|V(x)|\les \la x\ra^{-2-}$. One can see that the requirement on the decay rate of the potential arises when, for instance,
	the $\lambda$ derivative act on one of the
	$\rho_{\pm}$'s, this differentiated object is in $L^{2,-\frac{3}{2}-}$ by \eqref{rho L2}.
	The potential then needs to map
	$L^{2,-\frac{3}{2}-}\to L^{2,\frac{1}{2}+}$ for the  application of the limiting absorption principle for
	$R_V$.  On the other hand, if the derivative acts
	on the perturbed resolvent, $\partial_\lambda R_V$ 
	maps $L^{2,\frac{3}{2}+}\to L^{2,-\frac{3}{2}-}$ by \eqref{lap}, and $V$ must map $L^{2,-\f32-}$ to
	$L^{2,\f12+}$.

\end{proof}

We can now prove the high energy bound.

\begin{proof}[Proof of Proposition~\ref{high prop}]

	The statement follows from the spectral representation
	\eqref{weighteddecayhi},  and
	Lemmas~\ref{lem:bs2hi} and
	\ref{lem:tailhi} to control each term in 
	the Born series	expansion \eqref{BS:high}.

\end{proof}

\begin{rmk}\label{rmk:high}

	We note that one can prove estimates without the
	`regularizing' powers of $H$ by directly appealing
	to differentiability of the initial data.  That is,
	one can show
	\begin{align*}
		\sup_{x\in \R^2}
		\sup_{L\geq1}\bigg| \int_{\R^2}
		\int_0^\infty e^{it\lambda}\lambda 
		\widetilde{\chi}_1(\lambda) 
		\chi_L(\lambda)
		[R_V^+(\lambda^2)-R_V^-(\lambda^2)](x,y)f(y)\, d\lambda
		\, dy \bigg|
		\les |t|^{-\f12}\|f\|_{W^{2,1}}\\
		\sup_{x\in \R^2}
		\sup_{L\geq1}\bigg| \int_{\R^2} 
		\int_0^\infty e^{it\lambda} 
		\widetilde{\chi}_1(\lambda)\chi_L(\lambda)
		[R_V^+(\lambda^2)-R_V^-(\lambda^2)](x,y)f(y)\, d\lambda
		\, dy \bigg|
		\les |t|^{-\f12}\|f\|_{W^{1,1}}
	\end{align*}
	To do this, one can employ an integration by parts
	in the $y$ spatial variable according to
	\begin{align*}
		\int_{\R^2} e^{-i\lambda|z-y|}&\rho_+(\lambda|z-y|)f(y)
		\, dy
		= \frac{i}{\lambda}\int_{\R^2}e^{-i\lambda|z-y|}
		\nabla_y \cdot \Big[\rho_+(\lambda|z-y|)f(y)
		\frac{y-z}{|y-z|}\Big]\, dy\\
		&=-\frac{1}{\lambda^2}\int_{\R^2}
		e^{-i\lambda|z-y|}
		\nabla_y\cdot \Big\{\nabla_y \cdot \Big[\rho_+(\lambda|z-y|)f(y)
		\frac{y-z}{|y-z|}\Big] 
		\frac{y-z}{|y-z|}\Big\}\, dy.
	\end{align*}
	to gain $\lambda$ decay to ensure integrability
	as $\lambda \to \infty$ for the `high-high' 
	interactions.  One must also iterate the
	resolvent identity to form a longer Born series
	expansion than used in \eqref{BS:high},
	and use similar (but modified) 
	techniques as we used above.  This method, however,
	misses the sharp smoothness requirement by $\f12$
	of a derivative instead of the $\epsilon$ loss
	presented in this section.

\end{rmk}

\section{Low Energy}\label{sec:low energy}
To analyze the evolution of \eqref{wave soln} on low
energy, we must utilize different expansions for the
resolvent.  The low energy contribution to 
Theorem~\ref{thm:main} can again be reduced to
an integral bound due to \eqref{spectral rep}.

\begin{prop}\label{prop:lowbd}

	If $|V(x)|\les \la x\ra^{-3-}$ and if
	zero is a regular point of the spectrum of $H=-\Delta+V$, we have the bound
	\begin{align*}
		\sup_{x,y\in \R^2}
		\bigg|\int_0^\infty (\sin(t\lambda)
		+\lambda \cos(t\lambda)) 
		\chi_1(\lambda)[R_V^+(\lambda)(x,y)
		-R_V^-(\lambda)(x,y)]\, d\lambda\bigg|
		\les t^{-\f12}.
	\end{align*}

\end{prop}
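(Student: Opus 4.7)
The plan is to apply the symmetric resolvent identity
\[
R_V^\pm(\lambda^2) = R_0^\pm(\lambda^2) - R_0^\pm(\lambda^2)\, v\, [M^\pm(\lambda)]^{-1}\, v\, R_0^\pm(\lambda^2),
\]
where $v = |V|^{1/2}$, $V = Uv^2$ with $U = \operatorname{sgn} V$, and $M^\pm(\lambda) := U + v R_0^\pm(\lambda^2) v$. Zero being a regular point of $H$ means, in the sense of \cite{JN,EG}, that a Feshbach-type reduction yields an expansion of $[M^\pm(\lambda)]^{-1}$ around $\lambda = 0$ with operator-valued coefficients bounded on suitable weighted $L^2$ spaces, with errors of $\widetilde O(\lambda^{2}\log\lambda)$ type. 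Substituting this into the spectral representation splits the integral into a free piece and a perturbation piece, to be treated separately.

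For the free contribution, using \eqref{r0low2} we must bound
\[
\int_0^\infty \bigl(\sin(t\lambda) + \lambda\cos(t\lambda)\bigr)\chi_1(\lambda)\, \tfrac{i}{2} J_0(\lambda|x-y|)\,d\lambda.
\]
I would decompose $J_0$ as in the proof of Lemma~\ref{free dispersive} into its low-argument piece $\rho_-(\lambda|x-y|)$ and its high-argument oscillatory pieces $e^{\pm i\lambda |x-y|}\omega_{\pm}(\lambda|x-y|)$. The casework is then essentially the same as in Lemma~\ref{free dispersive}, but in the regime $\lambda \lesssim 1$: when $|t \mp |x-y||$ is comparable to $t$, integrate by parts once against the combined phase $e^{i\lambda(t \mp |x-y|)}$ to gain a factor of $t^{-1}$; when the phases nearly cancel (stationary-phase regime $t \approx |x-y|$), bound directly using the $\omega_\pm$ decay and exploit that $|x-y|\gtrsim t$ in this case. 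The compact support of $\chi_1$ makes the mild log singularities from $\rho_-$ harmless in $\lambda$.

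The perturbation term reduces to bounding
\[
\int_0^\infty \bigl(\sin(t\lambda) + \lambda\cos(t\lambda)\bigr)\chi_1(\lambda) \bigl\langle [M^{\pm}(\lambda)]^{-1} v R_0^{\pm}(\lambda^2)(\cdot,y),\, v R_0^{\pm}(\lambda^2)(\cdot,x)\bigr\rangle \, d\lambda
\]
and its difference under $\pm$. Using the regular-zero expansion of $[M^{\pm}(\lambda)]^{-1}$, together with the representation \eqref{Resolvent decomp} of $R_0^\pm$ and the weighted $L^2$ bounds \eqref{R0 wtdL2}, \eqref{rho L2}, the integrand factors into a product of the same shape as in Lemma~\ref{lem:bs2hi} and Lemma~\ref{lem:tailhi}, and the casework from those lemmas transfers almost verbatim (split on $t$ versus $|x-z|+|z-y|$, integrate by parts against $e^{\pm it\lambda}$ when the combined phase is non-stationary, use direct bounds otherwise). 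The spatial integrations against $V(z)$ converge uniformly in $x,y$ under the hypothesis $\beta > 3$.

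The main obstacle is establishing and tracking the low-energy expansion of $[M^\pm(\lambda)]^{-1}$: because $R_0^\pm(\lambda^2)$ is logarithmically singular at $\lambda = 0$ by \eqref{r0low}, invertibility of $M^\pm(\lambda)$ on $L^2$ is not automatic and requires the regular-at-zero hypothesis via a Jensen--Nenciu type orthogonality argument. One must also verify that the $\lambda$-derivatives of $[M^\pm(\lambda)]^{-1}$ satisfy $\widetilde O_k$-style bounds compatible with a single integration by parts against $e^{\pm it\lambda}$, which is where the cancellation between $R_V^+$ and $R_V^-$ is essential: it removes the real, most singular (logarithmic) contributions and leaves only sufficiently regular remainders. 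Once this low-energy functional calculus is in place, the dispersive bound follows by the same mechanisms as in Section~\ref{sec:high energy}.
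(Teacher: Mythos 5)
Your skeleton matches the paper's: the symmetric resolvent identity, an expansion of $M^{\pm}(\lambda)^{-1}$ valid when zero is regular, a free term handled as in Lemma~\ref{free dispersive}, and casework on the perturbative terms. The gap is in how you propose to control the logarithmic singularities at $\lambda=0$; the claim that the high-energy casework of Lemmas~\ref{lem:bs2hi} and \ref{lem:tailhi} ``transfers almost verbatim'' is where it breaks down. The paper uses the specific three-term structure $M^{\pm}(\lambda)^{-1}=h_{\pm}(\lambda)^{-1}S+QD_0Q+E^{\pm}(\lambda)$ of Lemma~\ref{Minverse}, and each piece is tamed by a different mechanism, none of which occurs in the high-energy analysis: the $S$ term carries the factor $h_{\pm}(\lambda)^{-1}=O(|\log\lambda|^{-1})$ with $|\partial_\lambda h_{\pm}^{-1}(\lambda)|\les \lambda^{-1}|\log\lambda|^{-1}$; the error term vanishes like $\lambda^{1/2}$ in Hilbert--Schmidt norm (not $\widetilde O(\lambda^2\log\lambda)$ as you assert, which overstates the smallness available under $\beta>3$); and the $QD_0Q$ term is controlled by the orthogonality $Qv=vQ=0$. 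Your proposal identifies none of these, and invokes the Jensen--Nenciu orthogonality only to invert $M^{\pm}(\lambda)$, not where it is actually needed, namely in estimating the oscillatory integral itself.

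The cancellation between $R_V^+$ and $R_V^-$ also does less than you claim. In the $QD_0Q$ contribution it only eliminates the $Y_0\cdot Y_0$ product (the double logarithm), leaving a single $Y_0(\lambda|x-x_1|)$ whose $\lambda$-derivative is of size $\lambda^{-1}$ and hence not integrable at $\lambda=0$; one integration by parts against $e^{\pm it\lambda}$ therefore does not close without a further idea. The paper's further idea is to use $Qv=vQ=0$ to replace $\chi(\lambda|x-x_1|)Y_0(\lambda|x-x_1|)$ by the function $F(\lambda,x,x_1)$ of \eqref{F defn}, whose bounds \eqref{F bounds} combine with the Schr\"odinger-equation estimate \eqref{Schd2bd} imported from Lemma~13 of \cite{Sc2} (and Lemmas~17, 18 there for the $S$ and error terms) to handle the `low-low' interactions. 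For the $S$ and $E^{\pm}$ terms the paper explicitly does \emph{not} use the $\pm$ cancellation and estimates the two signs separately, relying instead on the $|\log\lambda|^{-1}$ decay of $h_{\pm}^{-1}$ and the $\lambda^{1/2}$ smallness of $E^{\pm}$. Without these ingredients your integration by parts produces non-integrable singularities of order $\lambda^{-1}$ at the origin and the argument does not yield the stated uniform $t^{-1/2}$ bound.
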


First we state the needed resolvent
expansions and then control their contribution before
proving the dispersive bound.

\subsection{Low energy Resolvent expansions}

Let $U(x)=1$ if $V(x)\geq 0$ and $U(x)=-1$ if $V(x)<0$, and let $v=|V|^{1/2}$. We have $V=Uv^2$.
We use the symmetric resolvent identity, valid for $\Im\lambda>0$:
\be\label{res_exp}
R_V^\pm(\lambda^2)=  R_0^\pm(\lambda^2)-R_0^\pm(\lambda^2)vM^\pm(\lambda)^{-1}vR_0^\pm(\lambda^2),
\ee
where $M^\pm(\lambda)=U+vR_0^\pm(\lambda^2)v$.
The key issue studied in \cite{JN} and used in 
\cite{Sc2,EG,EG2} in the resolvent expansions is the invertibility of the operator $M^\pm(\lambda)$ for small $\lambda$
under various spectral assumptions at zero. 
For the sake of
brevity, we use the expansions of \cite{Sc2,EG,EG2}
as needed and omit the proofs.  To understand the
operator 
$M^{\pm}(\lambda)$ we define the following (see \eqref{Y0 def}), 
\begin{align}
	G_0f(x)&=-\frac{1}{2\pi}\int_{\R^2} \log|x-y|f(y)\,dy
	=(-\Delta)^{-1}f(x), \label{G0 def}\\
\label{g form}
		g^{\pm}(\lambda)&:= \|V\|_1\Big(\pm \frac{i}{4}-\frac{1}{2\pi}\log(\lambda/2)-\frac{\gamma}{2\pi} \Big).
\end{align}
We consider
more complicated expansions for $M^\pm(\lambda)$
in Sections~\ref{sec:weighted} and \ref{sec:nonreg}.

Our first expansion is a modification of Lemma~5 in
\cite{Sc2} which is Lemma~1 in \cite{EG2}.
\begin{lemma}\label{R0 exp cor}	
	We have the following expansion for the kernel of the free resolvent
	\begin{align*}
		R_0^{\pm}(\lambda^2)(x,y)=\frac{1}{\|V\|_1} g^{\pm}(\lambda)+G_0(x,y)
		+E_0^{\pm}(\lambda)(x,y).
	\end{align*}
	Here $G_0(x,y)$ is the kernel of the operator $G_0$ in \eqref{G0 def}, $g^{\pm}(\lambda)$ is as in \eqref{g form}, and
$E_0^{\pm}$
	satisfies the bounds
	\begin{align*}
		|E_0^{\pm}|\les \lambda^{\frac{1}{2} }|x-y|^{\frac{1}{2} }, \qquad
		|\partial_\lambda E_0^{\pm}|\les \lambda^{-\frac{1}{2} }|x-y|^{\frac{1}{2} }, \qquad|\partial_\lambda^2 E_0^{\pm}|\les \lambda^{-\frac{1}{2} }|x-y|^{\frac{3}{2}}.
		\end{align*}
\end{lemma}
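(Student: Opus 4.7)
The plan is to start from the explicit representation $R_0^{\pm}(\lambda^2)(x,y) = \pm\frac{i}{4}H_0^{\pm}(\lambda|x-y|)$ and subtract off the leading (singular) behavior as $\lambda\to 0$, treating the regimes $\lambda|x-y|\ll 1$ and $\lambda|x-y|\gtrsim 1$ separately. Since the formula proposed for $E_0^{\pm}$ is a difference of known functions, what must really be checked is the size and derivative bounds, not the algebraic form of the decomposition.

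In the small-argument regime $\lambda|x-y|\les 1$, I would apply \eqref{r0low} directly: writing $\log(\lambda|x-y|/2)=\log(\lambda/2)+\log|x-y|$ and using $G_0(x,y)=-\frac{1}{2\pi}\log|x-y|$ together with the identity $g^{\pm}(\lambda)/\|V\|_1=\pm\frac{i}{4}-\frac{1}{2\pi}\log(\lambda/2)-\frac{\gamma}{2\pi}$ from \eqref{g form} identifies the remainder as $E_0^{\pm}=\widetilde O(\lambda^2|x-y|^2\log(\lambda|x-y|))$. Since $z^{3/2}|\log z|$ is bounded for $z\in(0,1]$, the inequality $\lambda^2|x-y|^2|\log(\lambda|x-y|)|\les \lambda^{1/2}|x-y|^{1/2}(\lambda|x-y|)^{3/2}|\log(\lambda|x-y|)|\les \lambda^{1/2}|x-y|^{1/2}$ holds in this regime. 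Differentiating the remainder once in $\lambda$ produces factors of the form $\lambda|x-y|^2\log(\lambda|x-y|)$ and $\lambda|x-y|^2$, which are controlled by $\lambda^{-1/2}|x-y|^{1/2}$ after writing $\lambda|x-y|^2=\lambda^{-1/2}|x-y|^{1/2}(\lambda|x-y|)^{3/2}$; the second derivative is handled analogously against $\lambda^{-1/2}|x-y|^{3/2}$.

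In the large-argument regime $\lambda|x-y|\gtrsim 1$, the natural move is to use \eqref{r0high} for the resolvent and bound each of the three terms $R_0^{\pm}$, $g^{\pm}(\lambda)/\|V\|_1$, and $G_0$ against the claimed targets separately, relying on the fact that $|\log\lambda|$ and $|\log|x-y||$ are both dominated by $\lambda^{1/2}|x-y|^{1/2}$ whenever $\lambda|x-y|\gtrsim 1$. Derivatives of the free resolvent in this regime satisfy $|\partial_\lambda^k R_0^{\pm}(\lambda^2)(x,y)|\les |x-y|^k(\lambda|x-y|)^{-1/2}=\lambda^{-1/2}|x-y|^{k-1/2}$, matching the target bounds, and the $\lambda$-derivatives of $g^{\pm}(\lambda)$ yield only $\lambda^{-k}$, which is again absorbed. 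The main obstacle is really bookkeeping at the interface $\lambda|x-y|\sim 1$ and verifying that the weighted bounds match across the two regimes; since the statement is essentially a reformulation of Lemma~5 of \cite{Sc2} (equivalently Lemma~1 of \cite{EG2}) adapted to the powers $|x-y|^{1/2}$, $|x-y|^{3/2}$ that appear naturally in the wave-equation analysis, a clean alternative is to invoke those results and indicate how the exponents in the error bound follow from the same case split.
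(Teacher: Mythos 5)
Your decomposition and your treatment of the regime $\lambda|x-y|\ll 1$ are correct: there the identity $\log(\lambda|x-y|/2)=\log(\lambda/2)+\log|x-y|$ splits \eqref{r0low} exactly into $\f{1}{\|V\|_1}g^{\pm}(\lambda)+G_0(x,y)$ plus the remainder $\widetilde O(\lambda^2|x-y|^2\log(\lambda|x-y|))$, and your conversion of that remainder and its first two $\lambda$-derivatives into the stated weights by peeling off bounded powers of $\lambda|x-y|$ is fine. (For the record, the paper does not prove this lemma itself; it imports it from Lemma~5 of \cite{Sc2}, equivalently Lemma~1 of \cite{EG2}, so the comparison is against the argument those references supply, which is the same case split you use.)

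The regime $\lambda|x-y|\gtrsim 1$, however, contains a genuine error. You propose to bound the three terms $R_0^{\pm}$, $g^{\pm}(\lambda)/\|V\|_1$ and $G_0$ against the target separately, relying on the claim that $|\log\lambda|$ and $|\log|x-y||$ are each dominated by $\lambda^{1/2}|x-y|^{1/2}$ whenever $\lambda|x-y|\gtrsim 1$. That claim is false: take $\lambda=e^{-N}$ and $|x-y|=e^{N}$, so that $\lambda|x-y|=1$ and $\lambda^{1/2}|x-y|^{1/2}=1$, while $|\log\lambda|=|\log|x-y||=N$ is arbitrarily large. The zeroth-order bound on $E_0^{\pm}$ survives only because of a cancellation between these two terms: one must first recombine
\begin{align*}
\f{1}{\|V\|_1}g^{\pm}(\lambda)+G_0(x,y)=\pm\f{i}{4}-\f{\gamma}{2\pi}-\f{1}{2\pi}\log(\lambda|x-y|/2),
\end{align*}
which is a function of the single variable $\lambda|x-y|$, and only then use $|\log z|\les z^{1/2}$ for $z\gtrsim 1$ to get $|\log(\lambda|x-y|/2)|\les (\lambda|x-y|)^{1/2}=\lambda^{1/2}|x-y|^{1/2}$; the term $R_0^{\pm}=O((\lambda|x-y|)^{-1/2})$ is then harmless. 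Your derivative estimates in this regime are unaffected, since $G_0$ is $\lambda$-independent and $\partial_\lambda^k g^{\pm}(\lambda)=O(\lambda^{-k})\les \lambda^{-1/2}|x-y|^{k-1/2}$ for $k=1,2$ precisely when $\lambda|x-y|\gtrsim 1$; it is only the undifferentiated estimate that requires the recombination, so the fix is local but necessary.
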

We note that the expansion for the free resolvent we 
develop here appears to depend 
on the potential $V$ through the
factor of $\|V\|_1$ that appears in the functions
$g^{\pm}(\lambda)$.  We include this factor here to
simplify certain operators later on and note that our
goal is to develop an expansion for the operators
$M^{\pm}(\lambda)=U+vR_0^{\pm}(\lambda^2)v$ which do
explicitly depend on the potential.

This expansion can be interpreted as follows, the first
term $g^{\pm}(\lambda)$ contains all of the singularities
in $\lambda$, the second term is the integral kernel of
the Green's function
for $-\Delta$ and the remaining term is an
error term which is small in $\lambda$.  Unlike in 
higher dimensional cases, we cannot simply expand the
free resolvent as a small $\lambda$ perturbation of
the integral kernel operator $-(\Delta)^{-1}(x,y)$ due
to the logarithmic behavior of $g^{\pm}(\lambda)$.

We employ this
notation to match with that of previous works, 
see \cite{Sc2,EG,EG2}, in the context of the
Schr\"odinger equation.  
The error terms in the expansions
are not related to and should not be confused with the spectral family $E_{ac}'$
in the spectral theorem.
We note that the bound on the second derivative
of the error term is
not used here, but is needed to consider weighted
estimates in Section~\ref{sec:weighted}.

\begin{lemma} \label{lem:M_exp} 
	For $\lambda>0$ define $M^\pm(\lambda):=U+vR_0^\pm(\lambda^2)v$.
	Let $P=v\langle \cdot, v\rangle \|V\|_1^{-1}$ denote the orthogonal projection onto $v$.  Then
	\begin{align*}
		M^{\pm}(\lambda)=g^{\pm}(\lambda)P+T+E_1^{\pm}(\lambda).
	\end{align*}
	Here
	$T=U+vG_0v$ where $G_0$ is an
	integral operator defined in \eqref{G0 def}.
	Further, the error term satisfies the bound
	\begin{align*}
		\big\| \sup_{0<\lambda<\lambda_1} \lambda^{-\frac{1}{2}} |E_1^{\pm}(\lambda)|\big\|_{HS}
		+\big\| \sup_{0<\lambda<\lambda_1} \lambda^{\frac{1}{2}} |\partial_\lambda E_1^{\pm}(\lambda)|\big\|_{HS}		
		\les 1,
	\end{align*}
	provided that $v(x)\lesssim \langle x\rangle^{-\frac{3}{2}-}$.
	
\end{lemma}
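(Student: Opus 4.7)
The proof plan is to substitute the free resolvent expansion from Lemma~\ref{R0 exp cor} directly into the definition $M^\pm(\lambda) = U + vR_0^\pm(\lambda^2)v$ and then check that the resulting remainder has the claimed Hilbert--Schmidt bound. This is essentially a computation rather than a structural theorem, so the main question is bookkeeping and verification of the integrability condition imposed on $v$.

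First, I would write
\begin{align*}
vR_0^\pm(\lambda^2)v(x,y) = \frac{g^\pm(\lambda)}{\|V\|_1}v(x)v(y) + v(x)G_0(x,y)v(y) + v(x)E_0^\pm(\lambda)(x,y)v(y),
\end{align*}
using Lemma~\ref{R0 exp cor}. The first term is exactly $g^\pm(\lambda)$ times the integral kernel of the rank-one projection $P = v\langle\cdot,v\rangle\|V\|_1^{-1}$ (which is why the factor $\|V\|_1$ was included in the definition of $g^\pm$). Adding the $U$ from the definition of $M^\pm$ to the second term produces $T = U + vG_0v$ as claimed, and identifies the remainder as
\begin{equation*}
E_1^\pm(\lambda)(x,y) = v(x)E_0^\pm(\lambda)(x,y)v(y).
\end{equation*}

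The substantive step is the Hilbert--Schmidt bound on $E_1^\pm$. Applying the pointwise bounds from Lemma~\ref{R0 exp cor}, namely $|E_0^\pm(\lambda)(x,y)| \lesssim \lambda^{1/2}|x-y|^{1/2}$ and $|\partial_\lambda E_0^\pm(\lambda)(x,y)| \lesssim \lambda^{-1/2}|x-y|^{1/2}$, both $\lambda^{-1/2}|E_1^\pm|$ and $\lambda^{1/2}|\partial_\lambda E_1^\pm|$ are dominated pointwise (uniformly in $0<\lambda<\lambda_1$) by $v(x)v(y)|x-y|^{1/2}$. Hence it suffices to show
\begin{equation*}
\iint_{\R^2\times\R^2} v(x)^2\, v(y)^2\, |x-y|\, dx\, dy < \infty.
\end{equation*}
With $v(x) \lesssim \langle x\rangle^{-3/2-}$ one has $v(x)^2 v(y)^2 \lesssim \langle x\rangle^{-3-}\langle y\rangle^{-3-}$, and the triangle inequality $|x-y| \le \langle x\rangle + \langle y\rangle$ reduces the integral to a product of two absolutely convergent integrals of the form $\int \langle x\rangle^{-2-}\,dx \cdot \int \langle y\rangle^{-3-}\,dy$.

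The only subtle point, and the reason the hypothesis $v \lesssim \langle x\rangle^{-3/2-}$ is sharp for this lemma, is that the pointwise bound on $E_0^\pm$ loses a factor of $|x-y|^{1/2}$ relative to the leading-order logarithmic behavior of $R_0^\pm$; this growth in the spatial variables is what forces the decay rate $3/2+$ on $v$ (equivalently $3+$ on $V$), rather than the weaker $1+$ that would suffice to make $vG_0v$ Hilbert--Schmidt. No other obstacle arises, since the derivative bound has exactly the same spatial profile as the undifferentiated one (only the $\lambda$-weight changes), and the supremum in $\lambda$ can be taken inside the integral because the pointwise majorant is $\lambda$-independent.
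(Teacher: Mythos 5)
Your proposal is correct and is essentially the argument the paper intends: the paper's own proof is a one-line appeal to Lemma~\ref{R0 exp cor} and the definition $M^\pm(\lambda)=U+vR_0^\pm(\lambda^2)v$, and you have simply written out the substitution, the identification $E_1^\pm=vE_0^\pm v$, and the Hilbert--Schmidt verification via $|x-y|\le\langle x\rangle+\langle y\rangle$. Nothing differs in substance.
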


\begin{proof}

	This follows from Lemma~\ref{R0 exp cor} the definition
	of the operators
	$$
		M^\pm (\lambda)^{-1}=U+vR_0^\pm(\lambda^2)v
	$$

\end{proof}

We recall the following  definition from \cite{Sc2} and \cite{EG}.
\begin{defin}
	We say an operator $T:L^2(\R^2)\to L^2(\R^2)$ with kernel
	$T(\cdot,\cdot)$ is absolutely bounded if the operator with kernel
	$|T(\cdot,\cdot)|$ is bounded from $L^2(\R^2)$ to $L^2(\R^2)$.
\end{defin}
It is worth noting that finite rank operators and  Hilbert-Schmidt operators are absolutely bounded.
Also recall the following definition from \cite{JN}, also see \cite{Sc2,EG}.
\begin{defin}\label{resondef}
Let $Q:=\mathbbm{1}-P$.
We say zero is a regular point of the spectrum
of $H = -\Delta+ V$ provided $ QTQ=Q(U + vG_0v)Q$ is invertible on $QL^2(\mathbb R^2)$.
\end{defin}
(See Definition~\ref{resondef2} in Section~\ref{sec:nonreg} below for a more complete 
description of regularity at zero.)
In Lemma~8 of
\cite{Sc2}, it was proved that if zero is regular, then the operator $D_0:=(QTQ)^{-1}$ is absolutely bounded on $QL^2$.
Below, we discuss the   invertibility of  
$M^\pm(\lambda)=U+vR_0^\pm(\lambda^2)v$, for small 
$\lambda$. This following lemma was proved in Lemma~8 of
\cite{Sc2} by 
employing an abstract Feshbach inversion formula. 

\begin{lemma}\label{Minverse}
    Suppose that zero is a regular point of the spectrum of  $H=-\Delta+V$. Then for   sufficiently small $\lambda_1>0$, the operators
	$M^{\pm}(\lambda)$ are invertible for all $0<\lambda<\lambda_1$ as bounded operators on $L^2(\R^2)$.
	Further, one has
	\begin{align}
	\label{M size}
        	 M^{\pm}(\lambda)^{-1}=h_{\pm}(\lambda)^{-1}S+QD_0Q+ E^{\pm}(\lambda),
	\end{align}
	Here
   	$h_\pm(\lambda)=g^\pm(\lambda)+c$  (with $c\in\R$), and
  	\be\label{S_defn}
  	 	 S=\left[\begin{array}{cc} P & -PTQD_0Q\\ -QD_0QTP & QD_0QTPTQD_0Q
		\end{array}\right]
  	\ee
	is a finite-rank operator with real-valued kernel.  Further, the error term satisfies the bounds
	\begin{align*}
		\big\| \sup_{0<\lambda<\lambda_1} \lambda^{-\frac{1}{2} } |E^{\pm}(\lambda)|\big\|_{HS}
		+\big\| \sup_{0<\lambda<\lambda_1} \lambda^{\frac{1}{2} } |\partial_\lambda E^{\pm}(\lambda)|\big\|_{HS}
		\les 1,
	\end{align*}
	provided that $v(x)\lesssim \langle x\rangle^{-\frac{3}{2}-}$.

\end{lemma}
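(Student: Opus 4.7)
The plan is to invert $M^\pm(\lambda)$ by applying the Feshbach/Schur-complement formula to the block decomposition of $M^\pm$ relative to the rank-one projection $P$ and $Q = I - P$. Substituting the expansion $M^\pm(\lambda) = g^\pm(\lambda) P + T + E_1^\pm(\lambda)$ from Lemma~\ref{lem:M_exp} and projecting, the diagonal blocks are
\begin{align*}
PM^\pm(\lambda) P &= g^\pm(\lambda) P + PTP + PE_1^\pm(\lambda) P, \\
QM^\pm(\lambda) Q &= QTQ + QE_1^\pm(\lambda) Q,
\end{align*}
with off-diagonal blocks $PTQ + PE_1^\pm(\lambda)Q$ and $QTP + QE_1^\pm(\lambda)P$. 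A key structural observation is that since $P$ has rank one and $T = U + vG_0 v$ has a real kernel, $PTP$ acts on $PL^2$ as multiplication by a real scalar $c_T$; an analogous statement will hold for the Schur-complement correction below.

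First I would invert $QM^\pm(\lambda) Q$ on $QL^2$. Regularity of zero energy means $D_0 := (QTQ)^{-1}$ is absolutely bounded on $QL^2$, and Lemma~\ref{lem:M_exp} gives $\|QE_1^\pm(\lambda)Q\|_{HS} \les \lambda^{1/2}$, so for $\lambda_1$ sufficiently small a Neumann series yields
$$
(QM^\pm(\lambda) Q)^{-1} = QD_0 Q + F^\pm(\lambda), \qquad \|F^\pm(\lambda)\|_{HS} \les \lambda^{1/2},
$$
with $\lambda^{1/2} \|\partial_\lambda F^\pm(\lambda)\|_{HS} \les 1$ following from the Leibniz rule and the derivative bound on $E_1^\pm$. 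Next I form the Schur complement on $PL^2$:
$$
A^\pm(\lambda) := PM^\pm(\lambda) P - PM^\pm(\lambda) Q \, (QM^\pm(\lambda) Q)^{-1} \, QM^\pm(\lambda) P.
$$
Since $PTP$ and the leading correction $PTQD_0QTP$ are both rank-one real scalar multiples of $P$, one obtains $A^\pm(\lambda) = h^\pm(\lambda) P + \widetilde{E}^\pm(\lambda)$ with $h^\pm(\lambda) = g^\pm(\lambda) + c$, where $c \in \mathbb{R}$ absorbs both $O(1)$ scalar contributions and $\widetilde{E}^\pm$ inherits the Hilbert--Schmidt bounds of $E_1^\pm$. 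Because $|h^\pm(\lambda)| \sim |\log \lambda| \to \infty$ as $\lambda \to 0$, $A^\pm(\lambda)$ is invertible on $PL^2$ with $A^\pm(\lambda)^{-1} = h^\pm(\lambda)^{-1} P + O\bigl(h^\pm(\lambda)^{-2} \lambda^{1/2}\bigr)$.

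The $2 \times 2$ block inversion formula then gives $M^\pm(\lambda)^{-1}$. Substituting the leading expressions $A^{-1} \approx h^\pm(\lambda)^{-1} P$, $B \approx PTQ$, $C \approx QTP$, $D^{-1} \approx QD_0 Q$ into
$$
\begin{pmatrix} A^{-1} & -A^{-1} B D^{-1} \\ -D^{-1} C A^{-1} & D^{-1} + D^{-1} C A^{-1} B D^{-1} \end{pmatrix},
$$
the four leading blocks assemble exactly into $h_\pm(\lambda)^{-1} S + QD_0Q$, with $S$ the finite-rank operator in \eqref{S_defn}. All remaining terms either carry an explicit factor of $E_1^\pm$ or $F^\pm$, producing $\lambda^{1/2}$-sized Hilbert--Schmidt errors, or carry an extra factor of $h^\pm(\lambda)^{-1} = O(|\log\lambda|^{-1})$, which is also admissible. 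Differentiating these expressions in $\lambda$ via the Leibniz rule and using the $\partial_\lambda E_1^\pm$ bound from Lemma~\ref{lem:M_exp} gives the stated $\lambda^{1/2}\|\partial_\lambda E^\pm(\lambda)\|_{HS} \les 1$.

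The main obstacle I foresee is the bookkeeping needed to confirm that $c$ is a single real constant (so that the only $\pm$-dependence of $h^\pm$ comes through the $\pm i \|V\|_1/4$ piece of $g^\pm$) and that $S$ has a real-valued kernel. This requires tracing every block entry back to the real operators $T$, $D_0$, $P$, and verifying that the scalar contributions from $PTP$ and $PTQD_0QTP$ (which are absorbed into $c$) produce no imaginary part. Secondary care is required to confirm that the derivative estimates survive the Neumann series, Schur-complement inversion, and the final block inversion without loss; this is why the hypothesis $v(x) \les \langle x\rangle^{-3/2-}$ enters, precisely as in the error bound of Lemma~\ref{lem:M_exp}.
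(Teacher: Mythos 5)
Your proposal is correct and follows essentially the same route as the paper: the paper does not reprove this lemma but cites Lemma~8 of \cite{Sc2}, which establishes it precisely via the abstract Feshbach (Schur-complement) inversion formula relative to $P$ and $Q$ that you carry out, with the Neumann-series inversion of $QM^\pm(\lambda)Q$ around $QTQ$, the real scalar constant $c$ coming from $PTP$ and $PTQD_0QTP$, and the four blocks assembling into $h_\pm(\lambda)^{-1}S+QD_0Q$. The only loose phrase is your remark that terms carrying merely ``an extra factor of $h_\pm(\lambda)^{-1}$'' are admissible --- a bare $O(|\log\lambda|^{-2})$ term would not satisfy the $O(\lambda^{1/2})$ error bound --- but no such term actually arises, since every correction in the block inversion contains at least one factor of $E_1^\pm$, $F^\pm$, or the scalar Schur-complement error, each of size $O(\lambda^{1/2})$.
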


\noindent
{\bf Remark.}  Under the conditions of Theorem~\ref{thm:main}, the resolvent identity
	\begin{multline}
	    R_V^{\pm}(\lambda^2)=R_0^{\pm}(\lambda^2)-R_0^{\pm}(\lambda^2) v M^{\pm}(\lambda)^{-1}v
	    R_0^{\pm}(\lambda^2) \\ \label{resolvent id}
=R_0^{\pm}(\lambda^2)-R_0^{\pm}(\lambda^2) \frac{v S v}{h_\pm(\lambda)}
	    R_0^{\pm}(\lambda^2)
-R_0^{\pm}(\lambda^2)  v QD_0Q v
	    R_0^{\pm}(\lambda^2) - R_0^{\pm}(\lambda^2)  v E^\pm(\lambda) v
	    R_0^{\pm}(\lambda^2)
	\end{multline}
	holds as an  operator identity  between the spaces $L^{2,\frac{1}{2}+}(\R^2)$ and $ L^{2,-\frac{1}{2}-}(\R^2)$, as in the
	limiting absorption principle, \cite{agmon}.

\subsection{The low energy dispersive bound}

We now consider the evolution 
for low energy, \eqref{spectral rep},  
when $0<\lambda< \lambda_1$.
In contrast with the high energy treatment in 
Section~\ref{sec:high energy} we do
not need any differentiability of initial data 
nor any `regularizing' powers of $H$.  In fact,
for small $\lambda$, we  prove an $L^1\to L^\infty$
dispersive bound.  We use previous work
on dispersive bounds for the two-dimensional Schr\"odinger
equation, using results of \cite{Sc2,EG,EG2} as needed.

First we bound the free resolvent term contribution
to \eqref{resolvent id},

\begin{lemma}\label{r0lowbd}

	We have the 
	bound 
	\begin{align*}
		\sup_{x,y\in \R^2} \bigg|
		\int_0^\infty e^{it\lambda} \chi_1(\lambda) 
		[R_0^+(\lambda)(x,y)-R_0^-(\lambda)(x,y)]
		\, d\lambda\bigg| \les t^{-\f12}.
	\end{align*}
	
\end{lemma}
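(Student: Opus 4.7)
By equation (\ref{r0low2}), the kernel $R_0^+(\lambda^2)(x,y)-R_0^-(\lambda^2)(x,y)$ equals $\tfrac{i}{2}J_0(\lambda|x-y|)$, so the task reduces to estimating
\[
I(t,x,y) := \int_0^\infty e^{it\lambda}\chi_1(\lambda)\,J_0(\lambda|x-y|)\,d\lambda,
\]
uniformly in $x,y\in\R^2$, by $O(t^{-1/2})$. Writing $d:=|x-y|$, I would decompose $J_0$ as in the proof of Lemma~\ref{free dispersive} via
\[
J_0(\lambda d)=\rho_-(\lambda d)+e^{i\lambda d}\omega_+(\lambda d)+e^{-i\lambda d}\omega_-(\lambda d),
\]
where $\rho_-=\widetilde O(1)$ is supported on $[0,\tfrac12]$ and $\omega_\pm=\widetilde O\big((1+|\cdot|)^{-1/2}\big)$ are supported on $[\tfrac14,\infty)$, and then handle the three pieces separately, in each case branching on the relation between $t$ and $d$. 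Unlike the high-energy lemma, here $\lambda$ is bounded ($\chi_1$ is supported in $[0,\lambda_1]$) so we do not need to track a factor $\lambda^{-1/2-}$; the integrals will be estimated directly.

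For the $\rho_-$ contribution, if $d\lesssim t$ I would integrate by parts once against $e^{it\lambda}$: the boundary term at $0$ is $O(t^{-1})$ and the interior integrand is dominated by $\lambda_1^{-1}|\chi_1'|\rho_- + |\chi_1|\,d\,|\rho_-'(\lambda d)|$; using that $|\rho_-'(\lambda d)|$ is supported on $\lambda d\leq 1/2$ gives an integral bound $\lesssim t^{-1}(1+d^{1/2})\lesssim t^{-1/2}$. If instead $d\gtrsim t$, I would not integrate by parts and use only that $\rho_-$ is supported on $\lambda d\leq 1/2$ together with $|\rho_-|\lesssim 1$ to get $\int_0^{1/d}d\lambda\lesssim d^{-1}\lesssim t^{-1/2}$.

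For the $e^{-i\lambda d}\omega_-(\lambda d)$ piece, the combined phase is $e^{i\lambda(t-d)}$. When $|t-d|\geq t/2$ I would integrate by parts once against that phase, using $|\partial_\lambda\omega_-(\lambda d)|\lesssim d(1+\lambda d)^{-3/2}$ together with the cut-off bounds in (\ref{log-eqn}), yielding an $O(t^{-1})$ bound. When $|t-d|\leq t/2$, so $d\sim t$, I would use the pointwise decay $|\omega_-(\lambda d)|\lesssim (\lambda d)^{-1/2}\lesssim (\lambda t)^{-1/2}$ on the support $\lambda\geq (4d)^{-1}$ to directly estimate
\[
\int_{(4d)^{-1}}^{\lambda_1}\frac{d\lambda}{(\lambda t)^{1/2}}\;\lesssim\; t^{-1/2}.
\]
For the $e^{+i\lambda d}\omega_+(\lambda d)$ piece the combined phase is $e^{i\lambda(t+d)}$ whose coefficient always exceeds $t$, and a single integration by parts against this phase gives an even stronger $O(t^{-1})$ bound.

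\textbf{Main obstacle.} The only place where time decay is truly marginal is the $\omega_-$ branch when $d\sim t$: this is the near-stationary-phase regime of the oscillatory integral, where no gain can be extracted from integration by parts and the bound is forced to come directly from the pointwise decay rate of $J_0$. It is precisely this case that dictates the sharp $t^{-1/2}$ rate, exactly analogous to the low–low and low–high cases in Lemma~\ref{lem:bs2hi}; verifying that the support restriction $\lambda d\gtrsim 1$ on $\omega_-$ is compatible with integrability at $\lambda=0$ (via the lower cutoff $\lambda\geq (4d)^{-1}$) is the one technical point that must be checked carefully.
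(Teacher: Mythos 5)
Your proposal is correct and follows essentially the same route as the paper: reduce to $\frac{i}{2}J_0(\lambda|x-y|)$ via \eqref{r0low2}, split $J_0$ into its small-argument part and the two oscillatory pieces $e^{\pm i\lambda|x-y|}\omega_\pm$, integrate by parts against the combined phase when it is non-stationary, and extract the $t^{-1/2}$ rate from the pointwise decay of $\omega_-$ in the near-stationary regime $|x-y|\sim t$. The only (immaterial) differences are that the paper handles the small-argument piece by interpolating a $t^{-1}$ integration-by-parts bound with trivial boundedness rather than splitting on $d$ versus $t$, and that boundedness of the integral disposes of the regime $t\lesssim 1$, which your estimates leave implicit.
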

		
\begin{proof}

	Since $R_0^+(\lambda)(x,y)-R_0^-(\lambda)(x,y)=
	\frac{i}{2}J_0(\lambda|x-y|)$.  We consider the
	contribution of $J_0$ when $\lambda |x-y|\les 1$,
	in which case 
	$J_0(\lambda |x-y|)=1+\widetilde O(\lambda^2|x-y|^2)$,
	see \eqref{J0 def}.
	In this case we note that the boundedness of the 
	integral is clear, to obtain the desired $t^{-1/2}$
	bound we interpolate with
	\begin{align*}
		\bigg|\int_0^\infty e^{it\lambda}\chi_1(\lambda)
		(1+|x-y|^2 \widetilde O(\lambda^2))\, d\lambda
		\bigg| &\les\frac{1}{t}\int_0^\infty 
		\chi_1'(\lambda)+\lambda |x-y|^2 \chi_{\{\lambda\les |x-y|^{-1}\} }\, d\lambda
		\les t^{-1}.
	\end{align*}
	For the piece supported on $\lambda|x-y|\gtrsim 1$,
	we note that for the negative phase piece we have 
	to bound
	\begin{align*}
		\int_0^\infty e^{it\lambda} \chi_1(\lambda)
		e^{-i\lambda |x-y|}\omega_-(\lambda |x-y|)
		\, d\lambda.
	\end{align*}
	As $\omega_-\les 1$, the integral is clearly bounded.
	To attain time decay, we again split into cases.  
	First, if $t-|x-y|\geq \f t2$ (or in the case of
	the `+' phase) we integrate by parts 
	once to bound with
	\begin{align*}
		\frac{1}{t}\int_{\lambda \gtrsim |x-y|^{-1}} \frac{|x-y|^{-\f12}}{\lambda^{\f32}}\, d\lambda
		\les t^{-1}.
	\end{align*}
	On the other hand if $t-|x-y|\leq \f t2$ we note that
	$|x-y|\gtrsim t$ and
	we can bound this contribution by
	\begin{align*}
		\frac{1}{|x-y|^{\f12}}\int_0^1 \lambda^{-\f12}
		\, d\lambda \les t^{-\f12}
	\end{align*}
	as desired.
\end{proof}

We now consider the contribution of the operator
$QD_0Q$ in \eqref{resolvent id}.

\begin{prop}\label{QDQ prop}

	We have the bound
	\begin{multline}\label{QD0Q int}
		\sup_{x,y\in \R^2}
		\bigg| \int_{\R^4}
		\int_{0}^\infty e^{it\lambda} \chi_1(\lambda)
		(1+\lambda)v(x_1)
		QD_0Q(x_1,y_1)v(y_1)\\
		\Big[
		R_0^+(\lambda^2)(x,x_1)R_0^+(\lambda^2)(y_1,y)
		-R_0^-(\lambda^2)(x,x_1)R_0^-(\lambda^2)(y_1,y)
		\Big] \, 
		d\lambda\, dx_1\, dy_1\bigg|
		\les t^{-\f12}.
	\end{multline}

\end{prop}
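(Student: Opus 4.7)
The plan is to feed the expansion of Lemma~\ref{R0 exp cor},
\[
R_0^\pm(\lambda^2)(x,z)=\tfrac{g^\pm(\lambda)}{\|V\|_1}+G_0(x,z)+E_0^\pm(\lambda)(x,z),
\]
into each of the two free resolvents appearing in the integrand and then exploit the orthogonality $Qv=0$ (which follows from $Pv = v\la v,v\ra/\|V\|_1 = v$) to eliminate every term containing the singular spectral factor $g^\pm(\lambda)/\|V\|_1$. Indeed, rewriting the spatial integrand as the bilinear form
\[
\la v\,R_0^\pm(x,\cdot),\,QD_0Q\bigl[v\,R_0^\pm(\cdot,y)\bigr]\ra,
\]
any $x_1$-constant contribution of the leading resolvent yields $\la Qv,\,\cdot\ra = 0$, and the analogous computation handles the $y_1$ side. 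Modulo these vanishing pieces, the product reduces to $[G_0(x,x_1)+E_0^\pm(x,x_1)][G_0(y_1,y)+E_0^\pm(y_1,y)]$, and the $G_0\otimes G_0$ piece is $\lambda$-independent so it cancels in the difference $R_0^+R_0^+-R_0^-R_0^-$. Hence only the cross terms $G_0\otimes(E_0^+-E_0^-)$, $(E_0^+-E_0^-)\otimes G_0$, and the diagonal $E_0^+E_0^+-E_0^-E_0^-$ (which I split via $(E_0^+-E_0^-)E_0^+ + E_0^-(E_0^+-E_0^-)$) survive.

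For each surviving piece I combine the size bounds $|E_0^\pm|\les \lambda^{1/2}|x-z|^{1/2}$ and $|\partial_\lambda E_0^\pm|\les \lambda^{-1/2}|x-z|^{1/2}$ from Lemma~\ref{R0 exp cor} with the absolute boundedness of $D_0$ on $QL^2$ and the decay of $v$, producing a spatial pairing $f(\lambda,x,y)$, uniform in $x,y$, satisfying $|f|\les \lambda^{1/2}$, $|\partial_\lambda f|\les \lambda^{-1/2}$ for the cross terms and $|f|\les \lambda$, $|\partial_\lambda f|\les 1$ for the diagonal. For the time decay I estimate $\int_0^{\lambda_1}e^{it\lambda}\chi_1(\lambda)(1+\lambda)f(\lambda)\,d\lambda$ trivially when $t\leq 1$, and when $t\geq 1$ by splitting at $\lambda=1/t$, bounding the integral over $[0,1/t]$ directly and integrating by parts once against $e^{it\lambda}$ on $[1/t,\lambda_1]$; in either case this gives $\les \min(1,t^{-1})\les t^{-1/2}$.

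The decisive step is the first one: exploiting $Qv=0$ to knock out the contributions of $g^\pm(\lambda)$ \emph{before} the $\lambda$-integration. Otherwise products such as $g^\pm(\lambda)G_0$ or $g^\pm(\lambda)^2$ would survive, contributing $\lambda$-behavior whose derivative is of size $\lambda^{-1}$ or worse, against which no integration-by-parts scheme can deliver the required decay. The remaining subtlety is controlling the spatial pairings involving $|x_1-y|^{1/2}$ against the weight $v$ uniformly in the external variables; this is where the potential-decay hypothesis $|V|\les\la\cdot\ra^{-3-}$ (equivalently $v\les\la\cdot\ra^{-3/2-}$) enters, possibly through a weighted $L^{2,\sigma}\to L^{2,-\sigma}$ bound on $QD_0Q$ rather than the plain $L^2\to L^2$ one.
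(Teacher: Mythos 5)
There is a genuine gap, and it sits exactly where you flag the "remaining subtlety." Your plan is to insert the global expansion of Lemma~\ref{R0 exp cor} into \emph{both} free resolvents and then claim that, after the $g^\pm$-terms are killed by $Qv=vQ=0$ and the inner variables are absorbed by $v$, the resulting amplitude $f(\lambda,x,y)$ satisfies $|f|\les\lambda^{1/2}$, $|\partial_\lambda f|\les\lambda^{-1/2}$ \emph{uniformly in $x,y$}. It does not. The error bounds of Lemma~\ref{R0 exp cor} read $|E_0^{\pm}(\lambda)(x,x_1)|\les\lambda^{1/2}|x-x_1|^{1/2}$ and $|\partial_\lambda E_0^{\pm}(\lambda)(x,x_1)|\les\lambda^{-1/2}|x-x_1|^{1/2}$; the factor $|x-x_1|^{1/2}\les\la x\ra^{1/2}\la x_1\ra^{1/2}$ leaves a $\la x\ra^{1/2}$ (and likewise $\la y\ra^{1/2}$) attached to the \emph{external} variables, over which the proposition takes a supremum, and the weight $v(x_1)$ can only absorb the $\la x_1\ra^{1/2}$ part. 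Worse, for $\lambda|x-x_1|\gtrsim 1$ the bound $\lambda^{1/2}|x-x_1|^{1/2}$ is growing in $|x-x_1|$ precisely where the true kernel $e^{\pm i\lambda|x-x_1|}\omega_\pm(\lambda|x-x_1|)$ is oscillating and decaying like $(\lambda|x-x_1|)^{-1/2}$; discarding that oscillation makes the regime $|x-x_1|\gg t$ uncontrollable (integration by parts yields $t^{-1}|x-x_1|^{1/2}$, which is unbounded in $x$). Your strategy is, in fact, exactly the one the paper uses in Section~\ref{sec:weighted} (Proposition~\ref{prop:stone2_3}), and there it necessarily produces the polynomial weights $\la x\ra^{\f12+\alpha+}\la y\ra^{\f12+\alpha+}$ in exchange for the faster decay $t^{-1-\alpha}$; it cannot yield the unweighted $L^1\to L^\infty$ bound. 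Also note that if your uniform bounds were correct, your own computation would give $t^{-1}$ unweighted for this term, contradicting the need for weights in Theorem~\ref{thm:wtd}.

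The paper's proof instead splits each Bessel factor according to the size of its argument. The $\pm$ cancellation reduces the kernel to $J_0Y_0+Y_0J_0$; in the \lq low-low\rq\ regime the orthogonality $Qv=vQ=0$ is used to replace $\chi Y_0$ by the function $F(\lambda,x,x_1)$ of \eqref{F defn}, which obeys the uniform bounds \eqref{F bounds} (at the price of the harmless weight $k(x,x_1)$, which \emph{is} absorbed by $v(x_1)$), and the estimate reduces to Schlag's Schr\"odinger lemmas plus an interpolation. In the \lq high\rq\ regimes the phase $e^{\pm i\lambda|x-x_1|}$ is retained and one argues by cases on $t$ versus $d_1$, $d_2$, $d_1+d_2$: when the combined phase is nonresonant one integrates by parts, and in the resonant case $t\les d_1$ one uses only the amplitude decay $(1+\lambda d_1)^{-1/2}\les(\lambda t)^{-1/2}$, which after integrating $\lambda^{-1/2}$ over $[0,\lambda_1]$ is the source of the (sharp) $t^{-1/2}$ rate. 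Your algebraic reduction via $Qv=0$ and the $\pm$ difference is sound as far as it goes, but to repair the proof you would need to abandon the global $E_0^\pm$ expansion on the set $\lambda|x-x_1|\gtrsim 1$ and carry out this phase analysis.
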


\begin{proof}
 We can reduce the contribution
to that when there is a single $J_0$ and a single $Y_0$
due to the difference of the `+' and `-' terms in
\eqref{spectral rep} and
\begin{align*}
	&R_0^+(\lambda^2)(x,x_1)R_0^+(\lambda^2)(y,y_1)
	-R_0^-(\lambda^2)(x,x_1)R_0^-(\lambda^2)(y,y_1)\\
	&=-\frac{i}{8}\bigg(Y_0(\lambda|x-x_1|)
	J_0(\lambda|y-y_1|)+J_0(\lambda|x-x_1|)
	Y_0(\lambda|y-y_1|)\bigg).
\end{align*}
This is a beneficial whenever at least one of the
Bessel functions is supported on `low' energy.
For `low-low' interactions, that is when the integral
in \eqref{QD0Q int} has cut-offs $\chi(\lambda|x-x_1|)$
and $\chi(\lambda|y_1-y|)$,
we note that in the two-dimensional Schr\"odinger equation, as in \cite{Sc2}, one wishes to
control integrals of the form
$$
	\int_0^\infty e^{it\lambda^2} \lambda \chi_1(\lambda)
	\mathcal E(\lambda)\, d\lambda
$$
To this end, one integrates by parts once against the
imaginary Gaussian to bound with
$$
	\frac{1}{t}\int_0^\infty\bigg|\frac{d}{d\lambda} 
	\bigg(\chi_1(\lambda)
	\mathcal E(\lambda)\bigg)\bigg|\, d\lambda
$$
Much of the work is involved in finding a uniform bound
for this resulting integral.  In our case, we wish to
bound integrals of the form
$$
	\int_0^\infty e^{it\lambda}(1+\lambda)\chi_1(\lambda)
	\mathcal E(\lambda)\, d\lambda
$$
Integrating by parts once, we are left to bound
$$
	\frac{1}{t}
	\int_0^\infty |	\chi_1(\lambda)
	\mathcal E(\lambda)|+\bigg|\frac{d}{d\lambda} 
	\bigg(\chi_1(\lambda)
	\mathcal E(\lambda)\bigg)\bigg|\, d\lambda
	\les \frac{1}{t}
	\int_0^\infty \bigg|\frac{d}{d\lambda} 
	\bigg(\chi_1(\lambda)
	\mathcal E(\lambda)\bigg)\bigg|\, d\lambda
$$
The last inequality follows from \eqref{J0 def} 
and \eqref{Y0 def}.  Defining
$$
	k(x,x_1):=1+\log^+|x_1|+\log^-|x-x_1|,
$$
with $\log^+ y=\chi_{\{y>1\}}\log y$ and $\log^- y=
-\chi_{\{0<y<1\}}\log y$.
The dispersive bound for Schr\"odinger,
see Lemma~13 in \cite{Sc2}, 
which proved the bound
\begin{multline}\label{Schd2bd}
	\bigg|
	\int_{0}^\infty e^{it\lambda^2}\lambda 
	\chi_1(\lambda)Y_0(\lambda|x-x_1|)
	\chi(\lambda|x-x_1|)v(x_1)
	QD_0Q(x_1,y_1)\\v(y_1)J_0(\lambda |y_1-y|)
	\chi(\lambda|x-x_1|) \, 
	d\lambda\bigg|
	\les \frac{k(x,x_1)}{t},
\end{multline}
implies
the bound for the wave equation.  The desired bound of 
$t^{-\f12}$ follows from a simple interpolation with the
boundedness in time of the integral.  To show this, 
we use the orthogonality relation, $Qv=vQ=0$ to
control the logarithmic singularity of $Y_0$.  
Specifically we note that we can replace
$Y_0(\lambda|x-x_1|)\chi(\lambda |x-x_1|)$ in 
\eqref{QD0Q int} with
\begin{align}
	F(\lambda, x,x_1)=Y_0(\lambda|x-x_1|)\chi(\lambda |x-x_1|)-\frac{2}{\pi}\chi(\lambda(1+|x|))\log(
	\lambda(1+|x|)).\label{F defn}
\end{align}
Noting Lemma~3.3 of \cite{EG} (which arose from the
argument in \cite{Sc2}) we have the bounds on
$F$
\begin{align}\label{F bounds}
	|F(\lambda, x,x_1)|\les k(x,x_1), \qquad
	|\partial_\lambda F(\lambda, x,x_1)|\les 
	\frac{1}{\lambda}.
\end{align}

The boundedness follows from the bounds on $F$
and \eqref{J0 def}.
$$
	\int_0^\infty |\chi_1(\lambda) F(\lambda,x,x_1)
	J_0(\lambda|y-y_1|)
	\chi(\lambda|y-y_1|)|\, d\lambda 
	\les k(x,x_1) \int_0^1 \, d\lambda \les k(x,x_1).
$$
Thus we can conclude that
\begin{multline}
	\bigg|
	\int_{0}^\infty e^{it\lambda}
	\chi_1(\lambda)Y_0(\lambda|x-x_1|)
	\chi(\lambda|x-x_1|)v(x_1)
	QD_0Q(x_1,y_1)\\v(y_1)J_0(\lambda |y_1-y|)
	\chi(\lambda|x-x_1|) \, 
	d\lambda\bigg|
	\les \frac{k(x,x_1)}{t}
\end{multline}
as desired.

Here we'll consider the integrals from the sine operator,
as they are larger in the small $\lambda$ regime.
Consider the `high-low' interaction terms,
$$
	\int_0^\infty e^{it\lambda} \chi_1(\lambda) 
	J_0(\lambda d_1)\widetilde \chi(\lambda d_1)
	Y_0(\lambda d_2)\chi(\lambda d_2)\, d\lambda
	=\int_0^\infty e^{it\lambda} \chi_1(\lambda)
	e^{-i\lambda d_1}\rho_+(\lambda d_1)F(\lambda,y,y_1)
	\, d\lambda.
$$

Consider the case when $t-d_1\geq t/2$, here we can safely
integrate by parts to bound
\begin{align*}
	\frac{1}{t-d_1}\int_0^\infty \bigg|
	\frac{d}{d\lambda}\big(\chi_1(\lambda)
	\rho_+(\lambda d_1)F(\lambda, y,y_1)\big)
	\bigg|\,d\lambda
	&\les \frac{k(y,y_1)}{t}
	\int_0^1 \frac{\lambda^{-\f32}
	\widetilde \chi(\lambda d_1)}{d_1^{\f12}}\, d\lambda.
\end{align*}
We note that the support condition
implies that $\lambda d_1\gtrsim 1$, so we can bound by
\begin{align*}
	\frac{k(y,y_1)d_1^{\f12}}{t} \int_0^1 \lambda^{-\f12}\, d\lambda
	&\les \frac{k(y,y_1)}{t^{\f12}}, 
\end{align*}
where we used that $ d_1\les t$ in the last step.

If $t-d_1\leq \f t2$ then $d_1^{-1}\les t^{-1}$ and
we need to bound an integral of the form
\begin{align*}
	\int_0^1 \frac{F(\lambda, y,y_1)}
	{(1+\lambda d_1)^{\f12}}\, d\lambda
	\les \int_0^1 \frac{k(y,y_1)}{d_1^{\f12}\lambda^{\f12}}
	\, d\lambda \les \frac{k(y,y_1)}{t^{\f12}}.
\end{align*}	
The case where $J_0(\lambda d_1)$ is supported on 
small energy is bounded in the same way as
$|J_0(\lambda d_1)\chi(\lambda d_1)|\les 1\les k(x,x_1)$.
	
We now consider the `high-high' terms.  
Here we do not take advantage of the cancellation between
`+' and `-' resolvents, but instead bound the contribution
of the $R_0^+R_0^+$ and $R_0^-R_0^-$ terms individually.
For the contribution of $R_0^-R_0^-$,
using \eqref{JYasymp2}, we need to
bound an integral of the form
\begin{align*}
	\int_0^\infty e^{it\lambda}
	\chi_1(\lambda)e^{-i\lambda(d_1+d_2)}
	\rho_+(\lambda d_1)\rho_+(\lambda d_2) \, d\lambda
\end{align*}
In the case that $t-(d_1+d_2)\geq \f t2$ we integrate by 
parts to bound integrals of the form
\begin{align*}
	\frac{1}{t}  \int_{\lambda \gtrsim d_1}
	\frac{d_1}{(1+\lambda d_1)^{\f32}(1+\lambda d_2)^{\f12}}
	\, d\lambda
	&\les \frac{1}{t}\frac{1}{d_1^{\f12}d_2^{\f12}}
	\int_{\lambda \gtrsim d_1}
	\frac{1}{\lambda^2}	\, d\lambda
	\les \frac{1}{t}\frac{d_1^{\f12}}{d_2^{\f12}}
	\les \frac{1}{t^{\f12}}\frac{1}{d_2^{\f12}}
\end{align*}	
Where we used that $t\gtrsim d_1,d_2$ in the last line.  
There is, of course, a similar term to consider with 
$d_1$ and $d_2$ switched.
	
On the other hand, if $t-(d_1+d_2)\leq \f t2$ we have that
$t\les \max(d_1, d_2)$ and we need to control an integral
of the form
\begin{align*}
	\int_0^1 \frac{1}{(1+\lambda d_1)^{\f12}
	(1+\lambda d_2)^{\f12}}\, d\lambda\les 
	\frac{1}{t^{\f12}}\int_0^1 
	\lambda^{-\f12}\, d\lambda \les \frac{1}{t^{\f12}}.
\end{align*}
This follows from the bound $t\les \max(d_1, d_2)$
 since
\begin{align*}
	\frac{1}{(1+\lambda d_1)^{\f12}(1+\lambda d_2)^{\f12}}
	&\les \frac{1}{\lambda^{\f12} 
	\max(d_1^{\f12},d_2^{\f12})}\les \frac{1}{t^{\f12}\lambda^{\f12}}.
\end{align*}
We again note that the case of the positive phase,
$R_0^+R_0^+$ follows
implicitly from these arguments with $2t$ replacing
$\f t2$ in the bounds for the two different cases analyzed
when a `high' term is involved in the interaction.
We close the argument by noting that
\begin{align*}
	\sup_{x,y\in \R^2}
	\int_{\R^4} k(x,x_1)v(x_1)&|QD_0Q|(x_1,y_1)
	v(y_1) \frac{1}{|y_1-y|^{\f12}}\, dx_1\, dy_1\\
	&\les \sup_{x,y\in \R^2}
	\|k(x,\cdot) v(\cdot)\|_{L^2}\| |QD_0Q|
	\|_{L^2\to L^2} \big\|v(\cdot)(1+|\cdot-y|^{-\f12})\big\|_{L^2}
	\les 1.
\end{align*}

\end{proof}

We now turn to the contribution of the operator $S$
in \eqref{resolvent id}.

\begin{lemma}\label{lem:S}

	We have the bound
	\begin{multline}
		\bigg|\int_{\R^4}
		 \int_0^\infty e^{it\lambda}\chi_1(\lambda)
		\bigg( \frac{R_0^+(\lambda^2)(x,x_1) v(x_1)
		S(x_1,y_1)v(y_1)
		R_0^+(\lambda^2)(y_1,y)}{h_+(\lambda)}\\
		-\frac{R_0^-(\lambda^2)(x,x_1) v(x_1)
		S(x_1,y_1)v(y_1)
		R_0^-(\lambda^2)(y_1,y)}{h_-(\lambda)}\bigg)\, d\lambda\, dx_1\, dy_1
		\bigg| \les t^{-\f12}
	\end{multline}
	uniformly in $x$ and $y$.

\end{lemma}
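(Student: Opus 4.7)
The plan is to substitute the three-term expansion
$R_0^\pm(\lambda^2)(x,x_1)=g^\pm(\lambda)/\|V\|_1+G_0(x,x_1)+E_0^\pm(\lambda)(x,x_1)$
from Lemma~\ref{R0 exp cor} into both the leading and lagging free resolvents, producing a nine-piece decomposition of the integrand. The dominant piece is the one where the constant-in-space factor $g^\pm(\lambda)/\|V\|_1$ appears on both sides; the spatial integration against $vSv$ then collapses via the identity $\langle Sv,v\rangle=\|V\|_1$ into the scalar $(g^\pm(\lambda))^2/(\|V\|_1\, h_\pm(\lambda))$, independent of $x$ and $y$. This identity follows from the block form of $S$ together with $Pv=v$ and $Qv=0$: directly $Sv=v-QD_0QTv$, and then $\la QD_0QTv,v\ra=\la D_0QTv,Qv\ra=0$.

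Writing $g^\pm=h_\pm-c$, the principal scalar decomposes as $h_\pm(\lambda)/\|V\|_1-2c/\|V\|_1+c^2/(\|V\|_1 h_\pm(\lambda))$. In the difference $\Gamma^+/h_+-\Gamma^-/h_-$ the constant $-2c/\|V\|_1$ cancels, while the linear-in-$h_\pm$ part reduces, via $g^+(\lambda)-g^-(\lambda)=i\|V\|_1/2$, to a constant multiple of $\chi_1(\lambda)$; a single integration by parts against $e^{it\lambda}$ yields an $O(t^{-1})$ bound, strictly better than what is required. For the $c^2/h_\pm$ remainder one computes $1/h_+(\lambda)-1/h_-(\lambda)=-i\|V\|_1/(2h_+(\lambda)h_-(\lambda))$; using that $|h_\pm(\lambda)|\geq \|V\|_1/4$ is bounded below and that $|\partial_\lambda h_\pm(\lambda)|\les \lambda^{-1}$, together with the convergence of $\int_0^{\lambda_1}(\lambda|\log\lambda|^2)^{-1}\,d\lambda$, one integration by parts again gives $O(t^{-1})$.

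The six remaining cross-terms pair $g^\pm(\lambda)/\|V\|_1$—bounded after division by $h_\pm(\lambda)$—against $G_0$ or $E_0^\pm$ on the opposite side, or pair the latter two with each other. The spatial integrals in $x_1$ and $y_1$ converge under the assumption $v(x)\les\la x\ra^{-3/2-}$ because $S$ is finite rank (in particular absolutely bounded on $L^2$), so the argument follows the template of Proposition~\ref{QDQ prop}: one partitions the $\lambda$-integration according to whether $\lambda|x-x_1|$ and $\lambda|y_1-y|$ are small or large, integrates by parts against $e^{it\lambda}$ only when the combined phase $t\pm(|x-x_1|+|y_1-y|)$ stays comparable to $t$, and otherwise uses the support conditions directly. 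The logarithmic singularity of $Y_0$ inside the $G_0$ factors is handled by replacing $Y_0(\lambda r)\chi(\lambda r)$ with the regularization $F(\lambda,\cdot,\cdot)$ of \eqref{F defn}, whose bounds \eqref{F bounds} tame the $\log\lambda$ at the origin; the substitution is legitimate because $Qv=0$ absorbs the subtracted piece. The main obstacle is organizing the cross-terms so that the $h_\pm(\lambda)^{-1}$ weight does not compound with the $\log\lambda$ behavior of $Y_0$ to destroy the $t^{-1/2}$ bound; as in the Schr\"odinger treatments of \cite{Sc2,EG,EG2}, the structural cancellation $\la Sv,v\ra=\|V\|_1$ and the projection identity $Qv=0$ are precisely what close the argument.
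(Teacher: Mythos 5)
Your expansion of both free resolvents via Lemma~\ref{R0 exp cor} is the mechanism the paper uses for the \emph{weighted} estimate (compare Proposition~\ref{prop:stone2_2}, where the same $\la Sv,v\ra=\|V\|_1$ collapse produces the $-\f{i}{2t}$ term), but it does not yield the \emph{unweighted} bound claimed in this lemma. The decomposition $R_0^\pm=g^\pm/\|V\|_1+G_0+E_0^\pm$ splits $\log(\lambda|x-x_1|)$ into $\log\lambda+\log|x-x_1|$ and thereby destroys the only control on $\log|x-x_1|$ available, namely the support condition $\lambda|x-x_1|\les 1$ encoded in \eqref{log-bd}, which converts $|\log(\lambda|x-x_1|)|$ into $(1+|\log\lambda|)(1+\log^-|x-x_1|)$ with \emph{no} growth at spatial infinity. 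Once separated, $|G_0(x,x_1)|$ can only be bounded by $k(x,x_1)(1+\log^+|x|)$ and $|E_0^\pm|$ by $\lambda^{1/2}\la x\ra^{1/2}\la x_1\ra^{1/2}$, so your cross-terms carry factors $(1+\log^+|x|)(1+\log^+|y|)$ or $\la x\ra^{1/2}\la y\ra^{1/2}$ that are not uniform in $x,y$; the available $\lambda$-gain from $|h_\pm(\lambda)|^{-1}\les|\log\lambda|^{-1}$ cannot beat these back down to a uniform $t^{-1/2}$ (take $|x|=|y|=e^{t}$ in the $G_0\cdot G_0$ term to see the failure). Your proposed repair --- replacing the logarithm by the regularization $F$ of \eqref{F defn} because ``$Qv=0$ absorbs the subtracted piece'' --- does not apply here: the leading block of $S$ in \eqref{S_defn} is $P$, which has no adjacent $Q$ on either side, so the $x_1$-independent subtraction in $F$ is not annihilated. (That trick is what makes Proposition~\ref{QDQ prop} work for $QD_0Q$; it is unavailable for the $P$ part of $S$.)

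The paper's proof instead keeps the free resolvents in the form \eqref{Resolvent decomp}, so that the logarithm only ever appears as $\rho_-(\lambda d)$ with $\lambda d\les 1$; the bound \eqref{log-bd} then gives $(1+|\log\lambda|)(1+\log^- d)$, the factor $|\log\lambda|$ is absorbed by $|h_\pm(\lambda)|^{-1}\les|\log\lambda|^{-1}$ (with $|\partial_\lambda h_\pm^{-1}|\les\lambda^{-1}|\log\lambda|^{-1}$), and the high-energy pieces $e^{\mp i\lambda d}\rho_+(\lambda d)$ are handled by the same case analysis on $t\mp d_1$, $t\mp(d_1+d_2)$ as in Proposition~\ref{QDQ prop}, never differentiating the phase $e^{\mp i\lambda d}$ so that no polynomial weights in $x,y$ appear. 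Your treatment of the principal $g^\pm\cdot g^\pm$ scalar term (the cancellation of $-2c$, the reduction of $h_+-h_-$ to a constant, and the $1/(h_+h_-)$ remainder) is correct and in fact sharper than needed; the gap is confined to, but fatal for, the cross-terms involving $G_0$ and $E_0^\pm$.
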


\begin{proof}

The `low-low' interaction is bounded using  Lemma~17 of
\cite{Sc2} and the discussion at the start of 
Proposition~\ref{QDQ prop}.  For the other terms we do
not use the difference of `+' and `-' terms but estimate
them individually.  We note the bound
\begin{align}\label{log-bd}
	\chi_1(\lambda)\chi(\lambda|x-x_1|)
	\log(\lambda|x-x_1|)
	\les (1+|\log\lambda|)(1+\log^{-}|x-x_1|).
\end{align}
This bound can be seen by considering the cases of
$|x-x_1|<1$ and $|x-x_1|>1$ separately.
Combining the above bound along with the fact 
that on the 
support of the integrals we have 
$|\partial_\lambda^k h_{\pm}^{-1}(\lambda)|\les \lambda^{-k}|\log \lambda|^{-1}$ 
for $k=0,1$ which allows us to run through the same
argument used for $QD_0Q$.  
As usual we consider the more delicate
case when $R_0^-(\lambda^2)$ is involved.
We first
consider the `high-low'
interaction in which    
we wish to control
\begin{align*}
	\int_0^\infty e^{it\lambda}\chi_1(\lambda)
	e^{-i\lambda d_1} \rho_+(\lambda d_1)
	\frac{\rho_-(\lambda d_2)}{h(\lambda)}\, d\lambda
\end{align*}
In the first case, when $t-d_1\geq \f t2$. 
We first note that the integral above is bounded by
$k(y,y_1)$, since
\begin{align*}
	\int_0^\infty e^{it\lambda}\chi_1(\lambda)
	e^{-i\lambda d_1} \rho_+(\lambda d_1)
	\frac{\rho_-(\lambda d_2)}{h(\lambda)}\, d\lambda
	&\les \int_0^{\lambda_1}
	\frac{(1+|\log\lambda|)(1+\log^{-}|x-x_1|)}
	{|h^-(\lambda)|}\, d\lambda \\
	&\les k(y,y_1)\int_0^{\lambda_1} |\log \lambda|
	+|\log \lambda|^{-1}\, d\lambda
	\les k(y,y_1).
\end{align*}
We can interpolate between this and the
bound obtained by  integrating
by parts against the combined phase
$e^{i\lambda(t-d_1)}$ and use that $(t-d_1)^{-1}\les t^{-1}$.
\begin{align*}
	\frac{1}{t-d_1}\int_0^\infty \bigg| \frac{d}{d\lambda}
	\bigg( \frac{\rho_+(\lambda d_1)
	\rho_-(\lambda d_2)}{h(\lambda)}
	\bigg)\bigg|\, d\lambda
	&\les \frac{k(y,y_1)}{t} \int_{\lambda \gtrsim d_1^{-1}}
	d_1^{-\f12}\lambda ^{-\f32}\, d\lambda \les \frac{k(y,y_1)}{t}.
\end{align*}
In the other case when $t-d_1\leq \f t2$, 
we have $t\les d_1$, we use the decay of 
$\rho_+$ to bound by
\begin{align*}
	\int_0^\infty \bigg|\chi_1(\lambda) 
	\frac{(1+|\log \lambda|)k(y,y_1)}{\lambda^{\f12}d_1^{\f12}}
	\bigg| \, d\lambda
	&\les \frac{k(y,y_1)}{t^{\f12}} \int_0^{\lambda_1}
	\frac{1+|\log\lambda|}{\lambda^{\f12}}\, d\lambda
	\les \frac{k(y,y_1)}{t^{\f12}}.
\end{align*}
The `high-high' interaction is handled similarly by 
considering the size of $t-(d_1+d_2)$ compared to
$\frac{t}{2}$ as in the `high-high' interactions
considered in Proposition~\ref{QDQ prop}.  As in the
previous proofs, when $R_0^+(\lambda^2)$ is involved
one needs only do the case analysis by comparing
the size  $t+d_1$ or
$t+d_1+d_2$ with $2t$.

\end{proof}

Finally we turn to the error term, $E^{\pm}(\lambda)$
in \eqref{resolvent id}.  In this case, we note that 
all the functions of $\lambda$ and
their derivatives are smaller than those encountered
in the $QD_0Q$ and/or $S$ terms. 

\begin{lemma}\label{lem:Ebd}

	We have the bound
	\begin{align*}
		\sup_{x,y\in \R^2}\bigg| \int_{\R^4}
		\int_0^\infty e^{it\lambda} \chi_1(\lambda)
		R_0^{\pm}(\lambda^2)(x,x_1)v(x_1)E^{\pm}(\lambda)
		(x_1,y_1)v(y_1) 
		R_0^{\pm}(\lambda^2)(y_1,y) \, d\lambda
		\, dx_1\, dy_1 \bigg|\les
		t^{-\f12}.
	\end{align*}

\end{lemma}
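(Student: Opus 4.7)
The plan is to mimic the scheme used in Proposition~\ref{QDQ prop} and Lemma~\ref{lem:S}: decompose each of the leading and lagging free resolvents via \eqref{Resolvent decomp} into its low piece ($\lambda d_j \les 1$) and high piece ($\lambda d_j \gtrsim 1$), enumerate the four cases low-low, low-high, high-low, high-high, and in each case run the standard integration-by-parts argument, splitting on whether $t - d_1 \gtrless t/2$ (respectively $t - (d_1 + d_2) \gtrless t/2$) for each high phase present. The desired $t^{-1/2}$ bound then follows by interpolating between the bound obtained from integration by parts and the trivial $L^\infty_t$ bound.

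The new feature, compared with the $QD_0Q$ and $S$ analyses, is that we only have Hilbert--Schmidt, rather than pointwise, information on the middle factor: by Lemma~\ref{Minverse},
\begin{equation*}
\|E^\pm(\lambda)\|_{HS} \les \lambda^{1/2}, \qquad \|\partial_\lambda E^\pm(\lambda)\|_{HS} \les \lambda^{-1/2}.
\end{equation*}
Accordingly, in place of pointwise kernel manipulations I would bound the spatial integral by the HS duality
\begin{equation*}
\bigl|\langle v R_0^\pm(\lambda^2)(\cdot,x),\, E^\pm(\lambda)\, v R_0^\pm(\lambda^2)(\cdot,y)\rangle\bigr|
\les \|E^\pm(\lambda)\|_{HS}\,\|v R_0^\pm(\lambda^2)(\cdot,x)\|_{L^2}\,\|v R_0^\pm(\lambda^2)(\cdot,y)\|_{L^2},
\end{equation*}
and similarly after differentiation in $\lambda$. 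The weighted $L^2$ norms of the envelope factors $\rho_\pm(\lambda|\cdot|)$ (and their $\lambda$-derivatives) are controlled by \eqref{rho L2} under $v(x) \les \langle x\rangle^{-3/2-}$; the low-energy logarithmic pieces are absorbed since $v^2(1+|\log(\lambda|x-\cdot|)|)^2$ is integrable uniformly in $x$, giving only an $O(1 + |\log\lambda|)$ loss.

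The crucial point is that the extra $\lambda^{1/2}$ gain on $E^\pm$ (relative to the $O(|\log\lambda|^{-1})$ bound on $S/h_\pm(\lambda)$ and the $O(1)$ bound on $QD_0Q$) renders the $\lambda$ integrals straightforwardly integrable on $(0,\lambda_1)$ in every case: e.g. low-low gives an integrand of size $\lambda^{1/2}(\log\lambda)^2$, low-high gives $\lambda^{1/2}\cdot|\log\lambda|\cdot\lambda^{-1/2} = |\log\lambda|$, and high-high gives $\lambda^{1/2}\cdot\lambda^{-1/2}\cdot\lambda^{-1/2}=\lambda^{-1/2}$. Because of this, we never need to invoke the orthogonality $Qv = vQ = 0$ that was essential in the $QD_0Q$ proof for killing the $\log\lambda$ part of $Y_0(\lambda|x-x_1|)$. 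After integration by parts the worst term is the one where $\partial_\lambda$ hits $E^\pm$ itself, producing the additional $\lambda^{-1}$ cost, and this is still integrable against the envelope bounds.

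The main obstacle is purely bookkeeping: verifying in each of the four spatial cases and two temporal subcases that the product of the HS norm of $E^\pm$ (or its derivative) with the weighted $L^2$ norms of the envelopes yields a $\lambda$-integrable quantity whose time-decay is $t^{-1/2}$ after interpolation. The hypothesis $|V(x)| \les \langle x\rangle^{-3-}$, giving $v \les \langle\cdot\rangle^{-3/2-}$, is precisely what is needed so that all weighted $L^2$ norms of $v R_0^\pm$ pieces that arise (including the ones with an extra factor of $|x-\cdot|^{-1/2}$ from the high part, or an extra factor of $|x-\cdot|$ from a spatial derivative via $\partial_\lambda \rho_\pm$) remain finite uniformly in $x$, so the argument closes exactly as in the $S$ term analysis.
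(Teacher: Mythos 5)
Your proposal follows essentially the same route as the paper's proof: the same low/high decomposition of the two free resolvents with the same temporal case analysis, the key observation that the $\lambda^{\pm 1/2}$ smallness of $E^{\pm}(\lambda)$ and $\partial_\lambda E^{\pm}(\lambda)$ from Lemma~\ref{Minverse} substitutes for the orthogonality $Qv=vQ=0$ used in the $QD_0Q$ term, Hilbert--Schmidt/absolute boundedness to close the spatial integrals, and interpolation with the trivial bound to reach $t^{-1/2}$. The only cosmetic difference is that the paper pulls out $\sup_\lambda \lambda^{-1/2}|E^{\pm}(\lambda)|$ as a fixed Hilbert--Schmidt kernel before integrating in $\lambda$ (and cites Lemma~18 of \cite{Sc2} for the low-low piece), whereas you apply the HS bound at each $\lambda$; both are correct.
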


\begin{proof}
We provide a brief outline of the proof 
the convience of
the reader.  We again reduce the analysis to a series
of cases, `low-low', 'high-low' and 'high-high' as
in the proof of Proposition~\ref{QDQ prop}.  Here,
we use the vanishing of the error term at $\lambda=0$
of the error term, see Lemma~\ref{Minverse}, allows
us to control the logarithmic singularities of the
resolvent.  We cannot utilize the orthogonality relations
we used for $QD_0Q$ or the difference of `+' and `-'
terms, though the smallness of $E^{\pm}(\lambda)$ 
more than compensates for this loss.

We provide a sketch of how one controls the `high-low'
interaction for the convience of the reader.  
That is, we wish to control integrals of the form
\begin{align*}
	\int_0^\infty e^{it\lambda} \chi_1(\lambda)
	e^{-i\lambda d_1}\rho_+(\lambda d_1)
	E^{\pm}(\lambda) \rho_-(\lambda d_2) \, d\lambda.
\end{align*}
Using \eqref{log-bd} it is easy to see the above
integral is bounded by $(1+\log^- d_2)$.  
That is we bound with
\begin{align*}
	\sup_{0<\lambda<\lambda_1} \big|\lambda^{-\f12}
	E^{\pm}(\lambda)\big|
	\int_0^{\lambda_1} |\lambda^{\f12} \rho_+(\lambda d_1)
	\rho_-(\lambda d_2)|\, d\lambda
	&\les (1+\log^- d_2) \sup_{0<\lambda<\lambda_1} \big|\lambda^{-\f12} E^{\pm}(\lambda)\big|.
\end{align*}
By Lemma~\ref{Minverse}, the error term defines a
bounded operator on $L^2$.

As usual, for the time decay
we have two cases to consider.  
If $t-d_1\geq \f t2$ we integrate by
parts and note that
\begin{multline*}
	\bigg|\frac{d}{d\lambda}\bigg(
	\chi_1(\lambda)
	\rho_+(\lambda d_1) \rho_-(\lambda d_2)
	\bigg)\bigg|\\
	\les \frac{k(x,x_1)(1+|\log \lambda|)}{\lambda^{\f12}}
	\bigg(
	\sup_{0<\lambda<\lambda_1} \big|\lambda^{-\f12}
	E^{\pm}(\lambda)\big|+
	\sup_{0<\lambda<\lambda_1} \big|\lambda^{\f12}
	\partial_\lambda E^{\pm}(\lambda)\big|\bigg).
\end{multline*}
By Lemma~\ref{Minverse} the operators involving
the error term are absolutely bounded.  The $t^{-1}$
bound follows from the observations that
$\lambda^{-\f12}(1+|\log\lambda|)$ is integrable on
the support of $\chi_1$ and
$k(x,x_1)v(x_1)\in L^2_{x_1}$ uniformly in $x$.

The case when $t-d_1\leq \f t2$ is simpler.  Here we use
$t\les d_1$ to bound with
\begin{align*}
	\frac{k(x,x_1)}{t^{\f12}}
	\int_0^\infty \chi_1(\lambda) | \lambda^{-\f12}
	E^{\pm}(\lambda)| (1+|\log\lambda|)\, d\lambda
\end{align*}
The bound follows from Lemma~\ref{Minverse} as in the previously considered case.
The `high-high' interaction is handled similarly by 
considering the size of $t-(d_1+d_2)$.

We note that the `low-low' interaction is handled by
Lemma~18 in \cite{Sc2} along with the discussion in
the proof of Proposition~\ref{QDQ prop}.  This
$t^{-1}$ bound is interpolated with the clear boundedness,
due to the integrability of $\lambda^{\f12}(\log \lambda)^2$ on the support of $\chi_1$ by treating
the error term as in the mixed `high-low' case above.

\end{proof}

\begin{proof}[Proof of Proposition~\ref{prop:lowbd}]

	The proposition follows from the expansion
	\eqref{resolvent id},
	Lemma~\ref{r0lowbd},
	Proposition~\ref{QDQ prop},
	Lemmas~\ref{lem:S}, \ref{lem:Ebd}.

\end{proof}

\section{Weighted Estimates}\label{sec:weighted}

In this section, we prove a weighted version of the
dispersive decay.  At the cost of polynomially growing
spatial weights,
we can gain decay in $t$ to acheive a decay rate of
$t^{-1}(\log t)^{-2}$ for large $t$, which is integrable
as $t\to \infty$.  Such integrable dispersive bounds have
applications in the analysis of non-linear equations,
see \cite{BP,PW,SW,Wed} for example.
This result is obtained in a manner
similar to the weighted decay for the two-dimensional
Schr\"odinger equation obtained in \cite{Mur,EG},
and is motivated by recent work of Kopylova 
in \cite{Kop}.

Roughly speaking, we are taking advantage of the fact that the free Schr\"odinger operator $H_0=-\Delta$ has
a resonance at zero energy in dimension  two.  
It is well-known that 
a resonance at zero energy slows the long-time decay
rate of the evolution operator, see 
\cite{JenKat,ES,KS,EG}
and Section~\ref{sec:nonreg} below.  By perturbing
with a potential that removes this zero energy resonance,
one can obtain a faster time decay rate by allowing for
spatially growing weights.  A similar gain of time
decay rate can be seen in one spatial dimension,
\cite{Scs}.

The condition of zero being
regular is equivalent to the boundedness of the operators
$R_V^{\pm}(\lambda^2)$ between certain weighted 
$L^2$ spaces.  We can see from the expansion in
Lemma~\ref{R0 exp cor} that zero is not regular for the
free resolvent due to the singular $\log\lambda$ term.

Due to the representation \eqref{spectral rep}, 
Theorem~\ref{thm:wtd} follows from the following
oscillatory integral bound.
\begin{prop}\label{prop:wtdmain}

	If $|V(x)|\les \la x\ra^{-3-\alpha}$ for some
	$\alpha\in (0,\frac{1}{4})$ and zero is a regular
	point of the spectrum of $H=-\Delta+V$, then
	for $t>2$,
	\begin{align*}
		\bigg|\int_0^\infty \big(\sin(t\lambda)
		\la \lambda \ra^{-\f14-}+\lambda
		\cos(t\lambda) \la \lambda \ra^{-\f34-}\big)
		[R_V^+(\lambda^2)(x,y)-R_V^-(\lambda^2)(x,y)]
		\,d\lambda \bigg| \les 
		\frac{\la x\ra^{\f12+\alpha+} 
		\la y\ra^{\f12+\alpha+}}{t (\log t)^2}.
	\end{align*}

\end{prop}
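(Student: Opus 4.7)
The plan is to split the integral into high and low energy using $\widetilde{\chi}_1(\lambda)$ and $\chi_1(\lambda)$ as in Sections~\ref{sec:high energy} and \ref{sec:low energy}, and show that each piece is bounded by $\langle x\rangle^{1/2+\alpha+}\langle y\rangle^{1/2+\alpha+} t^{-1}(\log t)^{-2}$ for $t>2$. For the high energy piece, the regularizing factors $\langle \lambda\rangle^{-3/4-}$, $\langle \lambda\rangle^{-1/4-}$ together with two integrations by parts against the phase $e^{\pm it\lambda}$ should give $t^{-1}$ decay (stronger than needed) at the cost of polynomial spatial weights. The weights appear when we use the kernel bounds \eqref{r0bds} and the limiting absorption principle \eqref{lap} with one more derivative than in Lemma~\ref{lem:tailhi}; this requires $v \in L^{2,-\sigma}$ mapping properties with $\sigma>3/2$, which is exactly where the hypothesis $|V|\lesssim\langle x\rangle^{-3-\alpha}$ is used.

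For the low energy piece I will insert the symmetric resolvent identity \eqref{resolvent id} and estimate each of the four contributions separately. The free resolvent piece reduces to an oscillatory integral involving $J_0(\lambda|x-y|)$ which, after two integrations by parts on the support of $\chi_1$, produces the clean bound $t^{-1}\langle x-y\rangle^{1/2+}$, acceptable by absorbing $|x-y|$ into $\langle x\rangle \langle y\rangle$. The $QD_0Q$ term and the error term $E^\pm(\lambda)$ are handled by refining Proposition~\ref{QDQ prop} and Lemma~\ref{lem:Ebd}: one integrates by parts twice against $e^{it\lambda}$, uses the orthogonality $Qv=0$ to absorb the logarithmic singularity of $Y_0$ via the function $F(\lambda,x,x_1)$ in \eqref{F defn}, and absorbs the enlarged weights into the potential decay $\langle x\rangle^{-3-\alpha}$. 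Because all of these terms carry a genuine $t^{-1}$ decay rate once a weight of order $\langle x\rangle^{1/2+\alpha+}\langle y\rangle^{1/2+\alpha+}$ is extracted, they are automatically $\le t^{-1}(\log t)^{-2}$.

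The main obstacle, and the source of the $(\log t)^{-2}$ factor, is the finite rank term
\[
\frac{R_0^+(\lambda^2)\,vSv\,R_0^+(\lambda^2)}{h_+(\lambda)} \;-\; \frac{R_0^-(\lambda^2)\,vSv\,R_0^-(\lambda^2)}{h_-(\lambda)}.
\]
Here the key observation is that, by \eqref{g form}, $h_+(\lambda)-h_-(\lambda)=i\|V\|_1/2$ is constant in $\lambda$, while $h_+(\lambda)h_-(\lambda)=|h_+(\lambda)|^2\sim(\log\lambda)^2$, so
\[
\frac{1}{h_+(\lambda)}-\frac{1}{h_-(\lambda)}=\frac{-i\|V\|_1/2}{h_+(\lambda)h_-(\lambda)} = O\bigl((\log\lambda)^{-2}\bigr),
\]
with derivatives improving by one power of $\lambda$. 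After writing the difference of products $R_0^+\cdots R_0^+ / h_+ - R_0^-\cdots R_0^-/h_-$ as a sum of a term in which the $h_\pm^{-1}$ factor is differenced (gaining $(\log\lambda)^{-2}$) plus terms where the resolvents are differenced (producing $J_0$ factors that compensate the logarithmic growth), one reduces matters to controlling
\[
\int_0^\infty e^{it\lambda}\chi_1(\lambda)\,\mathcal G(\lambda,x,y)\,d\lambda
\]
where $\mathcal G$ and its derivative carry $(\log\lambda)^{-2}$ or integrable $\lambda^{-1+}(\log\lambda)^{-2}$ type singularities. Splitting the $\lambda$ integral at $\lambda\sim 1/t$ and integrating by parts only on the region $\lambda\gtrsim 1/t$ then yields the required $t^{-1}(\log t)^{-2}$, since $\int_{1/t}^{1}\lambda^{-1}(\log\lambda)^{-2}\,d\lambda \asymp (\log t)^{-1}$ and the boundary term at $\lambda=1/t$ is of size $(\log t)^{-2}$. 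The weights $\langle x\rangle^{1/2+\alpha+}\langle y\rangle^{1/2+\alpha+}$ enter through the estimates \eqref{R0 wtdL2} and \eqref{rho L2} on the flanking free resolvents paired against $v\in L^{2,3/2+\alpha/2+}$, and this is precisely where the hypothesis $\beta>3$ (in fact $\beta>3+\alpha$) is sharp.
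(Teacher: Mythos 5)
There is a genuine gap, and it is the central one. Throughout you treat a bound of size $t^{-1}$ as ``stronger than needed'' and assert that terms carrying ``a genuine $t^{-1}$ decay rate \dots are automatically $\le t^{-1}(\log t)^{-2}$.'' This inverts the inequality: since $(\log t)^{-2}\to 0$, the target $t^{-1}(\log t)^{-2}$ is \emph{smaller} than $t^{-1}$, so a bare $t^{-1}$ bound proves nothing here. This matters concretely for the free resolvent term: since $R_0^+-R_0^-=\frac{i}{2}+E_0^+-E_0^-$ and $\chi_1(0)=1$, the integral $\frac{i}{2}\int_0^\infty\sin(t\lambda)\chi_1(\lambda)\,d\lambda$ produces a boundary term at $\lambda=0$ equal to $\frac{i}{2t}+O(t^{-2})$ (see \eqref{Aest} and Proposition~\ref{freeevol}); this term is exactly of order $t^{-1}$ and cannot be absorbed into $t^{-1}(\log t)^{-2}$ no matter what spatial weights are allowed. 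The proof only closes because this $\frac{i}{2t}$ is \emph{cancelled} by an identical term of opposite sign coming from the constant part $-\frac{i}{2\|V\|_1}$ of $\mathcal R^--\mathcal R^+$ in the $S$-term (the term \eqref{Sll1}, evaluated in \eqref{lem:Sll1} and Proposition~\ref{prop:stone2_2}, using $\int vPv=\|V\|_1$ and $Qv=vQ=0$ to replace $S$ by $P$). Your decomposition via $h_+^{-1}-h_-^{-1}=O((\log\lambda)^{-2})$ correctly identifies the source of the $(\log t)^{-2}$ factor, but omits precisely this non-decaying constant piece and hence the cancellation; without it the argument fails.

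Two further quantitative points. First, for the $(\log\lambda)^{-2}$ piece a single integration by parts on $\lambda\gtrsim 1/t$ does not suffice: with $\mathcal G\sim(\log\lambda)^{-2}$ one has $\mathcal G'\sim\lambda^{-1}|\log\lambda|^{-3}$, and $\frac1t\int_{1/t}^{\lambda_1}\lambda^{-1}|\log\lambda|^{-3}\,d\lambda=O(t^{-1})$ only; moreover your claimed evaluation $\int_{1/t}^{1}\lambda^{-1}(\log\lambda)^{-2}\,d\lambda\asymp(\log t)^{-1}$ is incorrect (the antiderivative is $-1/\log\lambda$, so the integral is $O(1)$ on $[1/t,\lambda_1]$ and divergent at $\lambda=1$). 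The paper needs a second integration by parts on $\lambda>1/t$ using $\mathcal G''\lesssim\lambda^{-2}|\log\lambda|^{-3}$, as in Lemma~\ref{lem:Sll2}, to reach $t^{-1}(\log t)^{-2}$. Second, for the error terms $E_0^\pm$ and $E^\pm$ a full second integration by parts costs spatial weights of order $|x-y|^{3/2}$ (cf.\ the bound on $\partial_\lambda^2E_0^\pm$ in Lemma~\ref{R0 exp cor}), which exceeds the allowed $\la x\ra^{\f12+\alpha+}\la y\ra^{\f12+\alpha+}$ for $\alpha<\f14$; the paper instead uses the H\"older/shift argument of Lemma~\ref{lem:ibp2} together with Corollary~\ref{lipbound} to trade only $\alpha$ extra smoothness for $t^{-1-\alpha}$ decay. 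Your outline as written recovers neither mechanism.
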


The proof of this bound follows in a similar manner as
the non-weighted $t^{-\f12}$ bound.  Accordingly,
we divide this section into two subsections.  We first 
control the high energy, when $\lambda>\lambda_1>0$ 
with a faster time decay
when allowing for weights, then
we state revised resolvent expansions from \cite{EG2}
for low energy that allow us to prove the desired time
decay for low energy.

\subsection{High energy}

Much of the care we took in Section~\ref{sec:high energy}
was to avoid growth in the spatial variables by avoiding
differentiating the phase $e^{i\lambda |x-y|}$.  If
we allow for a spatially weighted estimate, one can proceed in
a less delicate manner.  

As usual, the high energy portion of 
Proposition~\ref{prop:wtdmain} follows from

	\begin{prop}\label{prop:wtdhi}
	
		If $|V(x)|\les \la x\ra^{-3-\alpha}$ for some
		$\alpha\in (0,\frac{1}{4})$ and zero is a regular
		point of the spectrum of $H=-\Delta+V$, then
		for $t>2$,
		\begin{align*}
			\bigg|\int_0^\infty e^{it\lambda} 
			\widetilde \chi_1(\lambda)\lambda ^{-\f12-} 
			[R_V^+(\lambda^2)(x,y)-R_V^-(\lambda^2)(x,y)]
			\,d\lambda \bigg| \les 
			\frac{\la x\ra^{\f12+\alpha} 
			\la y\ra^{\f12+\alpha}}{t^{1+\alpha}}.
		\end{align*}
	
	\end{prop}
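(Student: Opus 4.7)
The plan is to mimic the argument of Section~\ref{sec:high energy}, using the Born series \eqref{BS:high} to expand $R_V^\pm$, but now to take full advantage of the spatial weights on the right-hand side by freely integrating by parts against the phases $e^{\pm i\lambda|x-z|}$ and $e^{\pm i\lambda|z-y|}$ arising from the free-resolvent decomposition \eqref{Resolvent decomp}. The factors of $|x-z|$ or $|z-y|$ produced by differentiating these phases, which were carefully avoided in Section~\ref{sec:high energy}, can now be absorbed by the right-hand-side weights together with the polynomial decay of $V$.

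I would first treat the free-resolvent contribution. Using \eqref{r0low2}, replace $R_0^+(\lambda^2) - R_0^-(\lambda^2)$ by $\frac{i}{2}J_0(\lambda|x-y|)$ and split $J_0$ as in Lemma~\ref{free dispersive} into the piece $\rho_-(\lambda|x-y|)$ supported on $\lambda|x-y|\les 1$ and the oscillatory pieces $e^{\pm i\lambda|x-y|}\omega_{\pm}(\lambda|x-y|)$. Integrating by parts once against the combined phase, now permitting the derivative to land on $e^{\pm i\lambda|x-y|}$, yields a $t^{-1}$ bound with a factor of $|x-y|$. Interpolating this bound with the trivial $O(1)$ bound in $t$ furnishes the desired $t^{-1-\alpha}\la x\ra^{\f12+\alpha}\la y\ra^{\f12+\alpha}$ rate after using $|x-y|\leq \la x\ra + \la y\ra$.

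Next I would handle the second Born term, $R_0^\pm V R_0^\pm$, by carrying out the same case split (low-low, high-low, high-high) as in Lemma~\ref{lem:bs2hi}, but this time integrating by parts against the combined phase $e^{i\lambda(t\pm d_1\pm d_2)}$ freely even when $|t\pm d_j|$ is not large; the resulting $d_j$ factors are controlled by extracting $\la x\ra^{\f12+\alpha}\la y\ra^{\f12+\alpha}$ and a factor $\la z\ra^{1+2\alpha}$ that is absorbed by the enhanced potential decay $|V(z)|\les \la z\ra^{-3-\alpha}$. For the tail term $R_0^\pm V R_V^\pm V R_0^\pm$, I would integrate by parts once in $\lambda$ and apply the limiting absorption bounds \eqref{lap} on $R_V^\pm$ and $\partial_\lambda R_V^\pm$ between weighted $L^2$ spaces, together with the weighted $L^2$ control \eqref{R0 wtdL2} on the outer free resolvents and their derivatives; a final fractional interpolation with the trivial bound delivers the $t^{-1-\alpha}$ rate.

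The main obstacle will be the careful bookkeeping needed to ensure that the final spatial weight is at most $\la x\ra^{\f12+\alpha}\la y\ra^{\f12+\alpha}$, rather than a larger power. Each integration by parts may land on any of several factors, producing various combinations of $|x-y|$, $|x-z|$, or $|z-y|$, and the subsequent interpolation between the $t^{-1}$-with-spatial-growth bound and the trivial bound must be arranged to balance time decay against spatial weight. In the tail term this balance is tight: too large an $\alpha$ would require either more derivatives of $R_V^\pm$ than \eqref{lap} provides or stronger decay of $V$ than assumed, which is what forces the restriction $\alpha<\f14$.
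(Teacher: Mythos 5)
Your overall architecture (Born series \eqref{BS:high}, the decomposition \eqref{Resolvent decomp}, the low/high case split, absorbing a $\la z\ra$ weight into the decay of $V$) matches the paper, but the device you rely on to produce the decay rate $t^{-1-\alpha}$ does not work. You repeatedly propose to ``integrate by parts once to get a $t^{-1}$ bound with spatial growth, then interpolate with the trivial $O(1)$-in-$t$ bound.'' Interpolating a bound of size $t^{-1}X$ against one of size $t^{0}Y$ yields $t^{-\theta}X^{\theta}Y^{1-\theta}$ with $\theta\in[0,1]$; no choice of $\theta$ produces an exponent below $-1$. The mechanism the paper actually uses is different in both regimes: in the ``good'' cases (combined phase $t\pm d$ or $t\pm(d_1+d_2)$ comparable to $t$) it integrates by parts \emph{twice} against the combined phase, obtaining $t^{-2}$ or $t^{-3/2}$ outright; in the ``bad'' cases (e.g.\ $t-d_1\le t/2$, so $t\les d_1$) it integrates by parts only once against $e^{it\lambda}$, accepts the factor $d_1$ (or $d_1+d_2$) from differentiating the spatial phase, and then converts the surplus into spatial weight via $t^{-1}=t^{-1-\alpha}t^{\alpha}\les t^{-1-\alpha}\max(d_1,d_2)^{\alpha}\les t^{-1-\alpha}(\la x\ra\la z\ra\la y\ra)^{\alpha}$. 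That trade of spatial weight for time decay, not interpolation in $t$, is what produces $t^{-1-\alpha}$; see Lemmas~\ref{lem:J0hiwtd} and~\ref{lem:bs2hiwtd}.

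Two further points. First, your statement that one may integrate by parts against the combined phase ``freely even when $|t\pm d_j|$ is not large'' is exactly the step that fails: it introduces a factor $|t-d_1-d_2|^{-1}$ which is unbounded on the region where the combined frequency is small, and the entire case analysis exists to avoid this division. Second, for the tail $R_0^{\pm}VR_V^{\pm}VR_0^{\pm}$ the paper integrates by parts \emph{twice} in \eqref{I def}, using the $k=2$ derivative bounds in \eqref{lap} (which is why $V$ must map $L^{2,-\f52-}\to L^{2,\f12+}$, i.e.\ the decay $\beta>3$ is consumed here) together with \eqref{R0 wtdL2} applied directly to the full resolvent kernels; a single integration by parts plus ``fractional interpolation'' again stalls at $t^{-1}$. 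Relatedly, the restriction $\alpha<\f14$ is not forced by this high-energy tail (which in fact yields $t^{-2}\la x\ra^{\f12}\la y\ra^{\f12}$, better than needed); it is imposed by the low-energy Lipschitz estimates in Section~\ref{sec:weighted}.
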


The high energy bounds follow with some modifications
to Lemma~\ref{lem:bs2hi} and 
\ref{lem:tailhi} which are outlined below.
We again start with the resolvent
expansion \eqref{BS:high}. 
We 
consider the contribution of the free resolvent
with the difference of the `+' and `-' terms so that for
the first term in the Born series, \eqref{BS:high}, we
need only consider the contribution of the Bessel function
$J_0$.

\begin{lemma}\label{lem:J0hiwtd}

	For $t>2$, we have the bound 
	$$		
		\sup_{L\geq 1}
		\bigg|\int_0^\infty e^{it\lambda} \widetilde \chi_1(\lambda) \chi_L(\lambda)
		\lambda^{-\f12-} J_0(\lambda |x-y|)\, d\lambda
		\bigg| \les \frac{\la x\ra^{\f12}\la y \ra^{\f12}}{t^{\f32}}+ \frac{1}{t^2}.
	$$

\end{lemma}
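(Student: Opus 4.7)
The plan is to split the Bessel function as
$J_0(\lambda r) = \rho_-(\lambda r) + e^{i\lambda r}\omega_+(\lambda r) + e^{-i\lambda r}\omega_-(\lambda r)$
with $r := |x-y|$, as in \eqref{largeJYH}, and handle each summand by integration by parts in $\lambda$. The contrast with Lemma~\ref{free dispersive} is that I will now freely allow derivatives to fall on the phases $e^{\pm i\lambda r}$, absorbing the resulting powers of $r$ into the spatial weights via the elementary inequality $r^{1/2}\les \la x\ra^{1/2}\la y\ra^{1/2}$ (which follows from $r\les \la x\ra + \la y\ra$ together with $\la\cdot\ra\geq 1$).

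For the $\rho_-$ piece, the joint support of $\rho_-(\lambda r)$ and $\widetilde\chi_1(\lambda)$ forces $r \les 1$, so two integrations by parts against $e^{it\lambda}$ are harmless: each derivative of $\rho_-(\lambda r)$ brings at most one factor of $r \les 1$, and the resulting $\lambda$-integral is uniformly bounded. This yields the $1/t^2$ term in the claim. For the $e^{+i\lambda r}\omega_+$ piece, the combined phase $e^{i\lambda(t+r)}$ is non-stationary with $t+r \geq t$, and two IBPs against this combined phase—using the bounds $|\partial_\lambda^k[\omega_+(\lambda r)]|\les r^k(1+\lambda r)^{-1/2-k}$ on the support $\lambda r\gtrsim 1$—yield a clean $1/t^2$ estimate.

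The main work is the $e^{-i\lambda r}\omega_-$ piece, whose combined phase $e^{i\lambda(t-r)}$ is stationary near the light cone $r = t$. I split into three regimes according to $r$ versus $t$. When $r \leq t/2$, $|t-r|\geq t/2$ and two IBPs against $e^{i\lambda(t-r)}$ give $\les r^{-1/2}/t^2\les 1/t^2$ (with a small-$r$ variant when $r \ll 1$ where the support of $\omega_-$ demands $\lambda\gtrsim 1/r$). When $r \geq 2t$, $|t-r|\geq r/2$, so two IBPs give $\les r^{-5/2}\les t^{-5/2}$. In the critical transition region $t/2\leq r \leq 2t$, I would abandon the combined phase—there is no oscillation left to exploit—and instead integrate by parts against $e^{it\lambda}$ alone, letting the derivatives fall on $e^{-i\lambda r}$ and produce powers of $r$. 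These factors of $r$, combined with the $r^{-1/2}$ gain from $\int|\omega_-(\lambda r)|\lambda^{-1/2-}\,d\lambda$, feed into the $\la x\ra^{1/2}\la y\ra^{1/2}$ budget and deliver the first term in the claim.

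The principal obstacle is this critical region $r\approx t$: here the combined phase $\lambda(t-r)$ has no usable oscillation, and the bound has to come entirely from trading spatial weights for time decay. The count of integrations by parts against $e^{it\lambda}$ must be calibrated delicately so that the $r$-growth produced by derivatives of $e^{-i\lambda r}$ fits precisely within the $\la x\ra^{1/2}\la y\ra^{1/2}$ budget, while still extracting the intended inverse powers of $t$; both under- and over-integrating by parts disrupts this balance.
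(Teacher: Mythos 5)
Your decomposition and case analysis track the paper's own proof of this lemma essentially step for step: the $\rho_-$ piece and the non-stationary phases are handled by two integrations by parts exactly as in the paper, and those regimes are fine. The genuine problem is the light-cone region $r:=|x-y|\approx t$, which you correctly single out as the principal obstacle but then leave to an unspecified ``calibration'' of integrations by parts --- and no such calibration exists. After $k$ integrations by parts against $e^{it\lambda}$ alone, the worst term (all derivatives falling on $e^{-i\lambda r}$) is
$t^{-k}r^{k}\int_1^\infty \lambda^{-\f12-}(\lambda r)^{-\f12}\,d\lambda \les (r/t)^{k}\, r^{-\f12}\approx r^{-\f12}\approx t^{-\f12}$
when $r\approx t$, independently of $k$. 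Nor is this an artifact of the method: at $r=t$ the combined phase vanishes identically, and since $\omega_-(z)=c\,z^{-\f12}(1+O(z^{-1}))$ with $c\neq 0$, the $\omega_-$ contribution genuinely has size $\approx t^{-\f12}$ (the $\rho_-$ and $\omega_+$ pieces contribute only $O(t^{-2})$ and cannot cancel it). Taking $x=0$, $|y|=t$ gives $\la x\ra^{\f12}\la y\ra^{\f12}t^{-\f32}\approx t^{-1}\ll t^{-\f12}$, so the half-power weights cannot absorb the light-cone contribution and the bound as stated is out of reach. You should be aware that the paper's own proof elides the same point: the passage from $\frac{1+r}{t\,r^{1/2}}$ to $\frac{1+r^{1/2}}{t^{3/2}}$ there presumes $t\les 1$, whereas the lemma concerns $t>2$.

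What your argument (and the paper's) actually yields in the critical region is $r^{\f12}/t\les \la x\ra^{\f12}\la y\ra^{\f12}/t$, and this weaker conclusion still serves the lemma's purpose: since $t\les r\les \la x\ra\la y\ra$ there, one has $r^{\f12}/t=r^{\f12}t^{\alpha}/t^{1+\alpha}\les r^{\f12+\alpha}/t^{1+\alpha}\les (\la x\ra\la y\ra)^{\f12+\alpha}/t^{1+\alpha}$, which is exactly the estimate required in Proposition~\ref{prop:wtdhi}. So the correct repair is not a cleverer count of integrations by parts but a weakening of the stated conclusion --- to $\la x\ra^{\f12}\la y\ra^{\f12}t^{-1}+t^{-\f32}$, or to $(\la x\ra\la y\ra)^{\f12+\alpha}t^{-1-\alpha}$, or to $\la x\ra\la y\ra\, t^{-\f32}$ if one insists on the $t^{-\f32}$ rate --- followed by a check that the downstream proposition still closes; it does.
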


\begin{proof}

We first consider the
$J_0$ on $\lambda|x-y|\les 1$.  By the expansion
\eqref{J0 def}, we need only  bound
\begin{align*}
	\int_0^\infty e^{it\lambda} \widetilde \chi_1(\lambda)
	\chi_L(\lambda)
	\lambda^{-\f12-} \widetilde O(1)\, d\lambda
\end{align*}
We note that $\chi^\prime$ is supported on 
$\lambda \approx 1$ and we have
\begin{align*}
	|\partial_\lambda^k \widetilde \chi_1(\lambda)|,
	|\partial_\lambda^k \chi_L(\lambda)| \les \lambda^{-k}.
\end{align*}
Integrating by parts twice against the phase
$e^{it\lambda}$ yields the bound of
\begin{align*}
	\frac{1}{t^2}
	\int_0^\infty \bigg| \frac{d}{d\lambda} \bigg(
	\widetilde \chi_1(\lambda) \chi_L(\lambda)
	\lambda^{-\f12-} \widetilde O(1)\bigg)\bigg|
	\, d\lambda
	&\les \frac{1}{t^2}\int_1^\infty \lambda^{-\f52-}
	\, d\lambda \les t^{-2}.
\end{align*}
We next consider when $\lambda|x-y|\gtrsim 1$ 
by \eqref{JYasymp2} we need to bound
bound
\begin{align*}
	\int_0^\infty e^{it\lambda}\widetilde \chi_1(\lambda)
	\chi_L(\lambda)
	\lambda^{-\f12-}e^{\pm i \lambda |x-y|}\omega_{\pm}
	(\lambda |x-y|)\, d\lambda
\end{align*}
Recall that $\omega_{\pm}(z)=\widetilde{O}(z^{-\f12})$
and is supported on $z\gtrsim 1$.
We consider the case of the `+' phase along with
the case of the `-' phase when $t-|x-y|\geq \f t2$
together.  In both cases, we can integrate by parts
twice against the phase $e^{i\lambda(t\pm |x-y|)}$ safely
and bound with
\begin{align*}
	\frac{1}{t^2}\int_0^\infty \bigg| 
	\frac{d^2}{d\lambda^2}\bigg( 
	\widetilde \chi_1(\lambda) \chi_L(\lambda)
	\lambda^{-\f12-}\omega_{\pm}(\lambda|x-y|)\bigg)
	\bigg|\, d\lambda
	&\les t^{-2} \int_1^\infty \lambda^{-\f52-}\, d\lambda
	\les t^{-2}.
\end{align*}
For the `-' phase, we consider the second case in which
$t-|x-y|\leq \f t2$, in which case $t\les |x-y|$.  Here
we integrate by parts once against $e^{it\lambda}$ and
bound with
\begin{align*}
	\frac{1}{t}\int_0^\infty \bigg| \frac{d}{d\lambda}
	\bigg(\widetilde \chi_1(\lambda) \chi_L(\lambda)
	\lambda^{-\f12-}
	e^{-i\lambda |x-y|}\omega_-(\lambda|x-y|)
		\bigg)\bigg|\, d\lambda
	&\les \frac{1+|x-y|}{t|x-y|^{\f12}}
	\int_1^\infty \lambda^{-1-}\, d\lambda\\
	&\les \frac{1+|x-y|^{\f12}}{t^{\f32}}\les
	\frac{\la x\ra^{\f12} \la y\ra^{\f12}}{t^{\f32}}.
\end{align*}
Here we see why polynomial weights are needed to 
gain the extra time decay.

\end{proof}

For the remaining terms in \eqref{BS:high} we note that
Lemmas~\ref{lem:bs2hi} and \ref{lem:tailhi} ensure the
convergence of the $\lambda$ integrals we wish to
analyze.  Accordingly, we can omit the $\chi_L$
cut-off function.
We now turn to the second term of the Born series
\eqref{BS:high} and estimate the `+' and `-' terms
separately.  

\begin{lemma}\label{lem:bs2hiwtd}

	If $|V(x)|\les \la x\ra^{-3-\alpha}$, then for $t>2$ we 
	have the bound
	\begin{align*}
		\bigg| \int_{\R^2}
		\int_0^\infty e^{it\lambda}\lambda^{-\f12-} 
		\widetilde{\chi}_1(\lambda) 
		R_0^{\pm}(\lambda^2)(x,z)V(z)
		R_0^{\pm}(\lambda^2)(z,y)\,d\lambda\, dz
		\bigg| \les \frac{\la x\ra^{\f12+\alpha}
		\la y\ra^{\f12+\alpha}}{t^{1+\alpha}}.
	\end{align*}

\end{lemma}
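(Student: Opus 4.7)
The plan is to run the same case analysis as Lemma~\ref{lem:bs2hi}, exploiting the new hypotheses $t>2$ and $|V(z)|\les\la z\ra^{-3-\alpha}$ to upgrade the $t^{-\f12}$ rate of that lemma to the weighted $t^{-1-\alpha}$ rate of the statement. Setting $d_1=|x-z|$ and $d_2=|z-y|$, I decompose each free resolvent via \eqref{Resolvent decomp} into a `low' piece $\rho_-(\lambda d_j)$ and a `high' piece $e^{\mp i\lambda d_j}\rho_+(\lambda d_j)$, and consider separately the `low-low', `low-high' and `high-high' interactions as in Lemma~\ref{lem:bs2hi}.

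First I would dispatch the `low-low' piece, which is the easiest.  Since $\widetilde\chi_1$ forces $\lambda\gtrsim 1$ while the $\rho_-$ supports force $\lambda d_j\les 1$, one has $d_j\les 1$ throughout.  In particular $\sqrt{d_1 d_2}\les 1<t/2$, so only the IBP sub-case of Lemma~\ref{lem:bs2hi} occurs.  Two integrations by parts against $e^{it\lambda}$ yield a pointwise bound of order $(1+\log^- d_1)(1+\log^- d_2)/t^2$, which is smaller than $1/t^{1+\alpha}$ for $t>2$ and $\alpha<1$; the logarithms integrate against $V(z)$ uniformly in $x,y$.

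Next I would handle the `low-high' and `high-high' pieces by replaying the case split of Lemma~\ref{lem:bs2hi} against the combined phase $e^{i\lambda(t\mp d_1\mp d_2)}$.  When this phase is non-degenerate (the relevant factor is $\gtrsim t$), two integrations by parts against it produce a $t^{-2}$ factor; derivatives falling on the $\rho_\pm$ introduce at most $(d_1+d_2)^2$ growth, which is tame since $d_j\les t$ in this regime.  In the degenerate sub-case (say $d_1+d_2\gtrsim t$ in the `high-high' piece), the direct bound of Lemma~\ref{lem:bs2hi} is of order $1/\sqrt{d_1 d_2}$, and the elementary inequality
\begin{align*}
\f{1}{\sqrt{d_1 d_2}}
&\les \f{(d_1+d_2)^{1+\alpha}}{t^{1+\alpha}\sqrt{d_1 d_2}} \\
&\les \f{d_1^{\f12+\alpha}}{t^{1+\alpha}\sqrt{d_2}}+\f{d_2^{\f12+\alpha}}{t^{1+\alpha}\sqrt{d_1}},
\end{align*}
valid precisely when $d_1+d_2\gtrsim t$, converts the unweighted bound to the weighted rate.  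Using $d_1^{\f12+\alpha}\les\la x\ra^{\f12+\alpha}\la z\ra^{\f12+\alpha}$ and the analogous bound for $d_2$ moves half the weight onto $x$ (or $y$) and half onto $z$.  The `low-high' degenerate case is identical, with $d_2\gtrsim t$ supplying a single $\la y\ra^{\f12+\alpha}\la z\ra^{\f12+\alpha}$ weight and a harmless logarithm from $\rho_-(\lambda d_1)$.

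The final step is to integrate these pointwise bounds against $|V(z)|\les\la z\ra^{-3-\alpha}$.  The $\la z\ra^{\f12+\alpha}$ factors combine with $V(z)$ to give $\la z\ra^{-\f52}$, and the leftover $|z-x|^{-\f12}$ or $|z-y|^{-\f12}$ factors are absorbed by the convolution-type bounds $\sup_y\int_{\R^2}\la z\ra^{-\f52}|z-y|^{-\f12}\,dz<\infty$ (and similarly for $x$).  Combining the two resulting terms via $\la x\ra^{\f12+\alpha}+\la y\ra^{\f12+\alpha}\les\la x\ra^{\f12+\alpha}\la y\ra^{\f12+\alpha}$ yields the desired bound.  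The main obstacle is the `high-high' degenerate sub-case: the $t^{-\f12}$ rate of Lemma~\ref{lem:bs2hi} is sharp there, and the extra $t^{-\f12-\alpha}$ decay must be produced entirely from $d_1+d_2\gtrsim t$ at the cost of $\la z\ra^{\f12+\alpha}$ growth; this is exactly what forces the strengthened decay hypothesis $\beta>3+\alpha$ on $V$, which supplies the $\la z\ra^{-\f52}$ needed for the $z$-integral to converge.
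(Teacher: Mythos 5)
Your proposal follows the paper's proof in all essentials: the same decomposition of each free resolvent into $\rho_-$ and $e^{\mp i\lambda d_j}\rho_+$ pieces via \eqref{Resolvent decomp}, the same low-low/low-high/high-high case split keyed to whether the combined phase is degenerate, and, crucially, the same mechanism in the degenerate sub-cases of trading $d\gtrsim t$ for a factor $t^{-\alpha}$ at the cost of a spatial weight $d^{\f12+\alpha}\les(\la x\ra\la z\ra)^{\f12+\alpha}$, with the extra $\la z\ra^{\f12+\alpha}$ absorbed by the strengthened decay of $V$. The only cosmetic difference is that in the degenerate sub-cases you estimate the $\lambda$-integral directly and then apply your elementary inequality, whereas the paper integrates by parts once against $e^{it\lambda}$ first; both routes land on the same $d^{\f12+\alpha}t^{-1-\alpha}$ bound.

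One step as you state it does not close. In the non-degenerate sub-cases you claim that two integrations by parts give $t^{-2}$ and that derivatives landing on $\rho_\pm$ ``introduce at most $(d_1+d_2)^2$ growth, which is tame since $d_j\les t$.'' If you really pay $(d_1+d_2)^2$ and then bound it by $t^2$, the entire $t^{-2}$ gain is cancelled and you are left with an $O(1)$ bound in $t$, not $t^{-1-\alpha}$. The correct bookkeeping, which the paper uses and which the derivative bounds recorded in the proof of Lemma~\ref{lem:bs2hi} already supply, is $|\partial_\lambda^k\rho_+(\lambda d)|\les d^k(1+\lambda d)^{-k-\f12}\les\lambda^{-k}(1+\lambda d)^{-\f12}$: each derivative costs a factor of $\lambda^{-1}$ (harmless since $\lambda\gtrsim1$ on the support of $\widetilde\chi_1$), no growth in $d_j$ is introduced, and both the full $t^{-2}$ and the $(d_1d_2)^{-\f12}$ decay survive, after which $t^{-2}\les t^{-1-\alpha}$ for $t>2$ and $\alpha<1$. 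This is a fixable slip rather than a wrong approach, but as literally written that sub-case produces no time decay.
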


\begin{proof}

This proof follows along the lines of the proof of
Lemma~\ref{lem:bs2hi}.  We recall the 
expansion for $R_0^{\pm})(\lambda^2)(x,y)$ given 
in \eqref{Resolvent decomp}.
For the `low-low' interaction, 
integrating by parts twice against the phase
$e^{it\lambda}$ yields
\begin{multline*}
	\bigg| \int_0^\infty e^{it\lambda}\widetilde \chi_1(\lambda) \lambda^{-\f12-} \rho_-(\lambda d_1)
	\rho_-(\lambda d_2)\, d\lambda \bigg|
	\les \frac{1}{t^2}\int_0^\infty \bigg| \frac{d^2}
	{d\lambda^2} 
	(\widetilde \chi_1(\lambda)\lambda^{-\f12-}
	\rho_-(\lambda d_1) \rho_-(\lambda d_2))\bigg| \, 
	d\lambda\\
	\les \frac{(1+\log^- d_1)(1+\log^- d_2)}{t^2}
	\int_1^\infty 
	\lambda^{-\f52-}\, d\lambda
	\les \frac{(1+\log^- d_1)(1+\log^- d_2)}{t^2}.
\end{multline*}
Here we used the inequality \eqref{log-eqn} to bound
with the $\log^- d_j$ spatial weights.
For the `high-low' and `high-high' interactions, we 
consider only the negative phase contributions.  
For the `high-low' interaction, we again consider cases.
If $t-d_1\geq \f t2$, then we can integrate by parts
against the combined phase $e^{i\lambda(t-d_1)}$,
\begin{multline*}
	\bigg| \int_0^\infty e^{i\lambda(t-d_1)} \widetilde 
	\chi_1(\lambda) \lambda^{-\f12-} \rho_-(\lambda d_2)
	\rho_+(\lambda d_1)\, d\lambda \bigg|\\
	\les \frac{1}{(t-d_1)^2}\int_0^\infty \bigg|
	\frac{d^2}{d\lambda^2}(\widetilde \chi_1(\lambda) 
	\lambda^{-\f12-} \rho_-(\lambda d_2)
	\rho_+(\lambda d_1))\bigg|\, d\lambda\\
	\les \frac{1+\log^- d_2}{t^2 d_1^{\f12}} 
	\int_1^\infty \lambda^{-\f52-}\, d\lambda\les 
	\frac{1+\log^- d_2}	{t^2 d_1^{\f12}}.
\end{multline*}
On the other hand, if $t-d_1\leq \f t2$ then we integrate
by parts once against the phase $e^{it\lambda}$
and, similar to the case of the free
resolvent only, use the decay of the Bessel function
and that $t\les d_1$ to see
\begin{multline*}
	\bigg| \int_0^\infty e^{it\lambda} \widetilde 
	\chi_1(\lambda) \lambda^{-\f12-} \rho_-(\lambda d_2)
	e^{-i\lambda d_1}\rho_+(\lambda d_1)\, 
	d\lambda \bigg|\\
	\les \frac{1}{t}\int_0^\infty \bigg|
	\frac{d}{d\lambda}(\widetilde \chi_1(\lambda) 
	\lambda^{-\f12-} \rho_-(\lambda d_2)e^{-i\lambda d_1}
	\rho_+(\lambda d_1))\bigg|\, d\lambda\\
	\les \frac{(1+\log^- d_2)\la d_1\ra}{t d_1^{\f12}} \int_1^\infty 
	\lambda^{-1-}\, d\lambda\les 
	\frac{(1+\log^- d_2)\la x\ra^{\f12} \la z\ra^{\f12}}
	{t^{\f32}}.
\end{multline*}
There is a corresponding term with $d_1$ and $d_2$ 
interchanged.
The `high-high' interaction is handled in cases as well.
If $t-(d_1+d_1)\geq \f t2$ we use that
\begin{multline*}
	\bigg| \int_0^\infty e^{i\lambda(t-(d_1+d_2))} 
	\widetilde \chi_1(\lambda) \lambda^{-\f12-} 
	\rho_+(\lambda d_2)
	\rho_+(\lambda d_1)\, d\lambda \bigg|\\
	\les \frac{1}{t^2}\int_0^\infty \bigg|
	\frac{d^2}{d\lambda^2}(\widetilde \chi_1(\lambda) 
	\lambda^{-\f12-} \rho_+(\lambda d_2)
	\rho_+(\lambda d_1))\bigg|\, d\lambda
	\les \frac{1}{t^2 d_1^{\f12}d_2^{\f12}} 
	\int_1^\infty \lambda^{-\f52-}\, d\lambda\les 
	\frac{1}{t^2 d_1^{\f12}d_2^{\f12}}.
\end{multline*}
On the other hand, if $t-(d_1+d_2)\leq \f t2$, then
$t\les \max (d_1,d_2)$ and we see that
\begin{multline*}
	\bigg| \int_0^\infty e^{it\lambda} \widetilde 
	\chi_1(\lambda) \lambda^{-\f12-} 
	e^{-i\lambda (d_1+d_2)}\rho_+(\lambda d_2)
	\rho_+(\lambda d_1)\, 
	d\lambda \bigg|\\
	\les \frac{1}{t}\int_0^\infty \bigg|
	\frac{d}{d\lambda}(\widetilde \chi_1(\lambda) 
	\lambda^{-\f12-} e^{-i\lambda (d_1+d_2)}
	\rho_+(\lambda d_2)
	\rho_+(\lambda d_1))\bigg|\, d\lambda\\
	\les \frac{1+d_1+d_2}{t d_1^{\f12}d_2^{\f12}} 
	\int_1^\infty \lambda^{-1-}\, d\lambda\les 
	\frac{1+\max(d_1,d_2)}{t \min(d_1^{\f12}, d_2^{\f12})\max(d_1^{\f12}, d_2^{\f12})}
	\les \frac{(\la x\ra \la z \ra \la y 
	\ra)^{\f12+\alpha}}
	{t^{1+\alpha} \min(d_1^{\f12}, d_2^{\f12})}.
\end{multline*}
To conclude the desired bounds, we note that
under the assumptions of Proposition~\ref{prop:wtdmain}
\begin{multline*}
	\int_{\R^2} |V(z)| \bigg( (1+\log^- |x-z|)(1+|z-y|^{-\f12})+\la z\ra^{\f12+\alpha} |x-z|^{-\f12}
	+(|x-z|^{-\f12}|z-y|^{-\f12})
	\bigg)\, dz \les 1
\end{multline*}
uniformly in $x$ and $y$.

\end{proof}

\begin{proof}[Proof of Proposition~\ref{prop:wtdhi}]

The Proposition follows from Lemma~\ref{lem:J0hiwtd},
Lemma~\ref{lem:bs2hiwtd} and the following modification
of Lemma~\ref{lem:tailhi} in 
Section~\ref{sec:high energy}.
For the tail of the Born series, we note that we can
essentially repeat the argument of Lemma~\ref{lem:tailhi}
but integrate by parts twice in \eqref{I def} to
see
\begin{align*}
	|\eqref{I def}|&\les \frac{1}{t^2}\int_0^\infty
	\bigg|	\frac{d^2}{d\lambda^2} \bigg( 
	\lambda^{-\f12-} \mathcal E(\lambda)(x,y) \bigg)
	\, d\lambda \bigg|\les \frac{\la x\ra^{\f12} 
	\la y\ra^{\f12}}{t^2}.
\end{align*}
Here we use the bound \eqref{R0 wtdL2} directly instead
of separating the phase from the large $\lambda r$ 
portion.  This, of course, leaves us with 
polynomial weights and requires an extra power of decay
on the potential.  This is easily seen since differentiating the resolvents twice requires a potential
that maps $L^{2,-\f52-}$ to $L^{2,\f12+}$.

\end{proof}

\subsection{Low Energy}
To attain the extra time decay, we need to modify 
slightly the resolvent expansions laid out in
Section~\ref{sec:low energy}.  Accordingly, we
use the resolvent expansions developed for the analysis
of the Schr\"odinger equation from \cite{EG}.

\begin{prop}\label{lowprop} Fix $0<\alpha<\f14$. Let $v(x)\lesssim \la x\ra^{-\f32-\alpha-}.$
For any $t > 2$, we have
\begin{multline}\label{stone2}
	\Big|\int_0^\infty (\sin(t\lambda)
	+\lambda \cos(t\lambda))
	\chi_1(\lambda) [R_V^+(\lambda^2)
	-R_V^-(\lambda^2)](x,y)
	d\lambda\Big| \\
	\les  \f{(1+\log^+ |x|)(1+\log^+ |y|)}{t\log^2(t)} + 
	\f{\la x\ra^{\f12+\alpha+} \la 
	y\ra^{\f12+\alpha+}}{t^{1+\alpha} }.
\end{multline}
\end{prop}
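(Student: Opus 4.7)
The plan is to follow the structure of Section~\ref{sec:low energy}, decomposing $R_V^\pm(\lambda^2)$ via the symmetric resolvent identity \eqref{resolvent id} into the free resolvent, the finite-rank contribution $R_0^\pm v S v R_0^\pm/h_\pm(\lambda)$, the $QD_0Q$ contribution, and the error term $E^\pm(\lambda)$. Each piece is then analyzed by splitting into `low-low', `high-low', and `high-high' interactions based on the sizes of $\lambda|x-x_1|$ and $\lambda|y_1-y|$ in the leading and lagging free resolvents, and by integrating by parts against the phase $e^{it\lambda}$ (or $e^{i\lambda(t\pm d_1\pm d_2)}$ for higher-energy portions). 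The two parts of the target bound correspond to which pieces of the expansion are responsible: the distinguished $(t\log^2 t)^{-1}$ term comes from the $S/h_\pm$ piece alone, while all other pieces will contribute to the polynomial-weighted $t^{-1-\alpha}$ term.

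For the free resolvent, using \eqref{r0low2} and the expansions \eqref{J0 def}, \eqref{JYasymp2}, two integrations by parts against $e^{it\lambda}$ yield a $t^{-1-\alpha}$ bound at the cost of polynomial weights $\la x\ra^{1/2+\alpha+}\la y\ra^{1/2+\alpha+}$, essentially a low-frequency analogue of Lemma~\ref{lem:J0hiwtd}. The $QD_0Q$ piece will be handled by the method of Proposition~\ref{QDQ prop}: I subtract the logarithmic singularity of $Y_0$ as in \eqref{F defn} to exploit the orthogonality $Qv=vQ=0$, but now integrate by parts one additional time to gain the full $t^{-1-\alpha}$ decay. The error term is handled similarly using the smallness estimates of Lemma~\ref{Minverse}, mimicking Lemma~\ref{lem:Ebd} with an extra integration by parts; these estimates force $v(x)\les \la x\ra^{-3/2-\alpha-}$ to keep the differentiated kernels in the appropriate weighted $L^2$ spaces.

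The main obstacle is the $S/h_\pm(\lambda)$ piece, which alone yields the sharp logarithmic decay. The key observation is that while $|1/h_\pm(\lambda)|\sim 1/|\log\lambda|$ is only marginally integrable, the difference of the `$+$' and `$-$' reciprocals satisfies
\begin{align*}
\frac{1}{h_+(\lambda)}-\frac{1}{h_-(\lambda)} = \frac{h_-(\lambda)-h_+(\lambda)}{h_+(\lambda)h_-(\lambda)} = \frac{-i\|V\|_1/2}{h_+(\lambda)h_-(\lambda)}.
\end{align*}
Since $h_\pm(\lambda)=a(\lambda)\pm i\|V\|_1/4$ with $a(\lambda)\sim -\|V\|_1\log\lambda/(2\pi)$, this difference is of size $(\log\lambda)^{-2}$ with derivatives satisfying $|\partial_\lambda^k[(h_+h_-)^{-1}]|\les \lambda^{-k}(\log\lambda)^{-2}$. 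Splitting the $\lambda$-integral at the threshold $\lambda\approx 1/t$, estimating the integrand directly on $(0,1/t)$ (where $\log(1/\lambda)\gtrsim \log t$ gives the $(t\log^2 t)^{-1}$ contribution) and integrating by parts once against $e^{it\lambda}$ on $(1/t,\lambda_1)$, will yield the desired decay. The logarithmic spatial factors $(1+\log^+|x|)(1+\log^+|y|)$ arise from the subtraction \eqref{F defn} used to tame the logarithmic singularities of $Y_0$ in the leading and lagging free resolvents, exactly as in the two-dimensional Schr\"odinger analysis of \cite{EG2}. The proposition then follows from combining the four term-by-term bounds.
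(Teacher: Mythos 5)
Your decomposition via \eqref{resolvent id} into the free term, the $vSv/h_\pm$ term, the $QD_0Q$ term, and the error term is exactly the paper's, and your treatment of the last two pieces and of the $(\log\lambda)^{-2}$ smallness in the $S$ piece is in the right spirit. But there is a fatal gap in how you dispose of the free resolvent and the $S/h_\pm$ piece separately. The free contribution contains, via $R_0^+-R_0^-=\f{i}{2}+E_0^+-E_0^-$, the term $\f{i}{2}\int_0^\infty \sin(t\lambda)\chi_1(\lambda)\,d\lambda=\f{i}{2t}+O(t^{-2})$: the integrand does not vanish at $\lambda=0$, so no amount of integration by parts removes the boundary term $\f{i}{2t}$, and a bare $c/t$ with $c\neq 0$ is \emph{not} dominated by either $\f{1}{t\log^2 t}$ or $\f{1}{t^{1+\alpha}}$. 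Correspondingly, your reduction of the $S$ piece to $\f{1}{h_+}-\f{1}{h_-}=\f{-i\|V\|_1/2}{h_+h_-}=O((\log\lambda)^{-2})$ is only valid if the numerators agreed, but they are $R_0^+vSvR_0^+$ versus $R_0^-vSvR_0^-$. Writing $R_0^\pm=\f{g^\pm}{\|V\|_1}+G_0+E_0^\pm$ and $h_\pm=g^\pm+c$, the leading part of $\mathcal R^\pm=R_0^\pm R_0^\pm/h_\pm$ is $\f{1}{\|V\|_1^2}[g^\pm+c+\widetilde G_0(x,x_1)+\widetilde G_0(y,y_1)+\cdots]$, so $\mathcal R^--\mathcal R^+$ contains the \emph{constant} $-\f{i}{2\|V\|_1}$, whose $\lambda$-integral against $\sin(t\lambda)\chi_1$ is $-\f{i}{2t\|V\|_1}+O(t^{-2})$; after pairing with $vSv$ (where only the $P$ block survives, since $Qv=0$) this produces exactly $-\f{i}{2t}$. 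The whole point of the paper's argument (Propositions~\ref{freeevol} and \ref{prop:stone2_2}, and the remark before Lemma~\ref{lem:ibp2}) is that these two $\pm\f{i}{2t}$ terms cancel; your proposal never identifies either of them, so the bounds you claim for the individual pieces are false as stated.

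A secondary but still substantive issue: for the error kernels one has $|\partial_\lambda \mathcal E(\lambda)|\les \lambda^{-1/2}|x-y|^{1/2}$, which blows up at $\lambda=0$, so a second integration by parts produces a divergent boundary term, and using $|\partial_\lambda^2 E_0^\pm|\les\lambda^{-1/2}|x-y|^{3/2}$ would cost the weight $|x-y|^{3/2}$, exceeding the allowed $\la x\ra^{\f12+\alpha+}\la y\ra^{\f12+\alpha+}$. The paper instead proves the intermediate rate $t^{-1-\alpha}$ with the intermediate weight $|x-y|^{\f12+\alpha}$ by the shift-by-$\pi/t$ device of Lemma~\ref{lem:ibp2} combined with the H\"older bound on $\partial_\lambda E_0^\pm$ from Corollary~\ref{lipbound} (and the corresponding H\"older bounds in Lemmas~\ref{lem:M_exp_E} and \ref{Minverse2}). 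Your "integrate by parts one additional time" would need to be replaced by this interpolation argument throughout.
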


The following corollary of Lemma~\ref{R0 exp cor}	 
follows from the bounds for $\partial_\lambda E_0^\pm$ 
and $\partial^2_\lambda E_0^\pm$.
\begin{corollary} \label{lipbound} For $0<\alpha<1$  and $b>a>0$ we have
$$
|\partial_\lambda E_0^\pm(b)-\partial_\lambda E_0^\pm(a)|\les a^{-\f12} |b-a|^{\alpha} |x-y|^{\frac12+\alpha}.
$$
\end{corollary}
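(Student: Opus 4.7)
The plan is to obtain two a priori bounds on the difference $|\partial_\lambda E_0^\pm(b)-\partial_\lambda E_0^\pm(a)|$ and then interpolate between them. The first bound uses only the pointwise estimate on the first derivative from Lemma~\ref{R0 exp cor}, while the second uses the fundamental theorem of calculus together with the pointwise estimate on the second derivative.

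First I would note that, since $b>a>0$, Lemma~\ref{R0 exp cor} and the triangle inequality give
\begin{align*}
|\partial_\lambda E_0^\pm(b)-\partial_\lambda E_0^\pm(a)|
\les b^{-1/2}|x-y|^{1/2}+a^{-1/2}|x-y|^{1/2}\les a^{-1/2}|x-y|^{1/2},
\end{align*}
which serves as the ``crude'' bound $(A)$.  Next, by the fundamental theorem of calculus and again Lemma~\ref{R0 exp cor},
\begin{align*}
|\partial_\lambda E_0^\pm(b)-\partial_\lambda E_0^\pm(a)|
\leq \int_a^b |\partial_\lambda^2 E_0^\pm(\lambda)|\,d\lambda
\les |x-y|^{3/2}\int_a^b \lambda^{-1/2}\,d\lambda
=2|x-y|^{3/2}\,\frac{b-a}{b^{1/2}+a^{1/2}},
\end{align*}
and since $a<b$, this in turn is bounded by $a^{-1/2}(b-a)|x-y|^{3/2}$, which serves as the ``smooth'' bound $(B)$.

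To finish, I would interpolate: for any $0<\alpha<1$, $\min\{A,B\}\leq A^{1-\alpha}B^{\alpha}$, so raising $(A)$ to the power $1-\alpha$ and $(B)$ to the power $\alpha$ and multiplying gives
\begin{align*}
|\partial_\lambda E_0^\pm(b)-\partial_\lambda E_0^\pm(a)|
&\les \bigl(a^{-1/2}|x-y|^{1/2}\bigr)^{1-\alpha}\bigl(a^{-1/2}(b-a)|x-y|^{3/2}\bigr)^{\alpha}\\
&= a^{-1/2}\,(b-a)^{\alpha}\,|x-y|^{1/2+\alpha},
\end{align*}
which is the asserted inequality. There is no genuine obstacle here; the only point requiring a bit of care is the conversion $b^{1/2}-a^{1/2}=(b-a)/(b^{1/2}+a^{1/2})\les (b-a)/a^{1/2}$ used to pass from the integral of $\lambda^{-1/2}$ to the clean $a^{-1/2}(b-a)$ form of bound $(B)$; everything else is routine.
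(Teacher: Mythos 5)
Your proposal is correct and follows essentially the same route as the paper: the paper obtains the bound $a^{-\f12}|b-a|\,|x-y|^{\f32}$ via the mean value theorem applied to the second-derivative estimate in Lemma~\ref{R0 exp cor} (your fundamental-theorem-of-calculus computation is the same step), and then interpolates with the first-derivative bound exactly as you do. The only difference is that you spell out the interpolation and the $b^{\f12}-a^{\f12}\les (b-a)a^{-\f12}$ detail explicitly, which the paper leaves implicit.
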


\begin{proof}

	The bounds on $\partial_\lambda^2 E_0^{\pm}$
	from Lemma~\ref{R0 exp cor} along with the mean
	value theorem guarantee that
	$$
	|\partial_\lambda E_0^\pm(b)-\partial_\lambda E_0^\pm(a)|\les a^{-\f12}|b-a| |x-y|^{\f32}.	
	$$
	The statement of this corollary then follows from
	a simple interpolation with the bounds in
	Lemma~\ref{R0 exp cor}.

\end{proof}

This corollary implies the improved error bounds
below.

\begin{lemma} \label{lem:M_exp_E} Let $0<\alpha<1$.
	The error term in Lemma~\ref{lem:M_exp}  
	satisfies the bound
	\begin{multline*}
		\big\| \sup_{0<\lambda<\lambda_1} \lambda^{-\frac{1}{2}} |E_1^{\pm}(\lambda)|\big\|_{HS}
		+\big\| \sup_{0<\lambda<\lambda_1} \lambda^{\frac{1}{2}} |\partial_\lambda E_1^{\pm}(\lambda)|\big\|_{HS}	
		\\+\big\| \sup_{0<\lambda<b<\lambda_1} \lambda^{\frac{1}{2}} (b-\lambda)^{-\alpha} |\partial_\lambda E_1^{\pm}(b)-\partial_\lambda E_1^\pm(\lambda)|\big\|_{HS}	
		\les 1,
	\end{multline*}
	provided that $v(x)\lesssim \langle x\rangle^{-\frac{3}{2}-\alpha-}$.
	
\end{lemma}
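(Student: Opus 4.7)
The first two bounds are already contained in Lemma~\ref{lem:M_exp} under the weaker hypothesis $v(x)\lesssim \langle x\rangle^{-3/2-}$, so only the H\"older-type bound on $\partial_\lambda E_1^{\pm}$ requires new work. Recall from Lemma~\ref{R0 exp cor} and the definition $M^{\pm}(\lambda) = U + v R_0^{\pm}(\lambda^2) v$ that the error term in Lemma~\ref{lem:M_exp} is given by
\[
    E_1^{\pm}(\lambda)(x,y) = v(x)\, E_0^{\pm}(\lambda)(x,y)\, v(y),
\]
where $E_0^{\pm}$ is the error in the free resolvent expansion. Consequently,
\[
    \partial_\lambda E_1^{\pm}(b)(x,y) - \partial_\lambda E_1^{\pm}(\lambda)(x,y) = v(x)\bigl[\partial_\lambda E_0^{\pm}(b) - \partial_\lambda E_0^{\pm}(\lambda)\bigr](x,y)\, v(y).
\]

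The plan is to invoke Corollary~\ref{lipbound} directly: for $0 < \lambda < b < \lambda_1$ and $0<\alpha<1$ it yields the pointwise bound
\[
    |\partial_\lambda E_0^\pm(b)(x,y) - \partial_\lambda E_0^\pm(\lambda)(x,y)| \lesssim \lambda^{-1/2}(b-\lambda)^{\alpha}|x-y|^{1/2+\alpha}.
\]
Multiplying by $\lambda^{1/2}(b-\lambda)^{-\alpha}$ and taking the supremum over $\lambda<b<\lambda_1$, we obtain the kernel bound
\[
    \sup_{0<\lambda<b<\lambda_1} \lambda^{1/2}(b-\lambda)^{-\alpha}|\partial_\lambda E_1^{\pm}(b) - \partial_\lambda E_1^{\pm}(\lambda)|(x,y) \lesssim v(x)v(y)|x-y|^{1/2+\alpha}.
\]

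To conclude, I compute the Hilbert--Schmidt norm of the right-hand side, using $|x-y|^{1+2\alpha} \lesssim \langle x\rangle^{1+2\alpha} + \langle y\rangle^{1+2\alpha}$ together with the decay hypothesis $v(x)^2 \lesssim \langle x\rangle^{-3-2\alpha-}$:
\[
    \iint v(x)^2 v(y)^2 |x-y|^{1+2\alpha} \, dx\, dy \lesssim \int \langle x\rangle^{-2-}\, dx \cdot \int \langle y\rangle^{-3-2\alpha-}\, dy,
\]
and both integrals converge in $\R^2$. There is no genuine obstacle here since Corollary~\ref{lipbound} already packages the delicate interpolation between the first and second derivative bounds on $E_0^{\pm}$; the only bookkeeping step is to verify that the spatial weights $|x-y|^{1/2+\alpha}$ are absorbed by the decay of $v\otimes v$, which forces exactly the strengthening from $\langle x\rangle^{-3/2-}$ to $\langle x\rangle^{-3/2-\alpha-}$ in the hypothesis.
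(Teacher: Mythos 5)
Your proposal is correct and follows exactly the route the paper intends: the paper omits a written proof, stating only that Corollary~\ref{lipbound} implies the improved error bounds, and your argument supplies precisely that deduction — identifying $E_1^{\pm}=vE_0^{\pm}v$, applying the H\"older bound on $\partial_\lambda E_0^{\pm}$ pointwise, and checking that the resulting weight $|x-y|^{1/2+\alpha}$ is absorbed by the strengthened decay $v(x)\lesssim\langle x\rangle^{-3/2-\alpha-}$ in the Hilbert--Schmidt norm. The bookkeeping is accurate, so nothing needs to be changed.
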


Recall that from Lemma~\ref{Minverse} we have the
expansion
	\begin{align*}
	 M^{\pm}(\lambda)^{-1}=h_{\pm}(\lambda)^{-1}S
	 +QD_0Q+ E^{\pm}(\lambda),
	\end{align*}

\begin{lemma}\label{Minverse2}
    Let $0<\alpha<1$.
    Under the conditions of Lemma~\ref{Minverse}, 
    we have the improved error bounds
	\begin{multline*}
		\big\| \sup_{0<\lambda<\lambda_1} \lambda^{-\frac{1}{2} } |E^{\pm}(\lambda)|\big\|_{HS}
		+\big\| \sup_{0<\lambda<\lambda_1} \lambda^{\frac{1}{2} } |\partial_\lambda E^{\pm}(\lambda)|\big\|_{HS}	\\
		+\big\| \sup_{0<\lambda<b\les\lambda<\lambda_1}  \lambda^{\frac{1}{2}+\alpha} (b-\lambda)^{-\alpha} |\partial_\lambda E^{\pm}(b)-\partial_\lambda E^\pm(a)| \big\|_{HS}	
		\les 1,
	\end{multline*}
	provided that $v(x)\lesssim \langle x\rangle^{-\frac{3}{2}-\alpha-}$.

\end{lemma}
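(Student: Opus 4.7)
\textbf{Proof proposal for Lemma~\ref{Minverse2}.} The first two bounds on $E^{\pm}$ are already contained in Lemma~\ref{Minverse}, so the only new content is the Hölder-type estimate on $\partial_\lambda E^{\pm}$. The plan is to start from the expansion
$$
M^{\pm}(\lambda) = g^{\pm}(\lambda) P + T + E_1^{\pm}(\lambda),
$$
and re-run the Feshbach inversion argument of Lemma~8 of \cite{Sc2} (i.e., the proof of Lemma~\ref{Minverse}), while tracking at each step the improved Hölder continuity for $\partial_\lambda E_1^{\pm}$ supplied by Lemma~\ref{lem:M_exp_E}.

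The Feshbach formula expresses $M^\pm(\lambda)^{-1}$ as a block matrix in the $P$, $Q$ decomposition whose entries are built, by finite arithmetic operations, from $P$, $Q$, $T$, $(QTQ)^{-1}=D_0$, from inverses of scalar quantities of the form $h_\pm(\lambda)+\widetilde E(\lambda)$ with $\widetilde E(\lambda)=\widetilde O(\lambda^{1/2})$, and from $E_1^{\pm}(\lambda)$ itself. Collecting the terms of each order in the small parameter $E_1^\pm$ and in $h_\pm^{-1}$ exactly as in \cite{Sc2} produces the decomposition
$$
M^{\pm}(\lambda)^{-1} = h_{\pm}(\lambda)^{-1} S + QD_0Q + E^{\pm}(\lambda),
$$
with $E^{\pm}(\lambda)$ a finite sum of terms each of which contains either a factor of $E_1^{\pm}(\lambda)$ or a factor of the scalar correction $[h_\pm(\lambda)+\widetilde E(\lambda)]^{-1}-h_\pm(\lambda)^{-1}=\widetilde O(\lambda^{1/2}|\log\lambda|^{-2})$, together with uniformly bounded (absolutely bounded) operator factors.

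To pass Hölder continuity of $\partial_\lambda E_1^\pm$ to $\partial_\lambda E^\pm$, I would apply $\partial_\lambda$ once to each term in this finite sum, using $\partial_\lambda A^{-1}=-A^{-1}(\partial_\lambda A)A^{-1}$ for the scalar factors, and then estimate the difference $\partial_\lambda E^\pm(b)-\partial_\lambda E^\pm(\lambda)$ via the algebraic identity
$$
F(b)G(b)-F(\lambda)G(\lambda) = [F(b)-F(\lambda)]G(b) + F(\lambda)[G(b)-G(\lambda)].
$$
Whenever the difference falls on $\partial_\lambda E_1^\pm$, Lemma~\ref{lem:M_exp_E} contributes the factor $\lambda^{-1/2}(b-\lambda)^{\alpha}$, which matches the target $\lambda^{-1/2-\alpha}(b-\lambda)^{\alpha}$ up to an absolute constant (using $b\lesssim\lambda$). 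Whenever the difference falls on a smooth factor with a standard Lipschitz bound $|F(b)-F(\lambda)|\les \lambda^{-1/2-1}|b-\lambda|$, one interpolates $|b-\lambda|=|b-\lambda|^{\alpha}|b-\lambda|^{1-\alpha}\les |b-\lambda|^{\alpha}\lambda^{1-\alpha}$ (again using $b\lesssim\lambda$), again converting the Lipschitz bound to the required Hölder form with the correct weight. Summing the finite number of such terms and taking Hilbert--Schmidt norms, using that each non-differentiated factor is uniformly absolutely bounded, yields the stated bound.

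The main technical obstacle is the weight bookkeeping: the Hölder bound asks for the power $\lambda^{1/2+\alpha}$, which is stronger than the $\lambda^{1/2}$ of Lemma~\ref{lem:M_exp_E}, and this extra $\lambda^\alpha$ has to come from the assumption $b\lesssim \lambda$ (equivalently $b-\lambda\lesssim \lambda$) together with the interpolation trick above. A secondary concern is the scalar factor $h_\pm(\lambda)^{-1}\sim |\log\lambda|^{-1}$ appearing in the $S$-term: differentiating this twice gives $\partial_\lambda h_\pm(\lambda)^{-1}\sim \lambda^{-1}|\log\lambda|^{-2}$, whose Hölder continuity in $\lambda$ follows directly from the explicit formula \eqref{g form} for $g^{\pm}(\lambda)$ and causes no loss provided $b\lesssim\lambda$. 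With these two points in hand, the remaining estimates are routine, following the same pattern as in the proofs of Lemmas~\ref{lem:M_exp} and \ref{Minverse}.
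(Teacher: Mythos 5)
Your proposal is correct and follows essentially the route the paper intends: the paper in fact states Lemma~\ref{Minverse2} without proof (the expansion is imported from \cite{EG2}), and the intended argument is exactly yours --- propagate the H\"older bound of Lemma~\ref{lem:M_exp_E} through the Feshbach/Neumann inversion of Lemma~\ref{Minverse} by telescoping product differences and interpolating $|b-\lambda|\les |b-\lambda|^{\alpha}\lambda^{1-\alpha}$ on the smooth and scalar factors. One small slip in your commentary: for $0<\lambda<\lambda_1$ the target weight $\lambda^{\f12+\alpha}$ is actually \emph{weaker} (more permissive) than the $\lambda^{\f12}$ of Lemma~\ref{lem:M_exp_E}, and this extra $\lambda^{-\alpha}$ allowance is precisely what absorbs the interpolation loss from the singular scalar factors such as $\partial_\lambda h_{\pm}(\lambda)^{-1}\sim \lambda^{-1}(\log\lambda)^{-2}$ --- your computation handles this correctly even though the stated motivation is reversed.
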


We note that for fixed $x,y$ the kernel 
$R_V^\pm(\lambda^2)(x,y)$ of the  resolvent  remains 
bounded as $\lambda \to 0$. This is because of a 
cancellation between the first and second summands of the 
second line of the expansion for $R_V^{\pm}$ 
in \eqref{resolvent id}. A consequence of 
this cancellation is cancellation of the slowest
decaying terms in the evolution which 
makes it possible to obtain the faster
time decay.

We first establish the following simple Lemma 
which allows us to use the Lipschitz
bounds on the error terms from Lemma~\ref{Minverse2}.
This Lemma is crucial on two fronts, it allows us to
minimize both the decay assumptions on the potential and
the size of polynomial weights needed in 
Theorem~\ref{thm:wtd}, see Proposition~\ref{freeevol},
Lemma~\ref{lem:Sll3}, Proposition~\ref{prop:stone2_3},
and Proposition~\ref{prop:stone2er} below.
\begin{lemma}\label{lem:ibp2} Assume that  $\mathcal E(0)=0$. For $t>2$, we have
\begin{align}\label{ibp2}
	\Big|\int_0^\infty e^{\pm it\lambda} \, \mathcal E(\lambda) d\lambda  \Big| 
	\les \f1t\int_0^{\infty}\frac{|\mathcal E^\prime(\lambda)|}{  (1+\lambda t)} d\lambda
	+\f1{t}\int_{t^{-1}}^\infty \big|  \mathcal E^\prime(\lambda +\pi/t)-\mathcal E^\prime(\lambda)  \big| d\lambda.
\end{align}
\end{lemma}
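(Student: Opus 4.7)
The plan is to combine a single integration by parts with the standard ``$e^{\pm i\pi}=-1$ shift'' trick to turn the oscillatory integral into an integral of a finite difference of $\mathcal{E}'$, and then to split the resulting integration range at $\lambda=t^{-1}$ so as to match the two pieces appearing on the right-hand side of \eqref{ibp2}.

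First I would integrate by parts once. Using $\mathcal{E}(0)=0$ (and the decay of $\mathcal{E}$ at infinity, which is implicit in the bounds from Lemma~\ref{Minverse2} that will be invoked when this lemma is applied), I would write
\begin{equation*}
I := \int_0^\infty e^{\pm it\lambda}\mathcal{E}(\lambda)\,d\lambda = \mp\frac{1}{it}\int_0^\infty e^{\pm it\lambda}\mathcal{E}'(\lambda)\,d\lambda =: \mp\frac{J}{it}.
\end{equation*}
Thus $|I|\leq t^{-1}|J|$, and it suffices to control $J$.

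Next, I would apply the shift trick. Using $e^{\pm i\pi}=-1$ and the substitution $\lambda=\mu+\pi/t$,
\begin{equation*}
J = -\int_{-\pi/t}^\infty e^{\pm it\mu}\,\mathcal{E}'(\mu+\pi/t)\,d\mu.
\end{equation*}
Adding this representation to the original one and splitting the shifted integral at $\mu=0$ produces
\begin{equation*}
2J = -\int_{-\pi/t}^0 e^{\pm it\mu}\,\mathcal{E}'(\mu+\pi/t)\,d\mu + \int_0^\infty e^{\pm it\lambda}\bigl[\mathcal{E}'(\lambda)-\mathcal{E}'(\lambda+\pi/t)\bigr]\,d\lambda.
\end{equation*}
Taking absolute values and changing variables $\nu=\mu+\pi/t$ in the boundary term gives
\begin{equation*}
2|J|\leq \int_0^{\pi/t}|\mathcal{E}'(\nu)|\,d\nu + \int_0^\infty |\mathcal{E}'(\lambda)-\mathcal{E}'(\lambda+\pi/t)|\,d\lambda.
\end{equation*}

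Finally, I would match the two stated terms. On $[0,\pi/t]$ we have $1+\nu t\leq 1+\pi$, so $\int_0^{\pi/t}|\mathcal{E}'(\nu)|\,d\nu\lesssim \int_0^\infty \frac{|\mathcal{E}'(\nu)|}{1+\nu t}\,d\nu$. For the difference term, I would split the range at $\lambda=t^{-1}$: the tail $\int_{t^{-1}}^\infty$ is exactly the second term of \eqref{ibp2} (up to the factor $1/t$), while the head $\int_0^{t^{-1}}$ is estimated by the triangle inequality as
\begin{equation*}
\int_0^{1/t}|\mathcal{E}'(\lambda)|\,d\lambda + \int_{\pi/t}^{\pi/t+1/t}|\mathcal{E}'(\nu)|\,d\nu,
\end{equation*}
and on each of these short intervals $1+\lambda t\lesssim 1$, so both pieces are controlled by $\int_0^\infty \frac{|\mathcal{E}'(\lambda)|}{1+\lambda t}\,d\lambda$. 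Dividing by $t$ and combining yields \eqref{ibp2}. The only point requiring any care is bookkeeping of the $\pi/t$ shift and verifying that the shifted range $[\pi/t,\pi/t+1/t]$ indeed satisfies $1+\nu t\lesssim 1$, which it plainly does; there is no serious obstacle.
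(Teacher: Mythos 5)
Your proof is correct and follows essentially the same route as the paper: one integration by parts (using $\mathcal E(0)=0$), the half-period shift $\lambda\mapsto\lambda+\pi/t$ to produce a finite difference of $\mathcal E'$, and absorption of the short intervals near $\lambda=0$ into the first term via $1+\lambda t\lesssim 1$. The only difference is bookkeeping — you apply the shift to the whole half-line and average, whereas the paper first splits off $[0,2\pi/t]$ and shifts only the tail — and both versions close without difficulty.
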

\begin{proof}
We integrate by parts
once and then consider two different regions.  We give the
proof for the positive phase, the negative phase follows
identically.
$$
 	\int_0^\infty e^{it\lambda} \mathcal E(\lambda) 
 	d\lambda =  -\f{i}{t}  \int_0^\infty e^{it\lambda}  
 	\mathcal E^\prime(\lambda) d\lambda
$$
We divide this integral into two pieces.  The first region
$0<\lambda<\frac{2\pi}{t}$, can clearly be bounded by the first
integral in the statement of the Lemma.
On the second region, $\frac{2\pi}{t}<\lambda$, we note that
$$
	\int_{\f{2\pi}{t}}^\infty e^{it\lambda}  
	\mathcal E^\prime(\lambda)\, d\lambda 
	=-\int_{\f{2\pi}{t}}^\infty e^{it(\lambda-\f\pi{t})}  
	\mathcal E^\prime(\lambda)\, d\lambda 
	= -\int_{\f{\pi}{t}}^\infty e^{it\lambda}  
	\mathcal E^\prime(\lambda+\pi/t)\, d\lambda.
$$
Therefore it suffices to consider (the integral on $[\pi/t,2\pi/t]$ is bounded by the first integral on the right hand side of \eqref{ibp2})
$$
	\int_{\f{\pi}{t}}^\infty e^{it\lambda}  
	\Big(\mathcal E^\prime(\lambda) 
	-\mathcal E^\prime(\lambda+\pi/t)\Big) \, d\lambda.
$$

\end{proof}

We start with the contribution of the free resolvent to \eqref{stone2}. In this instance we work with the
functions $\sin(t\lambda)$ and $\lambda \cos(t\lambda)$
directly rather than estimating terms containing
$e^{\pm i\lambda t}$.  This will allow us to 
explicitly determine the slowest decaying 
(order $t^{-1}$) 
term.
\begin{prop}\label{freeevol} For any $0<\alpha<\f14$,
we have
$$\int_0^\infty (\sin(t\lambda)+\lambda \cos(t\lambda)) \chi_1(\lambda)  [R_0^+(\lambda^2)-R_0^-(\lambda^2)](x,y)
d\lambda=\frac{i}{2t} +O\Big(\f{\la x \ra^{\f12 +\alpha } \la y\ra^{\f12 +\alpha }}{t^{1+\alpha}}\Big).
$$
\end{prop}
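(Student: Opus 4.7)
Since $R_0^+(\lambda^2)-R_0^-(\lambda^2) = \f{i}{2}J_0(\lambda d)$ with $d := |x-y|$ by \eqref{r0low2}, I split $J_0(\lambda d) = 1 + (J_0(\lambda d)-1)$ to isolate the leading-order term. The constant ``$1$'' contribution yields the exact main term $i/(2t)$, while the $(J_0(\lambda d)-1)$ contribution vanishes at $\lambda = 0$ and will account for the error.

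For the main term I would compute directly. A single integration by parts gives $\int_0^\infty \sin(t\lambda)\chi_1(\lambda)\,d\lambda = \chi_1(0)/t + O(t^{-N}) = 1/t + O(t^{-N})$, since the remaining $\chi_1'$-integral decays faster than any polynomial ($\chi_1'$ is smooth and supported away from zero). Similarly, two integrations by parts yield $\int_0^\infty \lambda\cos(t\lambda)\chi_1(\lambda)\,d\lambda = -1/t^2 + O(t^{-N})$. Multiplying the sum by $i/2$ produces the advertised leading term $i/(2t)$ with $O(t^{-2})$ remainder.

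For the error I would use the identity $\sin(t\lambda)+\lambda\cos(t\lambda) = \operatorname{Im}[(1+i\lambda)e^{it\lambda}]$ and set $\mathcal{E}(\lambda) := (1+i\lambda)\chi_1(\lambda)[J_0(\lambda d)-1]$, which satisfies $\mathcal{E}(0) = 0$ since $J_0(0)=1$. Lemma~\ref{lem:ibp2} then reduces matters to estimating two integrals involving $\mathcal{E}'$. Using $|J_0(z)-1|\les \min(z^2,1)$, $|J_1(z)|\les \min(z, z^{-1/2})$, and $J_0' = -J_1$, one obtains $|\mathcal{E}'(\lambda)|\les \lambda d^2$ when $\lambda d \les 1$ and $|\mathcal{E}'(\lambda)|\les d^{1/2}\lambda^{-1/2}$ when $\lambda d \gtrsim 1$. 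A routine case analysis---splitting on whether $\lambda t \les 1$ or $\gtrsim 1$ and whether $d \les t$ or $d > t$---then bounds the first integral $\int_0^\infty |\mathcal{E}'(\lambda)|/(1+\lambda t)\,d\lambda$ by $d^{1/2+\alpha}/t^\alpha$.

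The main obstacle will be the H\"older-type integral $\int_{1/t}^\infty |\mathcal{E}'(\lambda+\pi/t)-\mathcal{E}'(\lambda)|\,d\lambda$, since a crude mean-value bound involving $\mathcal{E}''$ incurs a $d^2$ growth from $J_1'(\lambda d)$ that is unacceptable when $d$ is large. The key step will be to exploit that at large argument $J_1(\lambda d) = e^{i\lambda d}\omega_+(\lambda d) + e^{-i\lambda d}\omega_-(\lambda d)$ with $\omega_\pm = \widetilde O((\lambda d)^{-1/2})$; the shift $\lambda\mapsto\lambda+\pi/t$ then produces a factor $e^{\pm i\pi d/t}-1$, whose H\"older interpolation $|e^{\pm i\pi d/t}-1|\les \min(d/t,1)\les (d/t)^\alpha$ supplies the decisive $d^\alpha/t^\alpha$ gain, and integration of the remaining $(\lambda d)^{-1/2}$ decay against $\lambda^{-1/2}$ over $[1/t,\lambda_1]$ yields $d^{1/2+\alpha}/t^\alpha$. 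A complementary term coming from the variation of $\omega_\pm$ is handled by a mean-value estimate using $|\omega_\pm'(z)|\les z^{-3/2}$; it contributes $(d/t)\,d^{-1/2}\lambda^{-3/2}$, which integrates to $\les d^{1/2}/\sqrt{t}$ and is in turn dominated by $d^{1/2+\alpha}/t^\alpha$ throughout the admissible range (the worst subcase $1/t \leq d \leq 1$ being exactly what forces the constraint $\alpha \leq 1/4$). Combining these with the $1/t$ prefactor in Lemma~\ref{lem:ibp2} and the elementary inequality $d \leq \la x\ra\la y\ra$ delivers the stated error $O(\la x\ra^{1/2+\alpha}\la y\ra^{1/2+\alpha}/t^{1+\alpha})$.
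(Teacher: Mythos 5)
Your proposal is correct and follows essentially the same route as the paper: split off the constant part of $R_0^+-R_0^-$ (the paper uses Lemma~\ref{R0 exp cor}, you use $J_0=1+(J_0-1)$, which is the same decomposition), integrate by parts for the $i/(2t)$ main term, and apply Lemma~\ref{lem:ibp2} to the vanishing-at-zero remainder using a H\"older bound $\les \lambda^{-\f12-\alpha}t^{-\alpha}\la x-y\ra^{\f12+\alpha}$ on the shifted difference of derivatives. The only variation is how that H\"older bound is produced: the paper gets it by interpolating the mean-value estimate (via $|\partial_\lambda^2 E_0^\pm|\les \lambda^{-\f12}|x-y|^{\f32}$) against the sup bound in Corollary~\ref{lipbound}, whereas you extract the $(d/t)^\alpha$ gain from the oscillatory factor $e^{\pm i\pi d/t}-1$ — both are valid and yield the same estimate.
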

\begin{proof}
Using Lemma~\ref{R0 exp cor}, we have
$$
R_0^+-R_0^-=\f{i}{2}+E_0^+(\lambda)-E_0^-(\lambda).
$$
We first consider the $\sin(t\lambda)$ term, the 
$\lambda \cos(t\lambda)$ term is smaller.
We now rewrite the $\lambda$ integral above as
\begin{align*}
\frac{i}{2}\int_0^\infty \sin(t\lambda) \chi_1(\lambda)
d\lambda + \int_0^\infty \sin(t\lambda) \chi_1(\lambda) (E_0^+(\lambda)-E_0^-(\lambda)) d\lambda=:I+II.
\end{align*}
Note that by integrating by parts we obtain
\begin{align}\label{Aest}
I=\frac{i}{2}\bigg(\frac{-\cos(t\lambda)\chi_1(\lambda)}
{t}\bigg|^\infty_0+\frac{1}{t}\int_0^\infty \cos(t\lambda)
\chi_1'(\lambda)\, d\lambda
\bigg)=\frac{i}{2t}+O(t^{-2})
\end{align}
Since
$\chi_1'$ is supported on $\lambda \approx 1$, 
the second term can be integrated by parts again.

Using the bounds in Lemma~\ref{R0 exp cor} for 
$\mathcal E(\lambda)= \chi_1(\lambda) 
(E_0^+(\lambda)-E_0^-(\lambda)) $, we see that 
$\mathcal E(0)=0$, and from
Corollary~\ref{lipbound},
\begin{align*}
|\partial_\lambda \mathcal E(\lambda)| &\les   \lambda^{-\f12}|x-y|^{\f12 } \les \lambda^{-\f12 } \sqrt{\la x \ra \la y\ra},\\
 \Big|\partial_\lambda \mathcal E (\lambda+\pi/t)
 -\partial_\lambda \mathcal E(\lambda)\Big|& \les \lambda^{-\f12-\alpha}t^{-\alpha}\la x-y\ra^{\f12+\alpha}
 \end{align*}
Accordingly, we decompose sine and cosine into
terms involving $e^{\pm i\lambda t}$ use Lemma~\ref{lem:ibp2} to obtain
$$
	|II|\les  \f1t\int_0^{\infty}\frac{|\mathcal E^\prime(\lambda)|}{  (1+\lambda t)} d\lambda
	+\f1{t}\int_{t^{-1/2}}^\infty \big|  \mathcal E^\prime(\lambda +\pi/t)-\mathcal E^\prime(\lambda)\big| d\lambda.
$$
Using the bounds above, we estimate the first integral by
$$
 \f{ \sqrt{ \la x\ra \la y\ra } }{t}\int_0^{\infty}\frac{1}{ \sqrt{\lambda} (1+\lambda t)} d\lambda\les  \f{ \sqrt{ \la x\ra \la y\ra } }{t^{1+\alpha}}.
$$
To estimate the second integral, we apply the Lipschitz bound to see
$$
	\f{ (\la x\ra \la y
	\ra)^{\f12+\alpha}}{t}\int_{t^{-1}}^{\lambda_1}  
	\lambda^{-\f12-\alpha} (t\lambda)^{-\alpha} d\lambda
	\les \f{ (\la x\ra \la 
	y\ra)^{\f12+\alpha }  }{t^{1+\alpha}},
$$
since $\alpha \in (0,\f14)$.

For the cosine term we note that we need to control
and integral of the form 
\begin{align*}
	\int_0^\infty \cos(t\lambda)  
	\mathcal E_1(\lambda)\, d\lambda,
\end{align*}
with $\mathcal E_1(\lambda)=\lambda\chi_1(\lambda)
(\frac{i}{2}+E_0^+(\lambda)-E_0^-(\lambda))$.  Here, we
can integrate by parts  without an order
$t^{-1}$ boundary term.
This follows from the vanishing of $\mathcal E_1(\lambda)$
and at $\lambda=0$.  With
the bounds on $E_0^{\pm}$ we can  bound this 
contribution as we did term II.

\end{proof}

We now consider the contribution of the second term in \eqref{resolvent id} to \eqref{stone2}:
\be\label{stone2_2}
 \int_{\R^4}\int_0^\infty 
 (\sin(t\lambda)+\cos(t\lambda)\lambda )
 \chi_1(\lambda)  [\mathcal R^- -\mathcal R^+] v(x_1)S(x_1,y_1)v(y_1)
d\lambda dx_1 dy_1,
\ee
where
\be\label{calR}
\mathcal R^\pm=\f{R_0^{\pm}(\lambda^2)(x,x_1) R_0^{\pm}(\lambda^2)(y_1,y)}{h_\pm(\lambda)}.
\ee

\begin{prop}\label{prop:stone2_2} Let $0<\alpha<1/4$. If $v(x)\les \la x\ra^{-\f32-\alpha-}$, then
we have
$$
\eqref{stone2_2}=-\frac{i}{2t} + O\Big( \f{(1+\log^+ |x|)(1+\log^+|y|)  }{t(\log(t))^2} \Big)+O\Big( \f{\la x\ra^{\f12+\alpha+} \la y\ra^{\f12+\alpha+}}{t^{1+\alpha} } \Big).
$$
\end{prop}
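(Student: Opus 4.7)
The plan is to insert the low-energy free resolvent expansion
$R_0^\pm(\lambda^2)(x,y)=g^\pm(\lambda)/\|V\|_1 + G_0(x,y) + E_0^\pm(\lambda)(x,y)$
from Lemma~\ref{R0 exp cor} into $\mathcal R^\pm$ from \eqref{calR} and expand the product in the numerator into nine pieces according to the three summands in each of the two free resolvents. The leading term is $(g^\pm(\lambda))^2/(\|V\|_1^2 h_\pm(\lambda))$, for which I use the algebraic identity
\begin{equation*}
\frac{(g^\pm(\lambda))^2}{h_\pm(\lambda)}=g^\pm(\lambda)-c+\frac{c^2}{h_\pm(\lambda)}
\end{equation*}
to split off an explicit constant-in-$\lambda$ piece plus a small residual containing only $1/h_\pm$.

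The extraction of the $-i/(2t)$ main term will proceed as follows. The piece $g^\pm(\lambda)/\|V\|_1^2$ is $\lambda$-dependent, but its difference $(g^+-g^-)/\|V\|_1^2 = i/(2\|V\|_1)$ is a pure constant. When paired against the spatial integral $\int v(x_1) S(x_1,y_1) v(y_1)\,dx_1\,dy_1$, the orthogonality $Qv=vQ=0$ kills every block of $S$ in \eqref{S_defn} except the $P$-block, which evaluates to $\|V\|_1$. The remaining $\sin(t\lambda)$ integral yields $1/t + O(t^{-2})$ as in \eqref{Aest}, producing exactly $-i/(2t)$, while the $\lambda\cos(t\lambda)$ integral contributes only $O(t^{-2})$ after an extra integration by parts.

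For all remaining terms in the expansion of $\mathcal R^- - \mathcal R^+$, I would apply Lemma~\ref{lem:ibp2} together with the derivative bounds on $E_0^\pm$ from Lemma~\ref{R0 exp cor}, the Lipschitz bound from Corollary~\ref{lipbound}, and the pointwise estimate $|\partial_\lambda^k h_\pm^{-1}(\lambda)| \lesssim \lambda^{-k}|\log\lambda|^{-1}$. Terms containing at most one factor of $g^\pm$ carry over the logarithmic spatial weights $(1+\log^+|x|)(1+\log^+|y|)$ through $G_0(x,x_1)=-\tfrac{1}{2\pi}\log|x-x_1|$, handled exactly as in Lemma~13 of \cite{Sc2} by an analogue of the operator $F$ in \eqref{F defn}. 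The key gain here is that in the residual $1/h_\pm$ pieces, the `+'$-$`$-$' cancellation produces the combination $1/(h_+(\lambda)h_-(\lambda))$ which decays like $(\log\lambda)^{-2}$, so that Lemma~\ref{lem:ibp2} delivers the $(\log t)^{-2}$ improvement. Terms carrying one or two factors of $E_0^\pm$ generate the polynomially weighted error $\langle x\rangle^{1/2+\alpha+}\langle y\rangle^{1/2+\alpha+}/t^{1+\alpha}$ by the same argument used in Proposition~\ref{freeevol}.

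The principal technical hurdle is the first step: correctly isolating the main $-i/(2t)$ contribution via the $(g^\pm)^2/h_\pm = g^\pm - c + c^2/h_\pm$ identity and the $Qv=0$ simplification of $vSv$. Once that extraction is carried out, the classification of the eight remaining cross-terms into a $\log$-improvement family (no $E_0^\pm$ factors) and a polynomially weighted family (at least one $E_0^\pm$ factor) is routine, and each bound follows by a careful but standard application of Lemma~\ref{lem:ibp2} together with the uniform Lipschitz and derivative estimates collected in Lemmas~\ref{R0 exp cor}--\ref{Minverse2}. The mild restriction $\alpha<1/4$ comes precisely from the integrability of $\lambda^{-1/2-\alpha}(\lambda t)^{-\alpha}$ used in the Lipschitz error, as in Proposition~\ref{freeevol}.
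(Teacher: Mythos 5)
Your proposal follows essentially the same route as the paper's proof: the paper likewise expands both free resolvents via Lemma~\ref{R0 exp cor} (absorbing the constant $c$ into $\widetilde G_0=\|V\|_1G_0-c$ rather than using your $(g^\pm)^2/h_\pm$ identity, which is the same algebra), extracts the $-i/(2t)$ term from the constant difference of $g^{\pm}$ paired with $\int vSv=\int vPv=\|V\|_1$ via $Qv=vQ=0$, obtains the $(\log t)^{-2}$ gain from the $1/(h_+h_-)\lesssim(\log\lambda)^{-2}$ cancellation, and controls the terms carrying $E_0^\pm$ with Lemma~\ref{lem:ibp2} and the Lipschitz bounds. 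The only slip is transient: since \eqref{stone2_2} carries $\mathcal R^--\mathcal R^+$, the relevant constant is $(g^--g^+)/\|V\|_1^2=-i/(2\|V\|_1)$ rather than $(g^+-g^-)/\|V\|_1^2$, which is consistent with the $-i/(2t)$ you correctly arrive at.
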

We will prove this Proposition through a series of
Lemmas to control the various terms that arise from the
difference of the `+' and `-' terms.
Recall from  Lemma~\ref{R0 exp cor}  that
	\begin{align*}
		R_0^{\pm}(\lambda^2)(x,x_1)=\frac{1}{\|V\|_1} g^{\pm}(\lambda)+G_0(x,x_1)+E_0^\pm(\lambda)(x,x_1).
	\end{align*}
Recalling that $h^\pm(\lambda)=g^\pm(\lambda)+c$ 
with $c\in \R$, we see
$$
\mathcal R^\pm=\frac{1}{\|V\|_1^2}\big[g^\pm(\lambda)+c+\widetilde G_0(x,x_1)+\widetilde G_0(y,y_1) +\frac{\widetilde G_0(x,x_1) \widetilde G_0(y,y_1)}{g^\pm(\lambda)+c}\big] +E_2^\pm(\lambda),
$$
where
\begin{multline}\label{E2def}
E_2^\pm(\lambda):=\f1{\|V\|_1}\Big(1+\f{\widetilde G_0(x,x_1)}{g^\pm(\lambda)+c}\Big) E_0^\pm(\lambda)(y,y_1) +
\f1{\|V\|_1}\Big(1+\f{\widetilde G_0(y,y_1)}{g^\pm(\lambda)+c}\Big) E_0^\pm(\lambda)(x,x_1)\\+\f{E_0^\pm(\lambda)(x,x_1)E_0^\pm(\lambda)(y,y_1)}{g^\pm(\lambda)+c},
\end{multline}
and $\widetilde G_0=\|V\|_1G_0-c$. Using this and  \eqref{g form}, we have
$$
\mathcal R^--\mathcal R^+=-\frac{i}{2\|V\|_1}+c_3\frac{\widetilde G_0(x,x_1) \widetilde G_0(y,y_1)}{(\log(\lambda)+c_1)^2+c_2^2} +E_2^-(\lambda)-E^+_2(\lambda),
$$
where  $c_1,c_2,c_3\in\R$.

We consider first the contribution of the sine,
the contribution of the cosine term can be
controlled similarly but without the boundary term due
to the additional power of $\lambda$, 
see Proposition~\ref{freeevol}. We rewrite the $\lambda$ 
integral in \eqref{stone2_2} as a sum of 
\begin{align}\label{Sll1}
	&-\frac{i}{2\|V\|_1}  \int_0^\infty 
	\sin(t\lambda) \chi_1(\lambda) d\lambda,\\
	&\label{Sll2}
	\int_0^\infty \sin(t\lambda)  \chi_1(\lambda) \frac{\widetilde G_0(x,x_1) \widetilde G_0(y,y_1)}{(\log(\lambda)+c_1)^2+c_2^2} d\lambda,\\
	&\label{Sll3}
	\int_0^\infty \sin(t\lambda)  \chi_1(\lambda) [E_2^-(\lambda)-E^+_2(\lambda)] d\lambda.
\end{align}
Note that by \eqref{Aest} we have
\be\label{lem:Sll1}
\eqref{Sll1}=-\frac{i}{2t\|V\|_1}+O(t^{-2}).
\ee
The leading term above will cancel the boundary term that arose in Proposition~\ref{freeevol}.

The  decay rate $\f{1}{t\log^2(t)}$ appears in the
weighted $H^s$ setting in \cite{Kop} and appears in our
analysis due to the contribution of \eqref{Sll2},
as shown in the following lemma.

\begin{lemma}\label{lem:Sll2} For $t>2$, we have the bound
$$
|\eqref{Sll2}| \les \frac{1}{t\log^2(t)} k(x,x_1)  k(y,y_1)  (1+\log^+ |x|)(1+\log^+ |y|).
$$ 
\end{lemma}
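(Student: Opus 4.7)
The proof has a clean split into a spatial factor and a temporal integral. The spatial factor $\widetilde G_0(x,x_1)\widetilde G_0(y,y_1)$ pulls out of the $\lambda$ integral in \eqref{Sll2} because it does not depend on $\lambda$. Since $\widetilde G_0(x,x_1) = -\frac{\|V\|_1}{2\pi}\log|x-x_1| - c$, I would first record the pointwise bound
\[
|\widetilde G_0(x,x_1)| \lesssim 1 + \log^+|x-x_1| + \log^-|x-x_1| \lesssim 1 + \log^+|x|+\log^+|x_1|+\log^-|x-x_1|,
\]
where the last step uses $\log^+|x-x_1|\leq 1+\log^+|x|+\log^+|x_1|$. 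Since $k(x,x_1)\geq 1$, this gives $|\widetilde G_0(x,x_1)|\lesssim k(x,x_1)(1+\log^+|x|)$, which accounts for the spatial weights on the right-hand side. The analogous bound holds in $(y,y_1)$.

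It remains to prove the purely temporal bound
\[
\Big|\int_0^\infty \sin(t\lambda)\,\chi_1(\lambda)\,f(\lambda)\,d\lambda\Big|\lesssim \frac{1}{t(\log t)^2},\qquad f(\lambda):=\frac{1}{(\log\lambda+c_1)^2+c_2^2}.
\]
My plan is to integrate by parts once against $\sin(t\lambda)$. The boundary terms vanish: at $\lambda=\infty$ by the support of $\chi_1$, and at $\lambda=0^+$ because $(\log\lambda+c_1)^2\to\infty$ forces $\chi_1(\lambda)f(\lambda)\to 0$. The result is $\frac{1}{t}\int_0^\infty \cos(t\lambda)(\chi_1 f)'(\lambda)\,d\lambda$. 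The contribution of $\chi_1'$ is supported on $\lambda\approx 1$ and can be integrated by parts any number of times to produce $O(t^{-N})$, which is negligible. The main piece comes from
\[
(\chi_1 f)'(\lambda)\approx -\frac{2\chi_1(\lambda)(\log\lambda+c_1)/\lambda}{[(\log\lambda+c_1)^2+c_2^2]^2},\qquad \text{so}\quad |(\chi_1 f)'(\lambda)|\lesssim \frac{1}{\lambda|\log\lambda|^3}
\]
for small $\lambda$.

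The key step is then to split the resulting integral at the critical scale $\lambda=1/t$. On $(0,1/t)$ I estimate directly, using the substitution $u=\log\lambda$:
\[
\frac{1}{t}\int_0^{1/t} \frac{d\lambda}{\lambda|\log\lambda|^3}=\frac{1}{t}\int_{-\infty}^{-\log t}\frac{du}{|u|^3}=\frac{1}{2t(\log t)^2},
\]
which is precisely the target rate. On $(1/t,\infty)$ I integrate by parts a second time against $\cos(t\lambda)$; the boundary at $\lambda=1/t$ produces $\frac{1}{t^2}|(\chi_1 f)'(1/t)|\lesssim \frac{1}{t(\log t)^3}$, which is smaller. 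The remaining integral requires bounding $\int_{1/t}^{1/2}|(\chi_1 f)''(\lambda)|\,d\lambda$ with $|(\chi_1 f)''(\lambda)|\lesssim \frac{1}{\lambda^2|\log\lambda|^3}$; comparing against the derivative of $\frac{-1}{\lambda|\log\lambda|^2}$ shows this integral is $\lesssim \frac{t}{(\log t)^2}$, giving a total contribution $\lesssim \frac{1}{t(\log t)^2}$.

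The main obstacle here is purely bookkeeping: one must keep track of the exact power of $|\log\lambda|$ that survives each differentiation and integration, because the two balancing factors $\lambda^{-1}$ and $|\log\lambda|^{-2}$ conspire to produce exactly the critical $(\log t)^{-2}$ rate. All the geometric structure is captured at the first IBP, and the dyadic cutoff at $\lambda=1/t$ is the natural scale that separates the range where the cosine does not oscillate from where it does.
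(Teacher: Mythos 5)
Your argument is correct and follows essentially the same route as the paper: the same pointwise bound $|\widetilde G_0(x,x_1)|\les k(x,x_1)(1+\log^+|x|)$, one integration by parts using $\mathcal E(0)=0$, the derivative bounds $|\mathcal E'(\lambda)|\les \lambda^{-1}|\log\lambda|^{-3}$ and $|\mathcal E''(\lambda)|\les \lambda^{-2}|\log\lambda|^{-3}$, a split at $\lambda=t^{-1}$ with the direct estimate producing the critical $\frac{1}{t(\log t)^2}$ on $(0,t^{-1})$, and a second integration by parts on the tail. The only cosmetic difference is how the tail integral $\int_{t^{-1}}^{1/2}\lambda^{-2}|\log\lambda|^{-3}\,d\lambda$ is bounded (you compare with an explicit antiderivative, the paper splits again at $t^{-1/2}$), and both give an acceptable bound.
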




\begin{proof}
First note that
\be\label{G0bound}
|\widetilde G_0(x,x_1)|\les 1+|\log|x-x_1||\les k(x,x_1) (1+\log^+ |x|).
\ee
Second, we bound the $\lambda$-integral by integrating
by parts.  Denote 
$\mathcal E(\lambda)=\frac{\chi_1(\lambda)}
{(\log(\lambda)+c_1)^2+c_2^2}$.
We note that $\mathcal E(0)=0$ and satisfies the 
following bounds
\begin{align*}
	|\partial_\lambda \mathcal E(\lambda)|  \les   
	\frac{\chi_1(\lambda)}{\lambda |\log(\lambda)|^3}, 
	\quad\quad \Big|\partial_\lambda^2 \mathcal E 
	(\lambda)\Big|  \les  \frac{\chi_1(\lambda)}
	{\lambda^2 |\log(\lambda)|^3}.
\end{align*}
Using these bounds, we will integrate by parts once,
then divide the integral into two pieces.  On the
first region we consider $0<\lambda <t^{-1}$ and perform
another integration by parts on the second region,
$\lambda>t^{-1}$.
\begin{multline*}
	\Big|\int_0^\infty \sin(t\lambda) \, \mathcal 
	E(\lambda) \, d\lambda \Big| \les 
	\f1t\int_0^{t^{-1}}|\mathcal E^\prime(\lambda)| 
	d\lambda+ 
	\Big|\frac{\mathcal{E}^\prime(t^{-1})}{t^{2}}\Big|
	+\f1{t^2}\int_{t^{-1}}^\infty \Big|\mathcal E^{\prime\prime}(\lambda)\Big| d\lambda\\
	\les \f{ 1}{t}\int_0^{t^{-1}} \frac{1}{\lambda |\log(\lambda)|^3} d\lambda +\f{1 }{t|\log(t)|^3}
	+\f{1}{t^2}\int_{t^{-1}}^\infty\frac{1}
	{\lambda^2 |\log(\lambda)|^3}  d\lambda.
\end{multline*}
A simple calculation shows that
$$
 \f1t \int_0^{t^{-1}} \frac{1}{\lambda |\log(\lambda)|^3} d\lambda \sim \f1{t|\log(t)|^2}.
$$
Finally we bound the integral on $[t^{-1},\infty)$:
\begin{multline*}
	\f{1}{t^2}\int_{t^{-1}}^\infty\frac{\chi_1(\lambda)}
	{\lambda^2 |\log(\lambda)|^3}  d\lambda \lesssim  
	\f1{t^2}+ \f1{t^2} \int_{t^{-1}}^{1/2}  
	\frac{1}{\lambda^2  |\log(\lambda)|^3 } d\lambda \\  
	\les \f1{t^2}+\f1{t^2} \int_{t^{-1/2}}^{1/2}  
	\frac{1}{\lambda^2  } d\lambda+\f1{t^2} 
	\int_{t^{-1}}^{t^{-1/2}}  \frac{1}{\lambda^2  
	|\log(t)|^3 } d\lambda
	\les \f1{t^{\f32}}+\f1{t|\log(t)|^3}.
\end{multline*}
The first inequality follows since the integral
converges on $[\f1{2},\infty)$.

Combining the bounds we obtained above finishes the proof of the lemma.
\end{proof}

\begin{lemma}\label{lem:Sll3} Let $0<\alpha<1/4$. For $t>2$, we have the bound
$$
|\eqref{Sll3}|\les t^{-1-\alpha} k(x,x_1)  k(y,y_1)  \big(\la x\ra \la y\ra \la x_1\ra \la y_1\ra\big)^{\f12+\alpha+}.
$$
\end{lemma}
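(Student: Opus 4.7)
The plan is to apply Lemma~\ref{lem:ibp2} to $\mathcal E(\lambda) := \chi_1(\lambda)[E_2^-(\lambda)-E_2^+(\lambda)]$ after decomposing $\sin(t\lambda) = (e^{it\lambda}-e^{-it\lambda})/(2i)$. The hypothesis $\mathcal E(0)=0$ is immediate from Lemma~\ref{R0 exp cor}: the bound $|E_0^\pm(\lambda)(x,y)|\les \lambda^{1/2}|x-y|^{1/2}$ forces $E_0^\pm(0)=0$, and every summand in \eqref{E2def} carries a factor of $E_0^\pm$, so $E_2^\pm(0)=0$ individually. What remains is to produce (i) a pointwise bound on $\mathcal E'$ of size $\lambda^{-1/2}$ times spatial factors, and (ii) an $\alpha$-Hölder modulus $|\mathcal E'(\lambda+\pi/t)-\mathcal E'(\lambda)|$ of size $\lambda^{-1/2}\,t^{-\alpha}$ times spatial factors involving $|x-y|^{1/2+\alpha}$, in the spirit of Corollary~\ref{lipbound}.

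For (i), I differentiate each summand in \eqref{E2def} by the product rule, using $|g^\pm(\lambda)|\sim |\log\lambda|$ and $|\partial_\lambda g^\pm|\sim \lambda^{-1}$ from \eqref{g form}, the bound $|\widetilde G_0(x,x_1)|\les k(x,x_1)(1+\log^+|x|)$ from \eqref{G0bound}, and the $E_0^\pm$-estimates of Lemma~\ref{R0 exp cor}. The dominant contribution is
\[
|\mathcal E'(\lambda)|\les \lambda^{-1/2}\bigl(|x-x_1|^{1/2}+|y-y_1|^{1/2}\bigr)\,k(x,x_1)k(y,y_1)(1+\log^+|x|)(1+\log^+|y|),
\]
where the logarithmic factors from differentiating $(g^\pm+c)^{-1}$ are absorbed by the $|\log\lambda|^{-2}$ already attached to $(g^\pm+c)^{-1}$. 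The first term in Lemma~\ref{lem:ibp2} is then bounded by $t^{-1}\int_0^{\lambda_1}\lambda^{-1/2}(1+\lambda t)^{-1}\,d\lambda\les t^{-3/2}$, which, after using $|x-x_1|^{1/2}\les (\la x\ra\la x_1\ra)^{1/2}$ and similarly in $y$, is stronger than the claimed rate.

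The $t^{-1-\alpha}$ rate thus comes entirely from the second integral in Lemma~\ref{lem:ibp2}. For (ii) I apply Corollary~\ref{lipbound} to each $\partial_\lambda E_0^\pm$ factor, and combine with the elementary bound $|\partial_\lambda^2(g^\pm+c)^{-1}|\les \lambda^{-2}|\log\lambda|^{-2}$, which by the mean value theorem and interpolation yields an analogous Hölder estimate of the form $|h|^\alpha \lambda^{-1-\alpha}$ for $\partial_\lambda(g^\pm+c)^{-1}$. Careful product-rule bookkeeping then gives
\[
|\mathcal E'(\lambda+\pi/t)-\mathcal E'(\lambda)|\les \lambda^{-1/2-}\,t^{-\alpha}\,k(x,x_1)k(y,y_1)\bigl(\la x\ra\la x_1\ra\la y\ra\la y_1\ra\bigr)^{1/2+\alpha+}
\]
for $\lambda>1/t$, after estimating $|x-x_1|^{1/2+\alpha}\les (\la x\ra\la x_1\ra)^{1/2+\alpha}$ and similarly for $y,y_1$. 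Integrating this bound against $d\lambda$ on $(t^{-1},\lambda_1)$ and multiplying by $t^{-1}$ produces the claimed estimate.

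The principal obstacle is the product term $E_0^\pm(\lambda)(x,x_1)E_0^\pm(\lambda)(y,y_1)/(g^\pm(\lambda)+c)$ in \eqref{E2def}, whose differentiated increment must be split into three pieces—one per factor—so that Corollary~\ref{lipbound} supplies $|\pi/t|^\alpha$ on exactly one factor in each piece while the remaining two factors are controlled by their size bounds alone. The restriction $\alpha<1/4$ enters to keep the net $\lambda$-exponent in the Hölder bound strictly above $-1$ after these losses and to guarantee that $v(x_1)v(y_1)(\la x_1\ra\la y_1\ra)^{1/2+\alpha+}$ remains integrable when this lemma is later summed against $vSv$ in \eqref{stone2_2}. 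The parallel $\lambda\cos(t\lambda)$ contribution proceeds identically, with the extra factor of $\lambda$ compensating for the $\lambda^{-1/2}$ singularity and yielding the same or better decay.
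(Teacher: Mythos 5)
Your proposal is correct and follows essentially the same route as the paper's proof: verify $\mathcal E(0)=0$, establish the pointwise bound $|\mathcal E'|\les\lambda^{-1/2}\cdot(\text{spatial weights})$ and the H\"older modulus via Corollary~\ref{lipbound} together with the elementary estimates on $(g^\pm+c)^{-1}$ and $\widetilde G_0$, and then apply Lemma~\ref{lem:ibp2}, with the first integral giving $t^{-3/2}$ and the second giving $t^{-1-\alpha}$. The only difference is cosmetic — you track all three summands of \eqref{E2def} (including the three-way splitting of the increment of the product term), whereas the paper treats one representative summand and notes the others are of the same form or smaller; your $\lambda$-exponent in the H\"older bound should read $\lambda^{-1/2-\alpha}$ rather than $\lambda^{-1/2-}$, but this does not affect convergence since $\alpha<1/4$.
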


\begin{proof}
We will only consider the following part of \eqref{Sll3}:
\begin{align}\label{gecic}
\int_0^\infty e^{it\lambda } \chi_1(\lambda) \Big(1+\f{\widetilde G_0(x,x_1)}{g(\lambda)+c}\Big) E_0(\lambda)(y,y_1) d\lambda=: \int_0^\infty e^{it\lambda}  \, \mathcal E(\lambda) \,d\lambda.
\end{align}
The other parts (and their derivatives)
are either of this   form or
much smaller. We also omit the $\pm$ signs since we can not rely on a cancellation between `+' and `-' terms.

Using Lemma~\ref{R0 exp cor}, Corollary~\ref{lipbound},  and \eqref{G0bound},  we estimate (for $0<\lambda<b\les \lambda<\lambda_1$)
$$
 |\partial_\lambda\mathcal E(\lambda)|\les k(x,x_1) (1+\log^+ |x|) \chi_1(\lambda) \lambda^{-\f12 } \la y-y_1\ra^{\f12 }\les  k(x,x_1) 
 (1+\log^+ |x|)\sqrt{ \la y\ra \la y_1\ra }
 \lambda^{-\f12 },
$$
and
\begin{multline*}
	\big| \partial_\lambda \mathcal E (b)-\partial_\lambda \mathcal E(\lambda) \big|\les \chi_1(\lambda) k(x,x_1) (1+\log^+ |x|) \lambda^{-\f12-\alpha} (b-\lambda)^\alpha \la y-y_1\ra^{\f12+\alpha } \\
	\les \chi_1(\lambda) k(x,x_1) (1+\log^+ |x|)
	(\la y\ra \la y_1\ra)^{\f12+\alpha }  \lambda^{-\f12-\alpha} (b-\lambda)^\alpha.
\end{multline*}

Noting that $\mathcal E(0)=0$ we can use 
Lemma~\ref{lem:ibp2} as in Proposition~\ref{freeevol} to 
obtain
$$
	|\eqref{gecic}|\les  \f1t\int_0^{\infty}\frac{|\mathcal E^\prime(\lambda)|}{  (1+\lambda t)} d\lambda
	+\f1{t}\int_{t^{-1/2}}^\infty \big|  \mathcal E^\prime(\lambda +\pi/t)-\mathcal E^\prime(\lambda)\big| d\lambda.
$$
Using the bounds above, we estimate the first integral by
$$
 \f{k(x,x_1) (1+\log^+ |x|) \sqrt{ \la y\ra \la y_1\ra } }{t}\int_0^{\infty}\frac{1}{ \sqrt{\lambda} (1+\lambda t)} d\lambda\les  \f{k(x,x_1)(1+\log^+ |x|)
 \sqrt{ \la y\ra \la y_1\ra } }{t^{1+\alpha}}.
$$
To estimate the second integral, we apply the Lipschitz bound with $b=\lambda+\pi/t$ to see
$$
	\f{ k(x,x_1) (1+\log^+ |x|)(\la y\ra \la y_1
	\ra)^{\f12+\alpha}}{t}\int_{t^{-1}}^{\lambda_1}  
	\lambda^{-\f12-\alpha} (t\lambda)^{-\alpha} d\lambda
	\les \f{ k(x,x_1) (1+\log^+ |x|)(\la y\ra \la 
	y_1\ra)^{\f12+\alpha }  }{t^{1+\alpha}},
$$
since $\alpha \in (0,\f14)$.

Taking into account the contribution of the term with the roles of $x$ and $y$ switched, and the bound
$1+\log^+|x|\les \la x\ra^{0+}$,
we obtain the assertion of the lemma.
\end{proof}


We are now prepared to prove the Proposition.

\begin{proof}[Proof of Proposition~\ref{prop:stone2_2}]

Using the bounds we obtained in \eqref{lem:Sll1}, Lemma~\ref{lem:Sll2},   Lemma~\ref{lem:Sll3} in \eqref{stone2_2}, we obtain
\begin{align*}
\eqref{stone2_2} = &-\frac{i}{2t\|V\|_1} \int_{\R^4} v(x_1)S(x_1,y_1)v(y_1) dx_1 dy_1\\
&+O\Big( \f{(1+\log^+ |x|)(1+\log^+ |y|) }{t\log^2(t)}   \int_{\R^4}  k(x,x_1) v(x_1)|S(x_1,y_1)| v(y_1)k(y,y_1)   dx_1 dy_1   \Big) \\
  &+ O\Big( \f{\big(\la x\ra \la y\ra\big)^{\f12+\alpha+}}{t^{1+\alpha} }    \int_{\R^4}
 k(x,x_1)  \la x_1\ra^{\f12+\alpha+}
 v(x_1)|S(x_1,y_1)| v(y_1) k(y,y_1)  \la y_1 \ra^{\f12+\alpha+}    dx_1 dy_1   \Big).
 \end{align*}
 Note that the integrals in the error terms are bounded in $x,y$, since 
$$ \sup_{y\in \R^2}
\|v(y_1)\la y_1\ra^{\f12+\alpha+} k(y,y_1)\|_{L^2_{y_1}}\les 1.$$
 Also note that we can replace $S$ with $P$ in the first integral since the other parts of the operator $S$ contains $Q$ on at least one side and  $Qv=vQ=0$. Therefore,
\begin{multline*}
\eqref{stone2_2}= -\frac{i}{2t\|V\|_1} \int_{\R^4} v(x_1)P(x_1,y_1)v(y_1) dx_1 dy_1\\+ O\Big( \f{(1+\log^+ |x|)(1+\log^+ |y|)  }{t\log^2(t)} \Big)+ O\Big( \f{\big(\la x\ra \la y\ra\big)^{\f12+\alpha+}}{t^{1+\alpha} } \Big)\\ =-\frac{i}{2t }+O\Big( \f{(1+\log^+ |x|)(1+\log^+ |y|)  }{t\log^2(t)} \Big)+O\Big( \f{\big(\la x\ra \la y\ra\big)^{\f12+\alpha+}}{t^{1+\alpha} } \Big).
\end{multline*}

\end{proof}

Next we consider the contribution of the third term in \eqref{resolvent id} to \eqref{stone2}:
\be\label{stone2_3}
 \int_{\R^4}\int_0^\infty \sin(t\lambda) \chi_1(\lambda)  [\mathcal R^-_2 -\mathcal R_2^+] v(x_1)[QD_0Q](x_1,y_1)v(y_1)
d\lambda dx_1 dy_1,
\ee
where
\be\label{calR2}
\mathcal R^\pm_2= R_0^{\pm}(\lambda^2)(x,x_1) R_0^{\pm}(\lambda^2)(y_1,y).
\ee
We notice that many of the terms that arise in this
term have zero contribution due to either the difference
of the `+' and `-' terms and the orthogonality of $Q$
and $v$.
Recall from  Lemma~\ref{R0 exp cor}  that
	\begin{align*}
		R_0^{\pm}(\lambda^2)(x,x_1)=c[a\log(\lambda|x-x_1|)+b\pm  i]+ E_0^\pm(\lambda)(x,x_1),
	\end{align*}
where $a,b,c\in\R$.  Therefore
\begin{multline*}
\mathcal R_2^\pm=c^2\big[(a\log(\lambda|x-x_1|)+b)(a\log(\lambda|y-y_1|)+b)-1 \big]\\  \pm i c^2 \big[ a\log(\lambda|x-x_1|)+a\log(\lambda|y-y_1|)+2b  \big] +E_3^\pm(\lambda),
\end{multline*}
where
\begin{multline}\label{E3def}
E_3^\pm(\lambda):= c[a\log(\lambda|x-x_1|)+b\pm  i] E_0^\pm(\lambda)(y,y_1)\\ +
c[a\log(\lambda|y-y_1|)+b\pm  i] E_0^\pm(\lambda)(x,x_1)+ E_0^\pm(\lambda)(x,x_1)E_0^\pm(\lambda)(y,y_1).
\end{multline}
Using this, we have
$$
\mathcal R_2^--\mathcal R_2^+=-2c^2(a\log(\lambda|x-x_1|)+a\log(\lambda|y-y_1|)+2b)+E_3^-(\lambda)-E^+_3(\lambda).
$$
Using this in \eqref{stone2_3}, and noting that the contribution of the first summand vanishes since $Qv=vQ=0$, we obtain
\be\label{stone2_3_2}
	\eqref{stone2_3} = \int_{\R^4}\int_0^\infty 
	\sin(t\lambda) \chi_1(\lambda)  
	[E_3^-(\lambda)-E^+_3(\lambda)] 
	v(x_1)[QD_0Q](x_1,y_1)v(y_1)\,
	d\lambda\, dx_1\, dy_1.
\ee

\begin{prop}\label{prop:stone2_3} Let $0<\alpha<1/4$. If $v(x)\les \la x\ra^{-\f32-\alpha-}$, then
we have
$$
|\eqref{stone2_3}|\les \f{\la x\ra^{\f12+\alpha+} 
\la y \ra^{\f12+\alpha+} }{t^{1+\alpha}}.
$$
\end{prop}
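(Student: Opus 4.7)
The plan is to follow the same three-step strategy used in Lemma~\ref{lem:Sll3}: first exploit the orthogonality $Qv=vQ=0$ to discard irrelevant pieces of $E_3^\pm$, then verify $\mathcal E(0)=0$ together with sharp bounds on $\partial_\lambda \mathcal E$ and its Lipschitz increment, and finally apply Lemma~\ref{lem:ibp2} and integrate against the $v(x_1)|QD_0Q|(x_1,y_1)v(y_1)$ kernel.

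For the first step, since $QD_0Q$ has $Q$ on both sides, any piece of $E_3^-(\lambda)-E_3^+(\lambda)$ that is independent of $x_1$ (resp.\ $y_1$) integrates to zero against $v(x_1)[QD_0Q](x_1,y_1)v(y_1)$. Inspecting \eqref{E3def}, the constants $b$ and $\pm i$ in the first bracketed factor, multiplied by $E_0^\pm(\lambda)(y,y_1)$, are $x_1$-independent and drop out after the difference is taken; likewise for the analogous terms in the second bracket. The only surviving contributions to \eqref{stone2_3_2} are of the three essential types
\begin{align*}
ca\log(\lambda|x-x_1|)\,[E_0^-\!-\!E_0^+](\lambda)(y,y_1), \quad
ca\log(\lambda|y-y_1|)\,[E_0^-\!-\!E_0^+](\lambda)(x,x_1),
\end{align*}
and $E_0^-(x,x_1)E_0^-(y,y_1)-E_0^+(x,x_1)E_0^+(y,y_1)$.

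For the second step, Lemma~\ref{R0 exp cor} gives $|E_0^\pm(\lambda)|\les \lambda^{1/2}|x-y|^{1/2}$, so each surviving factor vanishes like $\lambda^{1/2}|\log\lambda|$ as $\lambda\to 0^+$; hence $\mathcal E(0)=0$. Differentiating, Lemma~\ref{R0 exp cor} together with $\chi_1(\lambda)|\log\lambda|\les \lambda^{0-}$ yields, for the typical term,
\begin{align*}
|\partial_\lambda\mathcal E(\lambda)|\les \lambda^{-1/2-}\, k(x,x_1)\,(\la x\ra\la x_1\ra)^{0+}\,(\la y\ra\la y_1\ra)^{1/2},
\end{align*}
while Corollary~\ref{lipbound} combined with $|\log(b/a)|\les (b-a)/a$ produces
\begin{align*}
|\partial_\lambda\mathcal E(\lambda+\pi/t)-\partial_\lambda\mathcal E(\lambda)|\les \lambda^{-1/2-\alpha-}(\pi/t)^{\alpha}\, k(x,x_1)\,(\la x\ra\la x_1\ra)^{0+}\,(\la y\ra\la y_1\ra)^{1/2+\alpha},
\end{align*}
together with the symmetric versions (swap $(x,x_1)\leftrightarrow (y,y_1)$) and simpler analogues (no log) for the pure $E_0E_0$ piece.

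Third, writing $\sin(t\lambda)=(e^{it\lambda}-e^{-it\lambda})/(2i)$ and applying Lemma~\ref{lem:ibp2} exactly as in Lemma~\ref{lem:Sll3}, the two integrals on the right of \eqref{ibp2} are each controlled by $t^{-1-\alpha}$ since $\alpha\in(0,1/4)$. Finally, under the hypothesis $v(x)\les \la x\ra^{-3/2-\alpha-}$, the bound $\|v\la\cdot\ra^{1/2+\alpha+}\|_{L^2}<\infty$ and the absolute boundedness of $QD_0Q$ on $L^2$ yield
\begin{align*}
\sup_{x,y}\int_{\R^4} k(x,x_1)(\la x_1\ra)^{0+}v(x_1)\,|[QD_0Q](x_1,y_1)|\,v(y_1)(\la y_1\ra)^{1/2+\alpha+}\,dx_1\,dy_1 \les 1
\end{align*}
by Cauchy--Schwarz, producing the $\la x\ra^{1/2+\alpha+}\la y\ra^{1/2+\alpha+}$ weights in the statement. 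The main obstacle I anticipate is bookkeeping: ensuring that after the orthogonality cancellation one really is left with factors depending on $x_1$ (or $y_1$) through a $\log$ term, so that the derivative in $\lambda$ of $\log(\lambda|x-x_1|)$ (a pure $1/\lambda$) can be absorbed together with $\partial_\lambda E_0^\pm \sim \lambda^{-1/2}$ without exceeding the $\la x\ra^{1/2+\alpha+}$ budget allowed in the proposition.
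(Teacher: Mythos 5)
Your proposal is correct and follows essentially the same route as the paper: verify $\mathcal E(0)=0$, derive the $\lambda^{-\f12-}$ derivative bound and the $(b-\lambda)^\alpha\lambda^{-\f12-\alpha-}$ Lipschitz bound from Lemma~\ref{R0 exp cor} and Corollary~\ref{lipbound}, apply Lemma~\ref{lem:ibp2}, and close by Cauchy--Schwarz using $\|v\,k(x,\cdot)\la\cdot\ra^{\f12+\alpha+}\|_{L^2}\les 1$ and the absolute boundedness of $QD_0Q$. Your preliminary step of discarding the $x_1$- (resp.\ $y_1$-) independent pieces of $E_3^\pm$ via $Qv=vQ=0$ is a harmless extra simplification the paper does not bother with (it simply bounds the full prefactor $a\log(\lambda|x-x_1|)+b\pm i$ by $\lambda^{0-}k(x,x_1)\la x\ra^{0+}$), and note the cancellation there is due to orthogonality rather than to taking the $\pm$ difference.
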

\begin{proof}
Let $\mathcal E(\lambda)= \chi_1(\lambda) E_3(\lambda)$ 
(we dropped the '$\pm$' signs).
Using
$$
|\log|x-x_1||\les k(x,x_1) (1+\log^+|x|),
$$
and the bounds in Lemma~\ref{R0 exp cor} and Corollary~\ref{lipbound} we estimate
(for $0<\lambda<b\les \lambda<\lambda_1$)
$$
|\partial_\lambda\mathcal E(\lambda)|
 \les \chi_1(\lambda) \lambda^{-\f12-} 
 (\la y \ra\la x\ra \la y_1\ra \la x_1\ra)^{\f12+}  
 k(x,x_1)k(y,y_1),
$$
$$
\big| \partial_\lambda \mathcal E (b)-\partial_\lambda \mathcal E(\lambda) \big|\les
 \chi_1(\lambda) k(x,x_1) k(y,y_1)   (\la x\ra\la x_1\ra\la y\ra \la y_1\ra)^{\f12+\alpha+ }  \lambda^{-\f12-\alpha-} (b-\lambda)^\alpha.
$$
Applying Lemma~\ref{lem:ibp2} together with these bounds as in the proof of the previous lemma, we bound the    $\lambda$-integral by
$$
  k(x,x_1) k(y,y_1)   (\la x\ra\la x_1\ra\la y\ra \la y_1\ra)^{\f12+\alpha+ }\f{1}{t^{1+\alpha}}.
$$
Therefore,
\begin{multline*}
\eqref{stone2_3}
\les t^{-1-\alpha} \int_{\R^4} k(x,x_1)k(y,y_1) (\la y \ra \la y_1\ra \la x\ra \la x_1\ra )^{\f12+\alpha+} v(x_1)|QD_0Q(x_1,y_1)|v(y_1) dx_1dy_1
\\ \les \f{\la x\ra^{\f12+\alpha+} \la y \ra^{\f12+\alpha+}}{t^{1+\alpha}},
\end{multline*}
since  $\|v(x_1)k(x,x_1) \la x_1\ra^{\f12+\alpha+}\|_{L^2_{x_1}}\les 1$.
\end{proof}

We now turn to the contribution of the error term $E^{\pm}(\lambda)$ from Lemma~\ref{Minverse}
in \eqref{resolvent id}. Dropping the `$\pm$' signs, we need to consider
\begin{align}\label{stone2_er}
	\int_{\R^4}  \int_0^\infty e^{it\lambda} 
	\, \mathcal E (\lambda)  v(x_1) v(y_1)  \, d\lambda
	\, dx_1\, dy_1,
\end{align}
where
$$
	\mathcal E (\lambda):= \chi_1(\lambda)
	R_0(\lambda^2)(x,x_1)E(\lambda)(x_1,y_1) 
	R_0(\lambda^2)(y,y_1).
$$
\begin{prop} \label{prop:stone2er} Let $0<\alpha<1/4$. If $v(x)\les \la x\ra^{-\f32-\alpha-}$, then
we have
  	$$
    	|\eqref{stone2_er}|\les \f{\la x\ra^{\f12+\alpha+} \la y \ra^{\f12+\alpha+}}{t^{1+\alpha}}.
  	$$
\end{prop}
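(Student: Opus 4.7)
The plan is to follow the template established in Lemma~\ref{lem:Sll3} and Proposition~\ref{prop:stone2_3}: absorb the spatial integration against $v(x_1)v(y_1)$ into a single scalar function of $\lambda$, verify that this function vanishes at $\lambda=0$, bound its derivative and the Lipschitz increment of its derivative, and finally apply Lemma~\ref{lem:ibp2}.

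First, I would write
$$\eqref{stone2_er} = \int_0^\infty e^{it\lambda} F(\lambda)\,d\lambda,\qquad F(\lambda) := \chi_1(\lambda)\,\bigl\langle v\,R_0(\lambda^2)(\cdot,y),\ E(\lambda)\,v\,R_0(\lambda^2)(\cdot,x)\bigr\rangle.$$
The Hilbert--Schmidt bound $\|E(\lambda)\|_{HS}\lesssim \lambda^{1/2}$ from Lemma~\ref{Minverse2} immediately gives $F(0)=0$, which is the hypothesis of Lemma~\ref{lem:ibp2}. For the two remaining bounds, I would insert the decomposition $R_0^\pm(\lambda^2)(x,x_1)=A^\pm(\lambda)+G_0(x,x_1)+E_0^\pm(\lambda)(x,x_1)$ from Lemma~\ref{R0 exp cor} (with $A^\pm(\lambda)=g^\pm(\lambda)/\|V\|_1$ a scalar), and combine:
\begin{itemize}
\item $|A^\pm(\lambda)|\lesssim 1+|\log\lambda|$, $|\partial_\lambda A^\pm(\lambda)|\lesssim \lambda^{-1}$;
\item $|G_0(x,x_1)|\lesssim k(x,x_1)(1+\log^+|x|)$;
\item the bounds on $E_0^\pm$ and $\partial_\lambda E_0^\pm$ in Lemma~\ref{R0 exp cor} and the Lipschitz bound of Corollary~\ref{lipbound};
\item the bounds on $E(\lambda)$, $\partial_\lambda E(\lambda)$, and the Lipschitz bound on $\partial_\lambda E$ from Lemma~\ref{Minverse2}.
\end{itemize}
Applying the product rule, pairing the singular $\lambda^{-1}$ factor coming from $\partial_\lambda A^\pm$ against the $\lambda^{1/2}$ vanishing of $E$, and using $\|v\,k(x,\cdot)\,\la\cdot\ra^{1/2+\alpha+}\|_{L^2}\lesssim 1$ (which is where the hypothesis $v\lesssim \la\cdot\ra^{-3/2-\alpha-}$ enters) gives
\begin{align*}
|F'(\lambda)|&\lesssim \chi_1(\lambda)\,\lambda^{-1/2-}\,\la x\ra^{1/2+}\la y\ra^{1/2+},\\
|F'(\lambda+\pi/t)-F'(\lambda)|&\lesssim \chi_1(\lambda)\,\lambda^{-1/2-\alpha-}(\pi/t)^{\alpha}\,\la x\ra^{1/2+\alpha+}\la y\ra^{1/2+\alpha+},
\end{align*}
after absorbing logarithmic factors via $|\log\lambda|\lesssim \lambda^{-}$ and $1+\log^+|x|\lesssim \la x\ra^{+}$.

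Inserting these into Lemma~\ref{lem:ibp2} and computing
$$\frac1t\int_0^{\lambda_1}\frac{\lambda^{-1/2-}}{1+\lambda t}\,d\lambda\lesssim t^{-3/2+},\qquad \frac1t\int_{t^{-1}}^{\lambda_1}\lambda^{-1/2-\alpha-}t^{-\alpha}\,d\lambda\lesssim t^{-1-\alpha},$$
where convergence of the second integral uses $\alpha<1/4<1/2$, yields the claimed bound $t^{-1-\alpha}\la x\ra^{1/2+\alpha+}\la y\ra^{1/2+\alpha+}$.

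The main obstacle is the careful bookkeeping that is needed to handle the $\lambda^{-1}$ derivative of $A^\pm$ appearing in $\partial_\lambda R_0$: this is the worst singularity and must be offset by the vanishing of $E(\lambda)$ at zero rather than by taking derivatives of $E$. In the Lipschitz-difference estimate the situation is analogous: one uses the Lipschitz bound of Lemma~\ref{Minverse2} when the $\lambda$-derivative lands on $E$, and the Lipschitz content of Corollary~\ref{lipbound} combined with the mean value theorem on the smooth scalar $A^\pm$ when it lands on the free resolvent. Organizing these cases without accumulating extra powers of $|\log\lambda|$ that would spoil the sharp $\lambda^{-1/2-\alpha-}$ control is the only delicate point; once it is in place the Lemma~\ref{lem:ibp2} machinery gives the result exactly as in the proofs above.
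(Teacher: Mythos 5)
Your proposal is correct and follows essentially the same route as the paper: the paper likewise writes the integrand as a single function $\mathcal E(\lambda)$ with $\mathcal E(0)=0$, derives $|\partial_\lambda \mathcal E(\lambda)|\les \lambda^{-\f12-}(\la x\ra\la y\ra\la x_1\ra\la y_1\ra)^{\f12}k(x,x_1)k(y,y_1)T_0$ and the corresponding Lipschitz increment by combining Lemma~\ref{R0 exp cor}, Corollary~\ref{lipbound} and the improved error bounds of Lemma~\ref{Minverse2} (packaged into a single Hilbert--Schmidt majorant $T_0$), and then applies Lemma~\ref{lem:ibp2}. Your key observation --- that the $\lambda^{-1}$ from differentiating the $\log\lambda$ part of the free resolvent must be offset by the $\lambda^{1/2}$ vanishing of $E(\lambda)$ rather than by its derivative --- is exactly how the paper's bound $|\partial_\lambda R_0|\les \lambda^{-1}+\lambda^{-\f12}\sqrt{\la x\ra\la x_1\ra}$ is used there.
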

\begin{proof}
Let
\begin{multline*}
		T_0:= \sup_{0<\lambda<\lambda_1} \lambda^{-\frac{1}{2} } |E^{\pm}(\lambda)|+\sup_{0<\lambda<\lambda_1} \lambda^{\frac{1}{2} } |\partial_\lambda E^{\pm}(\lambda)| \\ + \sup_{0<\lambda<b\les\lambda<\lambda_1}  \f{\lambda^{\frac{1}{2}+\alpha}}{ (b-\lambda)^{\alpha}} |\partial_\lambda E^{\pm}(b)-\partial_\lambda E^\pm(\lambda)|.
\end{multline*}

By Lemma~\ref{Minverse}, we see that $T_0 $ is  Hilbert-Schmidt on $L^2(\R^2)$, and hence we have the following bounds for the kernels
$$
|E^{\pm}(\lambda)|\les \lambda^{\f12} T_0,\,\,\,\,|\partial_\lambda E^{\pm}(\lambda)| \les \lambda^{-\f12} T_0,
$$
$$
|\partial_\lambda E^{\pm}(b)-\partial_\lambda E^\pm(\lambda)|\les \lambda^{-\frac{1}{2}-\alpha}(b-\lambda)^{\alpha} T_0,\,\,\,\,\,\,\text{ if } 0<\lambda<b\les \lambda<\lambda_1.
$$
Moreover, using Lemma~\ref{R0 exp cor}  and Corollary~\ref{lipbound}, we have (for $0<\lambda<b\les \lambda<\lambda_1$)
\begin{align*}
&|R_0(\lambda^2)(x,x_1)| \les (1+ |\log \lambda|) k(x,x_1) (1+\log^+|x|)\les \lambda^{0-}k(x,x_1) \la x\ra^{0+},\\
&|\partial_\lambda R_0(\lambda^2)(x,x_1)|\les \lambda^{-1} + \lambda^{-\f12} \sqrt{ \la x\ra \la x_1\ra}, \\
&|\partial_\lambda R_0(\lambda^2)(x,x_1)-\partial_\lambda R_0(b^2)(x,x_1)|\les (b-\lambda)^\alpha \Big[\lambda^{-(1+\alpha)} + \lambda^{-\f12} |x-x_1|^{\f12+\alpha}\Big].
\end{align*}
Therefore we have the bounds (for $0<\lambda<b\les \lambda<\lambda_1$)
\begin{align*}
&|\partial_\lambda\mathcal E(\lambda)| \les  \lambda^{-\f12-} (\la y \ra \la x\ra \la y_1\ra \la x_1\ra)^{\f12}  k(x,x_1)k(y,y_1)T_0(x_1,y_1),\\
&|\partial_\lambda \mathcal E (b)-\partial_\lambda \mathcal E (\lambda)|\les \lambda^{-\frac{1}{2}-\alpha-}(b-\lambda)^{\alpha} (\la y \ra \la x\ra \la y_1\ra \la x_1\ra)^{\f12+\alpha+}  k(x,x_1)k(y,y_1)T_0(x_1,y_1).
\end{align*}
Applying Lemma~\ref{lem:ibp2} as 
in the proof of Proposition~\ref{prop:stone2_3}
above yields the claim of the proposition.
\end{proof}

We close this section by noting that Proposition~\ref{freeevol}, Proposition~\ref{prop:stone2_2}, Proposition~\ref{prop:stone2_3}, and Proposition~\ref{prop:stone2er} yield  Proposition~\ref{prop:wtdhi} and hence Theorem~\ref{thm:wtd}.

\section{Zero not Regular}\label{sec:nonreg}

Finally we consider the evolution of \eqref{wave soln}
when zero is not a regular point of the spectrum of
$H=-\Delta+V$.  In particular, we prove 
Theorem~\ref{thm:nonreg}.  The discussion here is
not completely self-contained, the development of
the expansions as well as the spectral structure of
$-\Delta+V$ at zero energy in two spatial dimensions
are quite lengthy and technical.  We direct
the interested reader to \cite{JN,EG} for these 
details. 

If zero is not regular, 
we cannot use the resolvent expansions employed in
Sections~\ref{sec:low energy} or \ref{sec:weighted}.  However,
we can use the 
resolvent expansions
developed in \cite{EG} for the Schr\"odinger evolution
with obstructions at zero energy.  The proof of these
expansions  need not be adjusted to prove the dispersive
bounds for the wave equation, 
we cite them without
proof.  These
expansions have roots in the work of Jensen and
Nenciu, \cite{JN}.

Formally, see Theorem~6.2 of \cite{JN}, Section~5
of \cite{EG} or Defintion~\ref{resondef2} below, the
different types of resonances are defined in terms of
the non-invertibility of certain operators.  Alternatively, one can characterize the obstructions
in terms of certain spectral subspaces
of $L^2(\R^2)$.  The obstructions at zero energy can also
be related to distributional solutions to $H\psi=0$.
If $\psi \in L^\infty (\R^2)$ but $\psi \notin L^p(\R^2)$
for any $p<\infty$ we say there is an s-wave resonance
at zero.  If $\psi \in L^p(\R^2)$ for all $p\in(2,\infty]$
we say there is a p-wave resonance at zero.  Finally, if
$\psi\in L^2(\R^2)$ there is an eigenvalue at zero.  

We note that resonances at zero energy exist only for
Schr\"odinger operators in dimensions $n\leq 4$ and can
be characterized in terms of distributional solutions 
to $H\psi=0$ where the appropriate space for $\psi$
differs with the spatial dimension considered. 
For example, in dimensions three and four one has
$\langle \cdot \rangle^{-\beta}\psi \in L^2(\R^n)$
where $\beta =\frac{1}{2}+$ in dimension three or
$\beta=0+$ in dimension four.  In one spatial dimension,
a resonance is characterized by the Wronskian of the
Jost solutions, see \cite{GS}.

We note that it was shown in \cite{JN,EG} that a distributional solution of
$H\psi=0$ in two spatial dimensions
corresponding to a zero-energy resonance or
eigenvalue
must satsify $\psi\in L^\infty(\R^2)$ and
have the form
\begin{align*}
	\psi(x)=c_0+\frac{c_1 x_1+c_2x_2}{\la x\ra^2}+
	\Psi(x)
\end{align*}
Here $\Psi\in L^2$, and we write $x=(x_1,x_2)\in \R^2$.  The first term corresponds to a 
one-dimensional space of `s-wave' resonance functions and
the second term corresponds to a two-dimensional space
of `p-wave' resonances.  One can see that
if $c_0\neq 0$ there is an s-wave resonance at zero,
if $c_0=0$ but one of $c_1$ or $c_2$ is non-zero there
is a p-wave resonance, and finally if $c_0=c_1=c_2=0$
there is an eigenvalue at zero.

We note that in the case of the free equation, when 
$V\equiv 0$, there is an s-wave resonance at zero energy
due to the solution $\psi\equiv1$ to $-\Delta \psi=0$.
In spite of this low energy obstruction, the free
evolution still satisfies the $t^{-\f12}$ decay rate.
As in the case of the Schr\"odinger equation, we see
(in Theorem~\ref{thm:nonreg})
that the s-wave resonance does not affect the natural
rate of time
decay.

The key difference from the previous sections
when zero is not regular is that
the expansions for $M^{\pm}(\lambda)^{-1}$ given
in Lemmas~\ref{Minverse} and \ref{Minverse2} are no
longer valid.  The effect of such obstructions is a
strictly low energy phenomenon, and the bounds 
attained in the high energy regime, Section~\ref{sec:high energy}, hold with no adjustments. 
To prove our main result, we need only adjust the 
low energy argument to account for resonances and/or
eigenvalues at zero.  The development of such expansions
was first studied in the context of the Schr\"odinger
operator $e^{itH}$ in \cite{JN}, adapted to the study
of $L^1\to L^\infty$ dispersive estimate when zero is
regular in \cite{Sc2} and adapted to the case of
zero not regular in \cite{EG}.  We use these
expansions in \eqref{spectral rep}
to establish bounds for the wave equation.

The obstructive resonance and/or eigenvalues at zero  cause the spectral measure to
be more singular as $\lambda\to 0$.  Roughly
speaking, if there is an s-wave resonace, one has
a most singular term of size $\log\lambda$ as
$\lambda \to 0$, see Lemma~\ref{Mswave} below.  A p-wave resonance leads to 
singular terms of size $\lambda^{-2}(\log \lambda)^{-k}$,
for $k=1,2,\dots$, while an eigenvalue leads to singular
terms of the form $\lambda^{-2}$ plus singular terms
of the same form as when there is a p-wave resonance,
see Lemma~\ref{M_exp_cor2} below.
As in the previous sections, we use the Stone formula
and expansions for the resolvents to reduce the operator
bounds to showing certain oscillatory integral bounds.
In this section we work to bound the new
terms, which are
singular as $\lambda \to0$,  that arise
in the expansion(s) for $M^{\pm}(\lambda)^{-1}$ and note
that the remaining terms have clear analogues in the
expansions of Section~\ref{sec:low energy} and are bounded
as in the cases handled previously.

To develop a more precise expansion for
$M^{\pm}(\lambda)$.  We define the
operators $G_j$ for $j=1,2$ (see \eqref{Y0 def})
\begin{align}
	G_1f(x)&=\int_{\R^2} |x-y|^2f(y)\, dy,\label{G1 def}\\
	G_2f(x)&= \frac1{8\pi}\int_{\R^2} |x-y|^2\log|x-y|f(x)\, dy.\label{G2 def}
\end{align}

To make the above discussion more precise, 
we employ the terminology used
in \cite{EG} for the Schr\"odinger evolution.
Compare to Definition~\ref{resondef}.
\begin{defin}\label{resondef2}\begin{enumerate}
\item We say zero is a regular point of the spectrum
of $H = -\Delta+ V$ provided $ QTQ=Q(U + vG_0v)Q$ is invertible on $QL^2(\mathbb R^2)$.

\item Assume that zero is not a regular point of the spectrum. Let $S_1$ be the Riesz projection
onto the kernel of $QTQ$ as an operator on $QL^2(\R^2)$.
Then $QTQ+S_1$ is invertible on $QL^2(\mathbb R^2)$.  Accordingly, we define $D_0=(QTQ+S_1)^{-1}$ as an operator
on $QL^2(\R^2)$.
We say there is a resonance of the first kind at zero if the operator $S_1TPTS_1$ is invertible on
$S_1L^2(\mathbb R^2)$.

\item We say there is a resonance of the second kind at zero if $S_1TPTS_1$ is not invertible on
$S_1L^2(\R^2)$ but
$S_2vG_1vS_2$ is invertible
on $S_2L^2(\R^2)$, where $S_2$ is the Riesz projection onto the kernel of $S_1TPTS_1$.

\item Finally, if $S_2vG_1vS_2$ is not invertible on $S_2L^2(\R^2)$, we say there is a resonance of the third kind at zero.
We note that in this case the operator $S_3vG_2vS_3$ is always invertible on $S_3L^2$, where $S_3$ is the Riesz
projection onto the kernel of $S_2vG_1vS_2$ (see (6.41) in \cite{JN} or Section~5 of \cite{EG}).

\end{enumerate}

\end{defin}
We note that a resonance of the first kind corresponds
to an s-wave resonance only at zero.  A resonance of
the second kind corresponds to a p-wave resonance at 
zero but no eigenvalue at zero.  There may or may not be
an s-wave resonance at zero for a resonance of the second
kind.  Finally, a resonance of the third kind corresponds
to an eigenvalue at zero.  There may or may not be s-wave
or p-wave resonances at zero in this case.

Note that we used the operator $D_0$ in Section~\ref{sec:low energy} when zero was regular.  This
is not an abuse of notation since $S_1=0$ when there is
no resonance at zero.  We also note that $QD_0Q$ is
still absolutely bounded in these cases.  We also
define the operators
\begin{align*}
	D_1&:=(S_1TPTS_1+S_2)^{-1} \qquad \textrm{ on }
	S_1 L^2(\R^2),\\
	D_2&:=(S_2vG_1vS_2+S_3)^{-1} \qquad \textrm{ on }
	S_2 L^2 (\R ^2),\\
	D_3&:=(S_2vG_2vS_2)^{-1} \qquad\qquad\, \textrm{ on }
	S_3L^2 (\R ^2).\\
\end{align*}
The operators $D_j$ and the projections $S_j$ for
$j=1,2,3$ are all absolutely bounded operators.
\,
The invertibility of $S_1TPTS_1+S_2$ and
$S_2vG_1vS_2+S_3$ is clear from the definition of the
projections $S_2$ and $S_3$.  The invertibility of
$S_2vG_2vS_2$ was shown in Lemma~5.4 in \cite{EG}.
For a more complete discussion of these operators see
Section~5 of \cite{EG}.  In particular, these
operators are absolutely bounded on 
$S_j L^2(\R^2)$ for $j=1,2,3$ appropriately.
Finally, we note that
the projections $S_1-S_2$, $S_2-S_3$ and $S_3$ correspond
to the s-wave resonances, p-wave resonances and zero
eigenvalues respectively.

The characterization of resonances in 
Definition~\ref{resondef2} is useful when trying
to invert the operators $M^{\pm}(\lambda)$.  To invert
these operators one requires a technical inversion lemma,
see Lemma~2.1 in \cite{JN} and a longer expansion for
$M^{\pm}(\lambda)$ which we now state. 
By Lemma~2.2 of \cite{EG}
\begin{lemma} \label{lem:M_exp nr}
	For $\lambda>0$ define $M^\pm(\lambda):=U+vR_0^\pm(\lambda^2)v$.
	Let $P=v\langle \cdot, v\rangle \|V\|_1^{-1}$ denote the orthogonal projection onto $v$.  Then
	\begin{align*}
		M^{\pm}(\lambda)=g^{\pm}(\lambda)P+T+M_0^{\pm}(\lambda).
	\end{align*}
	Here $g^{\pm}(\lambda)=a\ln \lambda+z$ where $a\in\R\backslash\{0\}$ and $z\in\mathbb C\backslash\R$,
	and $T=U+vG_0v$ where $G_0$ is an
	integral operator defined in \eqref{G0 def}.
	Further, for any $\frac{1}{2}\leq k<2$,
	\begin{align*}
		M_0^{\pm}(\lambda)=\widetilde O_1(\lambda^k)
	\end{align*}
	if $v(x)\lesssim \langle x\rangle^{-\beta}$ for some $\beta>1+k$. Moreover,
	\begin{align}\label{M0_defn}
		M_0^{\pm}(\lambda)= g_1^{\pm}(\lambda)vG_1v
		+\lambda^2 vG_2 v+M_1^{\pm}(\lambda).
	\end{align}
	Here $G_1$, $G_2$ are integral operators defined in
	\eqref{G1 def}, \eqref{G2 def},  and
	$g_1^{\pm}(\lambda)=\lambda^2 (\alpha \log \lambda+\beta_{\pm})$ where $\alpha \in\R\backslash\{0\}$ and $\beta_\pm \in\mathbb C\backslash\R$.
  Further, for any $2<\ell<4$,
	\begin{align*}
		M_1^{\pm}(\lambda)=\widetilde O_1(\lambda^\ell)
	\end{align*}
	if $\beta>1+\ell$.
\end{lemma}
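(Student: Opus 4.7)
The plan is to extract both expansions from the low-energy asymptotic of the free resolvent in Lemma~\ref{R0 exp cor}, pushed one more step into the Bessel series \eqref{J0 def}--\eqref{Y0 def}.

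First I would establish the initial decomposition. Sandwiching the identity
\[
	R_0^\pm(\l^2)(x,y)=\frac{g^\pm(\l)}{\|V\|_1}+G_0(x,y)+E_0^\pm(\l)(x,y)
\]
of Lemma~\ref{R0 exp cor} between two factors of $v$ and adding $U$ gives
\[
	M^\pm(\l)=\underbrace{(U+vG_0v)}_{T}+g^\pm(\l)\underbrace{\tfrac{v\la\cdot,v\ra}{\|V\|_1}}_{P}+M_0^\pm(\l),
\]
where $M_0^\pm(\l)$ is the integral operator with kernel $v(x)E_0^\pm(\l)(x,y)v(y)$. The explicit form of $g^\pm$ in \eqref{g form} immediately yields $g^\pm(\l)=a\ln\l+z$ with $a\in\R\setminus\{0\}$ and $z\in\mathbb C\setminus\R$.

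Second, to upgrade the endpoint bound $|E_0^\pm(\l)(x,y)|\les \l^{1/2}|x-y|^{1/2}$ from Lemma~\ref{R0 exp cor} to $M_0^\pm(\l)=\widetilde O_1(\l^k)$ for $k\in[\tfrac12,2)$, I would split into the regions $\l|x-y|\les 1$ and $\l|x-y|\gtrsim 1$. On the first region, the Taylor expansion \eqref{J0 def}--\eqref{Y0 def} gives $|E_0^\pm(\l)(x,y)|\les \l^2|x-y|^2(1+|\log(\l|x-y|)|)$; on the second region, the uniform bound on $0<\l<\l_1$ combined with $1\les (\l|x-y|)^k$ gives $|E_0^\pm|\les \l^k|x-y|^k$. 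The Hilbert--Schmidt norm of $M_0^\pm(\l)$ is then bounded by $\l^k$ times $\bigl(\int\!\!\int v(x)^2|x-y|^{2k+}v(y)^2\,dx\,dy\bigr)^{1/2}$, finite precisely when $v(x)\les\la x\ra^{-\beta}$ with $\beta>1+k$. The derivative bound follows by the same argument applied to $\partial_\l E_0^\pm$, using the companion estimate in Lemma~\ref{R0 exp cor}.

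Third, for the finer expansion \eqref{M0_defn} I would isolate the $\l^2$ coefficient in the Bessel series. Expanding the product $[\pm i/4-(2\pi)^{-1}(\log(\l|x-y|/2)+\gamma)]J_0(\l|x-y|)-(\l|x-y|)^2/(8\pi)$ and separating $\log(\l/2)$ from $\log|x-y|$ produces
\[
	R_0^\pm(\l^2)(x,y)=\frac{g^\pm(\l)}{\|V\|_1}+G_0(x,y)+g_1^\pm(\l)|x-y|^2+\frac{\l^2}{8\pi}|x-y|^2\log|x-y|+\widetilde O_1(\l^\ell|x-y|^\ell)
\]
for any $\ell\in(2,4)$, where $g_1^\pm(\l)=\l^2(\alpha\log\l+\beta_\pm)$ with $\alpha\in\R\setminus\{0\}$ and $\beta_\pm\in\mathbb C\setminus\R$ read off from the quadratic Bessel coefficient multiplied against the logarithmic part of $g^\pm(\l)$. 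Sandwiching by $v$ identifies the $|x-y|^2$ term with $vG_1v$ and the $|x-y|^2\log|x-y|$ term with $8\pi\cdot vG_2v$; the remainder is $M_1^\pm(\l)$. The bound $M_1^\pm=\widetilde O_1(\l^\ell)$ follows exactly as in the previous step from $\int\!\!\int v(x)^2|x-y|^{2\ell}v(y)^2\,dx\,dy<\infty$, valid under $\beta>1+\ell$.

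The main obstacle is bookkeeping: tracking the proliferation of $\log\l$ and $\log|x-y|$ factors through the product expansions, checking that differentiation in $\l$ costs at most one power of $\l^{-1}$ so that the $\widetilde O_1$ notation applies, and verifying that the two regimes $\l|x-y|\les 1$ and $\l|x-y|\gtrsim 1$ glue together to give the continuous range of exponents claimed.
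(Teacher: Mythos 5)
The paper does not prove this lemma at all: it is quoted verbatim from Lemma~2.2 of \cite{EG} (``By Lemma~2.2 of \cite{EG}\dots''), so there is no in-paper argument to compare against. Your reconstruction --- sandwiching the free-resolvent expansion of Lemma~\ref{R0 exp cor} between factors of $v$ to get the leading decomposition, then pushing the Bessel series \eqref{J0 def}--\eqref{Y0 def} one order further to extract the $g_1^{\pm}(\lambda)vG_1v+\lambda^2 vG_2v$ terms, with Hilbert--Schmidt control of the remainders split over the regimes $\lambda|x-y|\les 1$ and $\lambda|x-y|\gtrsim 1$ --- is exactly the argument of the cited reference, and it is correct. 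The only spot worth tightening is the large-argument regime: there the passage from the bound $|E_0^{\pm}|\les \lambda^{1/2}|x-y|^{1/2}$ to $\lambda^k|x-y|^k$ uses $1\les(\lambda|x-y|)^{k-1/2}$ (which is where the restriction $k\geq\frac12$ enters), and for $M_1^{\pm}$ the logarithmic factors $\log|x-y|$ must be absorbed into $|x-y|^{0+}$, consuming the $\epsilon$ of room in the hypothesis $\beta>1+\ell$; both are within the bookkeeping you already flag.
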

The difference between this expansion and the previous
expansion for $M^{\pm}(\lambda)$ in Lemma~\ref{lem:M_exp}
is the more detailed expansion of the `error term' 
$E_1^{\pm}(\lambda)$.

\subsection{The case of an s-wave resonance}
We first consider the case of a resonance of the
first kind (an s-wave resonance at zero only), that
is when there is a distributional solution to 
$H\psi=0$ with $\psi\in L^\infty$ and 
$\psi\notin L^p$ for any $p<\infty$.  As this is the type
of resonance that occurs for the free operator, we expect
to attain the $t^{-\f12}$ dispersive decay rate.

We note Corollary~2.7 in \cite{EG},
\begin{lemma}\label{Mswave}
 Assume that $|v(x)|\lesssim \langle x\rangle^{-1-k-}$ for some
$k\in [\frac{1}{2},2)$. Then in the case of a resonance of the first kind, we have
	\begin{multline*}
		M^{\pm}(\lambda)^{-1} =-h_{\pm}(\lambda)S_1D_1S_1-SS_1D_1S_1-S_1D_1S_1S\\
		 -h_{\pm}(\lambda)^{-1}SS_1D_1S_1S+
		h_{\pm}(\lambda)^{-1} S+QD_0Q +\widetilde O_1(\lambda^k),
	\end{multline*}
provided that $\lambda$ is sufficiently small.
\end{lemma}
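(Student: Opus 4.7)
The plan is to invert $M^\pm(\lambda)$ by an iterated Schur complement (Feshbach) procedure, starting from the expansion in Lemma~\ref{lem:M_exp nr}. Writing $M^\pm(\lambda) = g^\pm(\lambda) P + T + M_0^\pm(\lambda)$, the Jensen--Nenciu abstract inversion lemma (Lemma~2.1 of \cite{JN}) reduces inversion of $M^\pm(\lambda)$ on $L^2(\R^2)$ to inversion of a Schur complement against a suitably chosen projection, and vice versa. I would first try to invert against $P$: the leading block on $QL^2$ is $QTQ + QM_0^\pm Q$, which in the regular case is invertible and yields the expansion of Lemma~\ref{Minverse}. In the resonant case $QTQ$ has a nontrivial kernel, namely $S_1 L^2$, so this first-level inversion cannot be carried out directly.

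To handle this, I would further split $Q = (Q - S_1) + S_1$ and apply the inversion lemma a second time. On $(Q - S_1) L^2$ the operator $QTQ + S_1$ is invertible with absolutely bounded inverse $D_0$, which produces the summand $QD_0Q$. The nontrivial piece then remains on $S_1 L^2$, where $S_1 T S_1 = 0$ by definition of $S_1$. The crucial algebraic input is that a resonance of the first kind is precisely the statement that $S_1 T P T S_1$ is invertible on $S_1 L^2$, with inverse $D_1$.

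A careful Schur-complement computation now shows that the effective operator on $S_1 L^2$, coming from the interaction with the $P$-block, is at leading order in $\lambda$ a multiple of $-h_\pm(\lambda)^{-1} S_1 T P T S_1$: the $g^\pm(\lambda)$ singularity from the $P$-block appears in the denominator after inverting that block, shifted by a real constant $c$ which produces $h_\pm(\lambda) = g^\pm(\lambda) + c$. Inverting this on $S_1 L^2$ yields the logarithmically singular leading term $-h_\pm(\lambda) S_1 D_1 S_1$, the signature of the s-wave resonance. Tracking the off-diagonal interactions between the three subspaces $PL^2$, $S_1 L^2$, and $(Q - S_1) L^2$ through the iterated inversion generates the correction terms $-S S_1 D_1 S_1 - S_1 D_1 S_1 S - h_\pm(\lambda)^{-1} S S_1 D_1 S_1 S$ and the regular $h_\pm(\lambda)^{-1} S$ piece already familiar from the non-resonant expansion in Lemma~\ref{Minverse}.

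The main obstacle is controlling the remainder as $\widetilde O_1(\lambda^k)$. This requires combining the $\widetilde O_1(\lambda^k)$ bound on $M_0^\pm(\lambda)$ from Lemma~\ref{lem:M_exp nr} with the weight assumption $|v(x)| \les \la x \ra^{-1-k-}$, so that all compositions appearing in the Neumann series that define the Schur complement converge in Hilbert--Schmidt norm together with one $\lambda$-derivative; this is what forces the decay rate on $v$ to match $k$. Since the algebraic steps are identical to the Schr\"odinger analysis, I would carry out the bookkeeping by following the argument in the proof of Corollary~2.7 of \cite{EG}, which establishes exactly the stated expansion.
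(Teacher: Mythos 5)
Your proposal is correct and follows essentially the same route as the paper, which states this lemma as a direct citation of Corollary~2.7 of \cite{EG} without reproducing the argument; your sketch accurately reconstructs the mechanism behind that corollary (the Jensen--Nenciu inversion applied with the projection $S_1$ after first expanding $(M^\pm(\lambda)+S_1)^{-1}$ as in the regular case, with the invertibility of $S_1TPTS_1$ producing $D_1$ and the $-h_\pm(\lambda)S_1D_1S_1$ leading singularity, and the off-diagonal bookkeeping generating the $SS_1D_1S_1$, $S_1D_1S_1S$, and $h_\pm(\lambda)^{-1}SS_1D_1S_1S$ corrections). Your remarks on the $\widetilde O_1(\lambda^k)$ remainder and the matching decay hypothesis on $v$ are likewise consistent with the cited source.
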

We can now express the perturbed resolvent as in
\eqref{resolvent id} with the above expansion for
$M^{\pm}(\lambda)^{-1}$ in place of the expansion
used when zero is regular.  This new expansion can be
used in the Stone formula to deduce the desired bound
in Theorem~\ref{thm:nonreg}.

Roughly speaking, the analysis of the low energy
contribution when zero is regular controls all by the
the most singular terms
in the above expression.  That is, one needs to account
for the $-h_{\pm}(\lambda)$ terms.  Recall that
$h_{\pm}(\lambda)=a\log \lambda +z$ so a mild singularity
in the spectral measure is introduced as $\lambda \to 0$.
To control the remaining less singular terms
requires only very minor adjustments to
Lemma~\ref{r0lowbd}, Proposition~\ref{QDQ prop}, 
Lemma~\ref{lem:S} and Lemma~\ref{lem:Ebd}.  
Accordingly we first control the new singular terms
featuring $h_{\pm}(\lambda)$.

\begin{prop}\label{prop:summand2}
	If $|V(x)|\les \la x\ra^{-4-}$,
	we have the bound
\begin{multline*}
	\sup_{x,y\in\R^2}
  	\bigg|\int_{\R^4} \int_0^\infty (\sin(t\lambda) 
  	+\lambda\cos(t\lambda))
  	\chi_1(\lambda)\mathcal{K}(\lambda,|x-x_1|,|y-y_1|)\\
	v(x_1)S_1D_1S_1(x_1,y_1)
   	v(y_1) \,  d\lambda\, dx_1\, dy_1 \bigg|\lesssim t^{-\f12},
\end{multline*}
where
\begin{multline}  
	\mathcal{K}(\lambda,p,q) =h^+(\lambda) H^{+}_0(\lambda p)H_0^+(\lambda q)-h^-(\lambda)
	H^{-}_0(\lambda p)H_0^-(\lambda q) \\
	 =2ia\log(\lambda) [Y_0(\lambda p)J_0(\lambda q)+J_0(\lambda p)Y_0(\lambda q)]
+2z [J_0(\lambda p)J_0(\lambda q)+Y_0(\lambda p)Y_0(\lambda q)].\label{K defn}
\end{multline}
\end{prop}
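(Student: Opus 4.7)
The plan is to reduce to the argument of Proposition~\ref{QDQ prop} by exploiting the orthogonality $S_1 v = v S_1 = 0$ (which holds because $S_1 \leq Q$ and $Qv = 0$) to absorb the extra factor $h_\pm(\lambda) = a\log\lambda + z$. The new singular $\log\lambda$ behavior at $\lambda = 0$ would otherwise obstruct even $L^1$-integrability of the $\lambda$-integrand; the saving grace is that any summand of $\mathcal{K}(\lambda, |x - x_1|, |y - y_1|)$ whose spatial dependence factors through only $p = |x - x_1|$ (respectively only $q = |y - y_1|$) contributes zero, since $\int v(y_1) [S_1 D_1 S_1](x_1, y_1)\, dy_1 = 0$ (respectively $\int v(x_1) [S_1 D_1 S_1](x_1, y_1)\, dx_1 = 0$).

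The first step is to expand the Bessel products in $\mathcal{K}$ using \eqref{J0 def} and \eqref{Y0 def} by writing $J_0(\lambda r) = 1 + \widetilde O_4((\lambda r)^2)$ and $Y_0(\lambda r) = \f{2}{\pi}\log\lambda + A(r) + \widetilde O((\lambda r)^2 \log(\lambda r))$ with $A(r) = \f{2}{\pi}\log(r/2) + \f{2\gamma}{\pi}$, and then multiply out $\log\lambda \cdot (J_0(\lambda p) Y_0(\lambda q) + Y_0(\lambda p) J_0(\lambda q))$ and $J_0(\lambda p) J_0(\lambda q) + Y_0(\lambda p) Y_0(\lambda q)$. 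I would discard every resulting cross-term whose spatial dependence involves only $p$ or only $q$. This kills every $(\log\lambda)^2$ and every isolated $\log\lambda$ singularity in the integrand. The surviving kernel is at worst logarithmic in $\lambda$ and is paired with Bessel factors that either vanish at $\lambda = 0$ (from the $(\lambda r)^2$ remainders) or are $\lambda$-independent (notably the $A(p) A(q)$ contribution from $Y_0 Y_0$), and in every case depends nontrivially on both $x_1$ and $y_1$.

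After these subtractions the integrand has the same structure analyzed in Proposition~\ref{QDQ prop}, and I would mirror that proof: split into low-low, high-low and high-high regimes according to the cut-offs $\chi(\lambda p), \widetilde\chi(\lambda p), \chi(\lambda q), \widetilde\chi(\lambda q)$. In the low-low region I would replace $Y_0(\lambda p) \chi(\lambda p)$ by the function $F$ of \eqref{F defn}, whose bound \eqref{F bounds} controls the remaining logarithmic singularity by $k(x, x_1)$; the surviving $\log\lambda$ factor is then integrable against $\chi_1(\lambda)$ to give uniform-in-$t$ boundedness, and a single integration by parts against $e^{\pm it\lambda}$ gives a $t^{-1}$ bound, interpolating to $t^{-1/2}$. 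In the high-low and high-high regions the oscillatory factors from \eqref{JYasymp2} combine with $e^{\pm it\lambda}$ into phases $e^{i\lambda (t \mp p)}$ or $e^{i\lambda (t \mp (p + q))}$, and the standard case split on whether $t \mp p$ (respectively $t \mp (p + q)$) exceeds $t/2$ yields $t^{-1/2}$ via either one integration by parts or direct use of the Bessel decay. The $\lambda \cos(t\lambda)$ term is handled identically and is strictly easier at low frequency due to the extra factor of $\lambda$.

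The argument is then closed by pairing the pointwise bounds against $v(x_1) |S_1 D_1 S_1|(x_1, y_1) v(y_1)$. Since $S_1 D_1 S_1$ is absolutely bounded and finite-rank on $L^2(\R^2)$ and $\|v(\cdot)\, k(x, \cdot)\|_{L^2}$ is bounded uniformly in $x$ provided $v(x) \les \la x \ra^{-2-}$, the hypothesis $|V(x)| \les \la x \ra^{-4-}$ is exactly what is required. The principal difficulty is the bookkeeping in the orthogonality step: one must verify that every $\log\lambda$ factor produced by $h_\pm(\lambda)$ or by the low-frequency expansion of $Y_0$ is either killed by the $S_1 v = 0$ subtraction or paired with a Bessel remainder vanishing fast enough at $\lambda = 0$ to restore $L^1$-integrability in $\lambda$, since a single unbalanced $\log\lambda$ singularity near the origin would prevent the subsequent case analysis from closing.
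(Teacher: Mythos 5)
Your overall strategy --- use $S_1v=vS_1=0$ to kill the worst $\log\lambda$ singularities and then mirror Proposition~\ref{QDQ prop} --- is the right starting point and matches the paper's, and your treatment of the direct (no integration by parts) cases $t-s\leq t/2$ is fine. But there is a genuine gap in the regimes where at least one Bessel factor is in its high-energy range. Your Taylor-expansion-and-discard step only removes the $\log\lambda$ singularities arising from the \emph{low-energy} expansions of $J_0$ and $Y_0$; the prefactor $2ia\log\lambda$ in \eqref{K defn} still multiplies the high-energy pieces $\widetilde\chi(\lambda q)e^{\mp i\lambda q}\omega_{\mp}(\lambda q)$, which depend nontrivially on $y_1$ and therefore do not die under the orthogonality. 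The ``standard case split'' of Proposition~\ref{QDQ prop} then does not close: in the case $t-q\geq t/2$, after one integration by parts the term where the derivative falls on the low-energy factor (bounded by $\lambda^{-1}$ via \eqref{F bounds}) leaves an uncancelled $|\log\lambda|$, and
\[
\int_{\lambda\gtrsim 1/q}|\log\lambda|\,\lambda^{-1}(\lambda q)^{-1/2}\,d\lambda \approx \log q = \log|y-y_1|,
\]
which is not uniformly bounded in $y$ and destroys the $\sup_{x,y}$ estimate. (A similar failure occurs in the low-low regime when the derivative hits $\log\lambda$ against the non-vanishing factor $\chi(\lambda q)J_0(\lambda q)\approx 1$: one is left with $\lambda^{-1}$, which is not integrable at $\lambda=0$; discarding the constant ``$1$'' does not literally work because the cut-off $\chi(\lambda q)$ itself depends on $y_1$.)

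The paper's resolution, which your proposal is missing, is to apply the orthogonality to the high-energy and $J_0$ pieces as well, via the subtracted functions $\widetilde G^{\pm}(\lambda,y,y_1)$ of \eqref{G2defn} and $G(\lambda,y,y_1)$ of \eqref{Gtrick}: subtracting the same expression evaluated at $1+|y|$ (independent of $y_1$) costs nothing by $vS_1=0$ and buys the smallness $|\widetilde G^{\pm}|,|G|\les(\lambda\la y_1\ra)^{\tau}(\cdots)$ near $\lambda=0$, which converts the problematic $\lambda^{-1}$ into the integrable $\lambda^{\tau-1}$ and yields the uniform bound $k(x,x_1)\la y_1\ra/t$. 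This is also the true source of the hypothesis $\beta>4$: one needs $\la y_1\ra v(y_1)\in L^2$ because of the $\la y_1\ra$ weight in $\partial_\lambda\widetilde G^{\pm}$, not (as you assert) because of $\|v\,k(x,\cdot)\|_{L^2}$, which already holds for $v\les\la\cdot\ra^{-1-}$. You should incorporate the $G$ and $\widetilde G^{\pm}$ subtractions into the low-low, high-low and high-high analyses before the case split and integration by parts.
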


\begin{proof}
This proof follows along the lines of the proofs
presented in Section~\ref{sec:low energy} with modifications to account for the singularities in the
spectral measure. 
The dispersive bound for the `low-low' interaction follows 
from the arguments of Proposition~3.2 in \cite{EG}
and the discussion in the proof of 
Proposition~\ref{QDQ prop}.  Where stationary phase
is used for Schr\"odinger, the wave equation admits
the following argument, in a `high-low' or
`high-high' interaction and occurs
when $t-s\leq \f t2$ where $s=p$, $q$ or $p+q$,
where $p=|x-x_1|$, $q=|y-y_1|$.
Consider a `high-low' interaction of the first term in
\eqref{K defn}, in the case that $t-p\leq \f t2$ 
we have to bound
\begin{multline*}
	\bigg|\int_0^\infty 
	(\sin(t\lambda)+\lambda \cos(t\lambda))
	\chi_1(\lambda)
	(\log\lambda) e^{-i\lambda p}\rho_+(\lambda p)
	F(\lambda, y_1,y)\, d\lambda\bigg|\\
	\les \int_0^{\lambda_1} 
	\log(\lambda)\frac{1}{(1+\lambda p)^{\f12}}\, 
	d\lambda
	\les \frac{1}{p^{\f12}}\int_0^{\lambda_1}
	\lambda^{-\f12-}\, d\lambda \les p^{-\f12}\les t^{-\f12}.
\end{multline*}
where  we used that $p\gtrsim t$ in the last step.
The `high-high' interaction term can be similarly bounded
when $t-(p+q)\geq \f t2$.
Without loss of generality, take $p\geq q$ then
\begin{multline*}
	\bigg|\int_0^\infty 
	(\sin(t\lambda)+\lambda \cos(t\lambda))
	\chi_1(\lambda)
	(\log\lambda) e^{-i\lambda (p+q)}\rho_+(\lambda p)
	\rho_+(\lambda q)\, d\lambda\bigg|\\
	\les \int_0^{\lambda_1} 
	\log(\lambda)\frac{1}{(1+\lambda p)^{\f12}}\, 
	d\lambda
	\les \frac{1}{p^{\f12}}\int_0^{\lambda_1}
	\lambda^{-\f12-}\, d\lambda \les p^{-\f12}\les t^{-\f12}.
\end{multline*}
Here we used that $|\rho_+(\lambda q)|\les 1$.

In the case when $t-p\geq \f t2$ we integrate by parts.
Due to the $\log \lambda$ term, we need to be slightly
more careful and take advantage of the fact that 
$S_1\leq Q$ we have
$S_1v=vS_1=0$.  We recall the following function and
estimates from Lemma~3.7 of \cite{EG}.
	Define
	\begin{align}\label{G2defn}
		\widetilde G^{\pm}(\lambda,x,x_1):=\widetilde\chi(\lambda |x-x_1|)\omega_{\pm}(\lambda |x-x_1|)
		-e^{\pm i \lambda (|x-x_1|-(1+|x|))}\widetilde\chi(\lambda (1+|x|))\omega_{\pm}(\lambda (1+|x|)).
	\end{align}
	with $\omega_{\pm}$ as in \eqref{largeJYH}.
	Then for any $0\leq \tau\leq 1$ and $\lambda\leq 2\lambda_1$,
	\begin{align*}
		&|\widetilde G^{\pm}(\lambda,x,x_1)|\les (\lambda 
		\la x_1 \ra)^{\tau} 
		\bigg(\frac{\widetilde\chi(\lambda |x-x_1|)}{(\lambda|x-x_1|)^{\frac{1}{2}}}
		+\frac{\widetilde\chi(\lambda (1+|x|))}{(\lambda (1+|x|))^{\frac{1}{2}}}\bigg),\\
		&|\partial_\lambda \widetilde G^{\pm}(\lambda,x,x_1)|\les \la x_1\ra \bigg(\frac{\widetilde\chi(\lambda |x-x_1|)}{( \lambda |x-x_1|)^{\frac{1}{2}}}
		+\frac{\widetilde\chi(\lambda (1+|x|))}{(\lambda 
		(1+|x|))^{\frac{1}{2}}}\bigg)
	\end{align*}
Taking $\tau>0$,
the smallness of $\widetilde G^{\pm}$ as $\lambda\to 0$
keeps the resulting function integrable if the derivative
acts on the $\log\lambda$.  According we bound with
\begin{align*}
	\frac{1}{t}\int_0^\infty \bigg|\frac{d}{d\lambda}
	\Big(\chi_1(\lambda) \log (\lambda)
	F(\lambda, x,x_1) \widetilde G^{\pm}(\lambda, y,
	y_1) \Big) \bigg|\, d\lambda
	&\les \frac{k(x,x_1) \la y_1 \ra}{t}
	 \int_0^{\lambda_1}
	1+|\log\lambda|+\lambda^{\tau-1} \, d\lambda\\
	&\les \frac{k(x,x_1) \la y_1 \ra}{t}
\end{align*}
Where we take $\tau=0+$ in the bounds on 
$\widetilde G$ to ensure integrability.  The full power of $\la y_1 \ra$ 
arises from when the derivative acts on 
$\widetilde G^{\pm}$, or if one simply takes 
$\tau=1$.
A similar bound for the `high-high' 
interaction when 
$t-(p+q)\geq \f t2$ can 
be found by replacing $F$ with
$\widetilde G$.  It is this term that necessitates
extra decay on the potential, as we now need
$\la y_1 \ra v(y_1)$ to be in $L^2(\R ^2)$, that is why 
we assume $v(y_1)\les \la y_1 \ra^{-2-}$.

\end{proof}

Finally with
$p=|x-x_1|$, $q=|x|+1$ we define
\begin{align*}
	G(\lambda,x,x_1)&:=\chi(\lambda p)
	J_0(\lambda p)-\chi(\lambda q) J_0(\lambda q).
\end{align*}
Then for any $\tau\in [0,1]$ and 
$\lambda \leq 2\lambda_1$ we have
\begin{align}\label{Gtrick}
	|G(\lambda,x,x_1)|&\lesssim  \lambda^{\tau} \langle x_1\rangle^{\tau},
	\qquad \,\,\,\,\,\,|\partial_\lambda G(\lambda,x,x_1)|\lesssim \langle x_1\rangle^\tau\lambda^{\tau-1}.
\end{align}
These bounds were proven in Lemma~3.3 of \cite{EG} and
allow us to extract further $\lambda$-smallness from the
Bessel function as $\lambda \to 0$.  This smallness is
needed for the $SS_1D_1S_1$ and $S_1D_1S_1S$ terms 
that appear in the expansion of $M^{\pm}(\lambda)^{-1}$
given in Lemma~\ref{Mswave}.  

\begin{lemma}\label{SS_1D_1S_1 prop}

	We have the bound
	\begin{multline}\label{QD0Q int2}
		\sup_{x,y\in \R^2}
		\bigg| \int_{\R^4}
		\int_{0}^\infty (\sin(t\lambda)+\lambda \cos(t\lambda)) \chi_1(\lambda)
		v(x_1)
		[SS_1D_1S_1(x_1,y_1)+S_1D_1S_1S(x_1,y_1)]
		v(y_1)\\
		\Big[
		R_0^+(\lambda^2)(x,x_1)R_0^+(\lambda^2)(y_1,y)
		-R_0^-(\lambda^2)(x,x_1)R_0^-(\lambda^2)(y_1,y)
		\Big] \, 
		d\lambda\, dx_1\, dy_1\bigg|
		\les t^{-\f12}.
	\end{multline}

\end{lemma}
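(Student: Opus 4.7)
The plan is to adapt the arguments of Proposition~\ref{QDQ prop} and Proposition~\ref{prop:summand2} by exploiting the orthogonality relations $vS_1 = S_1 v = 0$ (valid since $S_1 \leq Q$) together with the extra $\lambda$-smallness encoded in \eqref{Gtrick}. First I would expand the difference $R_0^+(\lambda^2)(x,x_1)R_0^+(\lambda^2)(y_1,y)-R_0^-(\lambda^2)(x,x_1)R_0^-(\lambda^2)(y_1,y)$ as in the opening of Proposition~\ref{QDQ prop} into a sum of products of the form $Y_0(\lambda p)J_0(\lambda q)+J_0(\lambda p)Y_0(\lambda q)$, with $p=|x-x_1|$ and $q=|y_1-y|$, and then split each product into low- and high-energy pieces using $\chi(\lambda p)+\widetilde\chi(\lambda p)=1$ and the analogous splitting in $\lambda q$.

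For the term involving $v(x_1)SS_1D_1S_1(x_1,y_1)v(y_1)$, the projection $S_1$ sits adjacent to $v(y_1)$, so by $v(y_1)S_1=0$ we are free to subtract from $R_0^\pm(\lambda^2)(y_1,y)$ any function of $y$ alone. At low energy this replaces $\chi(\lambda q)J_0(\lambda q)$ with the function $G(\lambda,y,y_1)$ of \eqref{Gtrick}, while at high energy it replaces $\widetilde\chi(\lambda q)\omega_\pm(\lambda q)$ with the function $\widetilde G^{\pm}$ of \eqref{G2defn}; both enjoy bounds of size $\lambda^{\tau}\langle y_1\rangle^{\tau}$ for any $\tau\in [0,1]$, with analogous bounds for $\partial_\lambda$. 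The factor on the $x_1$ side is then treated exactly as in Proposition~\ref{QDQ prop}, using $F(\lambda,x,x_1)$ of \eqref{F defn} at low energy and $e^{\pm i\lambda p}\omega_\pm(\lambda p)$ at high energy. The companion operator $v(x_1)S_1D_1S_1S(x_1,y_1)v(y_1)$ is treated symmetrically, subtracting on the $R_0^\pm(\lambda^2)(x,x_1)$ factor via $S_1v(x_1)=0$; in both cases the extra $S$ sitting on the opposite side does not obstruct the argument because its kernel is absolutely bounded and its $P$ component can be absorbed as in Lemma~\ref{lem:S}.

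Once these substitutions are in place the proof reduces to the three-case analysis already carried out in Propositions~\ref{QDQ prop} and~\ref{prop:summand2}: one compares the appropriate phase ($p$, $q$, or $p+q$) to $t/2$; in the "bad" subcase one uses that the spatial distance dominates $t$ together with the decay of $\rho_+$ or the boundedness of $F$, and in the "good" subcase one integrates by parts once against $e^{i\lambda(t-p)}$ or $e^{i\lambda(t-(p+q))}$. Choosing $\tau=0+$ in the bounds for $G$ and $\widetilde G^\pm$ ensures both that the $\lambda$-integral converges as $\lambda\to 0$ (absorbing the $\log\lambda$ from the $Y_0$ factor on the opposite side) and that the spatial weight $\langle y_1\rangle^{0+}$ lies in $L^2$ against $v(y_1)$. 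Interpolating the resulting $t^{-1}$ bound against the trivial $\mathcal{O}(1)$ bound gives $t^{-\f12}$, as required. The main obstacle will be the high-high interaction, where the $\lambda$-derivative of $\widetilde G^{\pm}$ produces a full factor of $\langle y_1\rangle$ (rather than $\langle y_1\rangle^{0+}$) upon integration by parts; matching this growth against $v(y_1)$ in $L^2$ is precisely what forces the strengthened decay hypothesis $\beta>4$ on the potential appearing in Theorem~\ref{thm:nonreg}.
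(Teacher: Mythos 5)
Your overall architecture is the paper's: split into the $Y_0J_0+J_0Y_0$ products, exploit $S_1v=vS_1=0$ on the $S_1$-adjacent side to substitute $G(\lambda,y,y_1)$ (low energy) or $\widetilde G^{\pm}$ (high energy), take $\tau=0+$ to secure integrability at $\lambda=0$, and interpolate the resulting $t^{-1}$ bound with the trivial $O(1)$ bound. Your closing remark about the high-high interaction forcing $\la y_1\ra v(y_1)\in L^2$ and hence stronger decay on $V$ also matches the paper's discussion.

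However, one step as written would fail, and it is precisely the point the paper singles out as the key difference from Proposition~\ref{QDQ prop}. You claim the factor on the $x_1$ side can be "treated exactly as in Proposition~\ref{QDQ prop}, using $F(\lambda,x,x_1)$ of \eqref{F defn} at low energy." The substitution $\chi(\lambda|x-x_1|)Y_0(\lambda|x-x_1|)\mapsto F(\lambda,x,x_1)$ discards the term $\f{2}{\pi}\chi(\lambda(1+|x|))\log(\lambda(1+|x|))$, and this is only harmless when the adjacent operator annihilates $v$ in the $x_1$ variable. For $v(x_1)\,SS_1D_1S_1(x_1,y_1)\,v(y_1)$ the left factor is $S$, which contains the rank-one projection $P=v\la\cdot,v\ra\|V\|_1^{-1}$, so $\int v(x_1)S(x_1,\cdot)\,dx_1\neq 0$ and the subtraction changes the value of the integral. (Your suggestion that the $P$ component of $S$ "can be absorbed as in Lemma~\ref{lem:S}" does not apply here either, since there is no $h_\pm(\lambda)^{-1}$ factor supplying $(\log\lambda)^{-1}$ decay.) The correct route, which is what the paper does, is to leave $Y_0(\lambda|x-x_1|)\chi(\lambda|x-x_1|)$ alone, bound it by $(1+|\log\lambda|)(1+\log^-|x-x_1|)$ via \eqref{log-bd}, and rely entirely on the $\lambda^{\tau}\la y_1\ra^{\tau}$ smallness of $G$ on the $S_1$ side to absorb the $\log\lambda$ singularity: one takes $\tau=0$ for the undifferentiated ($O(1)$) bound and $\tau>0$ after integrating by parts, where the term $\lambda^{-1}|G(\lambda,y,y_1)|\les\la y_1\ra^{\tau}\lambda^{\tau-1}$ must be integrable and the boundary term must vanish. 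With that correction your argument coincides with the paper's proof.
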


\begin{proof}

This proof follows along the lines of the proof of
Proposition~\ref{QDQ prop}.  Due to the difference of
the `+' and `-' resolvents we need only bound integrals
of the form
\begin{align}\label{swave SS1}
	\int_{0}^\infty \sin(t\lambda) \chi_1(\lambda)
	Y_0(\lambda|x-x_1|)
	v(x_1) SS_1D_1S_1(x_1,y_1) v(y_1)	J_0(\lambda|y-y_1|)\, d\lambda,
\end{align}
along with a similar term with the Bessel functions
$J_0$ and $Y_0$ switching roles.  As usual, the sine
term is the most delicate to control.  The key difference
from Proposition~\ref{QDQ prop} is that we do not have
a projection orthogonal to the span of $v$ on both sides
of the operator $SS_1D_1S_1$, the worst case is when
$Y_0(\lambda |x-x_1|)$ is supported on 
$\lambda |x-x_1|\les 1$ in the above integral.  In this
case we cannot replace $Y_0$ with $F(\lambda, x,x_1)$
to control the logarithmic singularity as $\lambda \to 0$.
Instead we use \eqref{Gtrick} to gain integrability
at zero.  

As usual, we interpolate between two bounds to see the
$t^{-\f12}$ decay rate.  We bound $Y_0$ with
\eqref{log-bd} to see that
\begin{align*}
	|\eqref{swave SS1}|&\les k(x,x_1)\int_0^{\lambda_1}
	(1+|\log\lambda|) G(\lambda,y,y_1)\, d\lambda
	\les k(x,x_1).
\end{align*}
Here we took $\tau=0$ in \eqref{Gtrick} since 
$\log\lambda$ is integrable at zero.  On the other hand,
we can integrate by parts against 
$\sin(t\lambda)$ to gain time decay.
\begin{align*}
	|\eqref{swave SS1}|&\les \frac{1}{t}\int_0^{\lambda_1}
	\bigg| \frac{d}{d\lambda} \big(Y_0(\lambda|x-x_1|)
	G(\lambda,y,y_1)\big) \, d\lambda \bigg|\\
	&\les \frac{k(x,x_1)}{t} \int_0^{\lambda_1}
	\lambda^{-1}|G(\lambda,y,y_1)|+(1+|\log\lambda|)
	|\partial_\lambda G(\lambda,y,y_1)|\, d\lambda
	\les \frac{k(x,x_1)\la y_1 \ra^{\tau}}{t}.
\end{align*}
Selecting $\tau>0$ ensures that the boundary term 
vanishes as well as keeping the first term in the 
integrand integrable at $\lambda=0$.

The remaining terms to consider, `low-high' and `high-high' terms follow from the proof in Proposition~\ref{QDQ prop}, we leave the details to the
reader.

\end{proof}

We turn now to the terms in  Lemma~\ref{Mswave} with 
$h_{\pm}(\lambda)^{-1}$.  These terms are controlled
by either Lemma~\ref{lem:S} or the following bound
whose proof is identical.

\begin{lemma}\label{lem:S2}

	We have the bound
	\begin{multline}
		\bigg|\int_{\R^4}
		\int_0^\infty \sin(t\lambda)\chi_1(\lambda)
		\bigg( \frac{R_0^+(\lambda^2)(x,x_1) v(x_1)
		SS_1D_1S_1S(x_1,y_1)v(y_1)
		R_0^+(\lambda^2)(y_1,y)}{h_+(\lambda)}\\
		-\frac{R_0^-(\lambda^2)(x,x_1) v(x_1)
		SS_1D_1S_1S(x_1,y_1)v(y_1)
		R_0^-(\lambda^2)(y_1,y)}{h_-(\lambda)}\bigg)\, d\lambda\, dx_1\, dy_1
		\bigg| \les t^{-\f12}
	\end{multline}
	uniformly in $x$ and $y$.

\end{lemma}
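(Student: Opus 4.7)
The statement asserts $t^{-1/2}$ decay after sandwiching the more elaborate operator $SS_1D_1S_1S$ in place of $S$; since $S_1\leq Q$ and $D_1$ is absolutely bounded (see Definition~\ref{resondef2} and the remarks following it), $SS_1D_1S_1S$ is an absolutely bounded, finite-rank operator with the same spatial mapping properties as $S$. Thus the plan is to repeat the argument of Lemma~\ref{lem:S} essentially verbatim, with this new finite-rank kernel playing the role previously played by $S$, and checking that the extra projections create no new difficulties in the spatial integration.

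First, I would decompose each $R_0^{\pm}(\lambda^2)$ according to \eqref{Resolvent decomp}, obtaining the three interaction regimes `low-low', `high-low', and `high-high' in the variables $d_1=|x-x_1|$ and $d_2=|y-y_1|$, and treat the $R_0^+$ and $R_0^-$ contributions separately (since $h_+(\lambda)\neq \overline{h_-(\lambda)}$ prevents a clean cancellation in the denominator). Throughout, the only new $\lambda$-dependent ingredient compared to Proposition~\ref{QDQ prop} is the scalar factor $h_\pm(\lambda)^{-1}$, for which $|\partial_\lambda^k h_\pm^{-1}(\lambda)|\les \lambda^{-k}|\log\lambda|^{-1}$ on the support of $\chi_1$, for $k=0,1$; in particular this factor contributes a harmless $|\log\lambda|^{-1}$ which keeps every $\lambda$-integral that arose in Lemma~\ref{lem:S} convergent at zero.

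For the `low-low' regime I would invoke Lemma~17 of \cite{Sc2} together with the interpolation argument at the start of the proof of Proposition~\ref{QDQ prop}, using the orthogonality relation $vS_1=S_1v=0$ (a consequence of $S_1\leq Q$) together with the fact that the outermost $S$'s likewise have a $Q$-component, so the logarithmic singularity of $Y_0(\lambda d_j)$ at $\lambda d_j\to 0$ can be absorbed by subtracting the constant term as in \eqref{F defn} and using the bounds \eqref{F bounds}. For the `high-low' interaction one splits into $t-d_1\geq t/2$, in which case one integrates by parts once against $e^{i\lambda(t-d_1)}$ and uses the logarithmic smallness from $h^{-1}$ plus the $d_1^{-1/2}$ decay of $\rho_+$, and $t-d_1\leq t/2$, where $t\les d_1$ and the direct estimate $\int_0^{\lambda_1}(1+|\log\lambda|)\lambda^{-1/2}\,d\lambda<\infty$ yields $t^{-1/2}$. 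The `high-high' case is handled identically by comparing $t-(d_1+d_2)$ to $t/2$, and the $R_0^+$ cases follow by replacing the splitting point with $t+d_1$ or $t+d_1+d_2$ against $2t$.

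The spatial integration closes exactly as in Lemma~\ref{lem:S}: the absolute boundedness of $SS_1D_1S_1S$ on $L^2$ and the estimate $\sup_{x}\|v(\cdot)k(x,\cdot)\|_{L^2}\les 1$ (valid whenever $v(x)\les \la x\ra^{-3/2-}$) allow one to bound $\sup_{x,y}\int v(x_1)|SS_1D_1S_1S|(x_1,y_1)v(y_1)k(x,x_1)(1+|y-y_1|^{-1/2})\,dx_1\,dy_1 \les 1$. I do not anticipate a genuine obstacle; the only subtlety is book-keeping the logarithmic factor $|\log\lambda|^{-1}$ from $h_\pm^{-1}$ while differentiating, and verifying that the orthogonality $S_1v=0$ remains available to tame $Y_0$'s logarithm in the `low-low' piece, both of which were already handled in the proof of Lemma~\ref{lem:S}.
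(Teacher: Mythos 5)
Your proposal is correct and matches the paper, which gives no separate argument for this lemma and simply states that its proof is identical to that of Lemma~\ref{lem:S}: the factor $h_\pm(\lambda)^{-1}$ supplies the $|\log\lambda|^{-1}$ needed in the `low-low' regime, the case analysis on $t\mp d_1$, $t\mp(d_1+d_2)$ handles the oscillatory pieces, and absolute boundedness of $SS_1D_1S_1S$ closes the spatial integrals. The only minor imprecision is your appeal to $S_1v=vS_1=0$ via a ``$Q$-component'' of the outer $S$'s --- $S$ also contains $P$, so the subtraction trick \eqref{F defn} is not available for every block, but the $|\log\lambda|^{-1}$ from $h_\pm^{-1}$ (which you also invoke) covers exactly those terms, just as in Lemma~\ref{lem:S}.
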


The analysis for the remaining terms in Lemma~\ref{Mswave}
are controlled as
in  Section~\ref{sec:low energy}, see
Proposition~\ref{QDQ prop},
Lemmas~\ref{lem:S} and \ref{lem:Ebd}.  In 
particular one takes advantage of the identity
$Qv=vQ=0$ to employ the functions 
$F(\lambda, x,x_1)$, $\widetilde G(\lambda, x, x_1)$
and/or
$G(\lambda,x,x_1)$ 
in the place of
$Y_0$ and $J_0$ respectively.  Finally we note that the
statements of the propositions and lemmas hold if we
replace $\sin(t\lambda)$ with $\cos(t\lambda)\lambda$
as these terms and their derivatives are even smaller.
This suffices to prove the first statement in 
Theorem~\ref{thm:nonreg}.

\subsection{The case of a p-wave resonance 
and/or eigenvalue}

We now consider the case when there are non-trivial
solutions of $H\psi=0$ with either $\psi\in L^p$ for
all $p>2$, that is a p-wave resonance, or 
$\psi\in L^p$ for all $p\geq 2$ an eigenvalue at zero.
(Equivalently, when $S_1TPTS_1$ is not invertible on
$S_1 L^2$.)  These types of resonances correspond to a
loss of the time decay rate.

Let $\mathcal D=D_2+S_3D_3S_3vG_2vS_2D_2S_2vG_2vS_3D_3S_3-S_3D_3S_3vG_2vS_2D_2-D_2S_2vG_2vS_3D_3S_3$.
For the case of a p-wave resonance we note the expansion
in Corollary~4.2 of \cite{EG},
\begin{lemma}\label{M_exp_cor2}

	Assume that $v(x)\les \langle x\rangle^{-3-}$.  Then,
	in the case of a resonance of the second kind, we have
	\begin{multline}\label{M pwave}
		M^{\pm}(\lambda)^{-1} =\frac{S_2D_2S_2}{g_1^{\pm}(\lambda)}
		+Q\Gamma_{1}^{\pm}(\lambda)Q+Q\Gamma_2^{\pm}(\lambda) + \Gamma_3^{\pm}(\lambda)Q+
		 \Gamma_4^{\pm}(\lambda) \\
		  +(M^{\pm}(\lambda)+S_1)^{-1}+\widetilde O_1(\lambda^{2-}),
	\end{multline}
	where
	$\Gamma_{i}^{\pm}$, $i=1,2,3,4$ are absolutely bounded operators on $L^2(\R^2)$ with
	$\Gamma_{1}^{\pm}(\lambda)=O(\lambda^{-2}( \log \lambda)^{-2})$,
	$\Gamma_{2}^{\pm}(\lambda), \Gamma_3^{\pm}(\lambda)=O(\lambda^{-2}( \log \lambda)^{-3})$, and
	$\Gamma_{4}^{\pm}(\lambda)=O(\lambda^{-2}( \log \lambda)^{-4})$.

 In the case of a resonance of the third kind, we have
	\begin{multline}\label{M eval}
		M^{\pm}(\lambda)^{-1} =\frac{S_3D_3S_3}{\lambda^2 }+\frac{S_2\mathcal D S_2}{ g_1^\pm(\lambda)}
		+Q\Gamma_{1}^{\pm}(\lambda)Q+Q\Gamma_2^{\pm}(\lambda) + \Gamma_3^{\pm}(\lambda)Q+
		 \Gamma_4^{\pm}(\lambda) \\
		+(M^{\pm}(\lambda)+S_1)^{-1}+\widetilde O_1(\lambda^{2-}),
	\end{multline}
	where $\mathcal D$ is as above, and $\Gamma_i$ are absolutely bounded operators on $L^2(\R^2)$.  These operators are distinct from the
	$\Gamma_i $ in the case of a resonance of the second kind, but satisfy the same size estimates.

\end{lemma}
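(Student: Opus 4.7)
The plan is to iterate the abstract Feshbach/Jensen--Nenciu inversion lemma (Lemma~2.1 of \cite{JN}) starting from the expansion of $M^\pm(\lambda)$ in Lemma~\ref{lem:M_exp nr}, peeling off one resonance subspace at each stage. Recall the tool: if $A$ is closed on a Hilbert space $\mathcal H$ and $S$ is an orthogonal projection for which $A+S$ is invertible, then $A$ is invertible on $\mathcal H$ if and only if $B := S - S(A+S)^{-1}S$ is invertible on $S\mathcal H$, in which case
\begin{equation*}
 A^{-1} = (A+S)^{-1} + (A+S)^{-1}S B^{-1} S (A+S)^{-1}.
\end{equation*}

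First I would apply this with $S = Q = \mathbbm{1} - P$ to $M^\pm(\lambda) = g^\pm(\lambda)P + T + M_0^\pm(\lambda)$. Since $g^\pm(\lambda)P$ dominates on $PL^2$, this first iteration reduces matters to inverting an operator on $QL^2$ whose leading part is $QTQ + QM_0^\pm Q$, plus $P$-block Schur corrections of order $1/g^\pm(\lambda)$. When zero is not regular this fails because $\ker(QTQ)=S_1L^2$, so I would apply the lemma a second time with $S=S_1$, using $(QTQ+S_1)^{-1}=QD_0Q$. The reduced problem on $S_1 L^2$ has leading part of the form $h^\pm(\lambda)^{-1} S_1TPTS_1 + S_1 M_0^\pm S_1 + \cdots$, because $S_1 T S_1 = 0$ on $S_1 L^2$ (from $S_1\subseteq \ker(QTQ)$ and $S_1\subseteq Q$), while the Schur complement of the $P$-block contributes the $TPT$ term divided by $g^\pm$.

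For a resonance of the second kind, $\ker(S_1TPTS_1)=S_2 L^2$, and a third iteration with $S=S_2$ using $(S_1TPTS_1+S_2)^{-1}=D_1$ localises the problem to $S_2 L^2$. The inclusions $S_2 \subseteq S_1 \subseteq Q$ kill both the $T$- and $TPT$-contributions there, so the dominant operator to be inverted becomes $g_1^\pm(\lambda)\,S_2vG_1vS_2$ (from the $g_1^\pm(\lambda)vG_1v$ summand of $M_0^\pm$); its inverse $D_2/g_1^\pm(\lambda)$ produces the leading singularity in \eqref{M pwave}. For a resonance of the third kind, $\ker(S_2vG_1vS_2)=S_3L^2$, so one iterates once more with $S=S_3$; the chain $S_3 \subseteq S_2$ annihilates the $g_1^\pm vG_1v$ block, leaving $\lambda^2 S_3 v G_2 v S_3$ dominant, and inverting via $D_3$ yields the $S_3D_3S_3/\lambda^2$ singularity in \eqref{M eval}. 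The mixed term $S_2 \mathcal D S_2 / g_1^\pm(\lambda)$ emerges from the Schur-complement algebra coupling the $S_2$ and $S_3$ blocks, which produces exactly the combination defining $\mathcal D$ in the statement.

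The main obstacle is the bookkeeping. One must track which projection appears on the left and right of every operator generated by successive inversions, repeatedly use $Pv = \|V\|_1^{-1}v\langle\cdot,v\rangle$ together with $Qv=S_1v=0$ to cancel terms that look more singular than they are, and read off the exact powers of $(\log\lambda)^{-k}$ attached to each $\Gamma_i^\pm$ by counting how many $h^\pm(\lambda)^{-1}$ factors are absorbed along each path through the iterations. The $\widetilde O_1$ differentiability of the $\lambda^{2-}$ remainders propagates through each Schur complement by a Neumann series using the $\widetilde O_1$ bounds on $M_0^\pm$ and $M_1^\pm$ from Lemma~\ref{lem:M_exp nr}, while absolute boundedness of each $\Gamma_i^\pm$ reduces to that of $D_0, D_1, D_2, D_3$ and the projections $S_j$, all of which is established in \cite{EG}.
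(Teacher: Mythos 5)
The paper does not prove this lemma at all: it is imported verbatim from Corollary~4.2 of \cite{EG}, and the surrounding text states explicitly that these expansions ``need not be adjusted'' for the wave equation and are cited without proof. So your proposal cannot be checked against an argument in this paper; it can only be measured against the strategy of the cited reference, which it does capture in outline --- iterated application of the Jensen--Nenciu inversion lemma, peeling off $S_1$, $S_2$, $S_3$ in turn, with the leading singularities $S_2D_2S_2/g_1^{\pm}(\lambda)$ and $S_3D_3S_3/\lambda^2$ arising from the $g_1^{\pm}vG_1v$ and $\lambda^2 vG_2v$ blocks of $M_0^{\pm}$ once the more singular contributions are killed by $S_jv=vS_j=0$.

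Two points in your sketch need correction. First, your opening move --- applying the inversion lemma with $S=Q$ --- is not justified: that lemma requires $M^{\pm}(\lambda)+Q$ to be invertible, and the $Q$-block $Q(T+\mathbbm{1})Q$ on the infinite-dimensional space $QL^2$ has no reason to be invertible. The correct first step is either the $2\times 2$ Feshbach block inversion with respect to $PL^2\oplus QL^2$ (which only needs the $P$-block, dominated by $g^{\pm}(\lambda)$, to be invertible, as in Lemma~8 of \cite{Sc2}), or --- as the presence of the standalone summand $(M^{\pm}(\lambda)+S_1)^{-1}$ in the stated formula indicates is actually done in \cite{EG} --- a single application of the inversion lemma with $A=M^{\pm}(\lambda)$ and $S=S_1$, followed by expansion and inversion of $B_{\pm}=S_1-S_1(M^{\pm}(\lambda)+S_1)^{-1}S_1$ on the finite-dimensional space $S_1L^2$. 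Your route, taken literally, would not reproduce the $(M^{\pm}(\lambda)+S_1)^{-1}$ term without reassembly. Second, everything that makes the lemma quotable --- the exact $(\log\lambda)^{-k}$ rates attached to each $\Gamma_i^{\pm}$, the identification of $\mathcal D$ from the coupling of the $S_2$ and $S_3$ blocks, and the propagation of the $\widetilde O_1(\lambda^{2-})$ error through the successive Neumann series --- is deferred to ``bookkeeping''; that bookkeeping is the proof, and it occupies several pages of \cite{EG}. As written, the proposal identifies the right machinery but is a plan rather than a proof.
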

We note that the operators $(M^{\pm}(\lambda)+S_1)^{-1}$
have an expansion nearly identical in form to that we
use in Lemma~\ref{Minverse}, see Lemma~2.5 in \cite{EG}.
The error term is slightly different, but obeys the
same error bounds.
The resulting terms behave as in those already bounded in 
Section~\ref{sec:low energy}.

The new singular terms in \eqref{M pwave} and \eqref{M eval}
are all surronded by projections
which are orthogonal to  the span of $v$, thus we can
use the
functions $F(\lambda, x, x_1)$  
$G(\lambda,x,x_1)$, 
and $\widetilde G(\lambda,x,x_1)$ defined in the previous 
section to gain extra $\lambda$ smallness near 
$\lambda=0$.

The solution operator can be shown be bounded if there are
p-wave resonances and/or eigenvalues at zero energy.
The proof is
nearly identical to that found in Section~4 of
\cite{EG} for the Schr\"odinger equation.  
This follows as the oscillatory nature
of the imaginary Gaussian $e^{it\lambda^2}$ in the
integral is not used in \cite{EG} for
these terms, and no integration
by parts is used due to the highly singular nature of
integrals as $\lambda \to 0$.  In our case we do not
use any oscillation of $\cos(t\lambda)$.
We provide the details for
the convience of the reader.  We first establish the
bounds for the cosine operator, then describe how one
can reduce the contribution of the sine operator to
the bounds for the cosine operator.

Accordingly, it suffices to establish the estimates for the contributions of the terms:
\begin{align*}
  \frac{S_2D_2S_2}{g_1^{\pm}(\lambda)}
		+Q\Gamma_{1}(\lambda)Q+Q\Gamma_2(\lambda) + \Gamma_3(\lambda)Q+
		\Gamma_4(\lambda).
\end{align*}
For the rest of the analysis, the standing assumption is
that the potential satisfies the decay $|V(x)|\les 
\la x\ra^{-6-}$.
We start with the following.

\begin{lemma}\label{D2 lemma}

	We have the bound
  	\begin{multline}\label{D2 int bound}
    		\sup_{x,y\in \R^2}
    		\bigg| \int_{\R^4}\int_0^\infty \cos(t\lambda) \lambda \chi_1(\lambda)\bigg[
		\frac{R_0^{+}(\lambda^2)(x,x_1)v(x_1)
		S_2D_2S_2(x_1,y_1) v(y_1) R_0^{+}(\lambda^2)}{g_1^{+}(\lambda)}\\
    		-\frac{R_0^{-}(\lambda^2)(x,x_1) v(x_1) S_2D_2S_2(x_1,y_1)  v (y_1)R_0^{-}(\lambda^2)(y_1,y)}
    		{g_1^{-}(\lambda)}\bigg]
		\, d\lambda\, dx_1\, dy_1 \bigg|
		\les 1.
  	\end{multline}

\end{lemma}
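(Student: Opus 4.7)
\medskip
\noindent\textbf{Proof plan for Lemma \ref{D2 lemma}.}
The plan rests on exploiting the orthogonality $S_2v = vS_2 = 0$, which holds since $S_2 \leq S_1 \leq Q$.  Concretely, $\int S_2D_2S_2(x_1,y_1)v(y_1)\,dy_1 = 0$ and similarly on the left, so any summand of $R_0^{\pm}(\lambda^2)(y_1,y)$ that does not depend on $y_1$ contributes nothing to the integral, and likewise for $R_0^{\pm}(\lambda^2)(x,x_1)$ in $x_1$.  Using the expansion of Lemma~\ref{R0 exp cor}, $R_0^{\pm}(\lambda^2)(x,x_1) = g^{\pm}(\lambda)/\|V\|_1 + G_0(x,x_1) + E_0^{\pm}(\lambda)(x,x_1)$, the $g^{\pm}(\lambda)/\|V\|_1$ term (carrying the entire $\log\lambda$ singularity) is annihilated, so each resolvent factor may be replaced by $G_0 + E_0^{\pm}(\lambda)$.

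After this reduction, expand the product of the two modified resolvents and group the contributions into four types: (a) $G_0(x,x_1)G_0(y,y_1)$, independent of $\lambda$; (b)--(c) the cross terms $G_0\cdot E_0^{\pm}$ and $E_0^{\pm}\cdot G_0$; and (d) $E_0^{\pm}\cdot E_0^{\pm}$.  For (a), pull the spatial factor out of the $\lambda$-integral and use the difference
\[
\frac{1}{g_1^+(\lambda)}-\frac{1}{g_1^-(\lambda)} = \frac{g_1^-(\lambda)-g_1^+(\lambda)}{g_1^+(\lambda)g_1^-(\lambda)} = \widetilde{O}\!\left(\frac{1}{\lambda^2(\log\lambda)^2}\right),
\]
so that $\cos(t\lambda)\lambda\chi_1(\lambda)\bigl[1/g_1^+(\lambda)-1/g_1^-(\lambda)\bigr]$ is integrable at zero uniformly in $t$, the spatial integral being bounded by Cauchy--Schwarz using absolute boundedness of $S_2D_2S_2$ together with $\|v(\cdot)(1+|\log|\cdot-x||)\|_{L^2}\les 1$.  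For (b)--(d) no cancellation between $\pm$ is needed: from $|E_0^{\pm}(\lambda)(x,y)|\les \lambda^{1/2}|x-y|^{1/2}$ and $|g_1^{\pm}(\lambda)|\gtrsim \lambda^2|\log\lambda|$, the $\lambda$-integrals reduce to $\int_0^{\lambda_1}\lambda^{-1/2}|\log\lambda|^{-1}\,d\lambda$ or $\int_0^{\lambda_1}|\log\lambda|^{-1}\,d\lambda$, both finite.

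The main obstacle will be producing the \emph{uniform} bound in $x,y$ for the $E_0^{\pm}$ contributions: the naive estimate yields weights of the form $\|v(\cdot)|\cdot-y|^{1/2}\|_{L^2}$ which grow like $\la y\ra^{1/2}$.  To avoid this, I plan to split $E_0^{\pm}$ according to the size of $\lambda|x-y|$, using the refined bound $E_0^{\pm}=\widetilde{O}(\lambda^2|x-y|^2\log(\lambda|x-y|))$ from \eqref{r0low} in the regime $\lambda|x-y|\ll 1$, and the representation $e^{\pm i\lambda|x-y|}\omega_{\pm}(\lambda|x-y|)$ with $\omega_{\pm}=\widetilde O((1+\lambda|x-y|)^{-1/2})$ in the regime $\lambda|x-y|\gtrsim 1$, supplemented by subtracting off the constant-in-space pieces permitted by the orthogonality (much as $F(\lambda,x,x_1)$ in \eqref{F defn} and $\widetilde G^{\pm}$ in \eqref{G2defn} were used in the $s$-wave case).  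The spatial growths are thereby exchanged for logarithmic factors in $x_1,y_1$ absorbable by the decay $v(\cdot)\les \la\cdot\ra^{-3-}$ (guaranteed by $|V|\les\la\cdot\ra^{-6-}$), yielding the required uniform bound.
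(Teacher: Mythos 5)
Your plan is organized differently from the paper's proof, and the organization is sound: the paper works directly with the Bessel functions via the explicit identity \eqref{pwave hankel cancel}, splitting $H_0^+H_0^+/g_1^+ - H_0^-H_0^-/g_1^-$ into a $J_0J_0-Y_0Y_0$ piece (with $(\log\lambda)^{-2}$ gain) and a mixed $J_0Y_0$ piece (with only $(\log\lambda)^{-1}$ gain, hence requiring \emph{both} the substitution $Y_0\chi\to F$ and $J_0\chi\to G$ to recover $\lambda^{0+}$ integrability), and then treats low--low, high--low and high--high interactions separately. Your version instead kills $g^{\pm}(\lambda)$ outright by the two-sided orthogonality $S_2v=vS_2=0$ and isolates all of the $\lambda$-nonintegrability in the single spatially trivial term (a), where the $\pm$ cancellation $1/g_1^+-1/g_1^-=O(\lambda^{-2}(\log\lambda)^{-2})$ does exactly the right job; the remaining terms carry at least $\lambda^{1/2}$ of smallness from $E_0^{\pm}$ and are integrable term by term. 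This is a legitimate, arguably cleaner bookkeeping, and you correctly identify that the real difficulty it creates is spatial, not spectral: the price of using $G_0$ and $E_0^{\pm}$ is unbounded weights in $x,y$.

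There is, however, one concretely false step. The claim $\|v(\cdot)(1+|\log|\cdot-x||)\|_{L^2}\les 1$ used to close term (a) is not uniform in $x$: for $|x|$ large and $x_1$ in the region where $v$ lives, $|\log|x-x_1||\approx\log|x|$, so this norm grows like $1+\log^+|x|$ (this is precisely the content of \eqref{G0bound}). The fix is available to you and is the same device you invoke later for the $E_0^{\pm}$ pieces: use the orthogonality once more to replace $G_0(x,x_1)=-\frac{1}{2\pi}\log|x-x_1|$ by $-\frac{1}{2\pi}\log\frac{|x-x_1|}{1+|x|}$, which is $O(k(x,x_1))$ with $k(x,x_1)=1+\log^+|x_1|+\log^-|x-x_1|$; the logarithmic growth is then in $x_1$ rather than $x$ and is absorbed by $v$. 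You should state explicitly that this subtraction is performed for the $G_0$ factors as well as for $E_0^{\pm}$, in both variables, before applying Cauchy--Schwarz with the absolute boundedness of $S_2D_2S_2$. With that correction, and with the $\lambda|x-x_1|\lessgtr1$ splitting plus the $F$, $G$, $\widetilde G$-type subtractions carried out for the $E_0^{\pm}$ contributions as you outline, the argument closes and delivers the same uniform bound as the paper's proof.
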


\begin{proof}

  We note that we must exploit some cancellation between the `$+$' and `$-$' terms.  Recall that
  $H_0^{\pm}(y)=J_0(y)\pm iY_0(y)$ and the definition of $g_1^{\pm}(\lambda)$ in Lemma~\ref{lem:M_exp} give us
  \begin{align}
  \label{pwave hankel cancel}  \frac{R_0^{+}(\lambda^2) R_0^{+}(\lambda^2)}{g_1^{+}(\lambda)}
    -\frac{R_0^{-}(\lambda^2)  R_0^{-}(\lambda^2)}{g_1^{-}(\lambda)}
    &=\frac{J_0(\lambda p)J_0(\lambda q)-Y_0(\lambda p)Y_0(\lambda q)}{\lambda^2[(\log \lambda +c_1)^2+c_2^2]} \\
    &+\frac{(J_0(\lambda p)Y_0(\lambda q)+Y_0(\lambda p)J_0(\lambda q)) (\log \lambda +c_1)}
    {\lambda^2[(\log \lambda +c_1)^2+c_2^2]}\nn
  \end{align}
    We again must consider cases based on 
  the supports of the resolvents.  That is, we need to
  consider `low-low', `high-low' and `high-high' 
  interactions.
  Let us first consider the case when both resolvents are supported on the low part of their arguments.
  Contribution of the first term in \eqref{pwave hankel cancel} satisfies the required bound since $|J_0(z)|\les 1$, and $\frac{1}{\lambda(\log\lambda)^2}$ is integrable on $[0,\lambda_1]$. Since the other terms have additional powers $\log\lambda$ in the numerator, we need to use 
  the relation $S_2v=vS_2=0$ (recall that $S_2 \leq Q$).

  Consider the contribution of the second term  in \eqref{pwave hankel cancel}. We replace $\chi Y_0$ with $F(\lambda,\cdot,\cdot)$, and using the bounds on $F$, we see
 $$
        \bigg|\int_0^\infty \chi_1(\lambda) \frac{F(\lambda, x, x_1) F(\lambda,y,y_1)}
      {\lambda [(\log \lambda +c_1)^2+c_2^2]}\, d\lambda \bigg|
      \lesssim k(x,x_1)k(y,y_1) \int_0^{\lambda_1} \frac{1}
      {\lambda (\log \lambda)^2}\, d\lambda
      \lesssim k(x,x_1)k(y,y_1).
$$
  The mixed $J_0$ and $Y_0$ terms in the second part
  of \eqref{pwave hankel cancel} are bounded similarly using $|G(\lambda,x,x_1)|\les \lambda^{0+}\langle x_1\rangle^{0+}$ to ensure integrability.

When one or both of the Bessel functions is supported on high energies, we use the functions $\widetilde G(\lambda,p,q)$.
The bound $|\widetilde G(\lambda,x,x_1)|\les \lambda^{0+}\la x_1 \ra^{0+}$ suffices for obtaining the required bound.
  The details are left to the reader.
\end{proof}

\begin{lemma}\label{low high YJ lemma3}

	For $\mathcal C_i(z)=J_0(z)$ or $Y_0(z)$ for $i=1,2$, we have the bound
	\begin{multline*}
		\sup_{x,y\in \R^2}
    	\bigg| \int_{\R^4} \int_0^\infty 
    	\cos(t\lambda) \lambda \chi_1(\lambda)
		\mathcal C_1(\lambda|x-x_1|)v(x_1)\\ Q \Gamma_1(\lambda) Q (x_1,y_1) v(y_1)
		\mathcal C_2(\lambda|y-y_1|) \, d\lambda \, dx_1\, dy_1
		\bigg|\les 1.
  	\end{multline*}

\end{lemma}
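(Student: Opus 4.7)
The plan is to obtain the bound without relying on any cancellation from the $\cos(t\lambda)$ oscillation: simply bring absolute values inside the integral and exploit the size estimate $\Gamma_1(\lambda) = O(\lambda^{-2}(\log\lambda)^{-2})$ from Lemma~\ref{M_exp_cor2}. The factor of $\lambda$ in the measure combines with this weight to yield $\lambda^{-1}(\log\lambda)^{-2}$, which is integrable on the support of $\chi_1$. The delicate point is that the factors $\mathcal C_i$ themselves contribute either a logarithmic singularity (when $\mathcal C_i = Y_0$) or a non-small constant (when $\mathcal C_i = J_0$) at $\lambda = 0$, so naively there is no room to spare.

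The key observation is that $Q$ appears on both sides of $\Gamma_1$ and $Qv = vQ = 0$, so the Bessel factors may be modified by any function depending only on $x$ (respectively $y$), not on the $x_1, y_1$ variables where $v$ lives. I would split the integrand into low-low, low-high, and high-high pieces according to the sizes of $\lambda|x-x_1|$ and $\lambda|y-y_1|$. In a low-energy piece, replace $\chi(\lambda|x-x_1|)Y_0(\lambda|x-x_1|)$ by the regularized kernel $F(\lambda,x,x_1)$ from \eqref{F defn}, satisfying $|F|\lesssim k(x,x_1)$ by \eqref{F bounds}, and replace $\chi(\lambda|x-x_1|)J_0(\lambda|x-x_1|)$ by $G(\lambda,x,x_1)$ from \eqref{Gtrick}, gaining $\lambda^{0+}\langle x_1\rangle^{0+}$. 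In a high-energy piece, use $\widetilde G^\pm(\lambda,x,x_1)$ from \eqref{G2defn}, which likewise provides either smallness in $\lambda$ or decay in $\lambda|x-x_1|$.

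With these substitutions in hand, the $\lambda$ integral reduces to $\int_0^{\lambda_1} \lambda^{-1}(\log\lambda)^{-2}\,d\lambda$, which is finite; whenever a modified factor contributes an extra $\lambda^{0+}$ (as from $G$ or $\widetilde G$), one even gains an absolute bound. The spatial integral is then closed by Cauchy--Schwarz and the absolute boundedness of $Q\Gamma_1(\lambda) Q$ on $L^2(\R^2)$, using that $v(\cdot)k(x,\cdot)\langle \cdot\rangle^{0+}\in L^2(\R^2)$ uniformly in $x$ under the standing hypothesis $|V(x)|\lesssim \langle x\rangle^{-6-}$, and analogously in $y$. The main obstacle is bookkeeping: in each of the several subcases one must verify that at least one factor of $\lambda^{0+}$ (or a replacement by $F$) is available to absorb the residual logarithm, while maintaining spatial $L^2$-integrability uniformly in the external variables $x,y$; none of the individual estimates is difficult, but the case analysis must be carried out systematically.
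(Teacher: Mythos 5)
Your proposal is correct and follows essentially the same route as the paper: take absolute values, use $\Gamma_1(\lambda)=O(\lambda^{-2}(\log\lambda)^{-2})$ so that the $\lambda\,d\lambda$ measure leaves an integrable $\lambda^{-1}(\log\lambda)^{-2}$, exploit $Qv=vQ=0$ to trade $\chi Y_0$ for $F$ (and $\chi J_0$ for $G$) in the low-energy pieces, and close in the spatial variables via the absolute $L^2$-boundedness of $\sup_\lambda|\lambda^2(\log\lambda)^2\Gamma_1(\lambda)|$ together with $v(\cdot)k(x,\cdot)\langle\cdot\rangle^{0+}\in L^2$. The only cosmetic difference is that for the high-energy pieces the paper simply uses $\widetilde\chi(z)(|J_0(z)|+|Y_0(z)|)\lesssim 1$ rather than the finer $\widetilde G^{\pm}$ decomposition you invoke, which is not needed here since the residual $\lambda$-integral already converges.
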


\begin{proof}

	Unlike in Lemma~\ref{D2 lemma}
	we do not need to use any cancellation between the `$+$' and `$-$' terms.  We consider the terms that arise when both
	$\mathcal C_1$ and $\mathcal C_2$ are supported on small energies.  Consider,
	\begin{align*}
		\int_0^\infty \cos(t\lambda)\lambda \chi_1(\lambda) \chi(\lambda p)\mathcal C_1(\lambda p)
		vQ\Gamma_1(\lambda)Qv \chi(\lambda q)\mathcal C_2(\lambda q)
		\, d\lambda,
	\end{align*}
	where $p=|x-x_1|$, $q=|y-y_1|$. In the worst case when $\mathcal C_1=\mathcal C_2=Y_0$, 
	as $Qv=vQ=0$, we replace  $\chi Y_0 $ with $F $   to obtain
\begin{multline}
\label{FF Ci low}
		\bigg|\int_0^{\lambda_1} \lambda  F(\lambda,x,x_1) \Gamma_1(\lambda)F(\lambda,y,y_1)\, 
		d\lambda\bigg|\\
		 \lesssim \bigg| \int_0^{\lambda_1}
		 \frac{F(\lambda,x,x_1)F(\lambda,y,y_1)}{\lambda (\log\lambda)^2}\, d\lambda\bigg|
		 \sup_{0<\lambda<\lambda_1}|\lambda^2 (\log \lambda)^2 \Gamma_1(\lambda)|
		 \\ \lesssim k(x,x_1) k(y,y_1) \sup_{0<\lambda<\lambda_1}|\lambda^2 (\log \lambda)^2 \Gamma_1(\lambda)|.
	\end{multline}
	The last line follows since
  $\sup_{0<\lambda<\lambda_1} |\lambda^2 (\log \lambda)^2\Gamma_1(\lambda)|$
	defines a bounded operator on $L^2(\R^2)$ (by Lemma~\ref{M_exp_cor2}), we are done.
	The other low energy terms are similar using $G$ instead of $F$.

	For the large energies, we note that the argument runs in a similar manner.  Using
	$\widetilde\chi(z) (|J_0(z)|+|Y_0(z)|)\lesssim 1$, and an argument as in \eqref{FF Ci low}, it easily follows that  the integral is bounded
	as desired.
\end{proof}

We need the following modified version  
of Lemma~\ref{low high YJ lemma3} to control
the other $\Gamma_{i} (\lambda)$
terms, which don't have the projection $Q$ on both sides.
It is worth noting that the loss of a $Q$ on either side
occurs exactly with the gain of a $(\log \lambda)^{-1}$
smallness as $\lambda \to 0$, 
which is exactly the gain one gets from
replacing $Y_0$ with $F$.

\begin{corollary}\label{low high YJ cor3}

	For $\mathcal C_i(z)=J_0(z)$ or $Y_0(z)$ for $i=1,2$, we have the bound
	$$
		\sup_{x,y\in \R^2}
    		\bigg|  \int_{\R^4} \int_0^\infty \cos(t\lambda) \lambda \chi_1(\lambda)
		\mathcal C_1(\lambda|x-x_1|)v(x_1) Q\Gamma_2(\lambda)(x_1,y_1)   v(y_1)
\mathcal C_2(\lambda|y-y_1|) \, d\lambda  \, dx_1\, dy_1 \bigg|\les 1.$$

  The same bounds hold when $Q\Gamma_2(\lambda) $ is replaced by
  	$ \Gamma_3(\lambda)Q$ or $ \Gamma_4(\lambda) $.

\end{corollary}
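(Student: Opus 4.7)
The plan is to adapt the proof of Lemma~\ref{low high YJ lemma3} almost verbatim, exploiting the crucial accounting principle that each lost projection $Q$ is compensated by exactly one extra factor of $(\log\lambda)^{-1}$ in the size of the operator: $\Gamma_1=O(\lambda^{-2}(\log\lambda)^{-2})$ had $Q$ on both sides, $\Gamma_2,\Gamma_3=O(\lambda^{-2}(\log\lambda)^{-3})$ have only one, and $\Gamma_4=O(\lambda^{-2}(\log\lambda)^{-4})$ has none. This is precisely the bookkeeping needed since on any side where $Q$ is present one may replace $\chi(\lambda r)Y_0(\lambda r)$ by $F(\lambda,\cdot,\cdot)$ (saving a $\log\lambda$ via the bounds \eqref{F bounds}), while on a side where no $Q$ is present one must use the direct bound $\chi_1(\lambda)\chi(\lambda r)|Y_0(\lambda r)|\lesssim (1+|\log\lambda|)(1+\log^- r)$ from \eqref{log-bd}, paying an extra $\log\lambda$ which the operator norm of $\Gamma_j$ absorbs.

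First I would handle the low-low interaction, i.e.\ the portion where $\chi(\lambda p)$ and $\chi(\lambda q)$ cut off both resolvents (with $p=|x-x_1|$, $q=|y-y_1|$). The worst subcase is $\mathcal C_1=\mathcal C_2=Y_0$. For $Q\Gamma_2(\lambda)$, I use $Qv=vQ=0$ on the left to replace $\chi Y_0(\lambda p)$ with $F(\lambda,x,x_1)$, while leaving $Y_0$ alone on the right. After factoring out $\sup_{0<\lambda<\lambda_1}|\lambda^2(\log\lambda)^3 \Gamma_2(\lambda)|$ (which is a bounded operator by Lemma~\ref{M_exp_cor2}), the scalar integral reduces to
\[
  \int_0^{\lambda_1}\frac{|F(\lambda,x,x_1)|\,(1+|\log\lambda|)(1+\log^-q)}{\lambda\,|\log\lambda|^3}\,d\lambda
  \lesssim k(x,x_1)(1+\log^- q),
\]
which converges since $\lambda^{-1}|\log\lambda|^{-2}$ is integrable near zero. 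The case $\Gamma_3(\lambda)Q$ is symmetric, and for $\Gamma_4(\lambda)$ one uses \eqref{log-bd} on both sides and the stronger smallness $(\log\lambda)^{-4}$ to obtain a bound by $(1+\log^- p)(1+\log^- q)$. When one or both Bessel functions are $J_0$, the bound $|J_0|\lesssim 1$ gives the same or better estimates.

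For the mixed and high-high cases I would mirror the end of the proof of Lemma~\ref{low high YJ lemma3}: on a side where $Q$ acts and the Bessel function is on its high-energy support, use the function $\widetilde G^\pm(\lambda,\cdot,\cdot)$ from \eqref{G2defn} together with its bound $|\widetilde G^\pm|\lesssim\lambda^{0+}\langle\cdot\rangle^{0+}$; on a side without $Q$, use the uniform bound $\widetilde\chi(z)(|J_0(z)|+|Y_0(z)|)\lesssim 1$. In every configuration the logarithmic powers in $\Gamma_j(\lambda)$ exactly match the logarithmic growth produced by the unprojected Bessel functions, leaving an integrand of the form $\lambda^{-1}(\log\lambda)^{-2}$ times absolutely bounded kernels and factors in $L^2(v\,dx)$.

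The main obstacle, and the only nontrivial point, is verifying integrability at $\lambda=0$ in the low-low case when no $Q$ is adjacent, i.e.\ the $\Gamma_4$ term: here one cannot appeal to orthogonality on either side, and the argument works only because $\Gamma_4$ carries two more powers of $(\log\lambda)^{-1}$ than $\Gamma_1$, exactly enough to beat two factors of $(1+|\log\lambda|)$ from \eqref{log-bd}. Once this integrability is established, the spatial integration is closed exactly as at the end of Proposition~\ref{QDQ prop} by noting $\|(1+\log^-|\cdot-y|)v(\cdot)\|_{L^2}\lesssim 1$ and $\|k(x,\cdot)v(\cdot)\|_{L^2}\lesssim 1$ uniformly in $x,y$, using the decay assumption on $V$.
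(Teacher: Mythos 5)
Your proposal is correct and follows essentially the same route as the paper: replace $\chi Y_0$ by $F$ (or $G$, $\widetilde G$) only on the side adjacent to $Q$, use the direct bound $|Y_0(\lambda r)\chi(\lambda r)|\lesssim(1+|\log\lambda|)(1+\log^- r)$ on the unprojected side, and let the extra powers of $(\log\lambda)^{-1}$ in $\Gamma_2,\Gamma_3,\Gamma_4$ absorb the resulting logarithms so that the $\lambda$-integrand is $O(\lambda^{-1}|\log\lambda|^{-2})$. The bookkeeping you highlight (one lost $Q$ per extra $(\log\lambda)^{-1}$) is exactly the observation the paper makes before stating the corollary, and your low-low, mixed, and high-high cases match the paper's treatment.
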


\begin{proof}
	We repeat the analysis of Lemma~\ref{low high YJ lemma3}.
 Consider the case when both $\mathcal C_i(\lambda \cdot)$  are supported on
	low energies and both are $Y_0$.    We note that when $\lambda<1$, we have
	\begin{align}\label{Y0 low bd}
		|Y_0(\lambda p)\chi(\lambda p)| \lesssim (1+|\log\lambda|)(1+\log^- p).
	\end{align}
	Using this and replacing $\chi Y_0$ with $F$ on one side, we obtain the bound
	\begin{multline*}
      	  \int_0^{\lambda_1} \frac{|F(\lambda, x,x_1)| (1+\log^{-} q)}{\lambda |\log \lambda|^2}\, d\lambda
		    \sup_{0<\lambda<\lambda_1} |\lambda^2 (\log \lambda)^3 \Gamma_2(\lambda)|\\
      		 \lesssim  k(x,x_1) k(y,y_1)
		 \sup_{0<\lambda<\lambda_1} |\lambda^2 (\log \lambda)^3 \Gamma_2(\lambda)|.
  	\end{multline*}
	The same bound holds for  $\Gamma_3(\lambda)Q$.  For the contribution of $ \Gamma_4(\lambda) $, we have
	\begin{align*}
      		&\bigg|\int_0^\infty \lambda \chi(\lambda) Y_0(\lambda p)  \Gamma_4(\lambda)  Y_0(\lambda q)
      		\, d\lambda \bigg| \\
      		&\lesssim  \int_0^{\lambda_1} \frac{(1+|\log \lambda|)(1+\log^{-} p)(1+|\log \lambda|)(1+\log^{-} q)}
		{\lambda |\log \lambda|^4}\, d\lambda   \sup_{0<\lambda<\lambda_1} |\lambda^2 (\log \lambda)^4\Gamma_4(\lambda)|\\
      		&\lesssim k(x,x_1) k(y,y_1)\sup_{0<\lambda<\lambda_1} |\lambda^2 (\log \lambda)^4\Gamma_4(\lambda)|.
  	\end{align*}
  The other cases are similar.

When one of the $\mathcal C_i(\lambda \cdot)$ is supported
	on high energies, the analysis is less delicate. The required bound follows from $\widetilde\chi(z)(|J_0(z)|+|Y_0(z)|)\les 1$.
\end{proof}
This completes the proof in the case of a resonance of the second kind.

We note that the above bounds in Lemma~\ref{low high YJ lemma3} and Corollary~\ref{low high YJ cor3}
also hold for the $\Gamma_i$ term in \eqref{M eval}.  Thus for a resonance of the third kind,
it suffices to consider the leading $\lambda^{-2}$ term in \eqref{M eval}.  Noting \eqref{K defn} and
the fact that the kernel of $D_3$ is real-valued, the following lemma completes the analysis. 

\begin{lemma}\label{D3 lemma}

	We have the bound
	\begin{align}\label{low high bound6}
    	\sup_{x,y\in \R^2}
    	\bigg| &\int_{\R^4} \int_0^\infty \cos(t\lambda) \lambda \chi_1(\lambda)
		J_0(\lambda|x-x_1|)v(x_1) \frac{S_3D_3S_3 }{\lambda^2}  v(y_1)  Y_0(\lambda|y-y_1|) \, d\lambda   \, dx_1\, dy_1 \bigg|\les 1.
  	\end{align}
\end{lemma}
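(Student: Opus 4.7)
The central difficulty is that the factor $\cos(t\lambda)\lambda/\lambda^2 = \cos(t\lambda)/\lambda$ is non-integrable at zero, and since we aim only for a uniform-in-$t$ bound we cannot exploit any oscillation of $\cos(t\lambda)$.  The plan is to use the orthogonality relations $S_3 v = v S_3 = 0$ (valid since $S_3 \leq S_2 \leq S_1 \leq Q$) to replace each Bessel function by a substitute that carries the $\lambda$-smallness needed to absorb the $\lambda^{-1}$ singularity, after which we can estimate by absolute value and use the absolute boundedness of $S_3 D_3 S_3$ on $L^2(\R^2)$.

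First split the $\lambda$-integrand using $1 = \chi(\lambda p) + \widetilde\chi(\lambda p)$ with $p=|x-x_1|$, and similarly for $q=|y-y_1|$, producing the standard low--low, mixed, and high--high pieces.  In the low--low piece, the orthogonality on the $x_1$ slot lets us replace $\chi(\lambda p) J_0(\lambda p)$ by $G(\lambda, x, x_1)$ (which is $\chi(\lambda p)J_0(\lambda p)$ minus a function of $x$ alone integrating to zero against $v(x_1)S_3 D_3 S_3(x_1,y_1)$), and the orthogonality on the $y_1$ slot lets us replace $\chi(\lambda q) Y_0(\lambda q)$ by $F(\lambda, y, y_1)$.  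Choosing $\tau \in (0,1)$ and applying $|G(\lambda,x,x_1)| \les \lambda^{\tau}\langle x_1\rangle^{\tau}$ together with $|F(\lambda,y,y_1)| \les k(y,y_1)$, the low--low contribution is bounded by
\begin{align*}
	\int_{\R^4} \langle x_1\rangle^{\tau}\, v(x_1)\,|S_3 D_3 S_3(x_1,y_1)|\, v(y_1)\, k(y,y_1)\, dx_1\, dy_1
	\int_0^{\lambda_1} \lambda^{\tau-1}\, d\lambda,
\end{align*}
and the $\lambda$-integral converges for $\tau>0$.

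For the mixed and high--high pieces I will use the analogous correction $\widetilde G$ for the high-argument portions of $J_0$ and $Y_0$: the orthogonality gives the same subtractions as in Proposition~\ref{prop:summand2}, yielding $|\widetilde G(\lambda,\cdot,\cdot)| \les \lambda^{\tau}\langle \cdot\rangle^{\tau}$ on the relevant support.  For the low-argument factor not replaced by $\widetilde G$, either the logarithmic bound $|\chi(\lambda r)Y_0(\lambda r)| \les (1+|\log\lambda|)(1+\log^{-}r)$ or the trivial bound $|J_0|\les 1$ suffices, since the $\lambda^{\tau}$ gained from $\widetilde G$ dominates $\lambda^{-1}(1+|\log\lambda|)$ at zero.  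In each case the resulting $x_1,y_1$ integral is of the form $\int v(x_1)\langle x_1\rangle^{\tau}k(x,x_1) \cdot |S_3 D_3 S_3(x_1,y_1)| \cdot v(y_1) \langle y_1\rangle^{\tau} k(y,y_1)\, dx_1\, dy_1$, which is finite and bounded uniformly in $x,y$ by the absolute boundedness of $S_3 D_3 S_3$ and the fact that $v(x_1)\langle x_1\rangle^{\tau} k(x,x_1)\in L^2_{x_1}$ uniformly in $x$ (this is where the hypothesis $|V|\les \langle x\rangle^{-6-}$, giving $v \les \langle x\rangle^{-3-}$, is used).

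The main obstacle I anticipate is bookkeeping: making sure the subtracted auxiliary terms in the definitions of $G$, $F$, and $\widetilde G$ depend only on $x$ or only on $y$ (not on the integration variables $x_1,y_1$), so that the orthogonality relations $\int v(x_1) S_3 D_3 S_3(x_1,y_1)\, dx_1 = 0$ and $\int S_3 D_3 S_3(x_1,y_1)v(y_1)\, dy_1 = 0$ genuinely kill those terms.  Once the substitutions are made cleanly, no integration by parts is required, and the argument proceeds by pointwise absolute estimation exactly as in Proposition~\ref{prop:summand2} and Lemma~\ref{SS_1D_1S_1 prop}.
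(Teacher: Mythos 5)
Your proposal is correct and follows essentially the same route as the paper: split into low--low, mixed, and high--high pieces, use the orthogonality $S_3v=vS_3=0$ (since $S_3\leq Q$) to substitute $G$, $F$, and $\widetilde G$ for the Bessel functions, extract a factor $\lambda^{\tau}$ with $\tau>0$ to render $\lambda^{-1}$ integrable at zero, and close with the absolute boundedness of $S_3D_3S_3$ and the $L^2$ bounds on $v\langle\cdot\rangle^{\tau}k$. The only cosmetic difference is that in the mixed cases you allow the crude logarithmic bound on the unreplaced low-argument $Y_0$ rather than always substituting $F$, which still works since $\lambda^{\tau-1}(1+|\log\lambda|)$ is integrable near zero.
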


\begin{proof}

	We provide a sketch of the proof.  Due to similarities to previous proofs, we leave some details to the reader.
	We again consider the case when the Bessel functions are supported on low energy first.  Accordingly,
	we wish to control
	\begin{align*}
		\bigg|\int_0^\infty \cos(t\lambda)\lambda &\chi_1(\lambda) \chi(\lambda p) J_0(\lambda p)v(x_1)\frac{D_3}{\lambda^2}
		v(y_1) \chi(\lambda q) Y_0(\lambda q) \, d\lambda\bigg|\\
		&\lesssim \bigg| \int_0^\infty \chi_1(\lambda) \frac{G(\lambda,x,x_1)F(\lambda,y,y_1)}{\lambda}\, d\lambda
		\lesssim \langle x_1\rangle^{\tau}k(y,y_1).
	\end{align*}
	Where we used $S_3\leq Q$ and the known bounds
	for $G$ with $\tau>0$.
	
	For the case when one function is supported on high energy, we have
	\begin{align*}
		\bigg|\int_0^\infty \cos(t\lambda)\lambda &\chi_1(\lambda) \widetilde\chi(\lambda p)
		J_0(\lambda p)v(x_1)\frac{D_3}{\lambda^2}
		v(y_1) \chi(\lambda q) Y_0(\lambda q) \, d\lambda\bigg|\\
		&\lesssim \bigg| \int_0^\infty \chi_1(\lambda) \frac{\widetilde G(\lambda,x,x_1)F(\lambda,y,y_1)}{\lambda}\, d\lambda
		\lesssim \langle x_1\rangle^{\tau}k(y,y_1).
	\end{align*}
	Similarly one uses $\widetilde G(\lambda,y,y_1)$ instead of $F(\lambda, y,y_1)$ if we have $\widetilde\chi(\lambda q)$.
	
	When both functions are supported on high energy, we have
	\begin{align*}
		\bigg|\int_0^\infty \cos(t\lambda)\lambda &\chi_1(\lambda) \widetilde\chi(\lambda p)
		J_0(\lambda p)v(x_1)\frac{D_3}{\lambda^2}
		v(y_1) \widetilde\chi(\lambda q) Y_0(\lambda q) \, d\lambda\bigg|\\
		&\lesssim \bigg| \int_0^\infty \chi(\lambda) \frac{\widetilde G(\lambda,x,x_1)\widetilde G(\lambda,y,y_1)}{\lambda}\, d\lambda
		\lesssim \langle x_1\rangle^{\tau}\langle y_1\rangle^{\tau}.
	\end{align*}

\end{proof}

We are now ready to prove the main theorem of this section.

\begin{theorem}\label{pwave thm}

	Let $V:\R^2\to \R$ be such that $|V(x)|\lesssim \langle x\rangle^{-\beta}$ for some $\beta>6$.  Further assume that
	$H=-\Delta+V$ has a resonance of the second or third kind at zero energy.  Then, there is a time dependent operator
	$F_t$ such that
	\begin{align*}
		\sup_t \| F_t\|_{L^1\to L^\infty}\lesssim 1,
		\qquad \|\cos(t\sqrt{H})P_{ac} - F_t \|_{L^1\to L^\infty}\lesssim |t|^{-1},\,\,\,\,|t|>1.
	\end{align*}

\end{theorem}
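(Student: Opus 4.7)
The proof parallels the strategy of Section~4 of \cite{EG} for the two-dimensional Schr\"odinger evolution, transposed to the cosine operator through the spectral representation \eqref{spectral rep}. The plan is to insert the resolvent expansions \eqref{M pwave} and \eqref{M eval} for $M^\pm(\lambda)^{-1}$ into the symmetric resolvent identity \eqref{res_exp}, and then sort the resulting summands into those producing a uniformly-in-$t$ bounded $L^1\to L^\infty$ operator (which collectively define $F_t$) and those which decay in $t$ at the claimed rate.

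After cutting to low energy with $\chi_1(\lambda)$, I would define $F_t$ by collecting all the low-energy contributions coming from the singular pieces of $M^\pm(\lambda)^{-1}$: the $g_1^\pm(\lambda)^{-1} S_2 D_2 S_2$ term, the four $\Gamma_i^\pm(\lambda)$-type terms with $Q$ on one or both sides, and (in the third-kind case) the $\lambda^{-2} S_3 D_3 S_3$ term. The estimate $\sup_t \|F_t\|_{L^1\to L^\infty} \les 1$ then follows directly from Lemma~\ref{D2 lemma}, Lemma~\ref{low high YJ lemma3}, Corollary~\ref{low high YJ cor3}, and Lemma~\ref{D3 lemma}; these lemmas already perform the delicate task of separating each Bessel function into its small- and large-argument pieces and exploiting the orthogonality relations $S_j v = v S_j = 0$ to replace $Y_0$, $J_0$, and their high-argument analogues with the mollified versions $F$, $G$, $\widetilde G$, thereby extracting integrable bounds near $\lambda = 0$ from the $(\log\lambda)^{-k}$ factors inside each $\Gamma_i^\pm$.

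The remainder $\cos(t\sqrt{H})P_{ac} - F_t$ then reduces to the contributions of $R_0^+ - R_0^-$, of $(M^\pm(\lambda)+S_1)^{-1}$, and of the $\widetilde O_1(\lambda^{2-})$ error. The free-resolvent piece is handled by a mild adaptation of Lemma~\ref{r0lowbd}. By Lemma~2.5 of \cite{EG}, $(M^\pm(\lambda)+S_1)^{-1}$ admits an expansion with the same leading structure as in the s-wave case of Lemma~\ref{Mswave}, so its contribution is controlled exactly as in Proposition~\ref{prop:summand2}, Lemma~\ref{SS_1D_1S_1 prop}, and Lemma~\ref{lem:S2}. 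The $\widetilde O_1(\lambda^{2-})$ term is estimated as in Lemma~\ref{lem:Ebd}, where the extra $\lambda^{2-}$ smallness more than compensates for the factor of $\lambda$ from the Stone formula. The main obstacle will be the bookkeeping across the low-low, high-low, and high-high Bessel regimes for each residual piece, together with the careful exploitation of the $R_0^+R_0^+$ versus $R_0^-R_0^-$ cancellation of the type recorded in \eqref{pwave hankel cancel} to extract the requisite smallness from the most singular $\lambda^{-2}$ and $\lambda^{-1}(\log\lambda)^{-1}$ factors while still ensuring convergence of each $\lambda$-integral both near zero and at infinity.
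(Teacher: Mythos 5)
Your proposal is correct and follows essentially the same route as the paper: $F_t$ is defined as the contribution of the singular first-line terms of \eqref{M pwave} and \eqref{M eval}, its uniform boundedness is read off from Lemma~\ref{D2 lemma}, Lemma~\ref{low high YJ lemma3}, Corollary~\ref{low high YJ cor3}, and Lemma~\ref{D3 lemma}, and the remaining terms ($(M^\pm(\lambda)+S_1)^{-1}$ and the $\widetilde O_1(\lambda^{2-})$ error) are disposed of exactly as in the s-wave (first-kind) analysis of the preceding subsection. The one caveat, which is present in the paper itself rather than introduced by you, is that the decay rate inherited from the first-kind analysis is $|t|^{-1/2}$ (consistent with Theorem~\ref{thm:nonreg}) rather than the $|t|^{-1}$ displayed in the statement of Theorem~\ref{pwave thm}.
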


\begin{proof}
If we denote the terms that arise from the contribution of the terms in the first lines of  \eqref{M pwave} and \eqref{M eval} as $F_t$,
Lemmas~\ref{D2 lemma}, \ref{low high YJ lemma3}, and~\ref{D3 lemma}
and Corollary~\ref{low high YJ cor3} show that
\begin{align*}
	\sup_t \| F_t\|_{L^1\to L^\infty}\lesssim 1.
\end{align*}

As the remaining terms in \eqref{M pwave} and \eqref{M eval} are identical in form to those that arise in the analysis of a resonance of the first kind, we can use the bounds from the previous subsection to   establish the theorem.
\end{proof}

The sine operator can be seen to obey similar bounds,
but with a bound that, at worst, grows linearly in time.
To see this, we need only bound the most singular
terms of the sine evolution.  We use the following
inequality to reduce the analysis to terms of the form
previously considered
\begin{align*}
	\bigg|\int_0^{\infty} \sin(t\lambda)\frac{\mathcal E(\lambda)}{\lambda}\, d\lambda\bigg|
	&\les |t|\int_0^\infty |\mathcal E(\lambda)|\, d\lambda.
\end{align*}
Where we used the crude bound $|\sin(t\lambda)|\leq |t\lambda|$.  

\appendix
\setcounter{section}{0}



\vspace{1cm}
\begin{large}
\noindent
{\bf Acknowledgment. \\}
\end{large}
The author would like to thank Wilhelm Schlag for 
suggesting this line of investigation, and the anonymous
referee whose careful reading and suggestions 
greatly improved the paper.

\end{document}